\documentclass[reqno,11pt]{amsart}
\usepackage[utf8]{inputenc}
\usepackage{amsmath, latexsym, amsfonts, amssymb, amsthm, amscd, bm}
\usepackage{mathrsfs}
\usepackage{graphics,epsf,psfrag}
\usepackage{graphicx}
\usepackage[lofdepth,lotdepth]{subfig}
\usepackage[dvipsnames]{xcolor}
\usepackage{hyperref}
\usepackage{placeins}

\setlength{\oddsidemargin}{5mm}
\setlength{\evensidemargin}{5mm}
\setlength{\textwidth}{150mm}
\setlength{\headheight}{0mm}
\setlength{\headsep}{12mm}
\setlength{\topmargin}{0mm}
\setlength{\textheight}{220mm}
\setcounter{secnumdepth}{2}

\numberwithin{equation}{section}

\newtheorem{theorem}{Theorem}[section]
\newtheorem{lemma}[theorem]{Lemma}
\newtheorem{proposition}[theorem]{Proposition}

\newtheorem{rem}[theorem]{Remark}
\newtheorem{definition}[theorem]{Definition}
\newtheorem{hyp}[theorem]{Hypothesis}

\newcommand{\ind}{\mathbf{1}}

\renewcommand{\tilde}{\widetilde}
\xdefinecolor{red}{named}{Red}

\newcommand{\cF}{{\ensuremath{\mathcal F}} }
\newcommand{\cP}{{\ensuremath{\mathcal P}} }
\newcommand{\cE}{{\ensuremath{\mathcal E}} }
\newcommand{\cH}{{\ensuremath{\mathcal H}} }
\newcommand{\cC}{{\ensuremath{\mathcal C}} }

\newcommand{\cL}{{\ensuremath{\mathcal L}} }

\newcommand{\cV}{{\ensuremath{\mathcal V}} }

\newcommand{\cG}{{\ensuremath{\mathcal G}} }
\newcommand{\cM}{{\ensuremath{\mathcal M}} }
\newcommand{\cO}{{\ensuremath{\mathcal O}} }


\DeclareMathSymbol{\leqslant}{\mathalpha}{AMSa}{"36} 
\DeclareMathSymbol{\geqslant}{\mathalpha}{AMSa}{"3E} 
\DeclareMathSymbol{\eset}{\mathalpha}{AMSb}{"3F}     
\renewcommand{\leq}{\;\leqslant\;}                   
\renewcommand{\geq}{\;\geqslant\;}                   
\newcommand{\dd}{\,\text{\rm d}}             


\newcommand{\bbE}{{\ensuremath{\mathbb E}} }

\newcommand{\bbN}{{\ensuremath{\mathbb N}} }

\newcommand{\bbP}{{\ensuremath{\mathbb P}} }

\newcommand{\bbR}{{\ensuremath{\mathbb R}} }


\newcommand{\ga}{\alpha}
\newcommand{\gb}{\beta}
\newcommand{\gd}{\delta}
\newcommand{\gep}{\varepsilon}       

\newcommand{\gl}{\lambda}

\newcommand{\gs}{\sigma}

\makeatletter
\def\captionfont@{\footnotesize}
\def\captionheadfont@{\scshape}

\long\def\@makecaption#1#2{%
  \vspace{2mm}
  \setbox\@tempboxa\vbox{\color@setgroup
    \advance\hsize-6pc\noindent
    \captionfont@\captionheadfont@#1\@xp\@ifnotempty\@xp
        {\@cdr#2\@nil}{.\captionfont@\upshape\enspace#2}%
    \unskip\kern-6pc\par
    \global\setbox\@ne\lastbox\color@endgroup}%
  \ifhbox\@ne 
    \setbox\@ne\hbox{\unhbox\@ne\unskip\unskip\unpenalty\unkern}%
  \fi
  \ifdim\wd\@tempboxa=\z@ 
    \setbox\@ne\hbox to\columnwidth{\hss\kern-6pc\box\@ne\hss}%
  \else 
    \setbox\@ne\vbox{\unvbox\@tempboxa\parskip\z@skip
        \noindent\unhbox\@ne\advance\hsize-6pc\par}%
\fi
  \ifnum\@tempcnta<64 
    \addvspace\abovecaptionskip
    \moveright 3pc\box\@ne
  \else 
    \moveright 3pc\box\@ne
    \nobreak
    \vskip\belowcaptionskip
  \fi
\relax
}
\makeatother
\def\writefig#1 #2 #3 {\rlap{\kern #1 truecm
\raise #2 truecm \hbox{#3}}}


\newcommand{\Tr}{\text{\rm Tr}}

\title[Emergence of oscillatory behaviors for mean-field excitable systems]
{Emergence of oscillatory behaviors for excitable systems with noise and mean-field interaction,\\ a slow-fast dynamics approach.}

\author{Eric Lu\c{c}on}
\address{Laboratoire MAP5 (UMR CNRS 8145), Universit\'e Paris Descartes, Sorbonne Paris Cit\'e, 75270 Paris, France, \url{eric.lucon@paridescartes.fr}.
}

\author{Christophe Poquet}
\address{Univ Lyon, Université Claude Bernard Lyon 1, CNRS UMR 5208, Institut Camille Jordan, F-69622 Villeurbanne, France, \url{poquet@math.univ-lyon1.fr}}

\keywords{Nonlinear Fokker-Planck equation, mean-field systems, excitable systems, FitzHugh-Nagumo model, Stuart-Landau model, Cucker-Smale model, slow-fast dynamics, positively invariant manifold, noise-induced dynamics}
\subjclass[2010]{60K35, 35K55, 35Q84, 37N25, 82C26, 82C31, 92B20}

\date{\today}

\begin{document}

\begin{abstract}
We consider the long-time dynamics of a general class of nonlinear Fokker-Planck equations, describing the large population behavior of mean-field interacting units. Our main motivation concerns (but not exclusively) the case where the individual dynamics is excitable, i.e. when each isolated dynamics rests in a stable state, whereas a sufficiently strong perturbation induces a large excursion in the phase space. We address the question of the emergence of oscillatory behaviors induced by noise and interaction in such systems. We tackle this problem
by considering this model as a slow-fast system (the mean value of the process giving the slow dynamics) in the regime of small individual dynamics and by proving
the existence of a positively stable invariant manifold, whose slow dynamics is at first order the dynamics of a single individual averaged with a Gaussian kernel. We consider applications of this result to Stuart-Landau, FitzHugh-Nagumo and Cucker-Smale oscillators.
\end{abstract}

\maketitle

\section{Introduction}
\subsection{ Microscopic models of mean-field excitable units}

The aim of the paper is to address the long-time behavior of a class of nonlinear Fokker-Planck PDEs arising as the limit in large population of a system of mean-field interacting units. The dynamics of each isolated unit is described by a $d$ dimensional variable $X_{ t}$ solution to the system
\begin{equation}
\label{eq:IDS_intro}
{\rm d}X_{ t} = F(X_{ t}) {\rm d}t,
\end{equation}
for a certain functional $F: \mathbb{ R}^{ d} \to \mathbb{ R}^{ d}$. Our main (but not exclusive) interest concerns situations where \eqref{eq:IDS_intro} describes the dynamics of excitable units. Excitability is a phenomenon that has been widely observed in physics (see \cite{LINDNER2004321} and references therein) and life sciences (\cite{bressloff2014stochastic,LINDNER2004321} and references therein), especially in neuroscience \cite{MR2263523}. Informally speaking, we consider as \emph{excitable}, any system \eqref{eq:IDS_intro} that, without any perturbation, would stay in a stable resting state, whereas a sufficiently strong perturbation would force the system to leave this resting state and come back, resulting in a large excursion in the phase space.  A prominent example of excitable system is given by the FitzHugh-Nagumo model \cite{FitzHugh1961,Nagumo1962,LINDNER2004321,MR1779040} described by $X=(x,y)\in \mathbb{ R}^{ 2}$ and
\begin{equation}
\label{eq:FHN_u1}
F(x,y)\, =\,  \left( x-\frac{x^3}{3}-y,\frac{1}{\tau}(x+a-by) \right)\, ,
\end{equation}
for $a\geq0$, $\tau>0$ and $b\in \mathbb{ R}$. The FitzHugh-Nagumo model, introduced as a two-dimensional idealization of the dynamics of the activity of one neuron (where $x$ is the membrane potential and $y$ a recovery variable), has proven to be a simple prototype for excitability \cite{MR2263523,LINDNER2004321}. In this context, the dynamical description made above can be understood as a spiking activity.

Suppose now that we are given $N\gg1$ copies of \eqref{eq:IDS_intro}, $X_{ 1}, \ldots, X_{ N}$, each of them perturbed by thermal noise and within a linear mean-field interaction (see for example \cite{22657695,MR3392551} for similar models in the context of neuroscience):
\begin{equation}
\label{eq:part_syst}
{\rm d}X_{ i, t}= \left(\delta F(X_{ i, t}) -K \left(X_{ i, t}- \frac{ 1}{ N}\sum_{ j=1}^{ N}X_{ j, t}\right)\right)\dd t+\sqrt{2}\gs \dd B_{ i, t}\, ,\ i=1,\ldots, N\, ,\ t\geq0\, ,
\end{equation}
where $B_{ 1}, \ldots, B_{ N}$ is a collection of independent standard Brownian motions in $ \mathbb{ R}^{ d}$ (modeling thermal noise in the system), $K$ and $\sigma$ are two diagonal matrices, with positive coefficients $k_1, \ldots k_d$ and $\gs_1, \ldots, \gs_d$. In the context of neuronal activity, the presence of noise can be intrinsic to each neuron (e.g. the random switching of ion channels \cite{MR3024608,MR2779558}) or can come from the random input from other neurons. 

\subsection{Structured dynamics induced by noise and interaction} At an informal level, a rather general question is to ask if noise and interaction may induce for \eqref{eq:part_syst} a structured dynamics (e.g. synchronization, collective periodic behavior, see for example \cite{Ko2010} in the context of circadian rhythms) that is not originally observed for the isolated system \eqref{eq:IDS_intro}. 

This question of the influence of noise and interaction on mean-field systems has been a longstanding issue in the literature. Several papers from the physics literature have studied the existence of coherent structures for excitable systems (such as coherence resonance \cite{PhysRevE.60.7270}, pattern formation \cite{MR1697197} or wave propagation, see \cite{LINDNER2004321} and references therein). A first mathematical result showing that noise and interaction may induce periodic behaviors is due to Scheutzow \cite{MR808166}. Related works for various mean-field systems may be found in \cite{collet2016rhythmic,Collet:2015aa,dai2014noise,Ditlevsen:2015fk,Mischler2016,MR2902610}. In the case of excitable systems, some special attention has been given, due to the simplicity of their isolated dynamics, to the particular case of phase oscillators (the Active rotators model \cite{Sakaguchi1986}, a generalization of the Kuramoto model \cite{Kuramoto1975}), both from a perspective of physics (\cite{Shinomoto1986,Sakaguchi1988a}) or neuroscience (\cite{MR833476}) and from a mathematical point of view (\cite{doi:10.1137/110846452,MR3207725}), and apparition of periodic behaviors induced by noise and interaction have been proved in this case. One should mention at this point that the intrinsic nature of excitable systems such as \eqref{eq:part_syst} (and the main difficulty in the analysis) is their absence of reversibility: we are naturally dealing with non-equilibrium systems. 

To our knowledge, our paper provides the first rigorous proof of periodic behaviors induced by noise and interaction in the FitzHugh Nagumo model.

\subsection{ The mean-field limit point of view}
The point of view we adopt in this paper is to consider the large population limit $N\to \infty$ in \eqref{eq:part_syst}: standard propagation of chaos results \cite{SznitSflour} (see also \cite{MR3392551,LucSta2014} for results which cover our present case) show that the empirical measure $ \mu_{ N, t}:= \frac{ 1}{ N} \sum_{ j=1}^{ N} \delta_{ X_{ j, t}}$ of the particle system \eqref{eq:part_syst} is well described in the limit  $N\to\infty$ by the following nonlinear Fokker-Planck equation
\begin{equation}\label{eq:PDE}
\partial_t \mu_t(x)\, =\, \nabla\cdot (\gs^2 \nabla \mu_t)(x) + \nabla \cdot\left( K\mu_t(x)\left(x-\int_{\bbR^d}z\mu_t( {\rm d}z)\right)\right)-\delta \nabla\cdot \big(\mu_t(x) F(x)\big)\, ,t\geq0\, ,
\end{equation}
whose solution $ (\mu_{ t})_{ t\geq0}$ is a probability measure-valued process on $ \mathbb{ R}^{ d}$, describing the law of a typical particle $X_{ t}$ in an infinite population. This nonlinear process is formally described by
\begin{equation}\label{eq:Mc Kean}
\dd X_t\, =\, \Big(\delta F(X_t) -K\big(X_t-\bbE[X_t]\big)\Big)\dd t+\sqrt{2}\gs \dd B_t\, , t\geq0\, .
\end{equation}
Such nonlinear process (that is interacting with its own law through $ \mathbb{ E} \left[X_{ t}\right]$) has been studied since McKean \cite{McKean1967} and Sznitman \cite{SznitSflour}. Well-posedness results for both \eqref{eq:PDE} and \eqref{eq:Mc Kean} will be provided in the following.

\subsection{Slow-fast dynamics and invariant manifold}
Our approach is based on the fact that, since the two first terms of the right hand side of \eqref{eq:PDE} leave the mean $m_t =\int_{\bbR^d} x \mu_t(\dd x)$ invariant, this PDE is in fact equivalent to the system
\begin{equation}\label{eq:slow fast PDE}
\left\{
\begin{array}{rl}
\partial_t p_t(x)& =\, \nabla\cdot(\gs^2 \nabla p_t(x))+\nabla\cdot (Kp_t(x)x)+ \nabla\cdot (p_t(x)(\dot m_t-\gd F(x+m_t)))\\
\dot m_t& =\,  \gd \int_{\bbR^d} F(x+m_t) p_t(\dd x)
\end{array}
\right. \, ,
\end{equation}
where $p_t$ is the centered version of $\nu_t$, i.e. satisfies for all test function $\varphi$
\begin{equation}
\int_{\bbR^d} \varphi(x)p_t(\dd x)\, =\, \int_{\bbR^d} \varphi(x-m_t)\mu_t(\dd x)\, .
\end{equation}
Taking $\gd$ small, the system \eqref{eq:slow fast PDE} defines a slow-fast dynamics, the infinite dimensional one given by $p_t$ being the fast one. Following a classical approach for such systems, one would like then to consider the dynamics of $p_t$ with $\gd=0$, which is simply an Ornstein-Uhlenbeck dynamics, with exponential convergence to the Gaussian measure of density $q_{0,\gs^2 K^{-1}}$ (more details will be given in Section \ref{subsec:OU}), where
\begin{equation}
\label{eq:qm0_Gamma}
q_{m,\Gamma}(x)=\frac{1}{((2\pi)^{ d} \det(\Gamma))^{\frac{1}{2}}}\exp \left(-\frac{1}{2}(x-m)\cdot\Gamma^{-1}(x-m) \right),\ x\in \mathbb{ R}^{ d}\, .
\end{equation}
One would then replace $p_t$ by this limit in the equation of evolution $m_t$, obtaining the approximation
\begin{equation}\label{eq:approx m_t}
\dot m_t\, \approx\, \gd \int_{\bbR^d} F(x+m_t)q_{0,\gs^2K^{-1}}(x)\dd x\, =\, \gd \int_{\bbR^d} F(x)q_{m_t,\gs^2K^{-1}}(x)\dd x\, ,
\end{equation} 
which simply corresponds to replacing the right-hand side of \eqref{eq:IDS_intro} with its average with respect to a Gaussian measure centered in $m_t$, and slowing down the dynamics by a factor $\gd$. The purpose of this paper is make this approximation rigorous, and thus reducing drastically the dimension of the problem: the point being then to look for structured dynamics for the $d$-dimensional problem \eqref{eq:approx m_t}.

\medskip

To prove this approximation, we follow the founding arguments of Fenichel \cite{fenichel1971persistence,fenichel1979geometric} who solved this problem in finite dimension relying on the persistence under perturbation of normally hyperbolic manifolds: in our case the manifold $\cM^0=\{(q_{0,\gs^2K^{-1}},m):\, m\in \bbR^d\}$ is a stable manifold of stationary solutions for \eqref{eq:slow fast PDE} with $\gd=0$, and our aim is to prove that it persists in an invariant manifold $\cM^\gd$ for $\gd>0$ small, and that the phase dynamics on $\cM^\gd$ can be approximated by \eqref{eq:approx m_t} (more precisely we will only prove the existence of positively invariant manifolds, see Theorem \ref{th:main1} for a precise result). The persistence result of Fenichel has been generalized in several directions (see for example \cite{hirsch1977invariant,wiggins2013normally,sell2013dynamics,Bates1998}), in particular for infinite dimensional systems. We could not apply directly the very general result of \cite{Bates1998} in our situation, the main difficulty that arises in our case being the fact that the function $F$ we consider may be nonlinear (in particular in the case of excitable systems), with $\vert F(x)\vert$ growing rapidly at infinity. We will tackle this issue by studying the existence and $C^1$ character of $\cM^\gd$ in two different weighted $L^2$ spaces in which the Ornstein Uhlenbeck dynamics contracts, and it will in fact be sufficient to prove that the approximation \eqref{eq:approx m_t} is valid in $C^1$, so that this approximation describes accurately the phase dynamics on $\cM^\gd$. For other studies of slow-fast dimensional systems that do not apply to our context, see for example \cite{MR1022538,menonhaller}. Note that the works \cite{doi:10.1137/110846452,MR3207725} made on the Active Rotators model are also based on persistence of stable manifold, but on simpler models, with the dynamics of a single unit defined on the unit circle (so without problems of growth at infinity).

\medskip

An analysis related to this work, in the case of FitzHugh-Nagumo oscillators, has recently been made in \cite{Mischler2016}, showing in particular the existence of equilibria for the limit mean-field dynamics. Our analysis differs in two ways: \cite{Mischler2016} concerns the kinetic case where no noise and interaction is imposed for the recovery variable $y$, whereas our analysis requires that the noise and interaction is present on each component. Secondly, in \cite{Mischler2016} the mean field model is considered in the limit of small interaction, whereas we are concerned with the (somehow opposite) case where the dynamics is small w.r.t. the interaction.
\subsection{Organisation of the paper.}
We present in Section~\ref{sec:model} the model, the assumptions and main results. Section~\ref{sec:simulations} concerns examples and numerical simulations. The main regularity and a priori estimates are gathered in Section~\ref{sec:well_posed_apriori}. The existence of the perturbed manifold is proven in Section~\ref{sec:persistence} and it $ \mathcal{ C}^{ 1}$-character in Section~\ref{sec:regularity_manifold}.

\section{Assumptions and main results}
\label{sec:model}
\subsection{ Notations and first definitions}
We denote by $x \cdot y$ and $ \left\vert x \right\vert$ as the Euclidean scalar product and norm in $ \mathbb{ R}^{ d}$. We will also use the notation
\begin{equation}
\vert x\vert_A\, =\, \left(x\cdot A x\right)^{ \frac{ 1}{ 2}}\, ,
\end{equation}
as the Euclidean norm twisted by some positive symmetric matrix $A$. We will mainly use this twisted norm in the following for the choice of $A:= K \sigma^{ -2}$. For this norm and any $R>0$, we denote by $B_{ R}:= \left\lbrace x\in \mathbb{ R}^{ d},\ \left\vert x \right\vert\leq R\right\rbrace$ as the corresponding closed ball of radius $R>0$. Moreover, for any mapping open subset $\cO$ and any $x\in \cO \mapsto g(x)\in \mathbb{ R}^{ d}$, we denote by $ \left\Vert g \right\Vert_{ \mathcal{ C}^{ 1}(\cO, \mathbb{ R}^{ d})}= \sup_{ x\in \cO} \left\vert g(x) \right\vert + \sum_{ k=1, \ldots, d}\sup_{ x\in B_{ R}} \left\vert \partial_{ x_{ k}} g(x) \right\vert$ as the usual $ \mathcal{ C}^{ 1}$-norm of $g$. Define also
\begin{align}
\label{eq:kminmax}
\underline{ k}=\min \left(k_{ 1}, \ldots, k_{ d}\right),&\ \bar{ k}=\max \left(k_{ 1}, \ldots, k_{ d}\right),\\
\label{eq:sigminmax}
\underline{ \sigma}=\min \left(\sigma_{ 1}, \ldots, \sigma_{ d}\right),&\ \bar{ \sigma}=\max \left(\sigma_{ 1}, \ldots, \sigma_{ d}\right).
\end{align}
The analysis of \eqref{eq:PDE} will require the definition of weighted norms: let $x \mapsto w(x)$ a measurable positive weight. We define here the corresponding $L^{ 2}$ and $H^{ 1}$ norms as
\begin{equation}
\label{eq:L2norm_w}
\left\Vert u \right\Vert_{ L^{ 2}(w)}= \left(\int_{ \mathbb{ R}^{ d}} \left\vert u(x) \right\vert^{ 2} w(x) {\rm d}x\right)^{ \frac{ 1}{ 2}}\, ,  \qquad  
\Vert u\Vert_{H^1(w)}= \left(\Vert u\Vert^2_{L^2(w)}+\sum_{i=1}^d\Vert \partial_{x_i} u\Vert^2_{L^2(w)}\right)^{ \frac{ 1}{ 2}}\, .
\end{equation}
We use also the notation $ \left\langle u\, ,\, v\right\rangle_{ L^{ 2}(w)}$ and $ \left\langle u\, ,\, v\right\rangle_{ H^{ 1}(w)}$ for the corresponding scalar products. We will use the family of weights defined as, for any $ \theta\in \mathbb{ R}$,
\begin{equation}
\label{eq:w_alpha}
w_{ \theta}(x):=\exp\left(\frac{\theta}{2} \vert x\vert_{K\gs^{-2}}^2\right),\ x\in \mathbb{ R}^{ d}\,.
\end{equation} 
\subsection{ Main assumptions}\label{sec:asumption}
We make here the following hypotheses on $F$:
\begin{hyp}\label{hyp F}
\begin{enumerate}
\item There exists a positive constant $C_F$ such that
\begin{equation}
\label{hyp:F_Lipschitz}
(F(x)-F(y))\cdot(x-y)\, \leq\, C_F \vert x-y\vert^2,\ x, y\in \mathbb{ R}^{ d}\, .
\end{equation}
\item There exist positive constants $c_F$, $C_F$ and $r$ such that
\begin{equation}
\label{hyp:F_dot_x_bound}
F(x)\cdot K\gs^{-2} x\, \leq \, C_F \ind_{\{ \left\vert x \right\vert_{ K \sigma^{ -2}}\leq r\}} - c_F \vert x\vert_{ K \sigma^{ -2}}^2\,,\ x\in \mathbb{ R}^{ d}
\end{equation}
\item The following limits holds:
\begin{equation}
\label{hyp:F_dot_x_larger}
\lim_{ \vert x \vert \rightarrow \infty} \frac{\vert \partial_{x_k} F(x)\vert}{ F(x)\cdot K\gs^{-2} x}\, =\, 0 \, \text{ for } k=1,\ldots,d\, .
\end{equation}
\item There exist $C_F>0$ and $\gep>0$ such that
\begin{equation}
\label{hyp:bound_F_exp}
\max \left( \left\vert F(x) \right\vert, \sup_{ k=1, \ldots, d} \left\vert \partial_{ x_{ k}}F (x)\right\vert, \sup_{ k, l=1, \ldots, d} \left\vert \partial_{ x_{ k}, x_{ l}}^{ 2} F^{ (l)}(x) \right\vert\right) \leq C_{ F} w_\gep(x),\ x\in \mathbb{ R}^{ d}.
\end{equation}
\item There exists bounded open subset $\cV$ of $\bbR^d$ with smooth boundary such that for all $m\in \partial\cV$
\begin{equation}
n_{\partial\cV}(m) \cdot \int_{\bbR^d} F(x)q_{m,\gs^2K^{-1}}(x)\dd x \, <\, 0
\end{equation}
where $n_{\partial \cV}(m)$ denotes the exterior normal of $\partial \cV$ at $m$.
\end{enumerate}
\end{hyp}

The point of the main results below is to show the existence of a positively invariant manifold $\cM^\gd$ for \eqref{eq:slow fast PDE} (i.e. a manifold such that $(p_t,m_t)\in \cM$ for all $t>0$ as soon as $(p_0,m_0)\in \cM$, where $(p_t,m_t)$ is solution of \eqref{eq:slow fast PDE}) defined for $m\in \cV$. We give in the following Lemma a sufficient condition for the point (5) of Hypothesis \ref{hyp F} to be satisfied.

\begin{lemma}
\label{lem:F_vs_q0}
Suppose that $F$ satisfies
\begin{equation}
\label{hyp:F_dot_x_bound 2}
F(x)\cdot K\gs^{-2} x\, \leq \, C_F \ind_{\{ \left\vert x \right\vert_{ K \sigma^{ -2}}\leq r\}} - c_F \vert F(x)\vert_{ K \sigma^{ -2}}\vert x\vert_{ K \sigma^{ -2}}\,,\ x\in \mathbb{ R}^{ d}\, .
\end{equation}
Then there exists a $ \rho_{ 0}>0$ such that, if $F$ satisfies the point (4) of Hypothesis \ref{hyp F} with $ 0<\varepsilon \leq \rho_{ 0} $, there exists a $L_0>0$ such that $F$ satisfies the point (5) of Hypothesis \ref{hyp F}  with $\cV=\{x\in \bbR^d:\, \vert x\vert_{K\gs^{-2}}\leq L_0\}$.
\end{lemma}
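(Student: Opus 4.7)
I would take $\cV=\{m\in\bbR^d:\,|m|_{K\sigma^{-2}}\leq L_0\}$, the $K\sigma^{-2}$-ball of a radius $L_0>r$ to be fixed, and write $A:=K\sigma^{-2}$ throughout. The outward unit normal at $m\in\partial\cV$ is then $n_{\partial\cV}(m)=Am/|Am|$, so (5) reduces to checking that, for every $m$ with $|m|_A=L_0$,
\begin{equation*}
Am\cdot\tilde F(m)<0,\qquad\tilde F(m):=\int_{\bbR^d}F(y+m)\,q_{0,\sigma^2K^{-1}}(y)\,\dd y.
\end{equation*}

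The starting point is the decomposition $Am=A(y+m)-Ay$, which splits the quantity of interest as
\begin{equation*}
Am\cdot\tilde F(m)\,=\,\int F(y+m)\cdot A(y+m)\,q_0\dd y\;-\;\int F(y+m)\cdot Ay\,q_0\dd y\,=:\,I_1(m)-I_2(m).
\end{equation*}
Since $A$ is diagonal and $q_0\propto\exp(-\tfrac12|y|_A^2)$, one has $(Ay)_iq_0=-\partial_{y_i}q_0$; an integration by parts then turns $I_2(m)$ into $\int(\nabla\cdot F)(y+m)\,q_0\dd y$. Feeding the bound $|\nabla\cdot F|\leq C_Fd\cdot w_\varepsilon$ of \eqref{hyp:bound_F_exp} into this and computing $\int w_\varepsilon(y+m)q_0(y)\dd y$ exactly by completing the square in the Gaussian exponent (the integral being finite precisely when $\varepsilon<1$) yields
\begin{equation*}
|I_2(m)|\,\leq\,C_\varepsilon\exp\!\left(\frac{\varepsilon}{2(1-\varepsilon)}|m|_A^2\right),
\end{equation*}
a quantity whose rate of growth in $|m|_A^2$ vanishes as $\varepsilon\downarrow 0$.

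To extract the negative main contribution from $I_1(m)$, I would Taylor-expand $F(y+m)=F(m)+\int_0^1\nabla F(m+sy)y\,\dd s$ under the Gaussian integral. The zeroth-order term gives $Am\cdot F(m)$, which by \eqref{hyp:F_dot_x_bound 2} applied at $x=m$ is bounded by $-c_F|F(m)|_A|m|_A$ once $|m|_A=L_0>r$. The remainder is controlled through the same exponential-growth bound on $\nabla F$ from \eqref{hyp:bound_F_exp} and the elementary inequality $|m+sy|_A^2\leq 2|m|_A^2+2|y|_A^2$: the resulting Gaussian integral in $y$ converges for $\varepsilon<1/2$ and the remainder is of size $C_\varepsilon'\exp(\varepsilon|m|_A^2)$. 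Combining these bounds,
\begin{equation*}
Am\cdot\tilde F(m)\,\leq\,-c_F|F(m)|_A|m|_A+C'|m|_A\,C_\varepsilon'\exp(\varepsilon|m|_A^2).
\end{equation*}

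It then suffices to pick $L_0>r$ such that $\inf_{|m|_A=L_0}|F(m)|_A\geq c_0$ for some $c_0>0$, and then to fix $\rho_0$ small enough that $C'C_\varepsilon'\exp(\varepsilon L_0^2)<c_Fc_0$ throughout $\varepsilon\in(0,\rho_0]$. The key obstacle lies here: since hypothesis \eqref{hyp:F_dot_x_bound 2} constrains only the signed quantity $F\cdot Ax$ and not $|F|$ alone, verifying a uniform positive lower bound on $|F(\cdot)|_A$ on the sphere $\{|m|_A=L_0\}$ requires a careful choice of $L_0$ — one exploits the fact that \eqref{hyp:F_dot_x_bound 2} forces $F$ to be substantially anti-aligned with $Ax$ at large $|x|_A$, which makes $|F|$ non-degenerate at a suitable scale $L_0$, after which the balance between the polynomial main term and the exponential error (with rate $\to 0$ as $\varepsilon\to 0$) closes the argument.
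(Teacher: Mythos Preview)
Your decomposition $Am\cdot\tilde F(m)=I_1-I_2$ matches the paper's starting point, but the treatments then diverge. The paper neither integrates by parts in $I_2$ nor Taylor-expands in $I_1$. Instead it splits the $y$-domain into a near region $\{|y|_A<r_\varepsilon L_0\}$ (with $r_\varepsilon\sim\sqrt{\varepsilon}$ chosen so that $r_\varepsilon<\min(1,c_F)/4$; this is what fixes $\rho_0$) and a far region. The far contribution to $I_2$ is Gaussian-tail small, of order $e^{-\varepsilon L_0^2/2}$, while the near contribution is bounded by $r_\varepsilon L_0\int_{\mathrm{near}}|F(y+m)|_A\,q_0$. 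For $I_1$ the paper applies \eqref{hyp:F_dot_x_bound 2} \emph{pointwise under the integral} to $F(y+m)\cdot A(y+m)$, which on the near region (where $|y+m|_A\geq\tfrac34 L_0$) yields a term $-\tfrac{3c_F}{4}L_0\int_{\mathrm{near}}|F(y+m)|_A\,q_0$; since $r_\varepsilon<c_F/4$ this absorbs the near part of $I_2$. The upshot is that the paper's error terms \emph{decay} in $L_0$, so one simply takes $L_0$ large, whereas your errors \emph{grow} like $e^{\varepsilon L_0^2}$ and force a two-sided constraint on $L_0$ relative to $\varepsilon$.

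There is a genuine gap in your closing paragraph. You claim that the anti-alignment encoded in \eqref{hyp:F_dot_x_bound 2} makes $|F|_A$ nondegenerate on some large sphere, but this is false: for $|x|_A>r$ the hypothesis says only that $F(x)$ makes an angle of cosine $\leq -c_F$ with $x$ in the $A$-inner product, a condition vacuously satisfied wherever $F=0$ (indeed $F\equiv 0$ satisfies both \eqref{hyp:F_dot_x_bound 2} and point~(4) yet violates point~(5)). The needed lower bound comes instead from point~(2) of Hypothesis~\ref{hyp F}, which is a standing assumption of the paper: combining $F(x)\cdot Ax\leq -c_F|x|_A^2$ with Cauchy--Schwarz $|F(x)\cdot Ax|\leq|F(x)|_A|x|_A$ gives $|F(x)|_A\geq c_F|x|_A$ for $|x|_A>r$. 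The paper's own final step relies on this as well. With this correction your argument does close: taking $c_0=c_F L_0$, the balance $c_F^2 L_0>C'C'_\varepsilon e^{\varepsilon L_0^2}$ holds for any fixed $L_0>\max(r,C/c_F^2)$ once $\varepsilon$ is small enough.
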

The proof of Lemma~\ref{lem:F_vs_q0} is postponed to Appendix~\ref{sec:proof_lemma_F_VS_q0}. We suppose finally that
\begin{equation}
\label{hyp:epsilon_small}
\varepsilon\, <\, \frac{ 1}{ 5(\Tr(K)+4 \underline{ k})}
\end{equation}
this condition being related to \eqref{eq:def_alpha_beta} below.

\subsection{Positively invariant manifold and its phase dynamics} 

The main result of the paper is the following: 
\begin{theorem}\label{th:main1}
Under the assumptions of Section~\ref{sec:asumption} there exist $\ga\in  \left(0,1\right)$, $\bar \gd>0$ and $C>0$ such that the following is true: for all $0\leq \gd\leq \bar \gd$, there exists a positively invariant manifold $ \mathcal{ M}^\gd=\{(p_m^\gd,m):\, m\in \cV\}$ for \eqref{eq:PDE}, where $p^\gd_m$ is a probability measures on $ \mathbb{ R}^{ d}$ for all $m\in \cV$, and $\cM^\gd$ is a perturbation of size $ \delta$ of the manifold $ \mathcal{ M}^{ 0}$ in the following sense:
\begin{equation}
\sup_{m\in \cV} \left\Vert p^\gd_m-q_{0,\gs^2 K^{-1}}\right\Vert_{L^2 (w_\ga)}\, \leq\, C \gd\, .
\end{equation}
\end{theorem}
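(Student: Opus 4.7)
My plan is to construct the perturbed manifold $\cM^\gd = \{(p_m^\gd, m) : m \in \cV\}$ by a fixed-point argument exploiting the spectral gap of the linearized fast dynamics at $\gd = 0$. The starting point is the centered system \eqref{eq:slow fast PDE}: at $\gd = 0$ the fast equation reduces to the Ornstein--Uhlenbeck equation $\partial_t p = \nabla \cdot (\gs^2 \nabla p) + \nabla \cdot (Kpx)$, whose unique stationary probability density is $q_{0,\gs^2 K^{-1}}$. A direct Dirichlet-form computation in the weighted space $L^2(w_\ga)$---using crucially the smallness condition \eqref{hyp:epsilon_small} and a suitable choice $\ga \in (0,1)$---should yield a uniform spectral gap $\gl > 0$ for the corresponding generator $\cL$ on the codimension-one subspace of zero-mean densities, so that $\cL^{-1}$ is bounded there.

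Writing $p_m^\gd = q_{0,\gs^2 K^{-1}} + \gd\, h_m^\gd$, the requirement that $\cM^\gd$ be positively invariant under \eqref{eq:slow fast PDE} translates---after imposing $\partial_t p_{m_t}^\gd = \nabla_m p_{m_t}^\gd \cdot \dot m_t$ along the induced slow flow $\dot m = \gd\, G^\gd(m)$ with $G^\gd(m) := \int_{\bbR^d} F(x+m)\, p_m^\gd(\dd x)$---into the functional fixed-point equation
\begin{equation*}
\cL h_m^\gd \, = \, \gd\, \nabla_m h_m^\gd \cdot G^\gd(m) \, - \, \nabla \cdot \bigl((q_{0,\gs^2 K^{-1}} + \gd h_m^\gd)(G^\gd(m) - F(\,\cdot\, + m))\bigr)
\end{equation*}
for a family $h^\gd = \{h_m^\gd\}_{m \in \cV}$ of $L^2(w_\ga)$-valued functions of $m$. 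The unperturbed source $-\nabla \cdot \bigl(q_{0,\gs^2 K^{-1}} (G^0(m) - F(\,\cdot\, + m))\bigr)$ is of order one and lies in the range of $\cL$ (being a divergence), and every remaining term on the right-hand side depending on $h^\gd$ carries an explicit factor of $\gd$. Inverting $\cL$ therefore produces a map $\Phi^\gd$ that is a contraction, for $\gd \leq \bar\gd$ small, in a fixed ball of order one in $\mathcal{C}^1(\cV; L^2(w_\ga))$. Its unique fixed point gives $\{h_m^\gd\}$, and the desired estimate $\sup_m \|p_m^\gd - q_{0,\gs^2 K^{-1}}\|_{L^2(w_\ga)} \leq C \gd$ is built into the construction.

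The main obstacle is the exponential growth of $F$ allowed by \eqref{hyp:bound_F_exp}: $F$ is only $w_\gep$-bounded, so moments such as $G^\gd(m)$ and fluxes $p_m^\gd\, F(\,\cdot\, + m)$ are not controlled by $\|h_m\|_{L^2(w_\ga)}$ alone, and a single weighted space cannot accommodate both the contraction and the nonlinearity. I would therefore work simultaneously in two weighted spaces: the ambient $L^2(w_\ga)$ used to run the contraction, and a stronger weighted space whose weight dominates $w_\gep$ and in which the nonlinear forcing lives. The Gaussian tails of $q_{0,\gs^2 K^{-1}}$ dominate $w_\gep$ under \eqref{hyp:epsilon_small}, while the $H^1$-energy estimate provided by the OU Dirichlet form (together with the dissipation encoded in \eqref{hyp:F_dot_x_bound}, which keeps the candidate $p_m^\gd$ mass-concentrated) allows one to close the bootstrap between the two norms. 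Carefully tracking these two-weight bounds when differentiating in $m$ to justify the term $\nabla_m h_m^\gd$ is the step I expect to be the most delicate, and it naturally motivates the $\mathcal{C}^1$-analysis postponed to Section~\ref{sec:regularity_manifold}.

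Finally, positive invariance of $\cM^\gd$ follows at once from hypothesis~(5): on $\partial\cV$ the induced slow vector field $m \mapsto \gd\, G^\gd(m)$ is an $O(\gd^2)$ perturbation of the Gaussian-averaged field $m \mapsto \gd \int F(x) q_{m,\gs^2 K^{-1}}(x)\dd x$, which by assumption (5) of Hypothesis~\ref{hyp F} points strictly inward. Hence for $\gd \leq \bar\gd$ small any trajectory of \eqref{eq:slow fast PDE} starting on $\cM^\gd$ with $m_0 \in \cV$ stays on $\cM^\gd$ for all $t \geq 0$, yielding the announced result.
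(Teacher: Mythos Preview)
Your proposal takes a genuinely different route from the paper: you attempt a \emph{direct} (Lyapunov--Perron / stationary) construction, writing down the invariance PDE for $h_m^\gd$ and solving it by inverting $\cL$, whereas the paper uses the \emph{graph transform}: it evolves a Lipschitz graph $m\mapsto f(m)$ forward by the nonlinear semigroup for a fixed time $T_0$ and shows this time-$T_0$ map is a contraction on a complete set of such graphs (the space $\cH$ of Definition~\ref{def:space_F}, Lemmas~\ref{lem:etat_Lip}--\ref{lem:eta_contraction}). Your identification of the two-weight issue is correct and mirrors the paper's use of $L^2(w_\ga)$ for proximity to $q_0$ and $L^2(w_\gb)$ for the contraction, and your positive-invariance argument via hypothesis~(5) is essentially the paper's.

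There is, however, a genuine gap in your fixed-point step. Your invariance equation contains the transport term $\gd\,\nabla_m h_m^\gd\cdot G^\gd(m)$, so the map $\Phi^\gd$ you propose sends $h\in \mathcal C^1(\cV;L^2(w_\ga))$ to an element of $\mathcal C^0$ only: to check that $\Phi^\gd(h)$ is itself $\mathcal C^1$ in $m$ you must differentiate the right-hand side, which produces $\gd\,\nabla_m^2 h_m\cdot G^\gd(m)$. Thus $\Phi^\gd$ does not preserve $\mathcal C^1$, and no contraction in $\mathcal C^1$ can be set up as stated; the smallness factor $\gd$ does not help because it multiplies the wrong derivative. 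This is the classical loss-of-derivative obstruction for the direct approach in slow--fast problems.

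The paper's graph transform sidesteps this entirely: the contraction in Lemma~\ref{lem:eta_contraction} is established in the $\mathcal C^0$ (indeed Lipschitz) topology on graphs, using only the spectral gap of the time-$T_0$ OU flow and the Lipschitz estimates of Lemmas~\ref{lem:lip_cont_pi}--\ref{lem:encadre m}; no $m$-derivative of $f$ is needed to obtain the fixed point. The $\mathcal C^1$ regularity is then recovered \emph{a posteriori} in Section~\ref{sec:regularity_manifold} by setting up a separate contraction for the candidate derivative $\varphi$ (equation~\eqref{eq:fixed_point_phi_1}) and identifying it with $\nabla_m f$ via Lemma~\ref{lem:rho}. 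If you wish to salvage the direct route, you would have to mimic this: run your contraction for $h$ alone in a Lipschitz class (dropping the $\gd\,\nabla_m h$ term or treating it via a time-integral formulation), and treat the derivative as a second, coupled fixed point.
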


\begin{rem}\label{rem:stab}
The proof of Theorem \ref{th:main1} implies in particular that $ \mathcal{ M}^\gd$ is stable in the following sense: there exist $\beta\in(0, \ga)$,  positive constants $\lambda, c, c',c''$ and $C'$ such that the following is true: if $(p_0,m_0)$ satisfies $m_0\in \cV$, $\int_{\bbR^d} x p_0(\dd x)=0$, $\int_{\bbR^d} w_\ga(x+m_0)p_0(\dd x)\leq c$, $\Vert p_0-q_{0,\gs^2K^{-1}}\Vert_{L^2(w_\ga)}\leq c' \gd$ and $\Vert p_0-p^\gd_{m_0}\Vert_{L^2(w_\beta)}\leq c''\gd $, then for $(p_t,m_t)$ the solution of \eqref{eq:slow fast PDE}
with initial condition $(p_0,m_0)$ we have for all $t>0$:
\begin{equation}\label{eq:contract rem}
m_t\in \cV\, , \quad \text{and}\quad \left\Vert p_t-p^\gd_{m_t}\right\Vert_{L^2 (w_\beta)}\, \leq\, C' e^{-\lambda t}\left\Vert p_0-p_{m_0}^\gd\right\Vert_{L^2(w_\beta)}\, .
\end{equation}
\end{rem}

For any $\gd\in [0,\bar\gd]$ and any $m_0\in \cV$ we denote by $t\mapsto m^\gd_t$ the phase dynamics of the solution of \eqref{eq:slow fast PDE} starting from $(p^\gd_{m_0},m_0)\in \mathcal{ M}^\gd$.
\begin{theorem}\label{th:main2}
The trajectory $t\mapsto m^\gd_t$ is a $ \mathcal{ C}^1$-perturbation (slowed down by a factor $\gd$) of the dynamics given by the equation $\dot m_t =\int_{\bbR^d}F(u)q_{m_t,\gs^2K^{-1}}(u)\dd u$: there exist $C''>0$ and a $ \mathcal{ C}^1$-mapping $g^\gd:\cV\rightarrow\bbR^d$ such that
\begin{equation}\label{eq:dot m th}
\dot m^\gd_t\, =\, \gd\int_{\bbR^d}F(u)q_{m^\gd_t,\gs^2K^{-1}}(u)\dd u+ \gd^2 g^\gd(m^\gd_t)\, ,
\end{equation}
and $\Vert g^\gd\Vert_{ \mathcal{ C}^1(\cV,\bbR^d)}\leq C''$.
\end{theorem}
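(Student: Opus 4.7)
The plan is to read off \eqref{eq:dot m th} directly from the manifold equation, and then check that the remainder divided by $\gd$ has the desired $\mathcal{C}^1$-bound. Since $\cM^\gd$ is positively invariant for \eqref{eq:slow fast PDE}, the trajectory starting from $(p^\gd_{m_0},m_0)\in \cM^\gd$ satisfies $p_t = p^\gd_{m^\gd_t}$ for all $t\geq 0$, so the second equation of \eqref{eq:slow fast PDE} reduces to
\[
\dot m^\gd_t \,=\, \gd \int_{\bbR^d} F(x+m^\gd_t)\, p^\gd_{m^\gd_t}(\dd x).
\]
A change of variables gives $\int F(u)\, q_{m,\gs^2K^{-1}}(u)\,\dd u \,=\, \int F(x+m)\,q_{0,\gs^2K^{-1}}(x)\,\dd x$, so \eqref{eq:dot m th} is automatic with the choice
\[
g^\gd(m)\, :=\, \frac{1}{\gd}\int_{\bbR^d} F(x+m)\bigl[p^\gd_m(x)-q_{0,\gs^2K^{-1}}(x)\bigr]\,\dd x .
\]
The whole task is then to prove $\|g^\gd\|_{\mathcal{C}^1(\cV,\bbR^d)}\leq C''$ uniformly in $\gd\in(0,\bar\gd]$.

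To bound $|g^\gd(m)|$, I would apply Cauchy--Schwarz with the weight $w_\ga$ of Theorem~\ref{th:main1}:
\[
|g^\gd(m)|\, \leq\, \frac{1}{\gd}\, \|F(\cdot+m)\|_{L^2(w_\ga^{-1})}\, \|p^\gd_m - q_{0,\gs^2K^{-1}}\|_{L^2(w_\ga)}.
\]
Theorem~\ref{th:main1} bounds the second factor by $C\gd$, which absorbs the $1/\gd$ prefactor. For the first, \eqref{hyp:bound_F_exp} yields $|F(x+m)|^2\leq C_F^2\, w_{2\gep}(x+m)$; since $\cV$ is bounded and the smallness condition \eqref{hyp:epsilon_small} has been imposed precisely so that the constant $\ga$ produced by Theorem~\ref{th:main1} satisfies $\ga>2\gep$, the Gaussian integral $\int w_{2\gep}(x+m)\, w_\ga^{-1}(x)\,\dd x$ is bounded uniformly in $m\in\cV$. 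This gives $\sup_{m\in\cV}|g^\gd(m)|\leq C''$.

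For the derivative, I would differentiate under the integral sign,
\[
\partial_{m_k}g^\gd(m)\, =\, \frac{1}{\gd}\int \partial_{x_k}F(x+m)\bigl[p^\gd_m-q_{0,\gs^2K^{-1}}\bigr](x)\,\dd x + \frac{1}{\gd}\int F(x+m)\,\partial_{m_k}p^\gd_m(x)\,\dd x.
\]
The first term is controlled exactly as $g^\gd(m)$ above, using that \eqref{hyp:bound_F_exp} also bounds $|\partial_{x_k}F|$ by $C_F w_\gep$. For the second term I would invoke the $\mathcal{C}^1$-character of the manifold established in Section~\ref{sec:regularity_manifold}: $m\mapsto p^\gd_m$ is $\mathcal{C}^1$ into an appropriate weighted $L^2$-space, and since $p^0_m\equiv q_{0,\gs^2K^{-1}}$ is independent of $m$, the perturbative fixed-point construction of $\cM^\gd$ actually yields the stronger quantitative estimate $\|\partial_{m_k}p^\gd_m\|_{L^2(w_\gb)}=O(\gd)$ for a suitable weight $w_\gb$, which again cancels the $1/\gd$ prefactor via Cauchy--Schwarz.

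The hard part is keeping the weighted spaces mutually compatible. The function $F$ has Gaussian-type growth at infinity by \eqref{hyp:bound_F_exp}, while the contractive estimates for the Ornstein--Uhlenbeck part of \eqref{eq:slow fast PDE} only hold in weights $w_\ga$ with $\ga<1$, and the $\mathcal{C}^1$-regularity of $\cM^\gd$ is naturally established in a possibly different weight $w_\gb$. The role of \eqref{hyp:epsilon_small} is precisely to ensure that all these weights can be chosen simultaneously (in particular with $\ga > 2\gep$) so that Cauchy--Schwarz closes both for $g^\gd$ and for $\partial_{m_k}g^\gd$; once this bookkeeping is done, the final $\mathcal{C}^1$-bound is automatic.
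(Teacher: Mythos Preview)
Your proposal is correct and follows essentially the same route as the paper's proof: the same definition of $g^\gd$, the same Cauchy--Schwarz argument against $\|p^\gd_m-q_{0,\gs^2K^{-1}}\|_{L^2(w_\ga)}\leq C\gd$ for the $\sup$-bound, and the same splitting of $\partial_{m_k}g^\gd$ into a $\partial_{x_k}F$-term and a $\partial_{m_k}p^\gd_m$-term, the latter handled via the quantitative $\mathcal{C}^1$-estimate $\|\partial_{m_k}p^\gd_m\|_{L^2(w_\gamma)}\leq \kappa_{11}\gd$ (Proposition~\ref{prop:reg f}). The only cosmetic difference is that the paper carries out the $\mathcal{C}^1$-estimate in the weight $w_\gamma$ rather than $w_\gb$, but your ``suitable weight'' formulation already anticipates this.
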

\begin{rem}[Dependence of the results in $ \varepsilon$]
Recall the definition in $ \varepsilon>0$ in \eqref{hyp:bound_F_exp}. One can choose the constants $ \alpha= \alpha(\varepsilon)$ appearing in Theorem~\ref{th:main1}, $ \beta= \beta(\varepsilon)$ and $ \lambda= \lambda(\varepsilon,\gd)$ in Remark~\ref{rem:stab} so that they satisfy the following asymptotics as $ \varepsilon\to 0$: 
\begin{equation}
\ga(\varepsilon)\xrightarrow[\gep\rightarrow 0]{}1,\ \beta(\varepsilon)\xrightarrow[\gep\rightarrow 0]{}1\text{ and }\lambda(\varepsilon,\gd)\xrightarrow[\gep,\gd\rightarrow 0]{} \underline{ k}.
\end{equation}
\end{rem}

\section{Examples and simulations}
\label{sec:simulations}
\subsection{A general principle: the mean-field model viewed as a perturbation of the isolated deterministic system}
The general point of view given by Theorem \ref{th:main1} and Theorem \ref{th:main2} is to see the nonlinear dynamics \eqref{eq:Mc Kean} (or equivalently \eqref{eq:slow fast PDE}) as a modification (under noise and interaction) of the isolated deterministic system (IDS):
\begin{equation}
\label{eq:IDS}
\dot x_t\, =\, F(x_t)\, .
\end{equation}
As already mentioned in the Introduction, a general (and rather informal) issue at this point is to question the influence of noise and interaction on the dynamical properties of the IDS \eqref{eq:IDS}. We highlight here two main scenarios which are of significant importance in this context: first, \emph{persistence of dynamics under perturbation} (i.e. when the dynamics observed for \eqref{eq:Mc Kean} is similar the dynamics of the IDS \eqref{eq:IDS}, see Section~\ref{sec:persistence_under_perturbation}) and secondly, \emph{emergence of structured dynamics under noise and interaction} (i.e. when noise and interaction are at the origin of  dynamics that differ from the dynamics of the IDS, see Section~\ref{sec:emergence_dynamics_perturbation}).

As elucidated by Theorem~\ref{th:main2}, understanding the dynamics of the mean-value of the perturbed process \eqref{eq:PDE} boils down to understanding the system
\begin{equation}\label{eq:conv F of order epsilon}
\dot x_t\, =\, \int_{\bbR^d} F(u) q_{x_t, \gs^2 K^{-1}}(u) \dd u\, ,
\end{equation}
depending on the parameters $\varpi_1=\gs_1^{2}/k_1,\ldots ,\varpi_d=\gs_d^2/k_d$. The main issue here is to understand if the dynamics of \eqref{eq:conv F of order epsilon} may (or may not) differ significantly from the dynamics of \eqref{eq:IDS}. We will illustrate below this analysis with several examples:
\subsubsection{ Stuart-Landau oscillators.} Consider in $\bbR^2$ the function
\begin{equation}
\label{eq:stuart_landau}
F(x,y)\, =\, \left(x(a-(x^2+y^2))-\omega y , y(a-(x^2+y^2)+ \omega x) 
\right)\, ,
\end{equation}
where $a>0$, $ \omega\in \mathbb{ R}$. In polar coordinates, \eqref{eq:stuart_landau} corresponds to the dynamics given by $\dot \theta=\omega$, $\dot r=r(a-r^2)$, which admits the stable limits cycle $r_t=\sqrt{a}$, $\theta_t=\theta_0+\omega t$. The main remark here is that, considering $\varpi_1=\varpi_2=\varpi$, \eqref{eq:conv F of order epsilon} defines again a Stuart Landau model:
\begin{multline}
\label{eq:SL_av}
\int_{\bbR^2} F(u) q_{(x,y), \gs^2K^{-1}}(u)\dd u\\ =\, \left( \omega y - x(a-4 \varpi^2-(x^2+y^2)), -\omega x- y(a-4 \varpi^2-(x^2+y^2)) 
\right)\, .
\end{multline}
In particular the point (5) of Hypothesis \ref{hyp F} is satisfied for $\cV$ being a $B_R$ with $R$ large enough.

\subsubsection{FitzHugh-Nagumo oscillators}
Recall the definition of the FitzHugh-Nagumo dynamics in \eqref{eq:FHN_u1}. For the purpose of the analysis below, we introduce one more parameter $u\in(0, 1]$ and consider
\begin{equation}
\label{eq:FHN_u}
F_{ u, a, b, \tau}(x, y)\, =\,  \left( ux-\frac{x^3}{3}-y,\frac{1}{\tau}(x+a-by) \right)\, ,
\end{equation}
Starting from \eqref{eq:FHN_u1}, a direct calculation shows that
\begin{align}
\int_{\bbR^2} F_{ 1, a, b, \tau}(z) q_{(x,y),\gs^2 K^{-1}}(z)\dd z&= \left( (1- \varpi_1) x-\frac{x^3}{3}-y,\frac{1}{\tau}(x+a-by)
\right) \nonumber\\
&= F_{ 1- \varpi_1, a, b, \tau}(x, y)\, , \label{eq:FHN_transf}
\end{align}
and so \eqref{eq:FHN_transf} defines again a FitzHugh Nagumo system, where the parameter $u=1$ has been changed into $u=1- \varpi_1$.
Again, the point (5) of Hypothesis \ref{hyp F} is satisfied for $\cV$ being a $B_R$ with $R$ large enough.

\subsection{Persistence of dynamics under perturbation}
\label{sec:persistence_under_perturbation}
Suppose here that the IDS \eqref{eq:IDS} possesses a dynamical structure persistent under $\mathcal{ C}^{ 1}$-perturbation that is included in a bounded open set $\cV_0$ with smooth boundaries. Examples of such persistent structures are hyperbolic fixed-points, limit cycles, and more generally normally hyperbolic invariant manifolds \cite{fenichel1971persistence,wiggins2013normally}, but also chaotic strutures, as given by Lorentz-like flows \cite{guckenheimer1979structural,Araujo2010}.

Consider now small parameters $ \varpi_1,\ldots,\varpi_d$ in \eqref{eq:conv F of order epsilon}. Then, classical convolution arguments show that the dynamics given by \eqref{eq:conv F of order epsilon}
is a $ \mathcal{ C}^1$-perturbation (with perturbation of order $\max_i \varpi_i$) of the IDS, so that the point (5) of Hypothesis \ref{hyp F} is satisfied for $\cV=\cV_0$   and \eqref{eq:conv F of order epsilon} admits a similar persistent structure (a perturbed version of the initial one) for $ \max_i \varpi_i$ small enough. Then, according to Theorem \ref{th:main2}, the phase dynamics on $ \mathcal{ M}^\gd$ is a (slowed down) $ \mathcal{ C}^1$-perturbation of \eqref{eq:conv F of order epsilon}, which means that \eqref{eq:PDE} admits also a similar persistent structure for $\gd$ small enough (which depends on $\max_i \varpi_i$), and which is stable in the sense of Remark \ref{rem:stab}.

This is in particular true for the Stuart-Landau model with $\varpi_1=\varpi_2=\varpi$: we see from \eqref{eq:SL_av} that \eqref{eq:PDE} possesses a limit cycle as soon as $ \varpi^2<a/4$ and $\gd$ is small enough. This is also true in the FitzHugh-Nagumo case \eqref{eq:FHN_u}: if the parameters $(a, b, \tau)$ in \eqref{eq:FHN_u1} are chosen so that the isolated system $(\dot x, \dot y)= F(x,y)$ is away from a bifurcation point and if $ \varpi_1 \in[0, 1)$ is small enough, \eqref{eq:PDE} possesses the same type of dynamics as the IDS. In particular, it is well known (see \cite{MR1779040} for a complete study of the bifurcations of this model) that this model admits a limit cycle for an appropriate choice of parameters. This analysis shows the persistence of this limit cycle for the synchronized system \eqref{eq:PDE}, at least when the noise is small with respect to the interaction (as it had already been observed in \cite{MR2098593}).

A similar result had already been obtained in \cite{MR843504} for the mean-field Brusselator model, where the persistence of the periodic dynamics for the McKean process is proved, relying on other arguments, that allow in particular the use of diffusions terms depending on the positions but that do not ensure local stability. 

\subsection{Emergence of dynamics under noise and interaction}
\label{sec:emergence_dynamics_perturbation}
Suppose now that \eqref{eq:IDS} exhibits a bifurcation and that a careful choice of parameters in the functional $F$ brings \eqref{eq:IDS} close to the bifurcation point. It may be that the introduction of the parameters $ \varpi_1,\ldots,\varpi_d$ in \eqref{eq:conv F of order epsilon}  makes the system cross this bifurcation point: we would then be precisely in a situation where the addition of noise and interaction induce a structured dynamical behavior that is not initially present in the IDS \eqref{eq:IDS} for this choice of parameters.

\subsubsection{A crucial simple example:} suppose that $d=2$ and that around the origin the mapping $F$ is given by
\begin{equation}
\label{eq:F_toymodel}
F(x,y)\, =\,  \left( x^2-a,-b y \right)\, ,
\end{equation}
with $b>0$. Remark here that $F=\nabla V$ with $V(x,y)=ax-\frac{x^3}{3}+\frac{b x^2}{2}$. System \eqref{eq:F_toymodel} is a simple prototype of a dynamics with a saddle-node bifurcation (see Figure~\ref{fig:toy_model}): when $a>0$, \eqref{eq:F_toymodel} admits two stationary points $(-\sqrt{a},0)$ (stable) and $(\sqrt{a},0)$ (unstable). When $a>0$ goes to $0$, these two points collide so that the dynamics on $x$ simply boils down to a drift to the right when $a<0$.
\begin{figure}[ht]
\centering
\subfloat[$a>0$]{\includegraphics[width=0.35\textwidth]{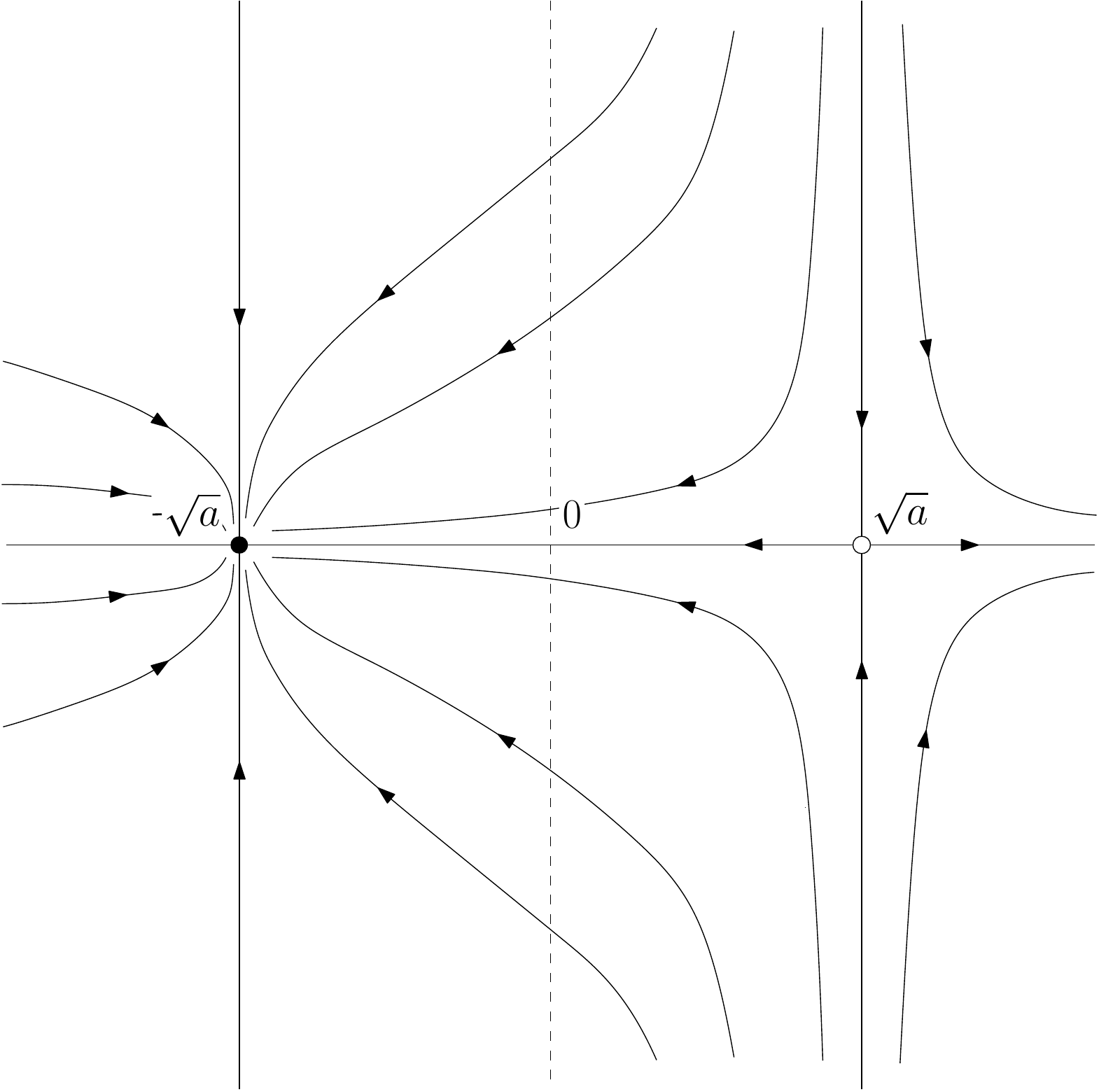}
\label{subfig:toy_model_apos}}
\quad\subfloat[$a<0$]{\includegraphics[width=0.35\textwidth]{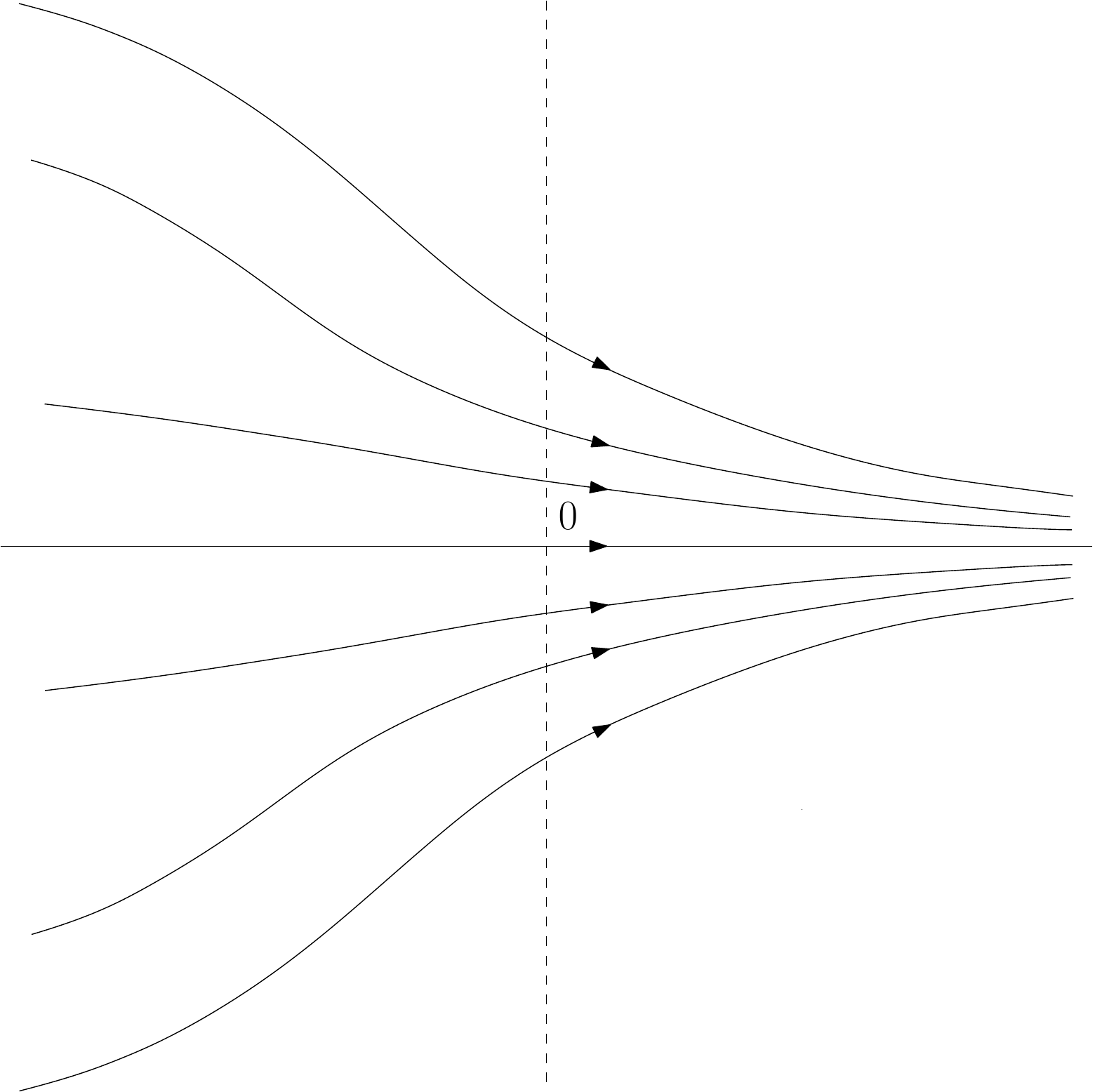}
\label{subfig:toy_model_aneg}}
\caption{Saddle-node bifurcation in $a=0$ for \eqref{eq:F_toymodel}.}%
\label{fig:toy_model}%
\end{figure}
In this particular model, the functional driving \eqref{eq:conv F of order epsilon} becomes
\begin{equation}
\label{eq:F_av_toymodel}
\int_{\bbR^2} F(u) q_{(x,y), \gs^2 K^{-1}}(u)\dd u\, =\, \left(x^2-a+ \varpi_1, -b y \right)\, .
\end{equation}
Here, the functional \eqref{eq:F_av_toymodel} is of the same nature as \eqref{eq:F_toymodel} with $a$ changed into $a- \varpi_1$. Hence, starting from $a>0$, the description of the bifurcation for \eqref{eq:F_av_toymodel} is now made in terms of $ \varpi_1<a$ or $ \varpi_1>a$: noise and interaction induce a drift to the right for the phase dynamics on $ \mathcal{ M}^\gd_L$ of \eqref{eq:PDE} as soon as $ \varpi_1= \frac{ \sigma_1^{ 2}}{ k_1}>a$, when $\gd$ is small enough. In that case, the combined effect of the noise and the interaction allows the interacting system \eqref{eq:part_syst} to collectively go over the difference of potential lying between the initial stable and unstable points.

One could imagine that farther from the origin, $F$ is such that the part of the line $\{y=0\}$ close to the origin belongs in fact to a stable closed loop for the IDS dynamics, and that this loop persists for the phase dynamics of \eqref{eq:PDE} (this could for example be the case for $a$ and $ \varpi_1$ small enough, so that the loop is only slightly perturbed away from the origin). Such a $F$ would provide a simple example of excitable system.
This is the spirit of the following concrete example, based on the Stuart Landau model \eqref{eq:stuart_landau}. 

\subsubsection{Modified Stuart Landau oscillators model:}
Let us now modify the system \eqref{eq:stuart_landau} in the following way:
\begin{equation}
\label{eq:SL_mod}
F(x,y)\, =\, \left( x(1-(x^2+y^2)) - (\omega-by) y, y(1-(x^2+y^2))+(\omega-by) x \right)\, .
\end{equation}
The IDS in this situation corresponds to the dynamics defined in polar coordinates by $\dot r=r(1-r^2)$, $\dot \theta=w-br\sin\theta$. The circle $\{r=1\}$ is invariant stable, and when $\omega>0$ and $b>\omega$ but close to $\omega$, this model is a simple example of excitable dynamics in $\bbR^2$: the point of polar coordinates $(r_0,\theta_0)=(1,\arcsin(\omega/b))$ is a stable fixed-point, and a perturbation of large enough amplitude may allow the system to go over the unstable fixed-point $(1,\pi-\arcsin(\omega/b))$ and to go back to the $(r_0,\theta_0)$ travelling along the circle $\{r=1\}$.

With this choice of $F$, \eqref{eq:PDE} can be seen as a $2$-dimensional generalization of the {\sl Active Rotators model} \cite{Sakaguchi1986,Shinomoto1986,Sakaguchi1988a} and the following is, in a sense, a generalization of the work made in \cite{doi:10.1137/110846452}, where a rigorous proof of the existence of noise induced periodic behaviors in this {\sl Active Rotators model} is given.

When it comes to \eqref{eq:conv F of order epsilon} in the case of \eqref{eq:SL_mod}, with $\varpi_1=\varpi_2=\varpi$, straightforward calculations lead to
\begin{multline}
\label{eq:SLmod_av}
\int_{\bbR^2} F(u) q_{(x,y), \gs^2 K^{-1}}(u)\dd u\, =\, \bigg(x\left(1-4 \varpi(x^2+y^2)\right) -(\omega-by) y +b \varpi ,\\ y\left(1-4 \varpi(x^2+y^2)\right) +(\omega-by) x\bigg)\, ,
\end{multline}
and the dynamics driven by \eqref{eq:SLmod_av} is given in polar coordinates by
\begin{equation}\label{eq:dyn polar pert}
\dot r\, =\, r\left(1-4 \varpi-r^2\right)-b \varpi\cos \theta\, ,\qquad  \dot \theta\, =\, \omega - b\left(r- \varpi\frac{1}{r}\right)\sin \theta\, .
\end{equation} 
Suppose now that $\omega=b(1-\boldsymbol{\zeta})$ with $\boldsymbol{\zeta}$ small.
If $\boldsymbol{\zeta}$ and $ \varpi$ are small enough the invariant manifold $\{r=1\}$ persists under perturbation, becoming an invariant curve $\{r_c(\theta)\}$ for the dynamics given by \eqref{eq:dyn polar pert}. It is easy to see that $r_c(\theta)\leq 1$ if $|b|\leq 4$, and we have $r_c(\theta)\geq 1/2$ for $ \varpi$ small enough, which means that
\begin{equation}
0 \, \leq\, r_c(\theta)- \varpi\frac{1}{r_c(\theta)}\, \leq\, 1-\frac12 \varpi\, .
\end{equation}
We deduce that \eqref{eq:dyn polar pert} admits a limit circle if $1-\frac12 \varpi<1-\boldsymbol{\zeta}$, i.e. $ \varpi$ is small enough and satisfies $ \varpi>2\boldsymbol{\zeta}$. Hence, with these choices of parameters, \eqref{eq:PDE} admits a noise-induced periodic behavior, for $\gd$ small enough. 

\begin{figure}[h]
\centering
\includegraphics[width=0.65\textwidth]{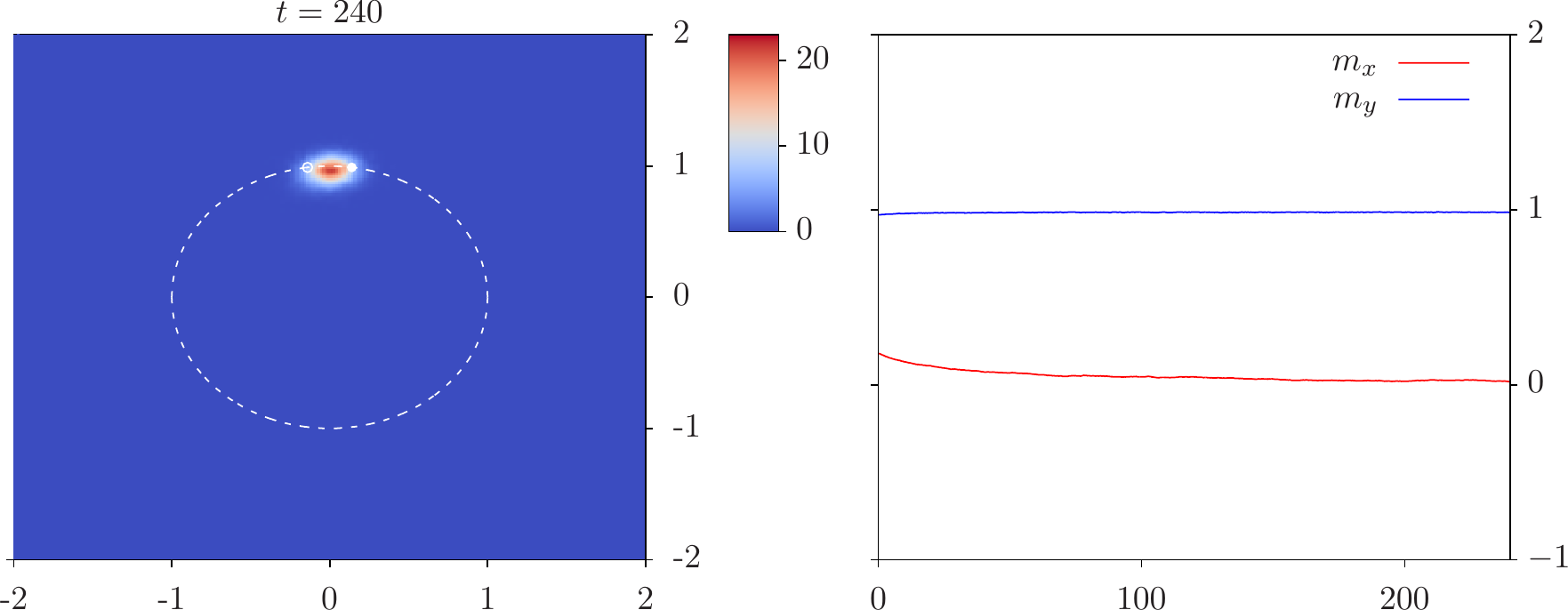}
\caption{Left: representation of the empirical measure of \eqref{eq:part_syst} for the modified Stuart-Landau model \eqref{eq:SL_mod} for $\sigma_1=\gs_2=0.1$. The limit-cycle of the IDS is represented in dotted-lines. Right: trajectories of the corresponding mean-values on $[0, 240]$. Simulations are made for $N=20000$ particles and parameters $\omega=1$, $k_1=k_2=1$, $b=1.01$ and $\delta=0.5$.}
\label{fig:SL_pt_stable}
\end{figure}
\begin{figure}[h]
\centering
\includegraphics[width=0.65\textwidth]{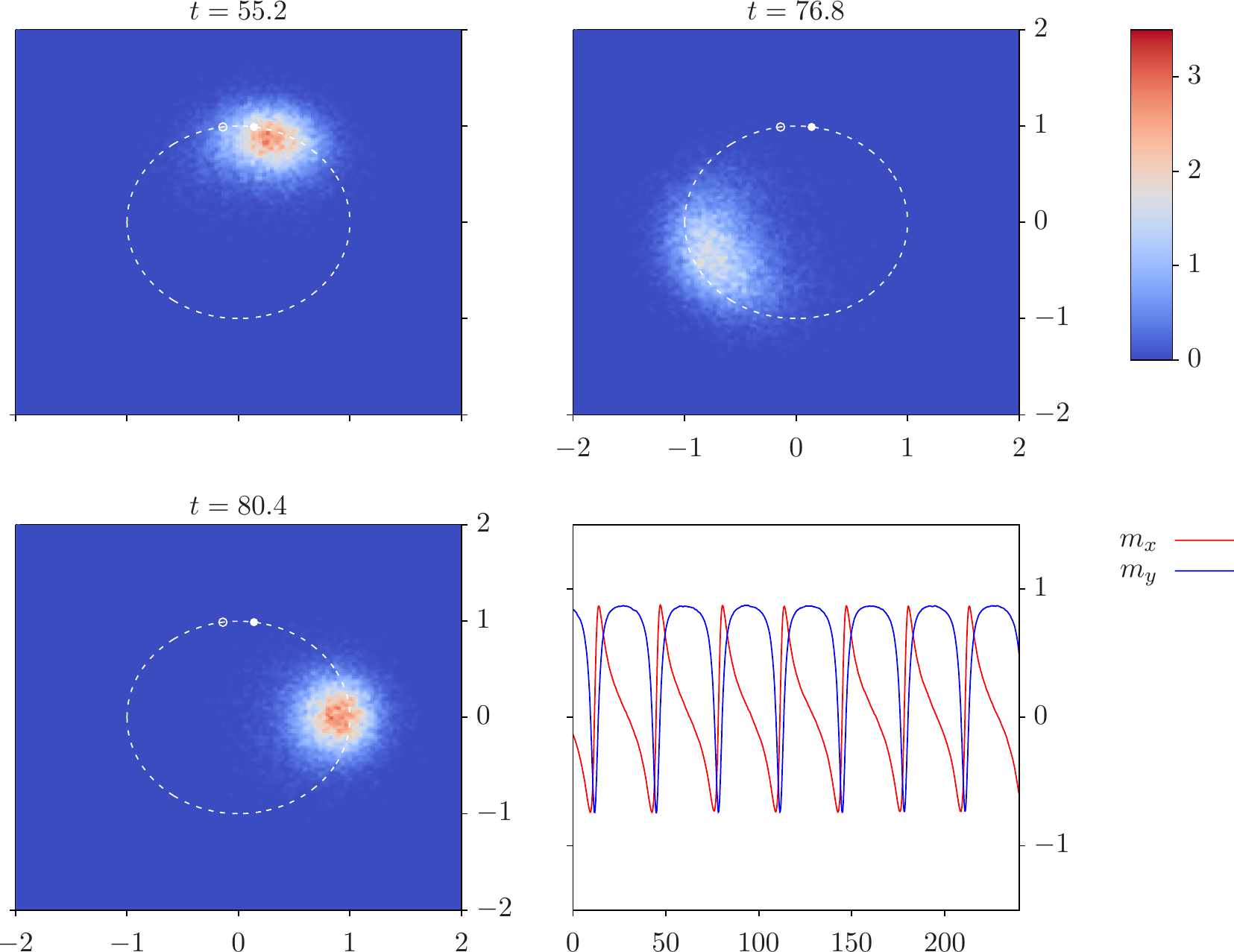}
\caption{From left to right: evolution of the empirical measure of the system \eqref{eq:part_syst}  for the modified Stuart-Landau model \eqref{eq:SL_mod} for $\sigma_1=\gs_2=0.3$. The limit-cycle of the IDS is represented in dotted-lines. Lower-right: trajectories of the corresponding mean-values on $[0, 240]$. Simulations are made for $N=20000$ particles and parameters $\omega=1$, $k_1=k_2=1$, $b=1.01$ and $\delta=0.5$.}
\label{fig:SL_cycle_limite}
\end{figure}

This transition, which corresponds to an infinite-period bifurcation (see \cite{strogatz2014nonlinear}, p.262) for the averaged system \eqref{eq:SLmod_av}, is made explicit in Figures~\ref{fig:SL_pt_stable} and~\ref{fig:SL_cycle_limite}: fixing the intensity of interactions $k_1=k_2=1$, the stationary point $(1,\arcsin(\omega/b))$ for \eqref{eq:SL_mod} remains stable for \eqref{eq:PDE} in the case of a small noise intensities (Figure~\ref{fig:SL_pt_stable}) whereas a periodic behavior appears when the noise is large enough (Figure~\ref{fig:SL_cycle_limite}).

\subsection{The case of the FitzHugh Nagumo model}
The bifurcation diagram for the dynamical system driven by \eqref{eq:FHN_u} is known to be complex (see \cite{MR1779040}). We study here two scenarios, the excitable and bistable cases, the first one being of particular interest in life sciences \cite{LINDNER2004321}.

\subsubsection{Disorder-induced limit cycles in the excitable case}

We are interested in this paragraph in the phase dynamics of \eqref{eq:PDE} for $F$ given by \eqref{eq:FHN_u} with parameters $a= \frac{ 1}{ 3}$, $b=1$, $\tau=10$. This choice of parameters corresponds to a situation where the IDS system \eqref{eq:FHN_u1} has a unique stationary point (see Figure~\ref{fig:FHN_bifurc_u}, case (a)), and is excitable, in the sense that if sufficiently perturbed while being initially at the stationary point, a trajectory of the IDS makes a whole excursion before coming back to the stationary point.

\begin{figure}[h]
\centering
\includegraphics[width=0.7\textwidth]{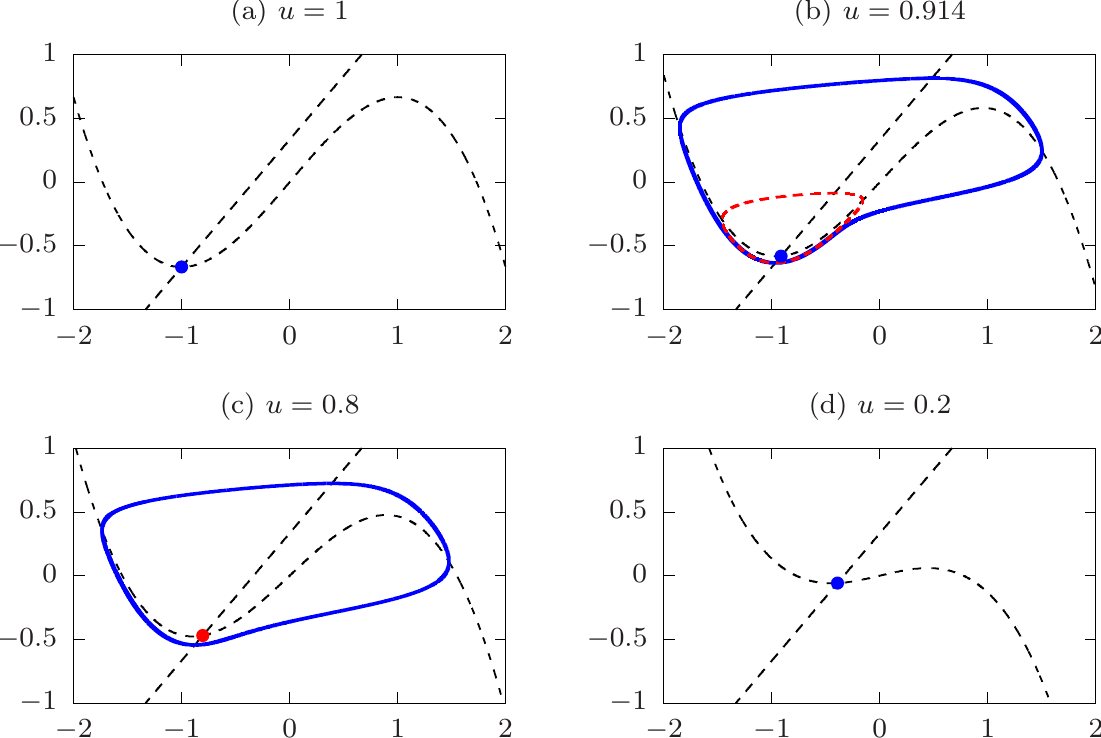}
\caption{Phase diagrams for the dynamics driven by \eqref{eq:FHN_u} for parameters $a= \frac{ 1}{ 3}$, $b=1$, $\tau=10$ and different choices of $u$. Stable (resp. unstable) points and limit cycles are represented in blue (resp. red). The nullclines $y=ux-\frac{x^3}{3}$ and $y=\frac{x+a}{b}$ are represented in black dashed lines.}
\label{fig:FHN_bifurc_u}
\end{figure}
\begin{figure}[h]
\centering
\includegraphics[width=0.6\textwidth]{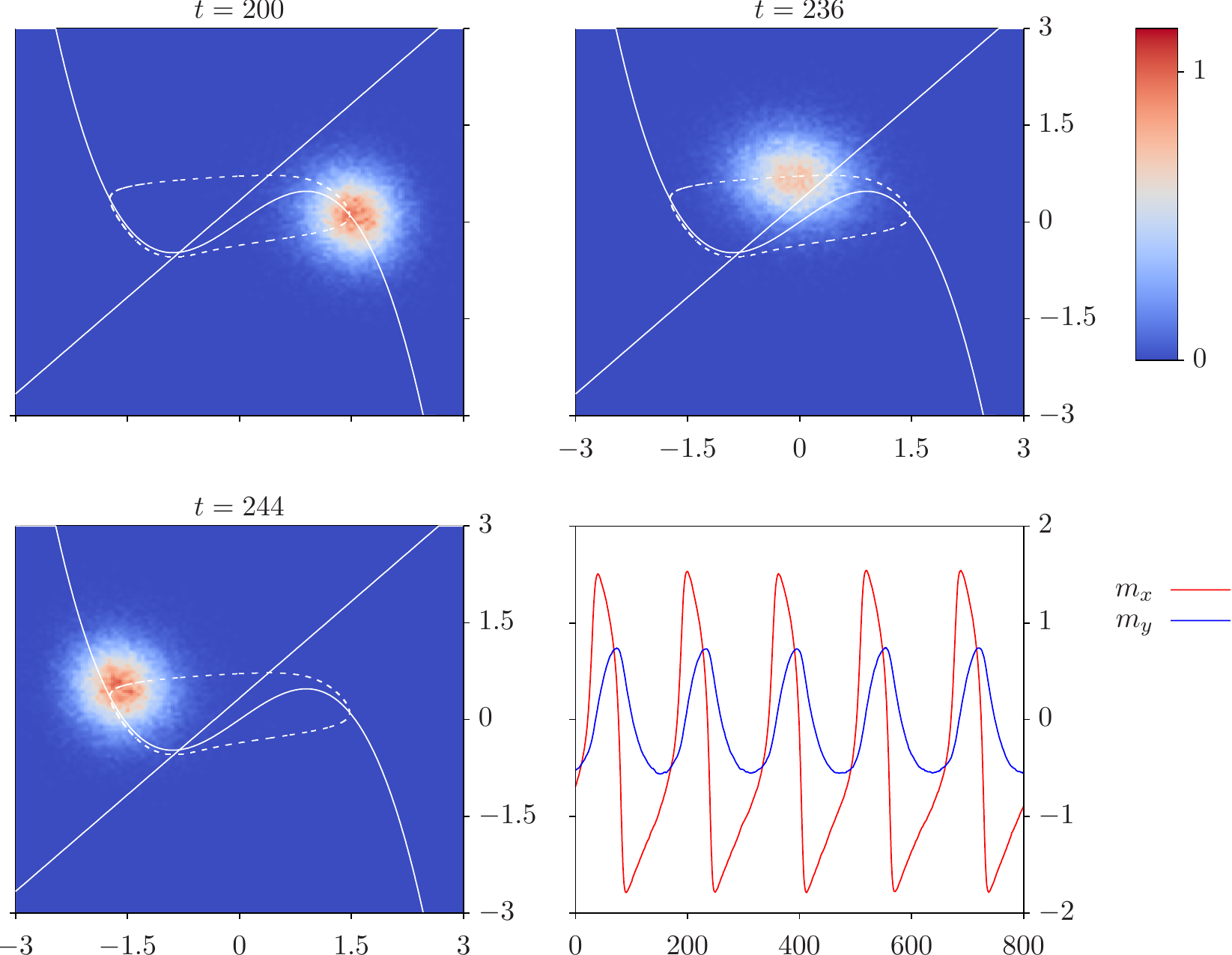}
\caption{From left to right: evolution of the empirical measure of the system \eqref{eq:part_syst} for the FitzHugh-Nagumo model \eqref{eq:FHN_u1}, with the IDS being in the excitable regime given by the parameter $a=\frac13$, $b=1$ and $\tau=10$. The theoretical limit-cycle (given by \eqref{eq:FHN_u}) is represented in dotted-lines. Lower-right: trajectories of the corresponding mean-values on $[0, 800]$. Simulations are made for $N=50000$ particles and parameters $k_1=k_2=1$, $\gs^2_1=\gs^2_2=0.2$ and $\delta=0.2$.}
\label{fig:u02}
\end{figure}

Recalling \eqref{eq:FHN_transf}, the dynamics of \eqref{eq:PDE} on the invariant manifold given by Theorem \ref{th:main1} depends, at first order in $\gd$, only on the parameter $\varpi_1$, and is given by a FitzHugh-Nagumo dynamics defined by $F_{u,a,b,\tau}$ (recall \eqref{eq:FHN_u}) with $u=1-\varpi_1$. We are thus interested in the dynamics of the FitzHugh-Nagumo model with varying parameter $u$, smaller values of $u$ corresponding to a larger ratio  intensity of the noise $\gs_1$ over interaction intensity $k_1$.

The different types of dynamics of \eqref{eq:FHN_u} obtained by tuning the parameter $u$ are represented in Figure~\ref{fig:FHN_bifurc_u}. Starting from the fixed-point dynamics of $u=1$ (case (a)), a saddle-node bifurcation of cycles then occurs (numerically estimated at $u\approx 0.91435$) after which a stable point and a stable cycle coexist, separated by an unstable cycle (case (b)). Then, at $u\approx 0.88604$ the stable point and the unstable cycle collide in a subcritical Andronov-Hopf bifurcation. The dynamics is then given by a limit cycle surrounding an unstable point (case (c)), until the supercritical Andronov-Hopf bifurcation at $u\approx 0.28383$, after which the dynamics is again given by a fixed-point (case (d)). While the saddle node bifurcation of cycles is estimated by simulating trajectories of \eqref{eq:FHN_transf}, the Andronov-Hopf bifurcations can be obtained by computing explicitly the fixed points and the eigenvalues of the linearized dynamics around these fixed points. More precisely a computation shows that the fixed point $(x_0(u),y_0(u))$ of \eqref{eq:FHN_transf} satisfies, for $u<1$,
\begin{equation}
x_0(u)\, =\, \left(\frac{\sqrt{5-12u+12u^2-4u^3}}{2}- \frac12 \right)^{\frac13}-\frac{1-u}{\left(\frac{\sqrt{5-12u+12u^2-4u^3}}{2}- \frac12 \right)^{\frac13}}\, .
\end{equation}
The Andronov-Hopf bifurcations occur at the values $u$ such that the matrix defining the linearized dynamics around $(x_0(u), y_0(u))$, which is given by $\left(\begin{array}{cc} u-x_0^2(u) & -1\\ \frac{1}{10} & -\frac{1}{10}\end{array}\right)$, has eigenvalues with real part equal to $0$. Here the eigenvalues are
\begin{equation}
\gl_{\pm}\, =\, \frac{-1+10(u-x_0^2)\pm i\sqrt{39-100(u-x_0^2)^2-29(u-x_0^2)}}{20}\, ,
\end{equation}
so this is exactly the case when $u-x^2_0(u)=\frac{1}{10}$, which occurs for the approximated values of $u$ given above (obtained by a computation on the software MAXIMA). At the bifurcation points we are precisely in the situation described in \cite{MR2263523}, page 213, exercise 16, with $\mu=\frac{1}{10}$, $F(x)=ux-\frac{x^3}{3}$ and $G(x)=x+\frac13$. For $u\approx 0.88604$ we obtain $a\approx 0.09334$ (see the definition of $a$ in \cite{MR2263523}), which indicates a subcritical Andronov-Hopf bifurcation, and for $u\approx 0.28383$ we get $a\approx -0.07394$, which indicates a supercritical Andronov-Hopf bifuration.

So in particular the cases (b) and (c) show that for an accurate choice of parameters $\gs_1$ and $k_1$ and for $\gd$ taken small enough, the PDE \eqref{eq:PDE} has a periodic behavior induced by the combined effect of noise and interaction. This is illustrated in Figure~\ref{fig:u02}.
Note that with this choice of parameters the IDS system \eqref{eq:IDS} is itself close to a saddle node bifurcation of cycles: keeping $b=1$ and $\tau=10$ this bifurcation occurs at $a\approx 0.29645$, and is then followed by a subcritical Andronov-Hopf bifurcation at $a\approx 0.28460$.
So in this sense this situation is somewhat close to the example given at the beginning of Section \ref{sec:emergence_dynamics_perturbation}.

This phenomenon of emergence of structured dynamics induced by noise and interaction is not observed for all values of parameters $a$, $b$ and $\tau$ putting \eqref{eq:IDS} close to a bifurcation point. For example for $a=1$, $b=0$ and $\tau=10$ the system is close to a supercritical Andronov-Hopf bifurcation, and no such phenomenon is observed for \eqref{eq:FHN_u} (but note that structured dynamics can be observed in this situation when no noise is present on the $x$ coordiante, and no interaction on the $y$ coordinate, see \cite{LINDNER2004321} page 383).

\subsubsection{Emergence of oscillatory behavior in the bistable case.}
We consider in this paragraph the phase dynamics of \eqref{eq:PDE} for $F$ given by \eqref{eq:FHN_u} for the parameters $a= 0$, $b=1.45$ and $\tau=10$. This choice of parameters corresponds to a situation where the system \eqref{eq:FHN_u1} has two symmetric attractive equilibria and a saddle point (see Figure~\ref{fig:traj_bistable}, case (a)).

\begin{figure}[h]
\centering
\includegraphics[width=0.7\textwidth]{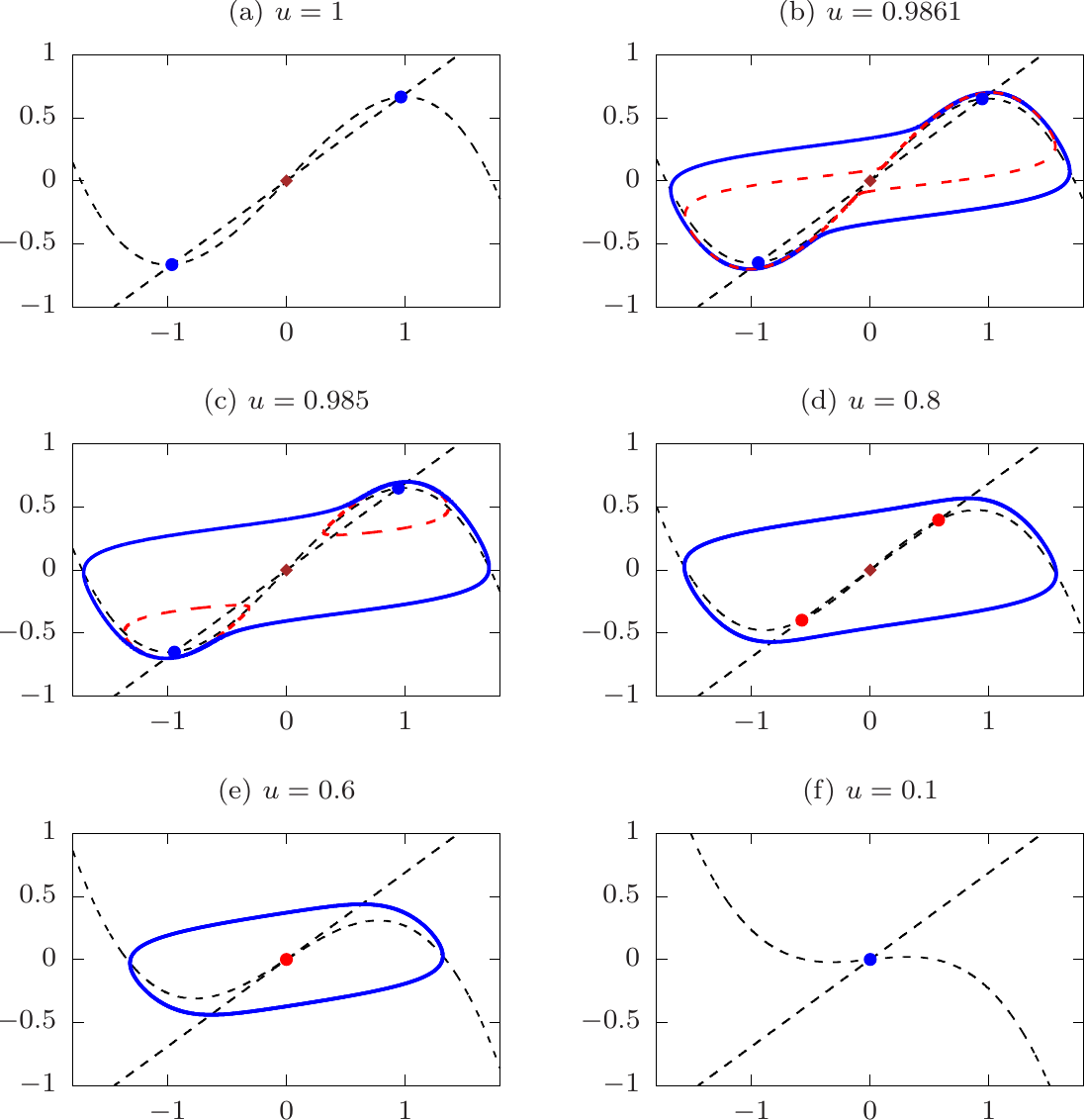}
\caption{Phase diagramms for the dynamics driven by \eqref{eq:FHN_u} for parameters $a= 0$, $b=1.45$ and $\tau=10$ and different choices of $u$. Stable (resp. unstable) points and limit cycles are represented in blue (resp. red), saddle points are represented in brown.}
\label{fig:traj_bistable}
\end{figure}

\begin{figure}[h]
\centering
\includegraphics[width=0.6\textwidth]{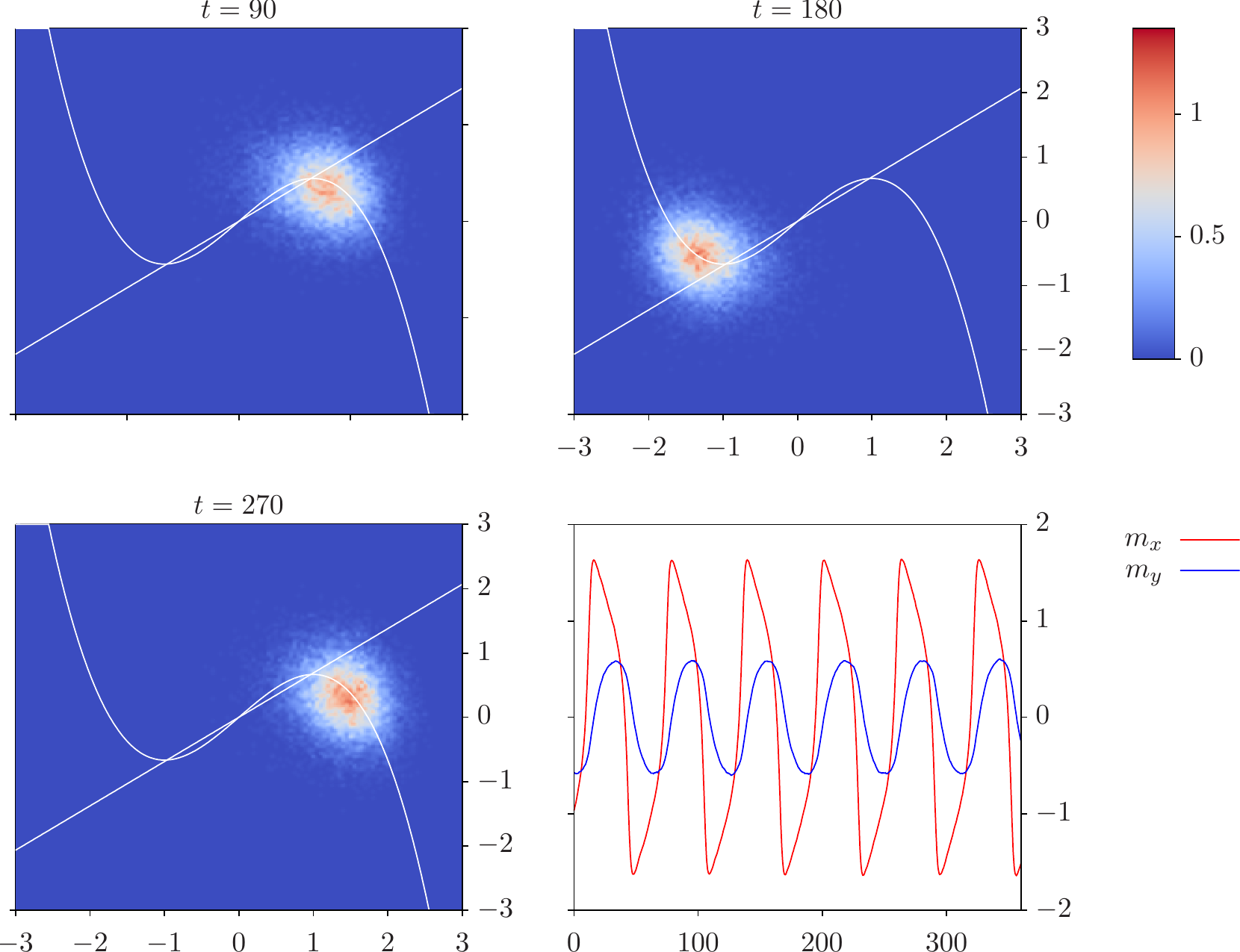}
\caption{From left to right: evolution of the empirical measure of the system \eqref{eq:part_syst} for the FitzHugh-Nagumo model \eqref{eq:FHN_u1}, when the IDS is in a bistable regime given by the parameters $a=0$, $b=1.45$ and $\tau=10$. Lower-right: trajectories of the corresponding mean-values on $[0, 360]$. Simulations are made for $N=50000$ particles and parameters $k_1=k_2=1$, $\gs_1=\gs_2=0.45$ and $\delta=0.5$.}
\label{fig:FHN_bistable_cycle_limite}
\end{figure}

The dynamics of \eqref{eq:FHN_u} obtained by tuning $u$ in this situation are represented in Figure~\ref{fig:traj_bistable}. Starting from the bistable case of $u=1$ (case a), the systems undergoes a saddle-node bifurcation of cycles at $u\approx 0.98614$, after which a stable and an unstable cycle surround the fixed-points (case (b)). Then the unstable cycle splits into two unstable cycles while colliding with the saddle point at a double homoclinic bifurcation point ($u\approx 0.98604$). For the values of $u$ following this bifurcation point, each unstable cycle surrounds a stable point (case (c)). At $u\approx 0.96198$, a double subcritical Adronov-Hopf bifurcation occurs, so that the stable cycle surrounds the saddle point and two unstable points (case (d)). At $u\approx 0.68966$, the three fixed-points collide in a pitchfork bifurcation, and only one unstable point remains (case (e)). Finally, a suppercritical Andronov-Hopf bifurcation occurs at $u = 0.145$, and then only one stable fixed-point remains (case (f)).

Once again, this shows that the addition of noise and interaction may induce a transition, from the bistable regime to an oscillatory behavior. This phenomenon is illustrated in Figure~\ref{fig:FHN_bistable_cycle_limite}.

\subsection{Another example of structured dynamics induced by noise and interaction: the Cucker-Smale model for collective dynamics}
A large number of models for collective alignment have been proposed in the literature (e.g. models of phase oscillators \cite{Kuramoto1975,Sakaguchi1986, Sakaguchi1988a,Shinomoto1986}, the Vicsek model \cite{MR2855983, Degond:2012uq}, etc.). We consider in this paragraph the Cucker-Smale model with self-propulsion, as proposed in \cite{MR3541988}. In the spatially-homogeneous case,  the point is to consider a mean-field model as \eqref{eq:PDE}, where the state variable (denoted as $v\in \mathbb{ R}^{ d}$ instead of $x$) represents the typical velocity $v$ of a particle. The intrinsic dynamics is here given by
\begin{equation}
\label{eq:F_CSM}
F(v)= v(1- \left\vert v \right\vert^{ 2}), \ v=(v_{ 1}, \ldots, v_{ d})\in \mathbb{ R}^{ d},\ .
\end{equation}
In absence of noise and interaction, the dynamics of the corresponding IDS \eqref{eq:IDS} is simple: each trajectory is exponentially attracted to the sphere $ \mathbb{ S}^{ d-1}$. 

The model \eqref{eq:PDE} driven by \eqref{eq:F_CSM} with noise and interaction is prototypical of mean field dynamics with non-convex potentials (in the local dynamics term and/or in the interaction), see in particular the example of granular media type equations (\cite{Bolley:2012fk, Durmus2018} and references therein).

As shown in \cite{MR3541988}, the stationary states $ \mu$ for the whole dynamics \eqref{eq:PDE} can be explicitly computed in this case in terms of the stationary mean velocity $\bar u:= \int v \mu({\rm d}v)$. Choosing the axis appropriately, one can always suppose that $\bar u$ points in the direction of the first vector $e_{ 1}$ of the canonical basis, so that $\bar u= u e_{ 1}$ may only be understood through its magnitude $u=u(\sigma)$, which solves an appropriate fixed-point relation (see \cite{MR3541988}, Eq. (7), p. 1067). The main results of \cite{MR3541988} concern the existence and characterization of such fixed-points, in the regime of small and large noise (considering the same noise intensity $\sigma$ and the same interaction parametre $k$ on each coordinate): for small noise, synchronization occurs (characterized by nontrivial $u(\sigma)$ such that $u(\sigma)\to1$ as $ \sigma\to 0$) whereas for large noise, $u(\sigma)=0$ is the only fixed-point. Numerical simulations in \cite{MR3541988} suggest that this phase transition occurs precisely at some $ \sigma_{ c}>0$, for which no explicit formula is known. 

Applying Theorem~\ref{th:main2} in this situation gives an explicit expression for $ \sigma_{ c}$ and $ \sigma \mapsto u( \sigma)$ in the regime where $ \delta$ (denoted as $ \alpha$ in \cite{MR3541988}) goes to $0$. Namely, choosing $k=1$, the Gaussian convolution of \eqref{eq:conv F of order epsilon} is here given by
\begin{equation}
\int_{\bbR^d} F(z) q_{v, \gs^2I_d}(z) \dd z= v \left( 1- \sigma^{ 2}(2+d) - \left\vert v \right\vert^{ 2}\right).
\end{equation}
We see here that the system \eqref{eq:conv F of order epsilon} exhibits two different behaviors. On one hand, when $ \sigma^{ 2}\geq \sigma_{ c}^{ 2}:= \frac{ 1}{ 2+d}$, its trajectories are attracted to $0$, and thus for $\gd$ small enough (depending on $\gs$) the PDE \eqref{eq:PDE} admits a stationnary solution, that is stable in the sense of Remark \ref{rem:stab} if $ \sigma^{ 2}\geq \sigma_{ c}^{ 2}$.
On the other hand, if $ \sigma^{ 2} < \sigma_{ c}^{ 2}$, the trajectories of \eqref{eq:conv F of order epsilon} concentrate on the attracting sphere of radius $ u(\sigma) := \sqrt{ 1- \sigma^{ 2}(2+d)}$, which means that for $\gd$ small enough \eqref{eq:dot m th} admits also an attracting manifold (which is again a sphere centered at the origin, by invariance by rotation of the problem), and thus in this case the PDE \eqref{eq:PDE} admits a sphere of stationnary solutions: there is synchronization. Note that, in the case $d=1$, this value of $ \sigma_{ c}^{ 2}$ coincides with the limit as $ \delta\to 0$ found in \cite{MR3180036}. 
\section{ Well-posedness results and a-priori estimates}
\label{sec:well_posed_apriori}
\subsection{Modification of the dynamics}
\label{sec:modification dynamics}

The proof we provide in the paper follows the classical steps of the proofs of persistence of normally hyperbolic manifold \cite{fenichel1971persistence,hirsch1977invariant,Bates1998,sell2013dynamics,wiggins2013normally}. To define the mapping for which the perturbed manifold is a fixed-point (see Section~\ref{sec:fixed_point_f}), we will need that the trajectories close to $\cM^0$ and with mean $m_t$ that escape some ball surrounding $\cV$ (see Lemma \ref{lem:bij}). In our case, the initial manifold $ \mathcal{ M}^{ 0}$ is static, so we modify artificially the dynamics given by \eqref{eq:slow fast PDE} to obtain this property, and in such a way that a trajectory with $m_t\in \cV$ is not affected by this artificial modification, so that the invariant manifold $ \mathcal{ M}^\gd$ that we obtain also remains unaffected.

Suppose that $\cV\subset B_L$ for a $L>0$ and consider a smooth mapping $h$ satisfying $h(x)=0$ for $\vert x\vert \leq L$ and $\vert x\vert \geq 3L$, and such that $ h(x)\cdot  x \geq c_h>0$ for $\vert x\vert =2L$ and $\vert h(x)\vert\leq C_h$ and $\vert h(x)-h(y)\vert \leq C_h \vert x-y\vert$ for some $C_h>0$. We tune $c_h$ such that
\begin{equation}\label{eq:hyp h}
c_h +\min_{\vert m\vert =2L} \left\{n(m) \cdot \int_{\bbR^d}F(x) q_{ m, \sigma^{ 2}K^{ -1}}(x) {\rm d}x\right\}\, \geq\, \xi\, ,
\end{equation}
for some $\xi>0$, where $n(m):= \frac{ m}{ \left\vert  m \right\vert}$.  For this choice of $h$, we consider the modified dynamics given by the McKean process
\begin{equation}\label{eq:modif mc kean}
\dd X_t\, =\, \Big(\delta F(X_t) + \delta h\left(\bbE [X_t]\right)-K\big(X_t-\bbE[X_t]\big)\Big)\dd t+\sqrt{2}\gs \dd B_t,\ t\geq0 \, ,
\end{equation}
whose distribution solves the PDE
\begin{multline}
\label{eq:PDE_mod_h}
\partial_t \mu_t\, =\, \nabla\cdot( \gs^2 \nabla \mu_t) + \nabla \cdot\left(K \mu_t\left(x-\int_{\bbR^d}z\mu_t({\rm d}z)\right)\right)\\-\delta \nabla\cdot \left(\mu_t \left(F(x) + h \left(\int_{ \mathbb{ R}^{ d}} z \mu_{ t}({\rm d}z)\right)\right)\right),\ t\geq0\, .
\end{multline}
The corresponding slow fast system is then given by
\begin{equation}\label{eq:slow fast PDE modified}
\left\{
\begin{array}{rl}
\partial_t p_t(x)& =\, \cL p_t(x)+ \nabla\cdot (p_t(x)(\dot m_t -\gd F_(x)-\gd h(m_t)))\\
\dot m_t& =\,  \gd \int_{\bbR^d} F(x+m_t) p_t(\dd x)+\gd h(m_t)
\end{array}
\right. \, , \,  t\geq 0\, ,
\end{equation}
where we have used the notations $ \mathcal{ L}$ for the Ornstein-Uhlenbeck operator defined by
\begin{equation}
\label{eq:cLtheta}
\cL f\, =\,  \nabla\cdot (\gs^2 \nabla f)+ \nabla\cdot (Kxf)\, ,
\end{equation}
and
\begin{equation}
\label{eq:F_t}
F_t(x)=F(x+m_t),\ t\geq0\,.
\end{equation}
A precise notion of solution for these equations is given in Definition~\ref{def:weak_sol} below.
\subsection{Existence and uniqueness for the nonlinear process}
We denote by $\cP_{2}= \mathcal{ P}_{ 2}(\mathbb{ R}^{ d})$ the space of probability measure on $\bbR^d$ endowed with the Wasserstein distance $W_{2}$:
\begin{equation}
\label{eq:W2}
W_{ 2}(\xi_{ 1}, \xi_{ 2}):= \left\lbrace \inf_{ X_{ 1}\sim \xi_{ 1}, X_{ 2}\sim \xi_{ 2}} \mathbb{ E}( \left\vert X_{ 1}-X_{ 2} \right\vert^{ 2})\right\rbrace^{ \frac{ 1}{ 2}},\ \xi_{ 1}, \xi_{ 2}\in \mathcal{ P}_{ 2}\,.
\end{equation}
\begin{lemma}\label{lem:pathwise existence}
For any $ \delta\geq 0$, $T>0$, any $X_0$ with distribution $\mu_0\in\cP_2$, equation \eqref{eq:modif mc kean} has unique pathwise solution $X_\cdot$ on $[0,T]$, the distribution $\mu_t$ of $X_t$ is element of $\cP_2$ for all $t\in[0,T]$ and the following bound on the mean-value $m_{t}^{ \delta}=m_{ t}:= \mathbb{ E} \left[X_{ t}\right], t\in[0, T]$, holds for any $\gd'>0$:
\begin{equation}
\label{eq:m_inf_delta}
\sup_{ \delta\leq \delta'}\sup_{ s\in[0, T]} \left\vert m_{ s}^{ \delta}\right\vert < +\infty.
\end{equation}
Secondly, for any $ \alpha\in \left(0, 1\right)$ such that $\bbE\left[w_{ \alpha}(X_0)\right]<\infty$ (recall \eqref{eq:w_alpha}), there exists a constant $ \kappa_{0}>0$  such that
\begin{align}
\sup_{ t\in[0, T]} \mathbb{ E} \left[ w_{ \alpha}(X_t)\right] &\leq \max \left( \kappa_{0}, \mathbb{ E} \left[ w_{ \alpha}(X_0)\right]\right).\label{eq:exp_control_Xt}
\end{align}
The constant $ \kappa_{0}$ in \eqref{eq:exp_control_Xt}, depends in particular on $\alpha$, $\sup_{ s\leq T} \left\vert m_{ s} \right\vert$ and $\gd$, but for a fixed $\gd'>0$ may be chosen independent of $ \delta \leq \gd'$. Moreover, we have
\begin{equation}
\label{eq:dot m_inf_delta}
\sup_{ \delta\leq \delta'}\sup_{ s\in[0, T]} \left\vert \dot m_{ s}^{ \delta} \right\vert < +\infty.
\end{equation}
\end{lemma}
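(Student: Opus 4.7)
The plan is to construct the nonlinear process by a Picard scheme on the mean trajectory, compensating for the exponential growth of $F$ via the one-sided Lipschitz condition \eqref{hyp:F_Lipschitz} and the dissipativity \eqref{hyp:F_dot_x_bound}. Fix a continuous path $m\in C([0,T], \bbR^d)$ and consider the linear SDE
\begin{equation*}
\dd X_t^m\, =\, \bigl(\gd F(X_t^m) + \gd h(m_t) - K(X_t^m - m_t)\bigr)\dd t + \sqrt{2}\gs\, \dd B_t,\qquad X_0^m = X_0\, ,
\end{equation*}
whose drift satisfies the one-sided Lipschitz estimate in $x$. A standard monotone SDE argument (truncating $F$ at level $n$, deriving a uniform-in-$n$ second moment bound via It\^o's formula on $|X_t^m|^2_{K\gs^{-2}}$ combined with \eqref{hyp:F_dot_x_bound}, and passing to the limit) produces a pathwise unique strong solution with $\sup_{t\leq T}\bbE|X_t^m|^2 < \infty$, the bound depending only on $\bbE|X_0|^2$, $\sup_{t\leq T}|m_t|$, $T$ and $\gd'$.

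To close the fixed point, define $\Phi : m \mapsto (\bbE[X_t^m])_{t\in[0,T]}$. Coupling two realizations $X^{m^1}, X^{m^2}$ driven by the same Brownian motion and applying \eqref{hyp:F_Lipschitz} yields
\begin{equation*}
\tfrac{\dd}{\dd t}\bbE|X_t^{m^1} - X_t^{m^2}|^2 \, \leq\, C\bbE|X_t^{m^1} - X_t^{m^2}|^2 + C\|m^1 - m^2\|_{L^\infty([0,t])}^2,
\end{equation*}
with $C$ depending on $C_F, C_h, K, \gd'$, so Gronwall makes $\Phi$ a contraction in sup-norm on $C([0,\tau], \bbR^d)$ for $\tau$ small enough; iterating on consecutive intervals of length $\tau$ (the constants are independent of the restart) produces the unique global solution on $[0,T]$ with $\mu_t\in\cP_2$. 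Taking expectation in \eqref{eq:modif mc kean} gives $\dot m_t = \gd\bbE[F(X_t)] + \gd h(m_t)$, so \eqref{eq:m_inf_delta} reduces to bounding $\int_0^T \bbE|F(X_s)|\dd s$, which requires the exponential moment below.

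For the exponential estimate \eqref{eq:exp_control_Xt} apply It\^o's formula to $w_\alpha(X_t)$ with stopping times $\tau_n := \inf\{t: |X_t|\geq n\}$. A direct calculation exploiting $\nabla w_\alpha = \alpha K\gs^{-2}x\,w_\alpha$ and $\nabla^2 w_\alpha = (\alpha K\gs^{-2} + \alpha^2 K\gs^{-2}xx^T K\gs^{-2})\,w_\alpha$ produces
\begin{equation*}
\cA w_\alpha(x)\, =\, \bigl(\alpha\Tr(K) - \alpha(1-\alpha)|x|^2_{K^2\gs^{-2}} + \alpha\gd F(x)\cdot K\gs^{-2}x + \alpha R(x, m_t)\bigr)w_\alpha(x),
\end{equation*}
where $\cA$ is the generator associated with the linear SDE and $R(x,m_t)$ collects the terms linear in $x$ originating from $h(m_t)$ and $Km_t$. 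Since $\alpha \in (0,1)$, the negative quadratic term dominates: \eqref{hyp:F_dot_x_bound} bounds the $F$-contribution by a constant and an AM-GM estimate absorbs $R(x,m_t)$ into half of the quadratic. One obtains $\cA w_\alpha(x) \leq C_1 - C_2 w_\alpha(x)$ globally, with constants depending on $\sup_{s\leq T}|m_s|$, $\alpha$ and $\gd'$ but not on $\gd\leq\gd'$. A standard Lyapunov argument (stopped bound via Fatou, upgrading to a differential inequality, then Gronwall) yields \eqref{eq:exp_control_Xt} with $\kappa_0 = C_1/C_2$. Finally, \eqref{eq:dot m_inf_delta} follows at once, since $|F(X_s)|\leq C_F w_\gep(X_s)$ by \eqref{hyp:bound_F_exp} and one may take $\alpha = \gep$ in the above.

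The main technical obstacle is that $F$ is only one-sided Lipschitz with exponential growth, which prevents both a classical contraction mapping argument and a straightforward integrability check when using $w_\alpha$ as a Lyapunov function (the martingale term in the It\^o expansion is not a priori integrable for $\alpha$ close to $1$). Both difficulties are overcome by the same mechanism: the mean-reversion term $-Kx$ contributes $-(1-\alpha)|x|^2_{K^2\gs^{-2}}$ to the generator of $w_\alpha$, which combined with the dissipativity \eqref{hyp:F_dot_x_bound} of $F$ forces trajectories to stay effectively in a compact region and allows every estimate to close uniformly in $\gd\leq\gd'$.
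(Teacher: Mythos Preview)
Your overall strategy is sound and close to the paper's, but there is a genuine logical gap in how you obtain \eqref{eq:m_inf_delta}. You write that bounding $\sup_{s\leq T}|m_s|$ ``reduces to bounding $\int_0^T \bbE|F(X_s)|\dd s$, which requires the exponential moment below''; but the exponential moment bound you derive has $\kappa_0$ depending explicitly on $\sup_{s\leq T}|m_s|$ (and in fact $\kappa_0$ grows super-exponentially in this quantity, cf.\ the paper's formula \eqref{eq:kappa1}). So the argument as written is circular, and in particular gives no bound uniform in $\gd\leq\gd'$.

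The fix is simple and already implicit in your own construction: extract \eqref{eq:m_inf_delta} directly from the \emph{second} moment. For the nonlinear process, the term $-K(X_t-m_t)$ satisfies $\bbE\big[K\gs^{-2}X_t\cdot K(X_t-m_t)\big]=\bbE\big[|X_t-m_t|^2_{K^2\gs^{-2}}\big]\geq 0$, so it contributes with the good sign; together with $K\gs^{-2}x\cdot F(x)\leq C_F$ and the boundedness of $h$, the Gr\"onwall inequality for $\bbE|X_t|^2_{K\gs^{-2}}$ closes with constants depending only on $\gd'$, $T$ and $\bbE|X_0|^2$, \emph{not} on $\sup_s|m_s|$. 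Then $|m_t|\leq(\bbE|X_t|^2)^{1/2}$ yields \eqref{eq:m_inf_delta}. This is exactly what the paper does; once \eqref{eq:m_inf_delta} is in hand, your Lyapunov computation for $w_\alpha$ and the deduction of \eqref{eq:dot m_inf_delta} go through as you describe.

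As a side remark, your fixed point on mean trajectories $m\in C([0,T],\bbR^d)$ is a legitimate simplification of the paper's Sznitman-type fixed point on path-space measures (the interaction being only through $\bbE[X_t]$); this buys you a finite-dimensional target space at the cost of having to iterate on short intervals rather than using the integrated contraction estimate $W_{2,T}^2(\Phi(\lambda^1),\Phi(\lambda^2))\leq e^{CT}\int_0^T W_{2,s}^2(\lambda^1,\lambda^2)\dd s$ directly.
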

\begin{proof}[Proof of Lemma~\ref{lem:pathwise existence}]
The proof of existence and uniqueness for \eqref{eq:modif mc kean} is similar to the ones given in \cite{22657695,SznitSflour}. We consider the space $\cM_T$ of probability measures on
$ \mathcal{ C}= \mathcal{ C}([0,T],\bbR^d)$, endowed with the Wasserstein distance
\begin{equation}
W_{2,T}(\pi_1,\pi_2)\, :=\, \left\{\inf_{\gl\in \Lambda( \lambda_1, \lambda_2)}\int_{\cC^2}\sup_{0\leq t\leq T}\left\vert x^1_t-x^2_t\right\vert^{2} \gl(\dd x^1,\dd x^2)\right\}^{\frac{1}{2}}\, ,
\end{equation}
where $\Lambda( \lambda_1,\lambda_2)$ denotes the set of couplings between $ \lambda_1$ and $ \lambda_2$. For any $ \lambda$ element of $\cM_T$, denote by $ \bar \lambda_t:=\int_{ \mathbb{ R}^{ d}} x \lambda_t( {\rm d}x)$ its mean-value and consider the equation
\begin{equation}\label{eq: eds mt parameter}
\dd Y_t\, =\, \Big(\delta F(Y_t) + \delta h( \bar \lambda_t)-K\big(Y_t- \bar \lambda_{ t}\big)\Big)\dd t+\sqrt{2}\gs \dd B_t\, ,
\end{equation}
with initial condition $Y_{ 0}=X_0$. Let us show that \eqref{eq: eds mt parameter} defines a process whose distribution belongs to $\cM_T$. To prove the non-explosion of $Y_\cdot$, let us consider the stopping times $\tau_n=\inf\{t>0,\vert Y_t\vert> n\}$ and the process $Y_\cdot^{(n)}=Y_{\cdot\wedge \tau_n}$. The process $Y^{(n)}_\cdot$ is well defined and satisfies
\begin{multline}
\vert Y^{(n)}_t \vert^2_{K\gs^{-2}}=\vert X_0\vert^2_{K\gs^{-2}}\\+\int_0^{t\wedge \tau_n}\Big( 2 K\gs^{-2}Y^{(n)}_s\cdot \big(\delta(F(Y^{(n)}_s)+h(\bar \lambda_s))-K(Y^{(n)}_s-\bar \lambda_s)\big)+   \Tr(K)\Big)\dd s\\
+\sum_{i=1}^d \frac{k_i}{\gs_i}\int_0^{t\wedge \tau_n} Y^{(n),i}_s\dd B^i_s\, .
\end{multline}
Since $ \lambda\in \mathcal{ M}_{ T}$, there exists $C_{ \lambda}>0$ such that $\sup_{ s\leq t}\vert \bar \lambda_s\vert \leq C_{ \lambda}$. Using this and the fact that $K\gs^ {-2} x\cdot F(x)\leq C_F$ (recall \eqref{hyp:F_dot_x_bound}), we get, using Burkholder-Davis-Gundy inequality,
\begin{equation}
\bbE\left[\sup_{s\leq t}\vert Y^{(n)}_s\vert^2_{K\gs^{-2}}\right]\, \leq\, \bbE[\vert X_0\vert^2_{K\gs^{-2}}]+C_{ \lambda,F}\int_0^{t}\bbE\left[\sup_{u\leq s}\vert Y^{(n)}_u\vert^2_{K\gs^{-2}}\right]\dd s\, ,
\end{equation}
which means that $\bbE\left[\sup_{s\leq t}\vert Y^{(n)}\vert^2_{K\gs^{-2}}\right]\leq (\bbE[\vert X_0\vert^2_{K\gs^{-2}}] + C_{ \lambda, F})e^{C_{ \lambda,F}t}$ for some $C_{ \lambda, F}>0$. The fact that this bound does not depend on $n$ implies that $\bbP(\tau_n\leq t)\rightarrow 0$ and so that it is valid for $Y_\cdot$, by Fatou Lemma. Hence, we can define the mapping $\Phi$ that associates $ \lambda$ to the distribution $ \Phi(\lambda)$ of $Y_\cdot$.

Considering now two distributions $ \lambda^1$ and $ \lambda^2$ satisfying $ \lambda^1_0= \lambda^2_0=\mu_0$, with respective means $\bar  \lambda^1_\cdot$ and $\bar  \lambda^2_\cdot$ and respective solutions $Y^1_\cdot$ and $Y^2_\cdot$ of \eqref{eq: eds mt parameter}, we obtain, for some constant $C_{ F, h, \delta}>0$ that remains bounded as $ \delta\to 0$, 
\begin{multline}
\sup_{s\leq t}\vert Y^1_t-Y^2_t\vert^2\, \leq \, 2\int_0^t\bigg|(Y^1_s-Y^2_s)\cdot \Big(\delta (F(Y^1_s)-F(Y^2_s))+\delta (h(\bar \lambda^1_s)-h(\bar \lambda^2_s))\\
-K(Y^1_s-Y^2_s)+K(\bar \lambda^1_s-\bar \lambda^2_s)\Big)\bigg|\dd s\\
\leq\, C_{F,h,\delta}\int_0^t \left(\sup_{u\leq s} \vert Y^1_u-Y^2_u\vert^2 +W^2_{2,s}(\lambda^1, \lambda^2)\right)\dd s\, ,
\end{multline}
where we have used \eqref{hyp:F_Lipschitz}, the Lipschitz continuity of $h$ and $\vert \bar \lambda^1_s- \bar \lambda^2_s\vert \leq W_{2,s}(\lambda^1,\lambda^2)$. By Gr\"onwall's inequality, this leads directly to the bound
\begin{equation}
W_{2,T}^2(\Phi( \lambda^1),\Phi( \lambda^2))\, \leq\,  e^{C_{F,h,\delta}T}\int_0^TW^2_{2,s}( \lambda^1, \lambda^2)\dd s\, .
\end{equation}
It implies directly uniqueness of solutions of \eqref{eq:modif mc kean} and existence of a unique fixed-point $ \mu= \Phi(\mu)$ follows by iteration, as done in \cite{SznitSflour}. We denote by $X_{ \cdot}$ the corresponding process with law $ \mu$, which solves \eqref{eq:modif mc kean}. A byproduct of the previous calculations is $\bbE\left[\sup_{s\leq T}\vert X\vert^2\right]\leq \bbE[\vert X_0\vert^2]e^{CT}$ for some constant $C>0$, independent of $ \delta$ and we deduce immediately \eqref{eq:m_inf_delta}.

To prove the existence of exponential moments, fix $ \alpha \in \left(0,1\right)$ and remark that
\begin{multline}
\frac{\dd}{\dd t} \bbE\left[w_{\alpha}(X^{(n)}_t)\right]\,=\, \alpha \bbE\Big[w_{\alpha}(X^{(n)}_t)\Big(-(1- \alpha)\left\vert K^{\frac12} X_{ t}^{ (n)} \right\vert_{K\gs^{-2}}^{ 2}\\ +\big(\delta F(X^{(n)}_t)+\delta h(m_t)+K m_t)\big)\cdot K\gs^{-2}X^{(n)}_t + \Tr(K)\Big)\Big]\, .
\end{multline}
Using \eqref{hyp:F_dot_x_bound}, we obtain
\begin{equation}
\frac{\dd}{\dd t} \bbE\left[w_{\alpha}(X^{(n)}_t)\right]\, \leq\,  \mathbb{ E} \left[ p \left( \left\vert X_{ t}^{ (n)} \right\vert_{K\gs^{-2}}\right)w_{ \alpha}(X^{(n)}_t)\right]
\end{equation}
for the second degree polynomial 
\begin{equation}
p(u)\, =\, - \alpha(1- \alpha) \underline{ k}u^{ 2} + \alpha(\delta \left\Vert h \right\Vert_{ \infty}\Tr(K \sigma^{ -2})^{ \frac{ 1}{ 2}} + \frac{\bar k^2}{\underline{\gs}^2}\sup_{ s\leq T} \left\vert m_{ s} \right\vert) u + \alpha\Tr(K)+ \ga\delta C_{ F}\, .
\end{equation}
Applying Lemma~\ref{lem:rough_bound_q}, \eqref{eq:rough_bound_q} to $q(u)=p(u)+1$, we obtain (recall that $1-\ga>0$),
\begin{equation}
\label{eq:bound_exp_Xt_C12}
\frac{\dd}{\dd t} \bbE\left[w_{ \alpha}(X^{(n)}_t)\right]\leq \kappa_{ 0} - \bbE\left[w_{ \alpha}(X^{(n)}_t)\right]\, ,
\end{equation}
for $ \kappa_{ 0}= \kappa_{ 0} \left(\sup_{ s\leq T} \left\vert m_{ s} \right\vert_{K\gs^{-2}}, \delta\right)>0$ given by the following equality (since we have $\Tr(K)\leq d\bar k$ and $\Tr(K \sigma^{ -2})^{ \frac{ 1}{ 2}}\leq \sqrt{ d \bar k} \underline{ \sigma}^{ -1}$)
\begin{multline}
\label{eq:kappa1}
\kappa_{ 0}(l, \delta):=\left(\frac{ \alpha\left(\delta \left\Vert h \right\Vert_{ \infty}\left(d \bar k\right)^{ \frac{ 1}{ 2}}\underline{ \sigma}^{ -1} + \frac{\bar k^2}{\underline{\gs}^2} l\right)^{ 2}}{ 4(1-\ga)\underline{ k}} +1+ \ga d\bar k+ \alpha\delta C_{ F}\right)\\ \exp\left(\frac{1}{ 2(1-\ga) \underline{ k}} \left( \frac{ \alpha \left(\delta \left\Vert h \right\Vert_{ \infty} \left(d \bar k\right)^{ \frac{ 1}{ 2}} \underline{ \sigma}^{ -1} + \frac{\bar k^2}{\underline{\gs}^2} l\right)^{ 2}}{(1-\ga) \underline{ k}}+2 (1+ \ga d\bar k+ \alpha\delta C_{ F})\right)\right)
\end{multline}
By \eqref{eq:bound_exp_Xt_C12},
\begin{equation}
\sup_{ t\in[0, T]} \bbE\left[w_{ \alpha}(X^{(n)}_t)\right] \leq \max \left(\kappa_{0}, \bbE\left[w_{ \alpha}(X_0)\right]\right)\, ,
\end{equation}
and we conclude by taking $n\to\infty$ using Fatou Lemma.
\end{proof}
\subsection{ Well-posedness of the McKean-Vlasov PDE}
The point of this section is to establish well-posedness results for \eqref{eq:PDE_mod_h}. The notion of solution to \eqref{eq:PDE_mod_h} is understood in the following way:
\begin{definition}
\label{def:weak_sol}
Let $ \mu_{ 0}\in \mathcal{ P}_{ 2}$. We say that $ t \mapsto \mu_{ t}$ is a weak solution to \eqref{eq:PDE_mod_h} if $ \mu\in \mathcal{ C}([0, +\infty), \mathcal{ P}_{ 2})$ and for any test function $ \varphi\in \mathcal{ C}^{ 1}([0, +\infty), \mathcal{ C}_{ c}^{ \infty}(\mathbb{ R}^{ d}))$ and any $t\geq 0$,
\begin{multline}
\int_{ \mathbb{ R}^{ d}} \varphi {\rm d}\mu_{ t} = \int_{ \mathbb{ R}^{ d}} \varphi {\rm d}\mu_{ 0}+ \int_{ 0}^{t} \int_{ \mathbb{ R}^{ d}}\left[\partial_{ s} \varphi +  \nabla\cdot(\gs^2\nabla \varphi) \right] {\rm d}\mu_{ s} {\rm d}s\\
- \int_{ 0}^{t} \int_{ \mathbb{ R}^{ d}} \left\lbrace K \left(x- \int_{ \mathbb{ R}^{ d}}z \mu_{ s}({\rm d}z)\right)- \delta \left(F+ h \left(\int_{ \mathbb{ R}^{ d}} z \mu_{ s}({\rm d}z)\right)\right) \right\rbrace\cdot \nabla \varphi {\rm d}\mu_{ s}{\rm d}s\,, t\geq0\,.
\end{multline}
\end{definition}
We first state the following uniqueness result, whose proof is postponed to Appendix~\ref{app: proof uniqueness}.
\begin{proposition}
\label{prop:PDE_well_posed}
For any $ \mu_{ 0}\in \mathcal{ P}_{ 2}$, there exists a unique weak solution $ \left\lbrace \mu_{ t}\right\rbrace_{ t\geq0}$ in $ \mathcal{ C}([0, \infty), \mathcal{ P}_{ 2})$ to \eqref{eq:PDE_mod_h} with initial condition $ \mu_{ 0}$ and this solution is the distribution of the nonlinear process $X_{ t}$ solution to \eqref{eq:modif mc kean}.
\end{proposition}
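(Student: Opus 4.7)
The plan is to deduce Proposition~\ref{prop:PDE_well_posed} from the pathwise well-posedness of the SDE~\eqref{eq:modif mc kean} already established in Lemma~\ref{lem:pathwise existence}. For existence, I would take the pathwise solution $X_\cdot$ of~\eqref{eq:modif mc kean} with initial law $\mu_0$ and set $\mu_t := \mathrm{Law}(X_t)$. Applying It\^o's formula to $\varphi(s,X_s)$ for $\varphi \in \cC^1([0,\infty), \cC_c^\infty(\bbR^d))$ and then taking expectation (the stochastic integral vanishes because $\nabla\varphi$ is bounded) yields exactly the weak identity in Definition~\ref{def:weak_sol}. Continuity $t\mapsto \mu_t$ in $\cP_2$ follows from pathwise continuity of $X_\cdot$ together with the uniform-in-$t$ second moment bound implicit in Lemma~\ref{lem:pathwise existence}.

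The heart of the argument is uniqueness. Let $(\mu^1_t)$ and $(\mu^2_t)$ be two weak solutions in $\cC([0,\infty),\cP_2)$ sharing the initial datum $\mu_0$. I would set $m^i_t := \int_{\bbR^d} z \mu^i_t(\dd z)$, which are continuous in $t$, and consider the two \emph{linearized} SDEs
\begin{equation*}
\dd Y^i_t = \Big(\gd F(Y^i_t) + \gd h(m^i_t) - K(Y^i_t - m^i_t)\Big) \dd t + \sqrt{2}\gs \dd B_t, \qquad Y^i_0 = X_0,
\end{equation*}
driven by a common Brownian motion $B$ and a common initial random variable $X_0 \sim \mu_0$. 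Each $Y^i_\cdot$ is well-posed thanks to the one-sided Lipschitz assumption~\eqref{hyp:F_Lipschitz} and the Lipschitz regularity of $h$ and of the Ornstein--Uhlenbeck drift.

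The crucial intermediate step is to identify $\mathrm{Law}(Y^i_t) = \mu^i_t$: both curves of measures satisfy the same \emph{linear}, time-inhomogeneous Fokker--Planck equation whose coefficients depend only on the prescribed continuous path $m^i_\cdot$, and uniqueness for this linear problem in $\cC([0,\infty),\cP_2)$ is obtained by a duality/regularization argument. Once the laws are identified, It\^o's formula applied to $\lvert Y^1_t - Y^2_t \rvert^2$, combined with~\eqref{hyp:F_Lipschitz}, the Lipschitz character of $h$, and the elementary bound $\lvert m^1_t - m^2_t \rvert^2 \leq \bbE[\lvert Y^1_t - Y^2_t \rvert^2]$, yields
\begin{equation*}
\tfrac{\dd}{\dd t} \bbE\big[\lvert Y^1_t - Y^2_t \rvert^2\big] \,\leq\, C \,\bbE\big[\lvert Y^1_t - Y^2_t \rvert^2\big].
\end{equation*}
Gr\"onwall's inequality together with $Y^1_0 = Y^2_0$ then forces $Y^1 \equiv Y^2$ almost surely, whence $m^1 \equiv m^2$ and $\mu^1 \equiv \mu^2$. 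Applying the same linearization procedure to the pathwise solution of~\eqref{eq:modif mc kean} identifies the unique weak solution with $\mathrm{Law}(X_t)$.

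I expect the main technical obstacle to be precisely the uniqueness step for the linear Fokker--Planck equation: $F$ may grow as fast as the Gaussian-like weight $w_\gep$ at infinity (cf.~\eqref{hyp:bound_F_exp}), so the term $F \cdot \nabla \varphi$ is not manifestly integrable against a generic $\cP_2$-solution. One must therefore work with compactly supported test functions, regularize $F$ by truncation, and pass to the limit using the uniform exponential moment bound~\eqref{eq:exp_control_Xt} coming from Lemma~\ref{lem:pathwise existence}. This is precisely why the detailed argument is deferred to Appendix~\ref{app: proof uniqueness}.
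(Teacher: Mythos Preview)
Your proposal is correct but takes a different route from the paper. You work \emph{symmetrically}: given two weak solutions $\mu^1,\mu^2$, you freeze each mean, build the linearized SDEs $Y^1,Y^2$ on the same probability space, identify $\mathrm{Law}(Y^i_t)=\mu^i_t$ via uniqueness for the \emph{linear} Fokker--Planck problem, and conclude by a synchronous-coupling estimate on $\bbE[|Y^1_t-Y^2_t|^2]$ using the one-sided Lipschitz condition~\eqref{hyp:F_Lipschitz}. The paper instead works \emph{asymmetrically}: it fixes $\mu$ as the law of the nonlinear process $X$, introduces the propagator $P_{s,t}$ of the linear SDE with drift frozen at the mean of the \emph{other} solution $\nu$, and compares $\langle\varphi,\mu_T-\nu_T\rangle$ directly via the backward Kolmogorov equation and a uniform bound $|\nabla P_{s,T}\varphi|\leq C$, closing with Gr\"onwall in the $W_1$ distance. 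Your coupling avoids the Kolmogorov/gradient-estimate machinery and is arguably more elementary; the paper's duality argument is slightly more economical in that it does not need a separate uniqueness step for the linear Fokker--Planck equation (that step is precisely what Remark~\ref{rem:uniqueness_frozen_PDE} records as a by-product). Both proofs handle the unbounded $F$ by approximation: the paper uses the Yosida regularization $F_\lambda$ and cites \cite{LucSta2014} for the passage to the limit. One small inaccuracy in your last paragraph: the exponential-moment bound~\eqref{eq:exp_control_Xt} is proved for the nonlinear process $X$, not for an arbitrary weak solution in $\cP_2$, so it is not directly available for the identification $\mathrm{Law}(Y^i)=\mu^i$; the approximation argument has to be set up so that only second-moment information on $\mu^i$ is used, which is why the paper's Yosida route (producing a globally Lipschitz $F_\lambda$) is convenient here.
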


It is clear that Proposition \ref{prop:PDE_well_posed} is equivalent to
\begin{proposition}
\label{prop:PDE_well_posed p m}
For any $ p_{ 0}\in \mathcal{ P}_{ 2}$ satisfying $\int_{\bbR^d} x p_0(\dd x)=0$ and any $m_0\in \bbR^d$, there exists a unique couple $(p,m)\in \mathcal{ C}([0, \infty), \mathcal{ P}_{ 2})\times \mathcal{ C}^1([0,\infty),\bbR^d)$ solution to \eqref{eq:slow fast PDE modified} with initial condition $(p_0,m_0)$.
\end{proposition}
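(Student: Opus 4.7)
The plan is to establish a bijection between solutions of \eqref{eq:PDE_mod_h} and solutions of \eqref{eq:slow fast PDE modified}, and then invoke Proposition~\ref{prop:PDE_well_posed} to transport existence and uniqueness from the first system to the second.

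\textbf{From $\mu$ to $(p,m)$.} Given a weak solution $t\mapsto\mu_t$ of \eqref{eq:PDE_mod_h} with initial condition $\tilde \mu_0(A):=p_0(A-m_0)$, I would first define $m_t:=\int_{\bbR^d}x\,\mu_t(\dd x)$ and $p_t$ as the centered push-forward, $\int \varphi\,\dd p_t:=\int\varphi(x-m_t)\,\dd\mu_t(x)$. Continuity of $t\mapsto\mu_t$ in $\cP_2$ gives continuity of $t\mapsto m_t$ and of $t\mapsto p_t$ in $\cP_2$. To obtain the evolution of $m_t$, I would test \eqref{eq:PDE_mod_h} against the coordinate functions $\varphi(x)=x_i$; since these are not compactly supported, the proper procedure is to test against a smooth cutoff $x_i\chi_R(x)$ and let $R\to\infty$, justifying the passage to the limit through the uniform $L^\infty([0,T],\cP_2)$ bound together with the exponential moments provided by \eqref{eq:exp_control_Xt} of Lemma~\ref{lem:pathwise existence} (which control the contribution of $F$ at infinity via \eqref{hyp:bound_F_exp}). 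The diffusion and recentering-interaction terms produce zero contributions (the first because $\varphi$ is affine, the second because $\int(x-m_t)\,\dd\mu_t=0$), so one obtains
\begin{equation}
\dot m_t\, =\, \delta\int_{\bbR^d}F(x)\,\mu_t(\dd x)+\delta h(m_t)\, =\, \delta\int_{\bbR^d}F(x+m_t)\,p_t(\dd x)+\delta h(m_t)\, .
\end{equation}
Continuity of the right-hand side in $t$ (again using the moment bound \eqref{eq:exp_control_Xt} and continuity of $(\mu_t)$ in $\cP_2$) gives $m\in\cC^1([0,\infty),\bbR^d)$.

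\textbf{Evolution of $p_t$.} For any $\psi\in\cC_c^\infty(\bbR^d)$, I would write $\int\psi\,\dd p_t=\int\psi(\cdot-m_t)\,\dd\mu_t$ and differentiate in time. The contribution of the $\dot m_t$ factor inside $\psi(\cdot-m_t)$ yields the transport term $-\dot m_t\cdot\int\nabla\psi\,\dd p_t$, and substituting the weak equation for $\mu_t$ against the time-dependent test $\varphi_s(x):=\psi(x-m_s)$ yields precisely the first equation of \eqref{eq:slow fast PDE modified} in the weak sense (after the change of variable $x\mapsto x+m_t$, the Ornstein--Uhlenbeck operator transforms into $\cL$ acting on $p_t$, and the $\delta F+\delta h$ terms combine with $-\dot m_t$ as in the announced drift $\dot m_t-\delta F(x+m_t)-\delta h(m_t)$, using the notation \eqref{eq:F_t} at the stationary point).

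\textbf{From $(p,m)$ to $\mu$ and conclusion.} Conversely, given a solution $(p,m)$ of \eqref{eq:slow fast PDE modified} with initial condition $(p_0,m_0)$, I would define $\mu_t$ by the translated push-forward $\int\varphi\,\dd\mu_t:=\int\varphi(x+m_t)\,\dd p_t$ and reverse the computation above to verify that $\mu_t$ is a weak solution of \eqref{eq:PDE_mod_h} with initial condition $\tilde\mu_0(A)=p_0(A-m_0)$. This correspondence is clearly one-to-one: if $(p^1,m^1)$ and $(p^2,m^2)$ both solve \eqref{eq:slow fast PDE modified} with the same initial condition $(p_0,m_0)$, their associated translated measures $\mu^1,\mu^2$ both solve \eqref{eq:PDE_mod_h} with common initial condition $\tilde\mu_0$, hence coincide by Proposition~\ref{prop:PDE_well_posed}, which forces $m^1=m^2$ (taking means) and then $p^1=p^2$. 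Existence of $(p,m)$ with the required regularity follows by applying Proposition~\ref{prop:PDE_well_posed} with initial condition $\tilde\mu_0$ and carrying out the first direction above.

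\textbf{Main obstacle.} The only delicate point is the $\cC^1$ regularity of $m_t$ and the justification of using the unbounded test functions $x\mapsto x_i$ in the weak formulation; both rely on the exponential moment estimate \eqref{eq:exp_control_Xt} of Lemma~\ref{lem:pathwise existence} combined with the exponential growth bound \eqref{hyp:bound_F_exp} of $F$, which together allow integration of $F(x)$ against $\mu_t$ uniformly in $t$ on compact intervals and a standard dominated convergence argument to remove the cutoff.
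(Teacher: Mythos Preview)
Your approach is exactly the one the paper takes: the paper simply states that Proposition~\ref{prop:PDE_well_posed p m} is equivalent to Proposition~\ref{prop:PDE_well_posed} (the sentence ``It is clear that Proposition~\ref{prop:PDE_well_posed} is equivalent to'' immediately precedes the statement), without supplying any details of the correspondence. You have correctly spelled out the bijection $\mu_t \leftrightarrow (p_t,m_t)$ via translation by the mean, identified the only technical point (justifying unbounded test functions through the exponential moment bound \eqref{eq:exp_control_Xt}), and invoked Proposition~\ref{prop:PDE_well_posed} for both existence and uniqueness. One small point you could make explicit in the direction ``from $(p,m)$ to $\mu$'' is that the centering $\int x\,p_t(\dd x)=0$ is preserved along the flow (test the first equation of \eqref{eq:slow fast PDE modified} against $x_i$ and use the second equation), so that indeed $\int x\,\mu_t(\dd x)=m_t$; but this is routine and consistent with the paper treating the equivalence as evident.
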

To emphasize the dependency in the initial condition $(p_{ 0}, m_{ 0})$, we will use in the following the notations $p_t^{p_0,m_0}$ and $m_t^{p_0,m_0}$. Note than an integration by parts shows that $y_{ t}^{ i}:=\partial_{x_i} (p_t^{p_0,m_0})$ is a weak solution of
\begin{equation}\label{eq:PDE partiali p}
\partial_t y^i_t\, =\, \mathcal{ U}_i y^i_t+\nabla\cdot \Big(\big(\dot m_t -\gd F_t -\gd h(m_t)\big) y^i_t\Big)+\gd \nabla \cdot \big( \partial_i F_t\, p_t\big),\ i=1, \ldots, d\,,
\end{equation}
where
\begin{equation}
\label{eq:def_U}
\mathcal{ U}_i u\, := \, \cL u + k_i u\, ,
\end{equation}
and $p_t=p_t^{p_0,m_0}$ and $m_t=m_t^{p_0,m_0}$.
\begin{rem}
\label{rem:uniqueness_frozen_PDE}
Along the same lines as for Proposition~\ref{prop:PDE_well_posed} (see Appendix~\ref{app: proof uniqueness}), one can also prove that there is a unique weak solution in $\mathcal{ C}([0, +\infty), \mathcal{ P}_{ 2})$ to \eqref{eq:slow fast PDE modified} where the nonlinearity $\{m_t\}_{t\geq0}$ has been frozen, equal to $\int_{ \mathbb{ R}^{ d}} z \mu_{ t}(z) {\rm d}z$, where $\mu_t$ denotes the distribution of $X_{ t}$.
The proof of uniqueness is actually simpler since the equation is now linear. This unique solution is obviously $p_{ t}$ the centered version of $\mu_t$.
\end{rem}

\subsection{ Regularity estimates}
\label{sec:regularity_estimates}
For any $ \theta\in \mathbb{ R}$, consider the weighted $L^{ 2}$ and $H^{ 1}$ spaces given by the norms and scalar products \eqref{eq:L2norm_w} for the choice of $w=w_{ \theta}$, where $w_{ \theta}$ is defined in \eqref{eq:w_alpha}. Note that in the case $ \theta=0$, we retrieve the usual $L^{ 2}$-norm $ \left\Vert \cdot \right\Vert_{ L^{ 2}}$ and scalar product $ \left\langle \cdot\, ,\, \cdot\right\rangle_{ L^{ 2}}$.
\medskip

The point of this section is to prove regularity estimates on the process $p$ given by \eqref{eq:slow fast PDE modified}. We first look more closely to the case $\gd=0$.

\subsubsection{The case $ \delta=0$: reversible dynamics and Ornstein-Uhlenbeck processes}\label{subsec:OU}
The case $\delta=0$ in \eqref{eq:slow fast PDE modified} corresponds to
\begin{equation}\label{eq:PDE rev}
\partial_t p^{\text{rev}}_t\, =\, \cL p^\text{rev}_t\, ,
\end{equation}
where the Ornstein-Uhlenbeck operator $ \mathcal{ L}$ is given by \eqref{eq:cLtheta}. Note that $\cL$ satisfies $\cL f =  \nabla\cdot (q_0 \, \gs^2\, \nabla(q_0^{-1} f))$. It will be useful in the following to consider, for $\theta>0$, the operator
\begin{equation}
\cL_\theta u\, =\, \nabla\cdot (\gs^2\nabla u) +\theta \nabla\cdot (Kx u)\, .
\end{equation}
It is well known (see for example \cite{MR3155209}) that its dual operator $\cL^*_\theta$, defined as
\begin{equation}
\cL^*_\theta f\, =\, \nabla\cdot( \gs^2\nabla f) -  \theta K x\cdot \nabla f\, ,
\end{equation}
is self-adjoint in $L^2(w_{-\theta})$ and admits the decomposition
\begin{multline}\label{decom L*}
 \cL^*_\theta \psi_l =-\lambda_l \psi_l\, , \quad \text{with} \qquad \lambda_l\, =\, \theta\sum_{i=1} k_i l_i \quad \text{and} \quad  \psi_l\, =\, \Pi_{i=1}^d H_{l_i}\left(\sqrt{\frac{ \theta k_i}{\gs_i^2}}x_i\right)\, , \\
l\, =\, (l_1,\ldots,l_d)\in \bbN^d\, , \quad \text{and}\quad x\, =\, (x_1,\ldots,x_d)\in \bbR^d\, ,
\end{multline}
where $(H_i)_{i\in \bbN}$ is the basis of the Hermite polynomials, which satisfy
\begin{equation}
e^{sx-\frac{s^2}{2}}\, =\, \sum_{k\in \bbN}\frac{s^k}{\sqrt{n!}}H_k(x),\ x\in\bbR\, .
\end{equation}
Moreover, $(\psi_l)_{l\in \bbN^d}$ is an orthonormal basis of $L^2(w_{-\theta})$.
In particular, if $\int_{\bbR^d}f w_{-\theta}=0$, then (recall that $ \underline{ k}$ denotes the smallest eigenvalue of $K$)
\begin{equation}\label{eq:Poincare q0}
-\langle f, \cL^*_\theta f\rangle_{L^2(w_{-\theta})}\, =\, \int_{\bbR^d} \vert \gs \nabla f\vert^2 w_{-\theta}\, \geq\,  \theta \underline{ k}\langle f,f\rangle_{L^2(w_{-\theta})}\, .
\end{equation}
Define the mapping $A: f\mapsto f w_{-\theta}$. $A$ is an isometry from $L^2(w_{-\theta})$ to $L^2(w_\theta)$ and we clearly have
\begin{equation}
\cL_\theta\, =\, A^{-1} \circ \cL^*_{ \theta} \circ A\, .
\end{equation}
So it is immediate that $\cL_\theta$ is self-adjoint in $L^2(w_\theta)$ and that it admits the decomposition
\begin{equation}\label{decomp cL}
\cL_\theta e_{l}\, =\, -\gl_l e_{l}\quad  \text{for all}\quad 
l= (l_1,\ldots,l_d)\in \bbN^d\, ,
\end{equation}
with $e_l= \psi_l w_{-\theta}$. In particular, for all $u\in L^{ 2}(w_\theta)$ such that $\int_{ \mathbb{ R}^{ d}} u(x) {\rm d} x=0$, we have
\begin{equation}
\label{eq:Lgamma_spectral_gap}
\langle \cL_\theta u, u\rangle_{L^2(w_\theta)} \, \leq - \theta \underline{ k} \left\Vert u \right\Vert_{L^2(w_\theta)}^{ 2}\, ,
\end{equation}
which implies that
\begin{equation}
\left\Vert p^{rev}_t-q_{0,\gs^2K^{-1}}\right\Vert_{L^2(w_1)}\, \leq\, e^{-\underline{ k}t}\left\Vert p^{rev}_0-q_{0,\gs^2K^{-1}}\right\Vert_{L^2(w_1)}\, .
\end{equation}

\subsubsection{ Technical estimates} 
We gather here some technical estimates that will be useful in the following:

\begin{lemma}
\label{lem:estimates}
Fix $\theta\in (0,1)$. For any $u\in H^{ 1}(w_{ \theta})$, then $ x u$ belongs to $L^{ 2}(w_{\theta})$ and we have
\begin{equation}
\label{eq:control_xu_alpha}
\left\Vert \gs^{-1} Kx u\right\Vert_{ L^{ 2}(w_{ \theta})} \, \leq\,  \frac{2}{\theta} \left\Vert \gs \nabla u\right\Vert_{L^2(w_{\theta})}\, ,
\end{equation}
and
\begin{equation}
\label{eq:IPP_xu_alpha}
\int_{\bbR^d}u\nabla u \cdot Kx  w_{\theta}=- \frac{\Tr(K)}{2}\Vert u\Vert^2_{L^2(w_\theta)}-\frac{ \theta}{2} \left\Vert \gs^{-1} Kx u\right\Vert_{L^2(w_\theta)}^2\, .
\end{equation}
\end{lemma}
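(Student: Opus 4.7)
The plan is to prove the identity \eqref{eq:IPP_xu_alpha} first by a direct integration by parts against the weight $w_\theta$, then to deduce \eqref{eq:control_xu_alpha} from it via Cauchy--Schwarz, and finally to extend both statements to $u\in H^1(w_\theta)$ by density. The single computational ingredient I will need is that, since $K\sigma^{-2}$ is symmetric and $w_\theta(x) = \exp(\frac{\theta}{2}x\cdot K\sigma^{-2} x)$, one has $\nabla w_\theta = \theta\, K\sigma^{-2} x\, w_\theta$.

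First I would establish \eqref{eq:IPP_xu_alpha} for $u\in C_c^\infty(\mathbb{R}^d)$. Writing $u\nabla u = \frac{1}{2}\nabla(u^2)$ and integrating by parts yields
\begin{equation*}
\int_{\mathbb{R}^d} u\,\nabla u\cdot Kx\, w_\theta\,dx = -\frac{1}{2}\int_{\mathbb{R}^d} u^2\,\nabla\cdot(Kx\,w_\theta)\,dx.
\end{equation*}
Combining $\nabla\cdot(Kx) = \mathrm{Tr}(K)$ with the formula for $\nabla w_\theta$ above gives
\begin{equation*}
\nabla\cdot(Kx\, w_\theta) = \bigl(\mathrm{Tr}(K) + \theta\,(Kx)\cdot(K\sigma^{-2}x)\bigr)\,w_\theta,
\end{equation*}
and \eqref{eq:IPP_xu_alpha} follows since $(Kx)\cdot(K\sigma^{-2}x) = \sum_{i=1}^d k_i^2\sigma_i^{-2}x_i^2 = |\sigma^{-1}Kx|^2$.

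From this identity, dropping the non-positive term $-\frac{\mathrm{Tr}(K)}{2}\|u\|_{L^2(w_\theta)}^2$ and rewriting the surviving right-hand side as $-\int_{\mathbb{R}^d} (\sigma^{-1}Kxu)\cdot(\sigma\nabla u)\,w_\theta\,dx$, Cauchy--Schwarz in $L^2(w_\theta)$ will yield
\begin{equation*}
\frac{\theta}{2}\|\sigma^{-1}Kxu\|_{L^2(w_\theta)}^2 \leq \|\sigma^{-1}Kxu\|_{L^2(w_\theta)}\,\|\sigma\nabla u\|_{L^2(w_\theta)},
\end{equation*}
and dividing by $\|\sigma^{-1}Kxu\|_{L^2(w_\theta)}$ (trivial otherwise) gives \eqref{eq:control_xu_alpha} for smooth compactly supported $u$.

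To finish, I would extend both statements to arbitrary $u\in H^1(w_\theta)$ by approximating $u$ in $H^1(w_\theta)$ by a sequence $(u_n)\subset C_c^\infty$, built in the standard way by smooth truncation followed by mollification (the weight $w_\theta$ is smooth and locally bounded above and below, so the truncation and mollification behave as in the classical $H^1$-setting). Applying \eqref{eq:control_xu_alpha} to $u_n-u_m$ shows that $(\sigma^{-1}Kx u_n)$ is Cauchy in $L^2(w_\theta)$, and its limit must agree with $\sigma^{-1}Kxu$ by extracting a subsequence converging almost everywhere. This delivers at once $\sigma^{-1}Kxu\in L^2(w_\theta)$ and the passage to the limit in both \eqref{eq:IPP_xu_alpha} and \eqref{eq:control_xu_alpha}. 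The only real obstacle is precisely this last step: a priori there is no reason to expect $Kxu\in L^2(w_\theta)$ for a generic $u\in H^1(w_\theta)$, so the inequality \eqref{eq:control_xu_alpha} is simultaneously a quantitative bound and a qualitative regularity statement---a typical Hardy-type feature of Gaussian-weighted spaces.
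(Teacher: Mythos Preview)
Your proof is correct and follows essentially the same route as the paper: the same integration by parts against $\nabla w_\theta = \theta K\sigma^{-2}x\,w_\theta$ gives the identity \eqref{eq:IPP_xu_alpha}, and the same Cauchy--Schwarz argument yields \eqref{eq:control_xu_alpha}. The only differences are cosmetic---the paper derives \eqref{eq:control_xu_alpha} first (by integrating $u^2$ against $\nabla\cdot(\sigma^2\nabla w_\theta)$) and then \eqref{eq:IPP_xu_alpha}, and it omits the density argument you spell out.
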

\begin{proof}[Proof of Lemma~\ref{lem:estimates}]
Note that we have $\nabla w_{\theta}(x)= \theta K\gs^{-2}x w_{\theta}(x)$ and $\nabla\cdot (\gs^2 \nabla w_{\theta})(x)=\theta^2 \vert \gs^{-1} K x\vert^2 w_{\theta}(x)+\theta\Tr(K)  w_{\theta}(x)$, so that
\begin{multline}
\theta^2\int_{\bbR^d} u^2 \vert \gs^{-1} K x\vert^2 w_{\theta}\, \leq\, \int_{\bbR^d}u^2 \nabla\cdot (\gs^2\nabla w_{\theta})\, =\, -2\int_{\bbR^d}u \nabla u\cdot \gs^2 \nabla w_{\theta}\\ \leq\, 2\theta \left( \int_{\bbR^d} u^2 \vert \gs^{-1} Kx\vert^2 w_{\theta}\right)^\frac12\left(\int_{\bbR^d}\vert \gs \nabla u\vert^2 w_{\theta}\right)^\frac12\, , 
\end{multline}
which implies \eqref{eq:control_xu_alpha}. Concerning \eqref{eq:IPP_xu_alpha}, it is a consequence of the identity
\begin{equation}
\int_{\bbR^d}u\nabla u \cdot K x  w_{\theta}=-\int_{\bbR^d}u\nabla u \cdot K x w_{\theta}- \Tr(K)\int_{\bbR^d}u^2 w_{\theta}- \theta \int_{\bbR^d}u^2 \vert \sigma^{-1} K x\vert^2 w_{\theta}\, .
\end{equation}
Lemma~\ref{lem:estimates} is proven.
\end{proof}

\begin{lemma}
\label{lem:bound L OU weighted L2}
Fix $\theta\in (0,1)$. For all $u\in L^{ 2}(w_{ \theta})$ such that $\int_{\bbR^d} u(x) {\rm d} x=0$ we have
\begin{equation}\label{eq poinc w-ga}
\left\Vert \gs \nabla u\right\Vert^2_{ L^{ 2}(w_{ \theta})}\, \geq\, \theta (\Tr(K)+ \underline{ k}) \left\Vert u \right\Vert_{ L^{ 2}(w_{\theta})}^{ 2}\, ,
\end{equation}
and
\begin{equation}
\label{eq:Dir_Lgamma_alpha}
\langle \cL u,u\rangle_{L^2(w_{\theta})}\, \leq\, -\frac{k_\theta}{ \theta (\Tr(K)+ \underline{ k}) }\Vert \gs\nabla u\Vert^2_{L^2(w_{\theta})}\,
\leq\, -k_\theta \Vert  u\Vert^2_{L^2(w_{ \theta})}\, ,
\end{equation}
where 
\begin{equation}
\label{eq:k_alpha_gamma}
k_\theta\, =\,\theta \underline{ k}-\frac{1-\theta}{2} \Tr(K)\, .
\end{equation}
\end{lemma}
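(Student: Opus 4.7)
The plan is to establish the weighted Poincaré inequality \eqref{eq poinc w-ga} first, and then to deduce \eqref{eq:Dir_Lgamma_alpha} by a short integration-by-parts computation combined with the identity \eqref{eq:IPP_xu_alpha} already at hand.

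For the Poincaré step, I will exploit the isometry $A\colon f \mapsto f w_{-\theta}$ from $L^2(w_{-\theta})$ onto $L^2(w_\theta)$ described in Section~\ref{subsec:OU}. Given $u\in L^2(w_\theta)$ with $\int u = 0$, I write $u = Af$ with $f := u w_{\theta} \in L^2(w_{-\theta})$, so that $\|u\|_{L^2(w_\theta)} = \|f\|_{L^2(w_{-\theta})}$ and, crucially, the \emph{unweighted} constraint $\int u=0$ translates exactly into the \emph{weighted} orthogonality $\int f\, w_{-\theta} = 0$. A direct computation gives $\nabla u = w_{-\theta}(\nabla f - \theta f K\gs^{-2}x)$; expanding $|\gs\nabla u|^2$ and integrating the cross-term $\int f \nabla f \cdot K x\, w_{-\theta}$ by parts once, using that $\nabla\cdot(K x \, w_{-\theta}) = (\Tr(K) - \theta |\gs^{-1}Kx|^2) w_{-\theta}$, the quartic-in-$x$ contributions exactly cancel, producing the clean identity
\begin{equation*}
\|\gs \nabla u\|_{L^2(w_\theta)}^2\, =\, \|\gs \nabla f\|_{L^2(w_{-\theta})}^2 + \theta \Tr(K)\, \|f\|_{L^2(w_{-\theta})}^2.
\end{equation*}
The spectral-gap Poincaré inequality \eqref{eq:Poincare q0}, applicable to $f$ precisely because $\int f w_{-\theta} = 0$, then delivers \eqref{eq poinc w-ga} with constant $\theta(\Tr(K)+\underline{k})$.

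For \eqref{eq:Dir_Lgamma_alpha}, I will integrate $\langle \cL u, u\rangle_{L^2(w_\theta)}$ by parts twice against $u w_\theta$ to obtain
\begin{equation*}
\langle \cL u, u\rangle_{L^2(w_\theta)}\, =\, -\|\gs\nabla u\|_{L^2(w_\theta)}^2 - (1+\theta)\int u \nabla u \cdot K x\, w_\theta - \theta\, \|\gs^{-1} K x\, u\|_{L^2(w_\theta)}^2,
\end{equation*}
and then substitute \eqref{eq:IPP_xu_alpha} into the middle term. The coefficient of $\|\gs^{-1}Kxu\|_{L^2(w_\theta)}^2$ collapses to $\theta(\theta-1)/2 \leq 0$ (using $\theta < 1$), so this term can simply be dropped, yielding $\langle \cL u, u\rangle_{L^2(w_\theta)}\leq -\|\gs\nabla u\|_{L^2(w_\theta)}^2 + \frac{(1+\theta)\Tr(K)}{2}\|u\|_{L^2(w_\theta)}^2$. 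Plugging \eqref{eq poinc w-ga} into the last term and using the arithmetic identity $2k_\theta = 2\theta\underline{k} - (1-\theta)\Tr(K)$, the coefficient of $\|\gs\nabla u\|_{L^2(w_\theta)}^2$ telescopes to exactly $-k_\theta/(\theta(\Tr(K)+\underline{k}))$, which is the first inequality in \eqref{eq:Dir_Lgamma_alpha}; a final invocation of \eqref{eq poinc w-ga} yields the second.

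The main obstacle is matching the natural unweighted zero-mean condition $\int u = 0$ appearing in the statement with the weighted orthogonality $\int f\, w_{-\theta} = 0$ that the Gaussian Poincaré inequality \eqref{eq:Poincare q0} requires on the reversible side. The isometry $A$ is tailor-made for this conversion, which is also why the hypothesis $\int u = 0$ (rather than some $w_\theta$-weighted zero-mean condition, which would be incompatible with the spectral theory of $\cL_\theta^*$) is the correct one to impose.
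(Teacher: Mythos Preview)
Your proof is correct and essentially identical to the paper's. For \eqref{eq poinc w-ga} the paper reaches the same gradient-norm identity via the Dirichlet form of $\cL_\theta$ (equation \eqref{eq:Dir_Lgamma_gamma}) rather than your explicit isometry computation, but both then invoke the same spectral gap \eqref{eq:Poincare q0}; for \eqref{eq:Dir_Lgamma_alpha} your integration by parts combined with \eqref{eq:IPP_xu_alpha} reproduces the paper's identity \eqref{eq:Dirich_L_alpha} and the subsequent steps verbatim.
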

Note that this result is consistent with the case $ \theta= 1$, where the optimal rate is $ \underline{ k}$, and that $k_\theta>0$ when $\theta > \frac{\Tr(K)}{\Tr(K)+2 \underline{ k}}$.

\begin{proof}[Proof of Lemma~\ref{lem:bound L OU weighted L2}]
Consider $\theta_1,\theta_2\in (0,1)$ and $u\in L^{ 2}(w_{\theta_1})$ such that $ \int_{ \mathbb{ R}^{ d}}u(x) {\rm d} x=0$. A straightforward calculation gives
\begin{multline}
\label{eq:Dir_Lgamma_alpha_1}
\langle \cL_{\theta_2} u,u\rangle_{L^2(w_{\theta_1})}\, =\, - \left\Vert \gs \nabla u\right\Vert^2_{L^2(w_{\theta_1})}+( \theta_2- \theta_1)\int_{\bbR^d}u \nabla u\cdot K x\,w_{ \theta_1} \\+\theta_2\Tr(K)\Vert u\Vert^2_{L^2(w_{\theta_1})}\, .
\end{multline}
In the particular case of $ \theta_2=\theta_1=\theta$, this boils down to
\begin{equation}
\label{eq:Dir_Lgamma_gamma}
\langle \cL_{\theta} u, u\rangle_{L^2(w_{\theta})}\, =\, -\left\Vert \gs\nabla u\right\Vert^2_{ L^{ 2}(w_{ \theta})} + \theta\Tr(K)\left\Vert u \right\Vert_{ L^{ 2}(w_{ \theta})}^{ 2}\, ,
\end{equation}
and combining this with \eqref{eq:Lgamma_spectral_gap}, we obtain the Poincar\'e inequality \eqref{eq poinc w-ga}.
Now going back to \eqref{eq:Dir_Lgamma_alpha_1}, with $\theta_2=1$ and $\theta_1=\theta$ and using \eqref{eq:IPP_xu_alpha},
\begin{multline}
\langle \cL_{ \theta} u,u\rangle_{L^2(w_{ \theta})}\, =\, -\left\Vert\gs \nabla u\right\Vert^2_{L^2(w_{ \theta})}-\frac{ \theta}{2}(1- \theta)\left\Vert \gs^{-1} Kx u\right\Vert^2_{L^2(w_{ \theta})}\\ +\frac{ 1+ \theta}{2}\Tr(K)\Vert u\Vert^2_{L^2(w_{ \theta})}\, .\label{eq:Dirich_L_alpha}
\end{multline}
It remains to remark from \eqref{eq poinc w-ga} that
\begin{multline}
- \left\Vert \gs\nabla u\right\Vert^2_{L^2(w_{\theta})}+\frac{ 1+ \theta}{2}\Tr(K)\Vert u\Vert^2_{L^2(w_{ \theta})}
\leq -\left(1-\frac{1+\theta}{2\theta}\frac{\Tr(K)}{\Tr(K)+ \underline{ k}}\right)\Vert \gs\nabla u\Vert^2_{L^2(w_{\theta})}\\
=\, -\frac{2\theta \underline{ k}-(1-\theta) \Tr(K)}{2 \theta (\Tr(K)+ \underline{ k})}\Vert \gs\nabla u\Vert^2_{L^2(w_{ \theta})}\, .
\end{multline}
The second inequality in \eqref{eq:Dir_Lgamma_alpha} is again a consequence of \eqref{eq poinc w-ga}.
\end{proof}

\begin{lemma}
\label{lem:control_U}
Fix $\theta\in (0,1)$. For all $v_j\in L^{ 2}(w_{ \theta})$ satisfying the equalities  $\int_{ \mathbb{ R}^{ d}} v_j(x) {\rm d} x =0$, $\int_{ \mathbb{ R}^{ d}} x_j v_j(x) {\rm d} x=0$ and $\int_{ \mathbb{ R}^{ d}} x_i^n v_j(x) {\rm d} x=0$ for $i\neq j$ and $n\geq 1$,
\begin{equation}
\label{eq:poinc_2}
\Vert \gs \nabla v_j\Vert^2_{ L^{ 2}(w_{\theta})}\, \geq\, \theta (\Tr(K)+k_j) \left\Vert v_j \right\Vert_{ L^{ 2}(w_{ \theta})}^{ 2} \, ,
\end{equation}
and
\begin{multline}
\left\langle \mathcal{ U}_{ j} v_j\, ,\, v_j\right\rangle_{ L^{ 2}(w_{ \theta})}\, \leq\, - \frac{ k^{ \prime}_{ \theta}}{ \theta (\Tr(K)+2 \underline{ k})}\Vert \gs \nabla  v_j\Vert^2_{L^2(w_{ \theta})}-\frac{ \underline{ k}\theta(1- \theta)}{2 \bar \sigma}\Vert  x v_j\Vert^2_{L^2(w_{\theta})}\\
\leq - k^{ \prime}_{ \theta}\Vert v_j\Vert^2_{L^2(w_{ \theta})}-\frac{ \underline{ k}\theta(1- \theta)}{2\bar \sigma}\Vert  x v_j\Vert^2_{L^2(w_{\theta})}\label{eq:Dir_Ugamma_alpha}
\end{multline}
where $k^{ \prime}_{ \theta}$ is given by
\begin{equation}
\label{eq:k_prime}
k^{ \prime}_{ \theta} = (2\theta-1) \underline{ k} -\frac{1-\theta}{2}\Tr(K)\, .
\end{equation}
\end{lemma}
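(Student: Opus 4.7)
The plan is to mirror the structure of the proof of Lemma~\ref{lem:bound L OU weighted L2}, upgrading the Poincaré step to exploit the additional moment conditions on $v_j$.

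Step~1: improved Poincaré \eqref{eq:poinc_2}. I use the spectral decomposition of $\mathcal{L}_\theta$ on $L^{2}(w_\theta)$. Via the isometry $A:f\mapsto fw_{-\theta}$ introduced in Section~\ref{subsec:OU}, one has $\mathcal{L}_\theta=A\circ\mathcal{L}_\theta^{*}\circ A^{-1}$, so $\mathcal{L}_\theta$ is self-adjoint on $L^{2}(w_\theta)$ with orthonormal eigenbasis $\{e_l:=\psi_l w_{-\theta}\}_{l\in\mathbb{N}^d}$ and eigenvalues $-\theta\sum_{i}l_i k_i$. Expanding $v_j=\sum_l c_l e_l$, the coefficient is $c_l=\langle v_j,e_l\rangle_{L^{2}(w_\theta)}=\int_{\mathbb{R}^d} v_j(x)\psi_l(x)\,{\rm d}x$, where $\psi_l$ is a polynomial of multi-degree $l$ in the scaled coordinates. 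The three moment conditions on $v_j$ force $c_l=0$ for every $l\in\{0,\mathbf 1_j\}\cup\{n\mathbf 1_i:i\neq j,\;n\geq 1\}$, which captures the multi-indices violating $\sum_i l_i k_i\geq k_j$. Hence $\langle \mathcal{L}_\theta v_j,v_j\rangle_{L^{2}(w_\theta)}\leq -\theta k_j\|v_j\|^2_{L^{2}(w_\theta)}$, and combining this with the identity \eqref{eq:Dir_Lgamma_gamma} produces \eqref{eq:poinc_2}.

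Step~2: dissipation identity. Since $\mathcal{U}_j=\mathcal{L}+k_j I$ by \eqref{eq:def_U}, applying \eqref{eq:Dirich_L_alpha} to $u=v_j$ gives
\begin{equation*}
\langle \mathcal{U}_j v_j,v_j\rangle_{L^{2}(w_\theta)}=-\Vert\sigma\nabla v_j\Vert^2_{L^{2}(w_\theta)}-\frac{\theta(1-\theta)}{2}\Vert\sigma^{-1}Kxv_j\Vert^2_{L^{2}(w_\theta)}+\left(\tfrac{1+\theta}{2}\Tr(K)+k_j\right)\Vert v_j\Vert^2_{L^{2}(w_\theta)}.
\end{equation*}

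Step~3: combination. Using the Poincaré from Step~1 (or its weakened form with $\underline k$ in place of $k_j$ to match the denominator $\Tr(K)+2\underline k$), I absorb the positive $\Vert v_j\Vert^2$ term into a fraction of $\Vert\sigma\nabla v_j\Vert^2$, the residual producing the first term of \eqref{eq:Dir_Ugamma_alpha}. Separately, the coordinatewise bound $|\sigma^{-1}Kx|^2\geq(\underline k/\bar\sigma)^2|x|^2$ gives a lower bound on $\Vert\sigma^{-1}Kxv_j\Vert^2_{L^{2}(w_\theta)}$ in terms of $\Vert xv_j\Vert^2_{L^{2}(w_\theta)}$, yielding the second term. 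The second inequality in \eqref{eq:Dir_Ugamma_alpha} follows by reapplying \eqref{eq:poinc_2} to trade the remaining $\Vert\sigma\nabla v_j\Vert^2$ against $\Vert v_j\Vert^2$.

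The main obstacle is the arithmetic bookkeeping needed to reach the precise form $k_\theta'=(2\theta-1)\underline k-\frac{1-\theta}{2}\Tr(K)$: the coefficient $\frac{1+\theta}{2}\Tr(K)+k_j$ from Step~2 has to be balanced against the Poincaré constant so that the final bound features $\underline k$ and $\Tr(K)+2\underline k$ rather than $k_j$ and $\Tr(K)+k_j$. A secondary technical point lies in the case check of Step~1 that $\sum_i l_i k_i\geq k_j$ for every surviving multi-index, separating the cases $l_j\geq 2$, $l_j=1$ with at least one $l_i\geq 1$ for $i\neq j$, and $l_j=0$ with several nonzero coordinates.
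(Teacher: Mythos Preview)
Your proposal follows essentially the same path as the paper: spectral decomposition of $\mathcal{L}_\theta$ in $L^2(w_\theta)$ to upgrade the Poincar\'e constant, the Dirichlet identity \eqref{eq:Dirich_L_alpha} combined with $\mathcal{U}_j=\mathcal{L}+k_j$, and then two successive applications of the improved Poincar\'e inequality. The one substantive difference is that the paper asserts the sharper spectral bound $\langle\mathcal{L}_\theta v_j,v_j\rangle_{L^2(w_\theta)}\leq -2\theta k_j\|v_j\|^2_{L^2(w_\theta)}$ rather than your $-\theta k_j$; this factor of $2$ (coming from the observation that every surviving multi-index has $|l|\geq 2$) is precisely what resolves the ``arithmetic bookkeeping'' obstacle you flag in Step~3 and produces the denominator $\Tr(K)+2\underline{k}$ together with the constant $k'_\theta$.
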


Note that again this result is consistent with the case $ \theta= 1$, where the optimal rate (for $j$ such that $k_j=k$) is $ \underline{ k}$, and that $k'_{\theta}>0$ when $\theta > \frac{\Tr(K)+2 \underline{ k}}{\Tr(K)+4 \underline{ k}}$.

\begin{proof}[Proof of Lemma~\ref{lem:control_U}]
The Poincar\'e inequality \eqref{eq:poinc_2} is a consequence of \eqref{eq:Dir_Lgamma_gamma} and the fact that for $v_j$ satisfying the hypotheses of the Lemma, we have (recall \eqref{decomp cL} and the fact that the eigenvectors are constructed with Hermite polynomials)
\begin{equation}
 \langle \cL_\theta v_j, v_j\rangle_{L^2(w_\theta)}\, \leq\, -2\theta k_j \Vert v_j\Vert^2_{L^2(w_\theta)}\, .
\end{equation}
 From \eqref{eq:Dirich_L_alpha}, we obtain
\begin{multline}
\left\langle \mathcal{ U}_{j} v_j\, ,\, v_j\right\rangle_{ L^{ 2}(w_{ \theta})}\, =\, \left\langle \cL v_j\, ,\, v_j\right\rangle_{ L^{ 2}(w_{ \theta})} + k_j\left\Vert v_j \right\Vert_{ L^{ 2}(w_{  \theta})}\\
=-\Vert \gs\nabla v_j\Vert^2_{L^2(w_{ \theta})}-\frac{ \theta}{2}( 1- \theta)\Vert \gs^{-1} K x v_j\Vert^2_{L^2(w_{\theta})} \\+\left(\frac{1+\theta}{2}\Tr(K)+k_j\right) \Vert v_j\Vert^2_{L^2(w_{ \theta})}\, .
\end{multline}
Hence, we deduce \eqref{eq:Dir_Ugamma_alpha} by two successive applications of \eqref{eq:poinc_2} to the previous inequality.
\end{proof}

\subsubsection{Regularity of the solution of \eqref{eq:slow fast PDE modified}}
Recall the definition of $ \varepsilon$ in \eqref{hyp:bound_F_exp}.
\begin{proposition}
\label{lem:regularity_p}
Fix $ \alpha$ and $\beta$ such that $0<\beta<\beta+2\gep<\ga<1$. Suppose that \eqref{eq:slow fast PDE modified} is endowed with an initial condition $ (p_0,m_0)\in \cP_2\times \bbR^d$ such that $p_0$ admits a density w.r.t. Lebesgue measure (that is renamed as $ p_{ 0}$ with a slight abuse of notations) satisfying $ p_0\in L^2(w_{\ga})$. Then, for all $ \delta \geq 0$, for all $t>0$, $ p_{ t}({\rm d}x)$ admits a density $ p_{ t}(x) {\rm d} x$ such that $ p\in L^{ 2}((0, T), H^1(w_{\ga}))$ for all $T>0$. If moreover $p_0\in H^1(w_{\beta})$, then for all $i=1, \ldots, d$, $ \partial_{x_i} p\in L^{ 2}((0, T), H^1(w_{\beta}))$ for all $T>0$.
\end{proposition}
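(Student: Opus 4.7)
My plan is to prove both statements by weighted parabolic energy estimates, carried out on smoothed approximations of $p_t$ and passed to the limit in the standard way. For the first claim, I would test \eqref{eq:slow fast PDE modified} against $p_t w_\alpha$. The identity \eqref{eq:Dirich_L_alpha} of Lemma \ref{lem:bound L OU weighted L2} extracts the dissipation $-\|\sigma\nabla p_t\|^2_{L^2(w_\alpha)}$ together with the negative term $-\tfrac{\alpha(1-\alpha)}{2}\|\sigma^{-1}Kxp_t\|^2_{L^2(w_\alpha)}$ from the Ornstein--Uhlenbeck operator. Two integrations by parts rewrite the nonlinear transport term as $\tfrac12\int p_t^2\nabla\cdot v_t\,w_\alpha-\tfrac{\alpha}{2}\int p_t^2\,v_t\cdot K\sigma^{-2}x\,w_\alpha$ with $v_t=\dot m_t-\delta F(\cdot+m_t)-\delta h(m_t)$. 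The critical contribution $\tfrac{\alpha\delta}{2}\int p_t^2 F(x+m_t)\cdot K\sigma^{-2}x\,w_\alpha$ I would split via $F\cdot K\sigma^{-2}x=F\cdot K\sigma^{-2}(x+m_t)-F\cdot K\sigma^{-2}m_t$: Hypothesis \ref{hyp F}(2) bounds the first piece by $C_F-c_F|x+m_t|^2_{K\sigma^{-2}}$, a favorable negative quadratic contribution, while the cross term is controlled via Hypothesis \ref{hyp F}(4) and the uniform bound on $|m_t|$ from Lemma \ref{lem:pathwise existence}. Together with the a priori moment bound \eqref{eq:exp_control_Xt}, this should close into an inequality of the form $\tfrac{d}{dt}\|p_t\|^2_{L^2(w_\alpha)}+c\|\sigma\nabla p_t\|^2_{L^2(w_\alpha)}\leq C_T(1+\|p_t\|^2_{L^2(w_\alpha)})$, and Grönwall followed by integration in time yield the claimed $L^2(0,T;H^1(w_\alpha))$ regularity.

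For the second claim, I would apply the same scheme to equation \eqref{eq:PDE partiali p} for $y_t^i=\partial_{x_i}p_t$, testing against $y_t^i w_\beta$ and using inequality \eqref{eq:Dir_Ugamma_alpha} of Lemma \ref{lem:control_U} for the Dirichlet estimate of $\cU_i$, which provides both $-\|\sigma\nabla y_t^i\|^2_{L^2(w_\beta)}$ and $-\|xy_t^i\|^2_{L^2(w_\beta)}$ dissipations. The transport part $\nabla\cdot((\dot m_t-\delta F_t-\delta h(m_t))y_t^i)$ is handled as in the first step, and the only new ingredient is the source term $\delta\nabla\cdot(\partial_i F_t\,p_t)$: integration by parts, Cauchy--Schwarz and Young reduce its control to bounding $\int(\partial_i F_t)^2 p_t^2 w_\beta$, modulo terms absorbable into the two dissipations. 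By Hypothesis \ref{hyp F}(4), the uniform bound on $m_t$, and the strict inequality $\beta+2\varepsilon<\alpha$ (which leaves a margin to bound $w_\varepsilon(x+m_t)$ by a $T$-dependent constant times $w_{\varepsilon(1+\eta)}(x)$ for small $\eta>0$), this source term is dominated by $C_T\|p_t\|^2_{L^2(w_\alpha)}$, a quantity already controlled from the first step. Grönwall then concludes.

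The hard part will be closing the first step: since $F$ only satisfies the exponential growth bound \eqref{hyp:bound_F_exp}, naive estimates of the transport term produce weights of the form $w_{\alpha+\varepsilon}$ or larger, which are not directly controlled by the moment bound \eqref{eq:exp_control_Xt} when $\alpha$ is close to $1$. It is essential to combine the quadratic pull-back from Hypothesis \ref{hyp F}(2), the negative $-\|\sigma^{-1}Kxp_t\|^2_{L^2(w_\alpha)}$ contribution from \eqref{eq:Dirich_L_alpha}, and the smallness \eqref{hyp:epsilon_small} of $\varepsilon$ to absorb all exponentially growing cross terms; this delicate interplay is what makes the first step the main technical challenge, while the second step is essentially a reprise of the first one in a lower-weight space, using the $\beta+2\varepsilon<\alpha$ separation to transfer regularity from $p_t$ to $\partial_{x_i} p_t$.
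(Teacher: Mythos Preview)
Your overall strategy---weighted parabolic energy estimates in $L^2(w_\alpha)$ for $p_t$ and in $L^2(w_\beta)$ for $\partial_{x_i}p_t$---is the same as the paper's, and your identification of the $\beta+2\varepsilon<\alpha$ gap as the mechanism allowing the source term in \eqref{eq:PDE partiali p} to be controlled by $\|p_t\|_{L^2(w_\alpha)}$ is exactly right. However, there is a genuine gap in how you close the first step, and two structural differences worth noting.

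\textbf{The gap.} Your plan to control the $F$-transport term via Hypothesis~\ref{hyp F}(4) cannot close. After your integration by parts, you face both $\frac{\delta}{2}\int p_t^2\,\nabla\!\cdot F_t\,w_\alpha$ and the cross term $\frac{\alpha\delta}{2}\int p_t^2\,F(x+m_t)\cdot K\sigma^{-2}m_t\,w_\alpha$. Bounding either via \eqref{hyp:bound_F_exp} produces a factor $w_\varepsilon(x+m_t)$, hence an integrand of the form $p_t^2\,w_{\alpha+\varepsilon+o(1)}$; no quadratic negative term---neither the $-c_F|x+m_t|^2$ from Hypothesis~\ref{hyp F}(2) nor the $-\|\sigma^{-1}Kxp_t\|^2_{L^2(w_\alpha)}$ from \eqref{eq:Dirich_L_alpha}---can absorb an exponentially growing coefficient, regardless of how small $\varepsilon$ is. The moment bound \eqref{eq:exp_control_Xt} controls $\int p_t\,w_\alpha$, not $\int p_t^2\,w_\alpha$, so it does not help here either. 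The paper instead treats the full combination $\alpha F_t\cdot K\sigma^{-2}x-\nabla\!\cdot F_t$ at once and shows it is bounded above by a constant, using Hypotheses~\ref{hyp F}(2) and~(3) together with the boundedness of $m_t$: Hypothesis~\ref{hyp F}(3), which you never invoke, is precisely what makes $|\nabla\!\cdot F(z)|$ negligible compared with $|F(z)\cdot K\sigma^{-2}z|$, so that the strongly negative contribution from Hypothesis~\ref{hyp F}(2) dominates. This yields the coercivity estimate $\langle Q_{t,R}u,u\rangle_{L^2(w_\alpha)}\geq C_3\|\nabla u\|^2_{L^2(w_\alpha)}-C_4\|u\|^2_{L^2(w_\alpha)}$ with $R$-independent constants (the paper's \eqref{eq:Q coerc}).

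\textbf{Structural differences.} First, ``smoothed approximations of $p_t$'' is too vague: the obstruction is not the regularity of the solution but the unboundedness of $F$, which makes the bilinear form discontinuous on $H^1(w_\alpha)\times H^1(w_\alpha)$. The paper truncates the \emph{coefficient}, replacing $F_t$ by $F_R=F_t\,\chi_R$, applies Lions' theorem to obtain a regular $\nu^R$, derives $R$-uniform bounds, and passes to the limit via Banach--Alaoglu. Second, for the second claim the paper does not test \eqref{eq:PDE partiali p} directly; it first proves $\nu^R\in L^2((0,T),H^2_0(w_\beta))$ by a Galerkin argument in the eigenbasis of $\mathcal L_\beta$ (showing $\partial_t\nu^R\in L^2(L^2(w_\beta))$ and then $\mathcal L_\beta\nu^R\in L^2(L^2(w_\beta))$), from which $\partial_{x_i}\nu^R\in L^2(H^1(w_\beta))$ follows by the spectral inequality $\|\partial_i u\|_{H^1(w_\beta)}\leq C_\beta\|u\|_{H^2_0(w_\beta)}$. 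Your direct energy estimate on $y_t^i$ is the right formal computation (and is exactly what the paper uses later, e.g.\ in Lemma~\ref{lem:nablamu_bounded_L2}), but justifying it rigorously requires first knowing that $\partial_{x_i}\nu^R$ has enough regularity to be a legitimate test function---which is what the paper's $H^2_0$ bootstrap provides.
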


\begin{proof}[Proof of Lemma~\ref{lem:regularity_p}]
Anticipating on the end of the proof, the point of the following is to construct a regular probability solution $ t \mapsto \nu_{ t}$ to the first equation of \eqref{eq:slow fast PDE modified} with the nonlinearity frozen equal to $m_t=m_t^{p_0,m_0}$. By Remark~\ref{rem:uniqueness_frozen_PDE}, it is equal to $ p_t^{p_0,m_0}$. Hence, we will have proven that $ p_t^{p_0,m_0}$ is regular.

We rely on classical ideas, following in particular \cite{Mischler2016}. Consider a regular plateau function $\chi(\cdot)$ on $[0, +\infty)$, namely $ \chi(x)=1$ for $x\in[0, 1]$, $ \chi(x)=0$ for $x\geq 2$ and decreasing on $[1, 2]$. Define finally for $R>0$, $\chi_R(x)=\chi(\vert x\vert^2/R)$ and $F_R(x)=F_t(x) \chi_R(x+m_t)$. For every $t\geq0$, $R\geq1$, introduce the linear operator $Q_{t,R}$
\begin{equation}
-Q_{t,R} u \, =\,  \cL u+  \dot m_t\cdot \nabla u-\delta \nabla\cdot(u (F_R+ h(m_{ t})) \, .
\end{equation}
A straightforward calculation leads to
\begin{multline}\label{eq:decomp Quv}
\langle Q_{t,R} u,v\rangle_{L^2(w_{\ga})}\, =\, \int_{\bbR^d}\nabla u\cdot\gs^2\nabla v\,  w_{\ga}-(1-\ga)\int_{\bbR^d}\nabla u\cdot K x v \,w_{\ga}
\\-\Tr(K)\int_{\bbR^d}uv\, w_{\ga}
- \dot m_{ t} \cdot\int_{ \mathbb{ R}^{ d}} \nabla u v w_{  \alpha} +\delta \int_{\bbR^d}\nabla u\cdot (F_R+h(m_{ t})) v\,w_{\ga}\\+\delta \int_{\bbR^d}uv \nabla\cdot (F_R+h(m_{ t})) w_{\ga}.
\end{multline}
Fix a finite time horizon $T>0$. Using \eqref{eq:control_xu_alpha}, the fact that $ h$ is bounded (see Section \ref{sec:modification dynamics}) and $\sup_{ t\in[0, T]}\vert \dot m_t\vert< +\infty$ (recall \eqref{eq:dot m_inf_delta}), it is easy to see that, for $t\in [0,T]$,
\begin{equation}\label{eq:bound Q L2 H1}
\left| \langle Q_{t,R} u,v\rangle_{L^2(w_{\ga})} \right| \, \leq\,  C_{T,R} \Vert u\Vert_{H^1(w_{\ga})}\Vert v\Vert_{H^1(w_{\ga})}\, .
\end{equation}
Let us now show that $Q_{t,R}$ is coercive, by finding a lower bound for $\langle Q_{t,R}u,u\rangle_{L^2(w_{\ga})}$. The constants appearing in this lower bound will not depend on $R$, which will be crucial to take the limit $R\to \infty$. Note that the following integration by parts formula is true:
\begin{equation}
\int_{\bbR^d} \nabla u\cdot F_R uw_{\ga}=-\int_{\bbR^d}u F_R\cdot \nabla u w_{\ga}-\int_{\bbR^d} u^2 \nabla\cdot F_R w_{\ga}-\ga \int_{\bbR^d}u^2 F_R\cdot K\gs^{-2} x w_{\ga},
\end{equation}
so that
\begin{equation}\label{eq:simplify F term}
\int_{\bbR^d} \nabla u\cdot F_R uw_{\ga}=-\frac12\int_{\bbR^d} u^2 (\nabla\cdot F_R w_{\ga}+\ga F_R\cdot K\gs^{-2}x) w_{\ga}.
\end{equation}
Using also \eqref{eq:IPP_xu_alpha}, we obtain
\begin{multline}
\langle Q_{t,R} u,u\rangle_{L^2(w_{\ga})} \, =\, \Vert \gs \nabla u\Vert^2_{L^2(w_{\ga})}+\frac{\ga}{2}(1-\ga)\Vert \gs^{-1} Kx u\Vert^2_{L^2(w_{\ga})}-\frac{1+\ga}{2}\Tr(K)\Vert u\Vert^2_{L^2(w_{\ga})}\\
- \frac{ \alpha}{ 2}\left( \delta h(m_{ t})-\dot m_t\right)\cdot \int_{\bbR^d} K\gs^{-2}x u^2 w_{\ga} -\frac{\delta}{2}\int_{\bbR^d}u^2 (\ga F_R\cdot K\gs^{-2} x-\nabla\cdot F_R) w_{\ga}\, .\label{eq:QtR_u2}
\end{multline}
Note that, by \eqref{eq:m_inf_delta} and the assumptions on $h$, we have
\begin{equation}
\frac{ \alpha}{ 2}\left\vert \delta h(m_{ t})-\dot m_t \right\vert \,\leq\, C_{ 1}\, ,
\end{equation}
for some positive constant $C_{ 1}=C_{ 1}(\delta, T)$ independent of $R$.
Moreover we have
\begin{multline}
\ga F_R(x)\cdot K\gs^{-2} x-\nabla\cdot F_R(x)\, =\, \chi_R(x+m_t)(\ga F(x+m_t)\cdot K\gs^{-2}(x+m_t)\\-\nabla\cdot F(x+m_t)
-F(x+m_t)\cdot K\gs^{-2}m_t)
-\frac{2}{R}\chi' \left( \frac{ \vert x\vert^2}{ R}\right)x \cdot F(x)\, , 
\end{multline}
so that, using \eqref{hyp:F_dot_x_bound}, \eqref{hyp:F_dot_x_larger}, \eqref{eq:m_inf_delta} and the fact that $ \chi^{ \prime}\leq 0$, there exists a constant $C_{2}>0$ (independent of $R$) such that for all $R\geq1$,
\begin{align*}
\ga F_R(x)\cdot K\gs^{-2} x-\nabla\cdot F_R(x)&\, \leq\, C_{ 2} \left(\chi_R(x) -\frac{2}{R}\chi' \left( \frac{ \vert x\vert^2}{ R}\right)\right)\leq C_{2} \left(1 + 2 \left\Vert \chi^{ \prime} \right\Vert_{ \infty}\right).
\end{align*}
Consequently,
\begin{multline}\label{eq:lower_bound_Qtuu}
\langle Q_{t,R} u,u\rangle_{L^2(w_{\ga})} \, \geq\, \Vert \gs\nabla u\Vert^2_{L^2(w_{\ga})}+\int_{\bbR^d}u^2 \left( \frac{\ga}{2}(1-\ga)\vert \gs^{-1} K x\vert^2 - C_{ 1} \left\vert x \right\vert\right) w_{\ga}\\
 - \frac{ 1}{ 2} \left( (1+\ga)\Tr(K) + \delta C_{2} \left(1 + 2 \left\Vert \chi^{ \prime} \right\Vert_{ \infty}\right)\right)\int_{\bbR^d}u^2 w_{\ga}\, .
\end{multline}
Since $ \alpha< 1$ by assumption, we deduce that there exist $C_{3}>0$ and $C_{4}>0$ that depend on $\ga$, $ \delta$ but not on $R$ such that
\begin{equation}\label{eq:Q coerc}
\langle Q_{t,R} u,u\rangle_{L^2(w_{\ga})}\,\geq \, C_{3} \Vert \nabla u\Vert_{L^2(w_{\ga})}^{ 2}- C_{4}\Vert u\Vert^2_{L^2(w_{\ga})}\, .
\end{equation}
Then the Lions Theorem (see \cite{MR697382}, Theorem X.9) implies that there exists a unique $ \nu^{ R}$ such that, for all $T>0$,
\[
\nu^R\in L^2((0,T),H^1(w_{\alpha}))\cap \mathcal{ C}([0,T],L^2(w_{ \alpha}))\, \quad\text{with} \quad \frac{\dd}{\dd t} 
\nu^R\in L^2((0,T), H^{-1}(w_{ \alpha})) \, ,
\]
such that $ \nu^R_0= p_0$ and for any $v\in H^1(w_{\ga})$ and almost every $t\in [0,T]$,
\begin{equation}
\label{eq:nuR_weak_form_L2}
\left\langle \frac{\dd}{\dd t} \nu^R_t, v\right\rangle_{L^2(w_{\ga})}+\left\langle Q_{t,R}\, \nu^R_t,v\right\rangle_{L^2(w_{\ga})}\, =\, 0\, .
\end{equation}
The fact that the positivity of the solution $ \nu^{ R}$ is conserved for $t>0$ can be obtained by regularization arguments (note in particular that since $ \nu_{ t}^{ R} \in H^{ 1}(w_{  \alpha})$, so does $\left(\nu^{ R}_{ t}\right)_{ -}:= \min \left( \nu^{ R}_{ t}, 0\right)$ and we have $ \nabla  \left(\nu_{ t}^{ R}\right)_{ -}= \nabla \nu_{ t}^{ R} \mathbf{ 1}_{ \nu_{ t}^{ R}<0}$, see for example \cite{MR2254750}, X,~Theorem~8).

\medskip

Our next aim is to get bounds on $\nu^R$ and $ \frac{\dd}{\dd t} \nu^R$ independent from $R$.
Taking $v= \nu^R_t$ we get for almost every $t\in[0,T]$,
\begin{equation}\label{eq:ddt norm 2 nuR}
\frac12 \frac{\dd}{\dd t} \Vert \nu^R_t\Vert^2_{L^2(w_{\ga})}+\left\langle Q_{t,R}\, \nu^R_t, \nu^R_t\right\rangle_{L^2(w_{\ga})} \, =\, 0\, , 
\end{equation}
which implies, using \eqref{eq:Q coerc} and Gr\"onwall's inequality that $\Vert \nu^R_t\Vert^2_{L^2(w_{\ga})} \leq  e^{Ct}\Vert p_0\Vert^2_{L^2(w_{\ga})}$ and
\begin{equation}\label{eq:bound nuR ind R} \int_0^T \Vert \nu^R_t\Vert_{H^1(w_{\ga})}^2\dd t\, \leq C(T)\, \Vert p_0\Vert^2_{L^2(w_{\ga})}\, ,
\end{equation}
where these constants do not depend on $R$.
Consider now a $v\in H^1(w_{\ga'})$ with $\ga'+2 \gep<\ga$ and $\Vert v\Vert_{H^1(w_{\ga'})}\leq 1$. Our first aim is to prove that
\begin{equation}\label{eq:bound Q H1}
\left|\langle Q_{t,R} \nu^R_t, v\rangle_{L^2(w_{\ga'})}\right|\leq C \Vert \nu^R_t\Vert_{H^1(w_{\ga})}\Vert v\Vert_{H^1(w_{\ga'})}\, ,
\end{equation}
where the constant $C$ does not depend on $R$. We have, by Lemma \ref{lem:estimates},
\begin{equation}
\left|\int_{\bbR^d} \nabla \nu^R_t \cdot Kx  v \, w_{\ga'} \right|\, \leq \, \frac{2}{\ga'}\Vert \gs \nu^R_t\Vert_{H^1(w_{\ga'})}\Vert \gs v\Vert_{H^1(w_{\ga'})}\, ,
\end{equation}
and by \eqref{hyp:bound_F_exp},
\begin{equation}
\left|\int_{\bbR^d} \nabla \nu^R_t \cdot F_R v \, w_{\ga'} \right|\, \leq \, \Vert \nabla \nu^R_t F_R\Vert_{L^2(w_{\ga'})}\Vert v\Vert_{L^2(w_{\ga'})}\, 
\leq\, C_F \Vert \nu^R_t\Vert_{H^1(w_{\ga})} \Vert v\Vert_{L^2(w_{\ga'})}\, ,
\end{equation}
and
\begin{equation}
\left|\int_{\bbR^d} \nu^R_t v \nabla \cdot F_R  \, w_{\ga'} \right|\, \leq \, \Vert  \nu^R_t \nabla \cdot  F_R\Vert_{L^2(w_{\ga'})}\Vert v\Vert_{L^2(w_{\ga'})}\, 
\leq\, C_F \Vert \nu^R_t\Vert_{H^1(w_{\ga})} \Vert v\Vert_{H^2(w_{\ga'})}\, .
\end{equation}
So, recalling the decomposition \eqref{eq:decomp Quv}, \eqref{eq:bound Q H1} is indeed satisfied.
We deduce that
\begin{equation}\label{eq:bound ddt nu H-1}
\left\Vert  \frac{\dd}{\dd t} \nu^R_t\right\Vert_{L^2((0,T),H^{-1}(w_{\ga'}))}
\, \leq\, C\Vert \nu^R_t\Vert_{L^2((0,T),H^1(w_{\ga}))}\, \leq\, C'(T) \Vert p_0\Vert_{L^2(w_{\ga})}\, .
\end{equation} 

We are now ready to apply the Banach-Alaoglu Theorem: from \eqref{eq:bound nuR ind R} and \eqref{eq:bound ddt nu H-1} we deduce that there exists a sequence $(R_p)_{p\geq 0}$ going to infinity and a $\nu \in  L^2((0,T),H^1(w_{\ga}))$ with $\frac{\dd}{\dd t} \nu \in L^2((0,T),H^{-1}(w_{\ga'}))$ such that $\nu^{R_{ p}}$ and $\frac{\dd}{\dd t}  \nu^{ R_{ p}}$ converge weakly respectively to $\nu$ and $\frac{\dd }{\dd t} \nu$. 
In particular $\nu \in \mathcal{ C}([0,T],L^2(w_{\ga'}))$ (see for example  Theorem 7.2 of \cite{MR1881888}), and $\nu \in \mathcal{ C}([0,T],\cP_{ 2})$ by continuous inclusion. Moreover for any $\phi$ smooth with compact support $ \nu$ satisfies thus for all $t\leq T$
\begin{equation}
\int_{\bbR^d} \nu_t \phi_t\, =\, \int_{\bbR^d}p_0 \phi_0 +\int_0^t \nu_s \Big(\partial_t \phi_s+\nabla\cdot(\gs^2 \nabla\phi_s)-\big(Kx+\dot m_t-\gd(F_t+h(m_t))\big)\cdot \nabla \phi_s\Big)\dd s\, , 
\end{equation}
and thus it is also satisfied for any $\phi$, $ \mathcal{ C}^2$ with bounded derivatives, by a density argument. So $\nu$ is a weak solution of \eqref{eq:slow fast PDE modified}, and $\nu=p$ by uniqueness.

\medskip

Suppose now that $p_0 \in H^1(w_{\beta})$. Our first aim is to prove that we have  $\frac{\dd}{\dd t} \nu^R\in L^2((0,T), L^2(w_{\beta}))$, which means that $\frac{\dd}{\dd t} \partial_i \nu^R\in  L^2((0,T), H^{-1}(w_{\beta}))$ for all $i$. For the orthonormal basis $(e_k)$ of $L^2(w_{\beta})$ defined in \eqref{decomp cL}, we denote for $n\geq 1$
\begin{equation}
\nu^{R,n}_t := \sum_{|j|\leq n} \langle \nu^R_t,e_j\rangle_{L^2(w_{\beta})}e_j\, .
\end{equation}
For all $|j|\leq n$ we have
\begin{multline}
\left\langle \frac{\dd}{\dd t} \nu^{R,n}_t,e_j\right\rangle_{L^2(w_{\beta})}\, 
=\,\left\langle \frac{\dd}{\dd t} \nu^{R}_t, e_j\right\rangle_{L^2(w_{\beta})}
\, =\,  -\left\langle Q_{t,R}\nu^{R}_t, e_j\right\rangle_{L^2(w_{\beta})}\\
 =\,  \left\langle \mathcal{ L}_{\beta}\nu^{R}_t,e_j\right\rangle_{L^2(w_{\beta})} -\left\langle A_{t,R}\nu^{R}_t, e_j\right\rangle_{L^2(w_{\beta})}\\
 =\,  \left\langle \mathcal{ L}_{\beta}\nu^{{R,n}}_t, e_j\right\rangle_{L^2(w_{\beta})} -\left\langle A_{t,R}\nu^{R}_t, e_j\right\rangle_{L^2(w_{\beta})} \, ,
\end{multline}
where $A_{t,R}=Q_{t,R}+  \mathcal{ L}_\beta$ and we have used the fact that the $e_j$'s are eigenvalues of $ \mathcal{ L}_\beta$. So, since $ \frac{\dd}{\dd t} \nu^{R,n}_t\in {\rm Span}(e_j)_{|j|\leq n}$, we have
\begin{equation}
\left\langle \frac{\dd}{\dd t} \nu^{R,n}_t,\frac{\dd }{\dd t} \nu^{R,n}_t\right\rangle_{L^2(w_{\beta})}
\, =\,  \left\langle \mathcal{ L}_\beta\nu^{R,n}_t, \frac{\dd }{\dd t}\nu^{R,n}_t\right\rangle_{L^2(w_{\beta})}-\left\langle A_{t,R}\nu^{R}_t, \frac{\dd }{\dd t}\nu^{R,n}_t\right\rangle_{L^2(w_{\beta})}\, .
\end{equation}
But on one hand, we have
\begin{equation}
\left\langle \mathcal{ L}_\beta\nu^{R,n}_t, \frac{\dd }{\dd t}\nu^{R,n}_t\right\rangle_{L^2(w_{\beta})}
\, =\, -\int_{\bbR^d}\nabla \nu^{R,n}_t \cdot \gs^2\nabla \left( \frac{\dd}{\dd t}\nu^{R,n}_t\right) w_{\beta}\, =\, -\frac12\frac{\dd}{\dd t}\Vert \gs \nabla\nu^{R,n}_t\Vert_{L^2(w_{\beta})}^2\, ,
\end{equation}
and on the other hand
\begin{multline}\label{estimate AtR}
\left| \left\langle A_{t,R}\nu^{R}_t, \frac{\dd }{\dd t}\nu^{R,n}_t\right\rangle_{L^2(w_{\beta})}\right|\, \leq\, C_T\Big(\Vert  \nu^R_t\Vert_{L^2(w_{\beta})}+\Vert \nabla \nu^R_t\Vert_{L^2(w_{\beta})}+\Vert x\nabla \nu^R_t\Vert_{L^2(w_{\beta})}\\
+\Vert F_R\cdot \nabla \nu^R_t\Vert_{L^2(w_{\beta})}+\Vert \nabla\cdot F_R  \nu^R_t\Vert_{L^2(w_{\beta})} \Big)\left\Vert \frac{\dd}{\dd t}\nu^{R,n}_t\right\Vert_{L^2(w_{\beta})}\\
\leq\, C_{F,T} \Vert \nu^R_t\Vert_{H^1(w_{\ga})}\left\Vert \frac{\dd}{\dd t}\nu^{R,n}_t\right\Vert_{L^2(w_{\beta})}\, ,
\end{multline}
where we have used in particular \eqref{hyp:bound_F_exp} and the fact that $\beta+2\gep<\ga$. This means that 
\begin{equation}
\left\Vert  \frac{\dd}{\dd t} \nu^{R,n}_t\right\Vert^2_{L^2(w_{\beta})}
\, \leq\, -\frac{1}{2} \frac{\dd}{\dd t}\Vert \gs \nabla\nu^{R,n}_t\Vert_{L^2(w_{\beta})}^2+ C'_{F,T}\Vert \nu^R_t\Vert_{H^1(w_{\ga})}^2+\frac12 \left\Vert \frac{\dd}{\dd t}\nu^{R,n}_t\right\Vert_{L^2(w_{\beta})}^2\, ,
\end{equation}
and after a time integration we obtain
\begin{equation}
\int_0^T\left\Vert  \frac{\dd}{\dd t} \nu^{R,n}_t\right\Vert^2_{L^2(w_{\beta})}\dd t
\, \leq\, \Vert p_0\Vert_{H^1(w_{\beta})}^2 +2 C'_{F,T}\int_0^T \Vert \nu^R_t\Vert_{H^1(w_{\ga})}^2\dd t\, ,
\end{equation}
and, recalling \eqref{eq:bound nuR ind R} and taking $n$ going to infinity, that $\frac{\dd}{\dd t} \nu^R\in L^2((0,T), L^2(w_{\beta}))$, with
\begin{equation}\label{eq:bound ddt nuR L2}
\left\Vert \frac{\dd}{\dd t}\nu^R\right\Vert_{L^2((0,T),L^2(w_{\beta}))}\leq C'_{F,T}\left(\Vert p_0\Vert_{H^1(w_{\beta})}+\Vert p_0\Vert_{L^2(w_{\ga})}\right)\, ,
\end{equation}
where $C'_{F,T}$ does not depend on $R$.

\medskip

Let us now prove that $\nu^R\in L^2((0,T), H_0^2(w_{\beta}))$. Here we define $H_0^2(w_{\beta})$ as
\begin{equation}
H_0^2(w_{\beta})\, :=\, \left\{ u\in L^2(w_{\beta}): \, \int_{\bbR^d}u =0\text{ and } \langle \mathcal{ L}_\beta u, \mathcal{ L}_\beta u\rangle_{L^2(w_{\beta})} <\infty \right\}\, .
\end{equation}
Remark in particular that there exists $C_\beta>0$ such that if $u\in H^2_0(w_{\beta})$, then
\begin{equation}
\Vert \partial_i u\Vert_{H^1(w_{\beta})}\, \leq\, C_\beta\Vert u\Vert_{H^2_0(w_{\beta})}\, .
\end{equation}
Indeed, if $u$ admits the decomposition $u=\sum_{j} u_j e_j$, then $\partial_i u=\sum_j u_j \partial_i e_k$, and we have
\begin{equation}
\partial_i e_j \, =\, \partial_i (\psi_j w_{-\beta})\, =\, -\beta \frac{k_j}{\gs_j^2}x_j e_j+\sqrt{j_i}\sqrt{\beta \frac{k_j}{\gs_j^2}}e_{\breve j_i}\, =\, -\sqrt{j_i+1}\sqrt{\beta \frac{k_j}{\gs_j^2}} e_{\hat j_i}\, , 
\end{equation}
where we used the notation $\breve j_i=(j_1,\ldots,j_{i-1}, j_i-1,j_{i+1},\ldots, j_d)$, $\hat j_i = (j_1,\ldots,j_{i-1}, j_i+1,j_{i+1},\ldots, j_d)$ and relied on \eqref{decom L*} and the fact that for the Hermite polynomials $H_n$ we have $H'_n(x)=\sqrt{n}H_n(x)$ and $xH_n(x)=\sqrt{n+1}H_{n+1}(x)+\sqrt{n}H_{n-1}(x)$. Then we have, since $\int_{\bbR^d} \partial_i u=0$, 
\begin{equation}
\Vert \partial_i u\Vert_{H^1(w_{\beta})}^2\, \leq\, C_\beta \sum_{j} \lambda_j\Vert e_j\Vert_{L^2(w_\beta)} u_k^2\, \leq\,C_{\beta}' \sum_j \lambda_j^2  u_k^2 \, =\, C_\beta'\Vert u\Vert_{H^2_0(w_{\beta})}^2\, .
\end{equation}
Now for all $v_t$ smooth, \eqref{eq:nuR_weak_form_L2} can be rewritten as
\begin{equation}
\langle \mathcal{ L}\nu^R_t, v_t\rangle_{L^2(w_{\beta})}\, = \left\langle \frac{\dd}{\dd t} \nu^R_t +A_{t,R} \nu^R_t, v_t\right\rangle_{L^2(w_{\beta})}\, ,
\end{equation}
so that
\begin{equation}
\left|\int_0^T \langle \mathcal{ L} \nu^R_t, v_t\rangle_{L^2(w_{\beta})}\right|\, \leq\, \left\Vert \frac{\dd}{\dd t} \nu^R_t +A_{t,R} \nu^R_t \right\Vert_{L^2((0,T),L^2(w_{\beta}))}
\left\Vert v \right\Vert_{L^2((0,T),L^2(w_{\beta}))}\, .
\end{equation}
But similar arguments as the ones used for example to obtain \eqref{estimate AtR}, we get
\begin{equation}
\left\Vert A_{t,R} \nu^R_t\right\Vert_{L^2((0,T), L^2(w_\beta))}\, \leq\, C_{F,T}\left\Vert \nu^R_t\right\Vert_{L^2((0,T),H^1(w_\ga))}\, \leq\, C'_{F,T}\Vert p_0\Vert_{L^2(w_\ga)}\, ,
\end{equation}
where for the last inequality we relied on \eqref{eq:bound nuR ind R}. So recalling also \eqref{eq:bound ddt nuR L2}, we get
\begin{equation}
\left|\int_0^T \langle \mathcal{ L} \nu^R_t, v_t\rangle_{L^2(w_{\beta})}\right|\, \leq\, C''_{F,T} \left(\Vert p_0\Vert_{H^1(w_\beta)}+\Vert p_0\Vert_{L^2(w_\ga)}\right)\left\Vert v \right\Vert_{L^2((0,T),L^2(w_{\beta}))}\, ,
\end{equation}
and we deduce that $\nu^R\in L^2((0,T), H^2_0(w_{\beta}))$, with
\begin{equation}
\Vert \nu^R\Vert_{L^2((0,T), H^2_0(w_{\beta}))}\leq C''_{F,T}\left(\Vert p_0\Vert_{H^1(w_\beta)}+\Vert p_0\Vert_{L^2(w_\ga)}\right)\, .
\end{equation}
Here $C''_{F,T}$ does not depend on $R$, and taking $R$ going to infinity along sub-sequences shows that this last bound is also valid for $p$. Remark that using integration by parts, we obtain for all $v$ smooth with compact support,
\begin{multline}
\left\langle \frac{\dd}{\dd t} \partial_i \nu^R_t, v\right\rangle_{L^2((0,T),L^2(w_{\beta}))}
+ \left\langle B_{t,R,i}\, \partial_i \nu^R_t, v\right\rangle_{L^2((0,T),L^2(w_{\beta}))}\\ =\, 
-\gd \left\langle \nabla \cdot \big( \partial_i F_t\, p_t\big), v\right\rangle_{L^2((0,T),L^2(w_{\beta}))}\, ,
\end{multline}
where
\begin{equation}
B_{t,R,i} u\,  =\, - \mathcal{ U}_i u-\nabla\cdot \Big(\big(\dot m_t -\gd F_t -\gd h(m_t)\big) u\Big)\, . 
\end{equation}
Since $\langle B_{R,t,i} u,v\rangle_{L^2((0,T),L^2(w_{\beta}))}\leq C_{R,T} \Vert u\Vert_{L^2((0,T),H^1(w_{\beta}))}\Vert v \Vert_{L^2((0,T),H^1(w_{\beta}))}$ and $\frac{\dd}{\dd t} \partial_i \nu^R$ and $ \nabla \cdot ( \partial_i F_t\, p_t)$ are elements of $L^2((0,T), H^{-1}(w_{\beta}))$, by a density argument we obtain for almost all $t\in [0,T]$, 
\begin{equation}\label{eq:ddt norm L2 partiali nuR}
\frac{\dd}{\dd t}\Vert \partial_i \nu^R_t\Vert_{L^2(w_{\beta})}^2+ \langle B_{t,R}\, \partial_ i\nu^R_t,\partial_i\nu^R_t\rangle_{L^2(w_{\beta})}\, =\, \langle  \nabla \cdot ( \partial_i F_t\, p_t),\partial_i \nu^R_t\rangle_{L^2(w_{\beta})}\, .
\end{equation}
\end{proof}

\begin{rem}\label{rem:non rig}
We make in the following an abuse of notations, i.e. write terms of the type $\frac{\dd}{\dd t}\Vert p_t\Vert_{L^2(w_{-\ga})}^2$ even if we have not define them properly as in Lemma \ref{lem:regularity_p}. These terms are in fact well defined for the process $\nu^R$ for all $R$ (see for example \eqref{eq:ddt norm 2 nuR} and \eqref{eq:ddt norm L2 partiali nuR}), and the bounds we will obtain will be independent of $R$, so will be satisfied by $p$ at the limit $R\rightarrow\infty$. So for simplicity we will drop the dependency in $R$ in our notations and write all these expressions with respect to $p$.
\end{rem}

\section{Persistence of the invariant manifold}
\label{sec:persistence}
\subsection{Uniform estimates}

Recall the definition of the exponent $ \varepsilon$ appearing in the exponential bound on $F$ in \eqref{hyp:bound_F_exp} and the hypothesis \eqref{hyp:epsilon_small}. We now define
\begin{equation}
\label{hyp:alpha_beta_gamma}
\alpha:= 1- \varepsilon,\ \beta:=1 - 4 \varepsilon,\ \gamma:= 1- 7 \varepsilon, \text{ and } \gamma^{ \prime}:= 1- 10 \varepsilon\,.
\end{equation}
The assumption \eqref{hyp:epsilon_small} ensures that the following conditions are satisfied:
\begin{multline}
\label{eq:def_alpha_beta}
\frac{ \Tr(K)+2 \underline{ k}}{ \Tr(K)+4 \underline{ k}}\, <\, \gamma'\, <\, \gamma'+2\gep\, <\, \gamma\, <\, \gamma+2\gep \, <\,   \beta \, < \, \beta+ 2\varepsilon \, <\,  \alpha\, <\,  1\, . 
\end{multline}
\begin{rem}
\label{rem:meaning_constants}
We will use the constants $\ga$ and $\gb$ in the present section: $L^{ 2}(w_{ \alpha})$ for the control of the proximity of the solution $p$ to the first equation of \eqref{eq:slow fast PDE modified} to the Gaussian measure $ q_{0,\gs^2K^{-1}}$ (see Lemma~\ref{lem:mu bounded L2} below) and $L^{ 2}(w_{ \beta})$ for the control of the spatial derivative of $p$ (see Lemma~\ref{lem:nablamu_bounded_L2}). These two spaces are the core of the construction of the fixed-point procedure exposed in Section~\ref{sec:fixed_point_f}. The constants $\gamma$ and $\gamma'$ are the counterparts of $ \alpha$ and $ \beta$ in order to establish the $ \mathcal{ C}^{ 1}$-regularity of the invariant manifold constructed in Section~\ref{sec:persistence} (see Section~\ref{sec:regularity_manifold}).

The lower bound $ \frac{ \Tr(K)+2 \underline{ k}}{ \Tr(K)+4 \underline{ k}}$ in \eqref{eq:def_alpha_beta} is here to ensure that both $k_{ \iota}$ and $k^{ \prime}_{ \iota}$ (defined respectively in \eqref{eq:k_alpha_gamma} and \eqref{eq:k_prime}) are strictly positive for any $ \iota\in \left\lbrace \alpha, \beta, \gamma, \gamma^{ \prime}\right\rbrace$. Note also that, under the hypothesis \eqref{hyp:epsilon_small} on $ \varepsilon$ together with \eqref{hyp:alpha_beta_gamma}, we have $ 2 \varepsilon < \iota$ for any $ \iota\in \left\lbrace \alpha, \beta, \gamma, \gamma^{ \prime}\right\rbrace$ (which entails, by \eqref{hyp:bound_F_exp}, that $F\in L^{ 2}(w_{-\iota})$) as well as $ \alpha + 2 \varepsilon< 2$ (which entails that $q_{ 0}F$ and $ \nabla(q_{ 0}F)$ belong to $L^{ 2}(w_{ \alpha})$). These properties will be used continuously in the following.
\end{rem}
The point of the following results is to prove proximity estimates on $p$, solution to the centered PDE \eqref{eq:slow fast PDE modified}.
\begin{lemma}
\label{lem:apriori_bound_mt}
Let $ \alpha$ given by \eqref{hyp:alpha_beta_gamma}. For all $T>0$, there exist $ \delta_{1}= \delta_{1}(T)>0$ and $ \kappa_{1}=\kappa_1(T)>0$ such that if we define for all $m\in\bbR^d$:
\begin{equation}\label{eq:def cE}
\cE(m)\, =\, \cE(m,\kappa_1)\, :=\, \left\{ p\in \cP_2 :\, \int_{\bbR^d} xp({\rm d}x)=0\quad \text{and}\quad  \int_{\bbR^d} w_{\ga}(x-m)p({\rm d}x)\leq \kappa_1\right\}\, ,
\end{equation}
then for all $ 0\leq \delta \leq \delta_{1}$ the following holds:
\begin{equation}
\sup_{|m_0|\leq 3L}
\quad
\sup_{p_0\in \cE(m_0)}
\quad
 \sup_{ t\in[0, 2T]}\quad
 \left\vert m_t^{p_0,m_0} \right\vert \, \leq 4L\, ,\label{eq:apriori_bound_mt}
\end{equation}
and
\begin{equation}
\sup_{|m_0|\leq 3L}
\quad
\sup_{p_0\in \cE(m_0)}
\quad
 \sup_{ t\in[0, 2T]}\quad
\int_{ \mathbb{ R}^{ d}} w_{ \alpha}(x-m^{p_0,m_0}_t) p_t^{p_0,m_0}({\rm d}x)\, \leq\,  \kappa_{1}\, .\label{eq:apriori_bound_exp}
\end{equation}
\end{lemma}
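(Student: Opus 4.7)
\medskip

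The plan is to proceed by a classical bootstrap argument based on the continuity of $t\mapsto m_t$ and $t\mapsto \int w_\alpha(x-m_t)p_t(\dd x)$. Introduce the stopping time
\begin{equation*}
\tau\, :=\, \inf\left\{t\geq 0:\, \vert m_t\vert > 4L\right\}\,\wedge\, \inf\left\{t\geq 0:\, \int_{\bbR^d} w_\alpha(x-m_t)p_t(\dd x) > \kappa_1\right\}\, ,
\end{equation*}
where $(p_t,m_t)=(p^{p_0,m_0}_t,m^{p_0,m_0}_t)$ and $\kappa_1$ is to be fixed. Continuity of both maps (which follows from Proposition~\ref{prop:PDE_well_posed p m} together with Proposition~\ref{lem:regularity_p}) gives $\tau>0$. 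The goal is to choose $\delta_1=\delta_1(T)$ and $\kappa_1=\kappa_1(T)$, independently of $(p_0,m_0)$ with $|m_0|\leq 3L$ and $p_0\in\cE(m_0,\kappa_1)$, so that $\tau>2T$; the conclusions \eqref{eq:apriori_bound_mt}--\eqref{eq:apriori_bound_exp} then follow from the definition of $\tau$.

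The first step is to control $\dot m_t$ on $[0,\tau)$. From the second line of \eqref{eq:slow fast PDE modified}, using \eqref{hyp:bound_F_exp} and the boundedness of $h$,
\begin{equation*}
\vert \dot m_t\vert\, \leq\, \delta C_F \int_{\bbR^d} w_\varepsilon(x+m_t) p_t(\dd x)\, +\, \delta \Vert h\Vert_\infty\, .
\end{equation*}
Expanding $|x+m_t|^2_{K\sigma^{-2}} = |x-m_t|^2_{K\sigma^{-2}} + 4(x-m_t)\cdot K\sigma^{-2}m_t + 4|m_t|^2_{K\sigma^{-2}}$, applying Young's inequality with a small parameter $\eta>0$ such that $(1+\eta)\varepsilon\leq\alpha$ (possible since $\varepsilon<\alpha$ by \eqref{eq:def_alpha_beta}), and using $|m_t|\leq 4L$ on $[0,\tau)$, one obtains $w_\varepsilon(x+m_t)\leq C(L)\,w_\alpha(x-m_t)$. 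Combined with $\int w_\alpha(x-m_t)p_t(\dd x)\leq \kappa_1$, this produces $|\dot m_t|\leq \delta\, C(L,\kappa_1)$ on $[0,\tau)$, so that $|m_t-m_0|\leq 2T\delta C(L,\kappa_1)$.

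The second step is to derive a differential inequality for $I(t):=\int w_\alpha(x-m_t)p_t(\dd x)$, following the scheme of the proof of Lemma~\ref{lem:pathwise existence} (Remark~\ref{rem:non rig} justifies these formal manipulations). Differentiating and integrating by parts (using the first line of \eqref{eq:slow fast PDE modified}), the contribution of the Ornstein-Uhlenbeck operator yields the dissipative quadratic term $-\alpha(1-\alpha)|K(x-m_t)|^2_{\sigma^{-2}}\,w_\alpha(x-m_t)$ after the calculation $\mathcal{L}^* w_\alpha(\cdot-m_t) = [\alpha\,\mathrm{Tr}(K) - \alpha(1-\alpha)|K(x-m_t)|^2_{\sigma^{-2}} - \alpha Km_t\cdot K\sigma^{-2}(x-m_t)]\,w_\alpha(x-m_t)$; the $\dot m_t$ contribution produces a linear term in $(x-m_t)$ of size $\delta\,C(L,\kappa_1)|x-m_t|_{K\sigma^{-2}}$; and the $\delta F_t$ contribution is handled via the dissipativity hypothesis \eqref{hyp:F_dot_x_bound} applied to $F(x+m_t)\cdot K\sigma^{-2}(x+m_t)$ (after shifting) together with \eqref{hyp:bound_F_exp}, where the smallness \eqref{hyp:epsilon_small} of $\varepsilon$ and the identity $\alpha=1-\varepsilon$ ensure the cross-exponential $w_\alpha\cdot w_\varepsilon = w_1$ remains at a critical but manageable level for $\delta$ small. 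Absorbing the linear term into the quadratic one, and applying the polynomial-times-exponential bound of Lemma~\ref{lem:rough_bound_q}, one arrives at an inequality of the form
\begin{equation*}
\frac{\dd}{\dd t} I(t)\, \leq\, \kappa_1^\prime - I(t) \qquad \text{on } [0,\tau)\, ,
\end{equation*}
for some $\kappa_1^\prime=\kappa_1^\prime(T,L)$ independent of $\delta\leq\delta_1$.

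The bootstrap now closes: choosing $\kappa_1:=\kappa_1^\prime+1$, Gr\"onwall yields $I(t)\leq \max(\kappa_1^\prime,I(0))\leq \kappa_1$ strictly on $[0,2T\wedge\tau)$, and choosing $\delta_1$ with $2T\delta_1 C(L,\kappa_1)<L$ yields $|m_t|<4L$ strictly on the same interval. Both strict inequalities contradict $\tau\leq 2T$, proving $\tau>2T$. The main technical obstacle is Step~3: the $\delta$-correction involving $F$ produces integrals of $w_\alpha\cdot w_\varepsilon$ whose exponent saturates at $1$, so the dissipative quadratic term from $\mathcal{L}^*$ must be used delicately to control them; the conditions on $\varepsilon$ in \eqref{hyp:epsilon_small} and on $\alpha$ in \eqref{eq:def_alpha_beta} are precisely calibrated for this absorption to succeed uniformly in $\delta\leq\delta_1$.
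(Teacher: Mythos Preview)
Your bootstrap structure is sound, but the paper takes a shorter route and your hardest step contains a genuine gap.

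The paper's argument uses only one stopping time, $t_1=\inf\{t:|m_t|>4L\}$. The key observation is that $\int w_\alpha(x+m_t)\,p_t(\dd x)=\bbE[w_\alpha(X_t)]$ for the nonlinear process $X_t$ of \eqref{eq:modif mc kean}; the exponential-moment bound \eqref{eq:exp_control_Xt} from Lemma~\ref{lem:pathwise existence} then applies directly on $[0,t_1]$, with constant $\kappa_0(4L,\delta_0)$ depending only on $\sup_s|m_s|\leq 4L$. This controls $|\langle\mu_t,F\rangle|$ and hence $|\dot m_t|$, and a single smallness condition on $\delta$ closes the bootstrap on $|m_t|$ alone. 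The exponential-moment estimate is a \emph{consequence}, not part of the bootstrap, so there is no circularity in setting $\kappa_1:=\kappa_0(4L,\delta_0)$.

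Your re-derivation at the PDE level runs into real trouble at the $\delta F_t$ term. Working with $w_\alpha(x-m_t)$ (as written in the statement---note this is a sign slip; compare Remark~\ref{rem:stab} and Definition~\ref{def:space_F}, which use $w_\alpha(x+m)$), the relevant product is $F(x+m_t)\cdot K\sigma^{-2}(x-m_t)$. Splitting off $F(x+m_t)\cdot K\sigma^{-2}(x+m_t)$ to invoke \eqref{hyp:F_dot_x_bound} leaves a cross-term $-2F(x+m_t)\cdot K\sigma^{-2}m_t$, which via \eqref{hyp:bound_F_exp} brings in a factor $w_\varepsilon(x+m_t)$. After recentering, the integrand carries the weight $w_{\alpha+2\varepsilon}(x-m_t)$ with $\alpha+2\varepsilon=1+\varepsilon>1$: this is a strictly higher exponential moment than $I(t)$ and grows faster than any polynomial times $w_\alpha$, so it cannot be absorbed by the quadratic dissipation $-\alpha(1-\alpha)|K(x-m_t)|^2_{\sigma^{-2}}\,w_\alpha$, and Lemma~\ref{lem:rough_bound_q} does not apply. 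Your assertion that this is ``critical but manageable for $\delta$ small'' is therefore not justified. The remedy is to track $\int w_\alpha(x+m_t)\,p_t(\dd x)$ instead: then the two $\dot m_t$ contributions (from the chain rule and from the drift in \eqref{eq:slow fast PDE modified}) cancel exactly, the $F$ term becomes $\delta F(x+m_t)\cdot K\sigma^{-2}(x+m_t)\leq\delta C_F$ directly from \eqref{hyp:F_dot_x_bound}, and you recover precisely the differential inequality of Lemma~\ref{lem:pathwise existence}---which is why the paper simply invokes that lemma.
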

\begin{proof}[Proof of Lemma~\ref{lem:apriori_bound_mt}]
We restrict ourselves to $ \delta\in[0, \delta_{ 0}]$ where $ \delta_{ 0}>0$ is a fixed arbitrary constant. Let $ \mu_{ 0}(\cdot)=p_0(\cdot-m_0)$, $X_{ 0}$ with distribution $\mu_{ 0}$ and consider $\{X_{ t}\}_{ t\in[0, 2T]}$ solution to \eqref{eq:modif mc kean}. Let $t_1$ be defined as
\begin{equation}
t_1\, =\, \inf\{t\in(0, 2T]:\,  \vert m_t\vert > 4L\}\, .
\end{equation}
By continuity we have $t_1>0$. Moreover for any $t\leq t_1$ we obtain, by \eqref{eq:exp_control_Xt}, that $\bbE[w_{\alpha}(X_t)]\leq \max(\bbE[w_{ \alpha}(X_0)], \kappa_{ 0}(4L, \delta_{ 0}))$, where $ \kappa_{0}$ is given in \eqref{eq:kappa1}. Choosing $ \kappa_{1}:= \kappa_{0}(4L, \delta_{ 0})$, by the exponential bound \eqref{hyp:bound_F_exp} on $F$ and since $ \alpha \geq \gep$ (recall \eqref{eq:def_alpha_beta}),
\begin{equation}
\label{eq:apriori_bound_mut_F}
\left\vert \left\langle \mu_{ t}\, ,\, F\right\rangle \right\vert \,=\, \mathbb{ E}\left[F(X_{ t})\right]\,\leq\, C_F\mathbb{ E} \left[w_{  \alpha}(X_{ t})\right] \, \leq\,C_F \kappa_{1}\, ,
\end{equation}
for all $t\leq t_1$.
Hence, since $m_t$ is solution of \eqref{eq:slow fast PDE modified},
we obtain that $\vert  \dot m_t\vert  \leq \delta \left( \left\Vert h \right\Vert_{ \infty} + C_F\kappa_{1}\right) $ for all $t\leq t_1$, so that $ \left\vert m_{ t} \right\vert \leq \left\vert m_{ 0} \right\vert +2T \delta \left( \left\Vert h \right\Vert_{ \infty} + C_F\kappa_{1}\right)$. For the choice of $ \delta_{1}:= \min \left(\frac{ L}{ 2T( \left\Vert h \right\Vert_{ \infty} +C_F \kappa_{1})}, \delta_{ 0}\right)$, $ \left\vert m_{ t} \right\vert \leq 4L$ for all $t\leq t_{ 1}$, so that $t_{ 1}=2T$ and Lemma~\ref{lem:apriori_bound_mt} is proven.
\end{proof}
\begin{lemma}
\label{lem:mu bounded L2}
Let $ \alpha$ given by \eqref{hyp:alpha_beta_gamma}. For all $T>0$, there exist $\delta_2= \delta_{ 2}(T)>0$ and $ \kappa_{2}= \kappa_{ 2}(T)>0$ such that if
we define for all $m\in \bbR^d$ (recall \eqref{eq:def cE})
\begin{equation}\label{eq:def cF}
\cF(m)\, =\, \cF(m,\gd,\kappa_1,\kappa_2)\, :=\, \cE(m,\kappa_1)\cap \bigg\{p\in \cP_2 :\, \Vert p-q_0\Vert_{L^2(w_{\ga})}\leq\, \kappa_2\gd\bigg\}\, ,
\end{equation}
then for all $\delta\in[0, \delta_{ 2}]$, the following holds:
\begin{equation}
\sup_{|m_0|\leq 3L}
\quad
\sup_{p_0\in \cF(m_0)}
\quad
 \sup_{ t\in[0, 2T]}\quad
 \left\Vert p_t^{p_0,m_0}-q_0\right\Vert_{L^2(w_{\ga})}\, \leq\,  \kappa_{2} \delta.
\end{equation}
\end{lemma}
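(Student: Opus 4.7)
The plan is to run a weighted-$L^2$ energy estimate for $r_t := p_t - q_0$ (with $q_0 := q_{0,\sigma^2 K^{-1}}$) and close it by a continuity/bootstrap argument. First I would fix $T>0$ and restrict to $\delta \leq \delta_1(T)$ so that Lemma~\ref{lem:apriori_bound_mt} applies: this gives the a priori bounds $|m_t^{p_0,m_0}| \leq 4L$ and $\int w_\alpha(x-m_t)\,p_t(\mathrm{d}x) \leq \kappa_1$ on $[0,2T]$, uniformly over $p_0 \in \cF(m_0,\delta,\kappa_1,\kappa_2)\subset \cE(m_0,\kappa_1)$. Since $\cL q_0 = 0$ and the second equation of \eqref{eq:slow fast PDE modified} yields $\dot m_t - \delta h(m_t) = \delta \bar F_t$ with $\bar F_t := \int F(y+m_t)p_t(\mathrm{d}y)$, the function $r_t$ satisfies
\begin{equation*}
\partial_t r_t \, =\, \cL r_t + \delta \nabla \cdot (p_t\,G_t),\qquad G_t(x) := \bar F_t - F(x+m_t).
\end{equation*}

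Then, working with the regularizations $\nu^R$ as justified in Remark~\ref{rem:non rig}, I would compute $\tfrac{1}{2}\tfrac{\mathrm{d}}{\mathrm{d}t}\|r_t\|_{L^2(w_\alpha)}^2$. Since $\int r_t\,\mathrm{d}x = 0$, the sharp form \eqref{eq:Dir_Lgamma_alpha} of Lemma~\ref{lem:bound L OU weighted L2} yields simultaneously an $L^2$-contraction $-k_\alpha \|r_t\|_{L^2(w_\alpha)}^2$ and an $H^1$-dissipation $-\tfrac{k_\alpha}{\alpha(\Tr(K)+\underline{k})}\|\sigma \nabla r_t\|_{L^2(w_\alpha)}^2$. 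For the perturbation term, integration by parts gives
\begin{equation*}
\delta\,\langle \nabla \cdot (p_t G_t),\,r_t\rangle_{L^2(w_\alpha)} \, =\, -\delta\!\int\! p_t\,G_t \cdot \nabla r_t\, w_\alpha - \alpha\delta\!\int\! p_t\,G_t \cdot K\sigma^{-2}x\,r_t\,w_\alpha.
\end{equation*}
Splitting $p_t = q_0 + r_t$ inside these integrals, the contributions involving $q_0$ (and not $r_t$) are controlled using $q_0 \sim w_{-1}$, the exponential bound \eqref{hyp:bound_F_exp} on $F$ and $\nabla F$, the uniform bound $|m_t|\leq 4L$, $|\bar F_t| \leq C_F\kappa_1$, and the condition $\alpha + 4\varepsilon < 2$ of Remark~\ref{rem:meaning_constants}; they contribute at most $C(T)\delta\bigl(\|r_t\|_{L^2(w_\alpha)} + \|\sigma\nabla r_t\|_{L^2(w_\alpha)}\bigr)$. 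The remaining (quadratic-in-$r_t$) contributions are bounded by $C(T)\delta\|r_t\|_{L^2(w_\alpha)}^2$ under the bootstrap hypothesis $\|r_t\|_{L^2(w_\alpha)}\leq \kappa_2\delta$, exploiting the moment bound and absorbing the derivative pieces into the $H^1$-dissipation via Young's inequality.

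Combining the three contributions, choosing $\delta$ small enough to absorb the $\|\sigma\nabla r_t\|_{L^2(w_\alpha)}^2$ and $\delta\|r_t\|_{L^2(w_\alpha)}^2$ terms into the dissipation, leads to a differential inequality of the form
\begin{equation*}
\frac{\mathrm{d}}{\mathrm{d}t}\|r_t\|_{L^2(w_\alpha)}^2 \,\leq\, -k_\alpha \|r_t\|_{L^2(w_\alpha)}^2 + C_1(T)\delta^2,
\end{equation*}
from which Gronwall yields $\|r_t\|_{L^2(w_\alpha)} \leq \max\bigl(\|r_0\|_{L^2(w_\alpha)},\, C_2(T)\delta\bigr)$. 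Setting $\kappa_2 := C_2(T)$ and $\delta_2$ small enough to satisfy the absorption conditions, the hypothesis $p_0\in\cF(m_0,\delta,\kappa_1,\kappa_2)$ gives $\|r_0\|_{L^2(w_\alpha)} \leq \kappa_2\delta$, which propagates for all $t \in [0,2T]$. The bootstrap is closed rigorously by a continuity argument: defining $T^* := \inf\{t\in[0,2T] : \|r_t\|_{L^2(w_\alpha)} > \kappa_2\delta\}$, the inequality just derived (valid for $t < T^*$) combined with $t \mapsto p_t \in \mathcal{C}([0,2T], L^2(w_\alpha))$ from Proposition~\ref{lem:regularity_p} forces $T^* = 2T$.

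The hard part will be bounding the quadratic-in-$r_t$ perturbation integrals such as $\int r_t^2\,|\nabla\cdot F_t|\,w_\alpha$ and $\int r_t^2 |F_t|\,|x|\,w_\alpha$. A naive Cauchy–Schwarz would require control of $r_t$ in the heavier space $L^2(w_{\alpha+4\varepsilon})$, which is not available. The resolution is to use the bootstrap hypothesis itself: the prefactor is then of order $\delta$, and under the bootstrap $\|r_t\|_{L^2(w_\alpha)}^2 \leq \kappa_2^2\delta^2$, so these terms produce only an $O(\delta^3)$ contribution (hence negligible compared to the $O(\delta^2)$ source). Tracking the dependencies of $C_1(T),C_2(T)$ on $\kappa_1,\kappa_2$ and the smallness condition on $\delta$ to close the argument consistently is where the care is needed.
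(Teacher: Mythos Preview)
Your overall framework---the weighted $L^2$ energy estimate, the use of Lemma~\ref{lem:apriori_bound_mt} for a priori control of $m_t$, the spectral gap from Lemma~\ref{lem:bound L OU weighted L2}, and the split $p_t = q_0 + r_t$---matches the paper's approach. However, your proposed resolution of the ``hard part'' has a genuine gap.

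You correctly identify that terms like $\delta\int r_t^2\,|\nabla\cdot F_t|\,w_\alpha$ cannot be bounded by $C\delta\|r_t\|_{L^2(w_\alpha)}^2$ via Cauchy--Schwarz, since $|\nabla\cdot F_t|$ can grow like $w_\varepsilon$. But the bootstrap does \emph{not} fix this: the hypothesis $\|r_t\|_{L^2(w_\alpha)}\leq\kappa_2\delta$ controls $\int r_t^2\,w_\alpha$, not $\int r_t^2\,w_{\alpha+\varepsilon}$. Multiplying by the prefactor $\delta$ does not change the fact that you have no bound at all on the heavier-weighted integral. Your claim that ``these terms produce only an $O(\delta^3)$ contribution'' implicitly assumes $\int r_t^2|\nabla\cdot F_t|w_\alpha \leq C\|r_t\|_{L^2(w_\alpha)}^2$, which is precisely what you said fails.

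The paper's resolution is structural, not bootstrap-based. After the integration by parts \eqref{eq:by part Ft} one obtains exactly
\[
-\delta\int r_t\,\nabla\cdot(F_t r_t)\,w_\alpha \;=\; \frac{\delta}{2}\int r_t^2\,\bigl(\alpha\,F_t\cdot K\sigma^{-2}x - \nabla\cdot F_t\bigr)\,w_\alpha,
\]
and the point is that the \emph{combination} $\alpha F_t\cdot K\sigma^{-2}x - \nabla\cdot F_t$ is bounded from above by a constant. This follows from Hypothesis~\eqref{hyp:F_dot_x_bound}, which forces $F(y)\cdot K\sigma^{-2}y\leq -c_F|y|_{K\sigma^{-2}}^2$ for large $|y|$, together with Hypothesis~\eqref{hyp:F_dot_x_larger}, which says $|\partial_{x_k}F|$ is negligible compared to $|F\cdot K\sigma^{-2}x|$ at infinity; the large negative part of $\alpha F_t\cdot K\sigma^{-2}x$ thus absorbs $-\nabla\cdot F_t$ (and also the cross term coming from the shift by $m_t$, since $|m_t|\leq 4L$). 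This is the same mechanism used in the coercivity estimate for $Q_{t,R}$ in the proof of Proposition~\ref{lem:regularity_p}. Once you have this, the quadratic term is simply $\leq C\delta\|r_t\|_{L^2(w_\alpha)}^2$, the resulting differential inequality is
\[
\tfrac{1}{2}\tfrac{\mathrm{d}}{\mathrm{d}t}\|r_t\|_{L^2(w_\alpha)}^2 \;\leq\; -(k_\alpha - c'\delta)\|r_t\|_{L^2(w_\alpha)}^2 + C\delta\,\|r_t\|_{L^2(w_\alpha)},
\]
and no bootstrap or continuity argument is needed: $\kappa_2 := 2C/k_\alpha$ and $\delta_2 := \min(\delta_1, k_\alpha/(2c'))$ close the proof directly.
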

\begin{proof}[Proof of Lemma~\ref{lem:mu bounded L2}]
We place ourselves in the framework of Lemma~\ref{lem:apriori_bound_mt} and consider $ \delta\leq \delta_{1}$. For simplicity, we denote $p_t=p_t^{p_0,m_0}$ and $m_t=m_t^{p_0,m_0}$. The process $p_t-q_0$ satisfies
\begin{equation}
\partial_t (p_t-q_0)\, =\,\mathcal{ L}(p_t-q_0)+\nabla\cdot (p_t \dot m_t)-\delta \nabla\cdot \big(p_t F_t\big)-\delta \nabla\cdot\left(p_t h(m_t)\right)\, ,
\end{equation}
so that (recall Remark \ref{rem:non rig}),
\begin{multline}
\frac12\frac{\dd}{\dd t}\Vert p_t-q_0\Vert^2_{L^2(w_{\ga})}\, =\, \langle \mathcal{ L}(p_t-q_0) ,(p_t-q_0)\rangle_{L^2(w_{\ga})}\\
+(\dot m_t -\gd h(m_t))\cdot \left(\int_{\bbR^d}  (p_t-q_0)\nabla(p_t-q_0) \,w_{\ga}+ \int_{\bbR^d}  (p_t-q_0)\nabla q_0 \,w_{\ga}\right)\\
-\delta\int_{\bbR^d}(p_t-q_0)\nabla\cdot (F_t (p_t-q_0))w_{\ga}\\
-\delta\int_{\bbR^d}(p_t-q_0)\nabla\cdot(q_0 F_t)w_{\ga}\, .
\end{multline}
By integration by parts, we obtain
\begin{equation}\label{eq:by part Ft}
-\delta\int_{\bbR^d}(p_t-q_0)\nabla\cdot (F_t (p_t-q_0))w_{\ga} \,=\, \frac{\delta}{2}\int_{\bbR^d} (p_t-q_0)^2\left(\ga F_t\cdot K\gs^{-2} x- \nabla \cdot F_t\right)w_{\ga}\, ,
\end{equation}
and since from Lemma \ref{lem:apriori_bound_mt} we have $\vert m_t \vert \leq 4L$ on $[0, 2T]$, by hypotheses \eqref{hyp:F_dot_x_bound} and \eqref{hyp:F_dot_x_larger}, there exists a constant $C=C_{ \alpha, T}>0$, independent of $ \delta$ such that
\begin{align*}
-\delta\int_{\bbR^d}(p_t-q_0)\nabla\cdot (F_t (p_t-q_0))w_{\ga} \, \leq\,  C\delta \left\Vert p_{ t}-q_{ 0} \right\Vert_{ L^{ 2}(w_{\ga})}^{ 2} \, .
\end{align*}
Using in particular \eqref{eq poinc w-ga}, we have moreover the bounds, for a constant $C_{\ga, \gs}>0$,
\begin{align}
\left\vert \int_{\bbR^d} (p_t-q_0)\nabla (p_t-q_0) w_{\ga}\right\vert\, &\leq\,C_{\ga,\gs}\Vert \nabla(p_t-q_0)\Vert^2_{L^2(w_{\ga})}\, ,\\
\left\vert \int_{\bbR^d}  (p_t-q_0)\nabla q_0 w_{\ga}\right\vert\, &\leq\, \Vert p_t-q_0\Vert_{L^2(w_{\ga})}\Vert \nabla q_0\Vert_{L^2(w_{\ga})}\, ,
\end{align}
and
\begin{equation}
\left| \int_{\bbR^d}  (p_t-q_0)\nabla\cdot( q_0 F_t) w_{\ga}\right|\, \leq\, \Vert p_t-q_0\Vert_{L^2(w_{\ga})}\Vert \nabla \cdot(q_0 F_{ t})\Vert_{L^2(w_{\ga})}\, .
\end{equation}
The exponential control \eqref{hyp:bound_F_exp} on $F$, Lemma~\ref{lem:bound L OU weighted L2}, Lemma~\ref{lem:apriori_bound_mt} and the definition of $ \alpha$ in \eqref{hyp:alpha_beta_gamma} imply that $\nabla q_0$ and $\nabla\cdot (q_0 F_{ t})$ are elements of $L^2(w_{\ga})$, uniformly in $t\in[0, 2T]$. By \eqref{eq:slow fast PDE modified} and the definition of $h$ in Section~\ref{sec:modification dynamics}, we have by the same arguments that $\dot m_t$ is uniformly bounded by $ \gd(C_F \kappa_{1}+C_h)$. Putting all these estimates together, we obtain, for constants $c,c',C>0$ (depending in particular on $T$ and $F$), 
\begin{multline}\label{eq:bound ddt p-q}
\frac12\frac{\dd}{\dd t}\Vert p_t-q_0\Vert^2_{L^2(w_{\ga})}\,\leq\, -\left(\frac{k_{\ga}}{\ga(\Tr(K)+ \underline{ k})}-c\delta\right)\Vert \gs \nabla(p_t-q_0)\Vert^2_{L^2(w_{ \ga})} +  C\delta \Vert p_t-q_0\Vert_{L^2(w_{ \ga})}\\
\leq\, -(k_{\ga}-c'\delta )\Vert p_t-q_0\Vert^2_{L^2(w_{ \ga})}+  C\delta \Vert p_t-q_0\Vert_{L^2(w_{\ga})}\,,
\end{multline}
where $k_{ \alpha}$ is defined in \eqref{eq:k_alpha_gamma} (recall Remark~\ref{rem:meaning_constants} so that in particular $k_{\ga}>0$).
This concludes the proof of Lemma~\ref{lem:mu bounded L2}, with $\delta_2= \min \left(\frac{k_{\ga}}{2 c'}, \delta_{1}\right)$ and $ \kappa_{2}=\frac{2 C}{k_{\ga}}$.
\end{proof}
The following lemma is the equivalent of Lemma~\ref{lem:mu bounded L2} for the control of the gradient of~$p$:
\begin{lemma}\label{lem:nablamu_bounded_L2}
Fix $ \alpha, \beta$ as in \eqref{hyp:alpha_beta_gamma}. For every $T>0$, there exist $\delta_3= \delta_{ 3}(T)>0$ and $ \kappa_{3}= \kappa_{ 3}(T)>0$ such that if we define for all $m\in \bbR^d$ (recall \eqref{eq:def cF})
\begin{multline}\label{eq:def cG}
\cG(m)\, =\, \cG(m,\gd,\kappa_1,\kappa_2,\kappa_3)\\:=\, \cF(m,\gd,\kappa_1,\kappa_2)\cap \bigg\{p\in \cP_2:\, \Vert \nabla (p-q_0)\Vert^2_{L^2(w_{\gb})}\leq \kappa_3\gd\bigg\}\, ,
\end{multline}
then, for all $0\leq\delta\leq \delta_{ 3}$, the following holds:
\begin{equation}
\label{eq:nablamu_bounded_L2}
\sup_{|m_0|\leq 3L}\quad 
\sup_{p_0\in \cG(m_0)}\quad \sup_{t\in[0,2T]} \quad 
 \left\Vert \nabla\left(p_t^{p_0,m_0}-q_0\right)\right\Vert_{L^2(w_{\beta})}\, \leq\, \kappa_3\gd\, .
\end{equation}
\end{lemma}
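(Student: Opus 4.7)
The proof mimics that of Lemma \ref{lem:mu bounded L2}, but carried out on the components $z^i_t := \partial_{x_i}(p_t - q_0)$ in $L^2(w_\beta)$. The starting point is the identity $\mathcal{U}_i(\partial_{x_i} q_0) = 0$, which follows from $\cL q_0 = 0$ and the elementary commutation $\partial_{x_i} \cL = \mathcal{U}_i \partial_{x_i}$. Combining this with \eqref{eq:PDE partiali p} yields
\begin{equation*}
\partial_t z^i_t = \mathcal{U}_i z^i_t + \nabla\cdot \bigl((\dot m_t - \gd F_t - \gd h(m_t))(z^i_t + \partial_{x_i} q_0)\bigr) + \gd \nabla\cdot(\partial_i F_t\, p_t).
\end{equation*}
Integration by parts shows that $z^i_t$ satisfies the orthogonality conditions of Lemma \ref{lem:control_U}, since $\int x_j^n \partial_{x_i}(p_t - q_0)\,dx = 0$ for all the relevant triples $(i,j,n)$ (both $p_t$ and $q_0$ being probability densities).

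I then perform the $L^2(w_\beta)$ energy estimate summed over $i$. Lemma \ref{lem:control_U} provides the dissipation $-k'_\beta \sum_i \|z^i_t\|^2_{L^2(w_\beta)} - c_1 \sum_i \|\gs\nabla z^i_t\|^2_{L^2(w_\beta)} - c_2 \sum_i \|x z^i_t\|^2_{L^2(w_\beta)}$ with positive constants, by \eqref{eq:def_alpha_beta}. Three types of cross-terms must then be controlled: (i) the self-interaction $\int \nabla\cdot((\dot m_t - \gd F_t - \gd h(m_t))z^i_t) z^i_t w_\beta$ is handled by integration by parts exactly as in \eqref{eq:by part Ft}, using the bound $|\dot m_t - \gd h(m_t)| \leq C\gd$ inherited from Lemma \ref{lem:apriori_bound_mt} together with hypotheses \eqref{hyp:F_dot_x_bound}--\eqref{hyp:F_dot_x_larger}; (ii) the cross-term involving $\partial_{x_i} q_0$ is directly bounded by $C\gd\|z^i_t\|_{L^2(w_\beta)}$ via Cauchy-Schwarz, exploiting the Gaussian decay of $\partial_{x_i} q_0$ (and hence of $\nabla\cdot(F_t\, \partial_{x_i} q_0)$ in $L^2(w_\beta)$) combined with \eqref{hyp:bound_F_exp}; (iii) the forcing $\gd \int \nabla\cdot(\partial_i F_t\, p_t) z^i_t w_\beta$, integrated by parts, yields a bound of the form $C\gd \|\partial_i F_t\, p_t\|_{L^2(w_\beta)} (\|\gs\nabla z^i_t\|_{L^2(w_\beta)} + \|x z^i_t\|_{L^2(w_\beta)})$.

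The main technical difficulty lies in step (iii): since $\partial_i F_t$ grows exponentially, one cannot conclude directly. The key observation, and the reason for the chain $\beta + 2\gep < \alpha$ in \eqref{eq:def_alpha_beta}, is that \eqref{hyp:bound_F_exp} together with the pointwise inequality $w_{\beta + 2\gep} \leq w_\alpha$ yields $\|\partial_i F_t\, p_t\|_{L^2(w_\beta)} \leq C\|p_t\|_{L^2(w_\alpha)}$, and the latter is uniformly bounded on $[0,2T]$ by Lemma \ref{lem:mu bounded L2}, which applies thanks to the inclusion $\cG(m_0) \subset \cF(m_0)$. Collecting all estimates and using Young's inequality with a small parameter to absorb the $\|\gs\nabla z^i_t\|^2_{L^2(w_\beta)}$ and $\|x z^i_t\|^2_{L^2(w_\beta)}$ terms into the dissipation, one obtains for $\gd \leq \gd_3$ small enough
\begin{equation*}
\frac{d}{dt}\sum_i \|z^i_t\|^2_{L^2(w_\beta)} \leq -k'_\beta \sum_i \|z^i_t\|^2_{L^2(w_\beta)} + C\gd^2,
\end{equation*}
and Gronwall's lemma yields \eqref{eq:nablamu_bounded_L2} for an appropriate choice of $\kappa_3$.
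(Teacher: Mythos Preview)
Your proof is correct and follows essentially the same route as the paper: differentiate the centered PDE, apply the dissipation estimate of Lemma~\ref{lem:control_U} to each $\partial_{x_i}(p_t-q_0)$, control the convection and forcing terms via hypotheses \eqref{hyp:F_dot_x_bound}--\eqref{hyp:bound_F_exp} and the gap $\beta+2\varepsilon<\alpha$, then sum over $i$ and close by a Gronwall-type argument. The only organizational difference is that the paper decomposes the forcing term by writing $p_t=(p_t-q_0)+q_0$, which produces cross-coupling terms $\ell_t^{(j)}\ell_t^{(l)}$ controlled after summation via \eqref{hyp:F_dot_x_larger}, and concludes with the square-root Gronwall Lemma~\ref{lem:gronwall_sqrt}; your choice to keep $p_t$ whole and absorb via Young's inequality is an equally valid (and slightly cleaner) alternative.
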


\begin{proof}[Proof of Lemma~\ref{lem:nablamu_bounded_L2}]
We place ourselves in the framework of Lemma~\ref{lem:mu bounded L2} and suppose that $ \delta\leq \delta_{ 2}$. We denote $p_t=p^{p_0,m_0}_t$ and $m_t=m^{p_0,m_0}_t$ and fix $j=1, \ldots, d$. Then $\ell_{ t}^{ (j)}:=\partial_{ x_{ j}} \left( p_{ t} - q_{ 0}\right)$ is a weak solution to
\begin{align}
\partial_t \ell_{ t}^{ (j)}\, &=\, \mathcal{ U}_{j} \ell_{ t}^{ (j)} +\nabla\cdot (\partial_{ x_{ j}}p_t (\dot m_t - \delta h(m_{ t})))-\delta \nabla\cdot \big(\partial_{ x_{ j}}(p_t F_t)\big)\, ,
\end{align}
where we recall the definition of the operator $ \mathcal{ U}_j$ in \eqref{eq:def_U}. Then (Remark \ref{rem:non rig}),
\begin{multline}
 \frac{ 1}{ 2} \frac{ {\rm d}}{ {\rm d} t} \left\Vert \ell_{ t}^{ (j)} \right\Vert_{ L^{ 2}(w_{  \beta})}^{ 2} = \left\langle  \mathcal{ U}_{ j} \ell_{ t}^{ (j)}\, ,\, \ell_{ t}^{ (j)}\right\rangle_{ L^{ 2}(w_{ \beta})} + \int \nabla\cdot (\partial_{ x_{ j}}p_t (\dot m_t - \delta h(m_{ t}))) \ell_{ t}^{ (j)} w_{  \beta}\\
 - \delta \int \nabla\cdot \big(\partial_{ x_{ j}}(p_t F_t)\big) \ell_{ t}^{ (j)} w_{  \beta},
\end{multline}
which gives by integration by parts,
\begin{align}
 \frac{ 1}{ 2} \frac{ {\rm d}}{ {\rm d} t} \left\Vert \ell_{ t}^{ (j)} \right\Vert_{ L^{ 2}(w_{ \beta})}^{ 2} &= \,\left\langle  \mathcal{ U}_{j} \ell_{ t}^{ (j)}\, ,\, \ell_{ t}^{ (j)}\right\rangle_{ L^{ 2}(w_{\beta})} \nonumber\\
&\quad - (\dot m_t - \delta h(m_{ t})) \cdot \left(\frac{ \beta}{ 2}\int \left(\ell_{ t}^{ (j)}\right)^{ 2} K\gs^{-2} x w_{ \beta}- \int \ell_{ t}^{ (j)} \nabla (\partial_{ x_{ j}} q_{ 0} )w_{ \beta} \right) \nonumber\\
& \quad  +\frac{ \delta}{ 2} \int \left(\ell_{ t}^{ (j)}\right)^{ 2} \left(\beta F_{ t}\cdot K\gs^{-2} x - \nabla \cdot F_{ t}\right) w_{ \beta} - \delta \sum_{ l=1}^{ d} \int \ell_{ t}^{ (j)} \ell_{ t}^{ (l)} \partial_{ x_{ j}} F_{ t}^{ (l)} w_{ \beta} \nonumber\\
&\quad  - \delta \int \ell_{ t}^{ (j)} (p_{ t}- q_{ 0}) \nabla \cdot  \left(\partial_{ x_{ j}} F_{ t}\right) w_{ \beta} - \delta \int \nabla \cdot \left[\partial_{ x_{ j}} \left(q_{ 0} F_{ t}\right)\right] \ell_{ t}^{ (j)} w_{ \beta}, \nonumber\\
&=:\,  L^{ (1)}_{ t} + L^{ (2)}_{ t} + L^{ (3)}_{ t} + L^{ (4)}_{ t}. \label{eq:sum_Lt_i}
\end{align}
Let us treat the different terms in \eqref{eq:sum_Lt_i} apart: since $ \int_{ \mathbb{ R}^{ d}} \ell_{ t}^{ (j)}(x) {\rm d} x =0$, $ \int x_j \ell_{ t}^{ (j)}(x)=0$ and $\int_{\bbR^d }x_i^n \ell^{(j)}_t=0$ for $n\geq 1$, one has by Lemma~\ref{lem:control_U} 
\begin{align}
L^{ (1)}_{ t}\, =\, \left\langle  \mathcal{ U}_{ j} \ell_{ t}^{ (j)}\, ,\, \ell_{ t}^{ (j)}\right\rangle_{ L^{ 2}(w_{ \beta})} \leq - k^{ \prime}_\gb\Vert \ell_{ t}^{ (j)}\Vert^2_{L^2(w_{\beta})}-\frac{ \underline{ k}\beta(1-\beta)}{2\bar \sigma}\Vert  x \ell_{ t}^{ (j)}\Vert^2_{L^2(w_{ \beta})}\, ,\label{eq:Lt_1}
\end{align}
Moreover by \eqref{eq:slow fast PDE modified},  \eqref{eq:exp_control_Xt}, 
\begin{equation}
L^{ (2)}_{ t}\, \leq\, C_{ 1} \delta\left(\left\Vert x\ell_{ t}^{ (j)} \right\Vert_{ L^{ 2}(w_{  \beta})}^{ 2} + \left\Vert \ell_{ t}^{ (j)} \right\Vert_{ L^{ 2}(w_{  \beta})} \left\Vert \nabla \partial_{ x_{ j}} q_{ 0} \right\Vert_{ L^{ 2}(w_{  \beta})}\right)\, ,\label{eq:Lt_2}
\end{equation}
for some $C_{ 1}>0$ and where $\left\Vert \nabla \partial_{ x_{ j}} q_{ 0} \right\Vert_{ L^{ 2}(w_{ \beta})}$ is bounded since $ \beta < 1$ (recall \eqref{eq:def_alpha_beta}). Concerning $L_{ t}^{ (3)}$, we have
\begin{align}
L_{ t}^{ (3)} &\leq \frac{ \delta}{ 2} \int \left(\ell_{ t}^{ (j)}\right)^{ 2} \left( \beta F_{ t}\cdot K\gs^{-2} x - \nabla \cdot F_{ t}\right) w_{ \beta} + \frac{ \delta}{ 2} \sum_{ l=1}^{ d} \int \left(\ell_{ t}^{ (j)}\right)^{ 2}\left\vert \partial_{ x_{ j}} F_{ t}^{ (l)} \right\vert w_{  \beta}\nonumber\\
&\qquad \qquad +\frac{ \delta}{ 2} \sum_{ l=1}^{ d} \int \left(\ell_{ t}^{ (l)}\right)^{ 2} \left\vert \partial_{ x_{ j}} F_{ t}^{ (l)} \right\vert w_{  \beta},\nonumber\\
&=\frac{ \delta}{ 2} \int \left(\ell_{ t}^{ (j)}\right)^{ 2} \left( \beta F_{ t}\cdot K\gs^{-2} x - \nabla \cdot F_{ t} + \left\vert \partial_{ x_{ j}}F_{ t} \right\vert_{ 1}\right) w_{  \beta} +\frac{ \delta}{ 2} \sum_{ l=1}^{ d} \int \left(\ell_{ t}^{ (l)}\right)^{ 2} \left\vert \partial_{ x_{ j}} F_{ t}^{ (l)} \right\vert w_{  \beta},\label{eq:Lt_3}
\end{align}
for $ \left\vert u \right\vert_{ 1}:= \left\vert u_{ 1} \right\vert+ \ldots+ \left\vert u_{ d} \right\vert$. Lastly,
\begin{align}
L_{ t}^{ (4)} &\leq \delta \int \left\vert \ell_{ t}^{ (j)} (p_{ t}- q_{ 0}) \nabla \cdot  \left(\partial_{ x_{ j}} F_{ t}\right) \right\vert w_{ \beta} + \delta \int  \left\vert \ell_{ t}^{ (j)} \right\vert \left\vert \nabla \cdot \left[\partial_{ x_{ j}} \left(q_{ 0} F_{ t}\right)\right]\right\vert  w_{  \beta},\nonumber\\
&\leq \delta\left\Vert \ell_{ t}^{ (j)} \right\Vert_{ L^{ 2}(w_{  \beta})} \left(\left\Vert (p_{ t}- q_{ 0}) \nabla \cdot  \left(\partial_{ x_{ j}} F_{ t}\right) \right\Vert_{ L^{ 2}(w_{  \beta})} + \left\Vert \nabla \cdot \left[ \partial_{ x_{ j}} \left(q_{ 0}F_{ t}\right)\right]\right\Vert_{ L^{ 2}(w_{  \beta})}\right).\label{eq:Lt_4}
\end{align}
By \eqref{hyp:bound_F_exp}, we have
\begin{multline}\label{eq:bound p-q nabla F}
\left\Vert (p_{ t}- q_{ 0}) \nabla \cdot  \left(\partial_{ x_{ j}} F_{ t}\right) \right\Vert_{ L^{ 2}(w_{  \beta})}^{ 2}\, =\, \sum_{ l=1}^{ d}\int (p_{ t}(x) - q_{ 0}(x))^{ 2}  \left\vert \partial_{ x_{ j}x_{ l}}^{ 2} F^{ (l)}(x+m_{ t}) \right\vert^{ 2} e^{ \frac{\beta}{2}x\cdot K\gs^{-2} x } {\rm d} x\\
\leq\, d C_{ F}^{ 2}\int (p_{ t}(x) - q_{ 0}(x))^{ 2}  \exp \left(\frac{ 1}{ 2}\left( 2\gep (x-m_t)\cdot K\gs^{-2} (x-m_t)  + \beta x\cdot K\gs^{-2} x\right)\right) {\rm d} x\\
\leq\, d C_{ F}^{ 2} e^{ \Tr(K\gs^{-2})\left(1+\frac{1}{\zeta}\gep\right) (4L)^{ 2}}\int (p_{ t}(x) - q_{ 0}(x))^{ 2}  w_{ \ga}(x) {\rm d} x< \infty\, ,
\end{multline}
where we have used Lemma~\ref{lem:apriori_bound_mt}, Lemma \ref{lem:mu bounded L2}, and where $\zeta>0$ has been chosen such that $\beta+2\gep(1+\zeta)\leq \alpha$ (possible since $ \beta + 2 \varepsilon < \alpha$, by \eqref{eq:def_alpha_beta}). In a similar way, since $\gb+2\gep<1$, using again \eqref{hyp:bound_F_exp},
\begin{equation}
\left\Vert \nabla \cdot \left[ \partial_{ x_{ j}} \left(q_{ 0}F_{ t}\right)\right]\right\Vert_{ L^{ 2}(w_{  \beta})}^{ 2} \,=\, \sum_{ l=1}^{ d} \int \left\vert \partial_{ x_{ l}x_{ j}} (q_{ 0} F_{ t}^{ (l)})\right\vert^{ 2} e^{ \frac{\beta}{2} x\cdot K\gs^{-2} x} {\rm d} x\ <\, \infty\, .
\end{equation}
Gathering the previous estimates \eqref{eq:Lt_1}, \eqref{eq:Lt_2}, \eqref{eq:Lt_3} and \eqref{eq:Lt_4} into \eqref{eq:sum_Lt_i}, we obtain, for some constant $C>0$ (that does not depend on $j$),
\begin{multline}
 \frac{ 1}{ 2} \frac{ {\rm d}}{ {\rm d} t} \left\Vert \ell_{ t}^{ (j)} \right\Vert_{ L^{ 2}(w_{ \beta})}^{ 2}\leq -  \left(k^{ \prime}_\beta - C\gd \right)\Vert \ell_{ t}^{ (j)}\Vert^2_{L^2(w_{ \beta})}-\left(\frac{ \underline{ k}\beta(1-\beta)}{2\bar \sigma} - C\gd\right)\Vert x \ell_{ t}^{ (j)}\Vert^2_{L^2(w_{ \beta})}\\
+ \delta C \left\Vert \ell_{ t}^{ (j)} \right\Vert_{ L^{ 2}(w_{  \beta})}+\frac{ \delta}{ 2} \int \left(\ell_{ t}^{ (j)}\right)^{ 2} \left( \beta F_{ t}\cdot x - \nabla \cdot F_{ t} + \left\vert \partial_{ x_{ j}}F_{ t} \right\vert_{ 1}\right) w_{ \beta} 
\\ +\frac{ \delta}{ 2} \sum_{ l=1}^{ d} \int \left(\ell_{ t}^{ (l)}\right)^{ 2} \left\vert \partial_{ x_{ j}} F_{ t}^{ (l)} \right\vert w_{ \beta}\, .
\end{multline}
Let $S_{ t}:= \sum_{ j=1}^{ d}  \left\Vert \ell_{ t}^{ (j)} \right\Vert_{ L^{ 2}(w_{\beta})}^{ 2}$. Choosing $ \delta>0$ small enough such that $\frac{ \underline{ k}\beta(1-\beta)}{2\bar \sigma} - C\gd >0$ and summing over $j=1, \ldots, d$ in the previous inequality leads to
\begin{multline}
 \frac{ 1}{ 2} \frac{ {\rm d}}{ {\rm d} t} S_{ t}\, \leq\,  -  \left(k^{ \prime}_\beta - C\gd\right) S_{ t} + \delta C \sum_{ j=1}^{ d}\left\Vert \ell_{ t}^{ (j)} \right\Vert_{ L^{ 2}(w_{  \beta})}\\
 +\frac{ \delta}{ 2} \int \sum_{ j=1}^{ d}\left(\ell_{ t}^{ (j)}\right)^{ 2} \left( \beta F_{ t}\cdot K\gs^{-2}x - \nabla \cdot F_{ t} + \left\vert \partial_{ x_{ j}}F_{ t} \right\vert_{ 1} + \sum_{ l=1}^{ d}\left\vert \partial_{ x_{ l}} F_{ t}^{ (j)} \right\vert\right) w_{ \beta}\,.
\end{multline}
Recalling the hypothesis \eqref{hyp:F_dot_x_larger}, the term in the second line is in fact bounded by $C'S_t$ for some $C'>0$, and we have
\begin{equation}
 \frac{ 1}{ 2} \frac{ {\rm d}}{ {\rm d} t} S_{ t}\, \leq\,  -  \left(k^{ \prime}_\beta - (C+C')\gd \right) S_{ t} + \delta C \sqrt{ d} \sqrt{ S_{ t}},
\end{equation}
so that the result holds for $ \delta_{3}:= \min \left( \delta_{ 2}, \frac{ \underline{ k}\beta(1-\beta)}{ 4\bar \sigma C},  \frac{ k^{ \prime}_\beta}{2(C+C')}\right)$ and $ \kappa_{ 3}:= \frac{ 2 C \sqrt{ d}}{ k^{ \prime}_\beta}$, by Lemma~\ref{lem:gronwall_sqrt}.
\end{proof}

\subsection{Lipschitz-continuity close to $\cM^0$}
\begin{lemma}
\label{lem:lip_cont_pi}
Fix $ \alpha, \beta$ as in \eqref{hyp:alpha_beta_gamma}. For every $T>0$, there exist $ \delta_{4}= \delta_{ 4}(T)>0$ and $\kappa_{4}= \kappa_{ 4}(T)>0$ such that for all $ 0\leq\delta\leq \delta_{4}$, $m^i_0\in \bbR^d$ with $\vert m^i_0\vert \leq 3L$ ($i=1,2$) and all $p^i_0\in   \cF(m_i,\gd,\kappa_1,\kappa_2)$, we have for all $t\leq 2T$, denoting $p^i_t=p^{p^i_0,m^i_0}_t$:
\begin{equation}\label{eq:lip_cont_pi 1}
\left\Vert p_{ t}^{ 1} - p_{ t}^{ 2} \right\Vert^{ 2}_{ L^{ 2}(w_{  \beta})}\, \leq\, \max\left( \left\Vert p_{ 0}^{ 1}- p_{ 0}^{ 2} \right\Vert^{ 2}_{ L^{ 2}(w_{  \beta})}\, , \quad \kappa_{4} \sup_{ s\leq t} \left\vert m_{ s}^{ 1}- m_{ s}^{ 2} \right\vert^{ 2} \gd \right)\, ,
\end{equation}
and
\begin{equation}\label{eq:lip_cont_pi 2}
\left\Vert p_{ t}^{ 1} - p_{ t}^{ 2} \right\Vert^{ 2}_{ L^{ 2}(w_{ \beta})}\, \leq\, e^{ - \chi(\gd) t}\left\Vert p_{ 0}^{ 1}-p_{ 0}^{ 2} \right\Vert_{ L^{ 2}(w_{ \beta})}^{ 2} +   \kappa_{ 4}\sup_{ s\leq t} \left\vert m_{ s}^{ 1} - m_{ s}^{ 2}  \right\vert^{ 2}  \gd \, ,
\end{equation}
where, for some constant $c_1>0$ depending on $F$, $L$, $h$ and $T$,
\begin{equation}\label{eq:def chi}
 \chi(\gd)\, :=\, k_{\beta}-c_1 \gd\, .
\end{equation}
\end{lemma}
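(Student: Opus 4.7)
The plan is to carry out an $L^2(w_\beta)$ energy estimate for the difference $r_t := p_t^1 - p_t^2$. Subtracting the two copies of \eqref{eq:slow fast PDE modified}, $r_t$ satisfies
\begin{equation*}
\partial_t r_t \,=\, \mathcal{L} r_t \,+\, \nabla\!\cdot\!(r_t\, B^1_t) \,+\, \nabla\!\cdot\!(p^2_t\, C_t),
\end{equation*}
where $B^1_t := \dot m^1_t - \delta F(\cdot+m^1_t) - \delta h(m^1_t)$ collects the ``linear-in-$r_t$'' coefficients, and $C_t := (\dot m^1_t-\dot m^2_t) - \delta(F(\cdot+m^1_t)-F(\cdot+m^2_t)) - \delta(h(m^1_t)-h(m^2_t))$ is the forcing driven by the $m$-discrepancy. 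Since $\int r_t\,dx = 0$, Lemma~\ref{lem:bound L OU weighted L2} supplies the spectral gap $\langle \mathcal{L} r_t, r_t\rangle_{L^2(w_\beta)} \leq -k_\beta\,\|r_t\|^2_{L^2(w_\beta)}$.

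I would then test the equation against $r_t w_\beta$ and estimate each contribution. The term $\nabla\!\cdot\!(r_t B^1_t)$ is treated by the integration-by-parts identity of Lemma~\ref{lem:mu bounded L2} (see \eqref{eq:by part Ft}), using \eqref{hyp:F_dot_x_bound}--\eqref{hyp:F_dot_x_larger}, the Lipschitz continuity of $h$, and the uniform bounds $|m^i_t|\leq 4L$, $|\dot m^i_t| = O(\delta)$ from Lemma~\ref{lem:apriori_bound_mt}; it contributes at most $C\delta \|r_t\|^2_{L^2(w_\beta)}$. The forcing $\nabla\!\cdot\!(p^2_t C_t)$ is bounded after integration by parts by Cauchy--Schwarz, using the a priori controls $\|p^2_t - q_0\|_{L^2(w_\alpha)} \lesssim \delta$ and $\|\nabla p^2_t\|_{L^2(w_\beta)}$ bounded from Lemmas~\ref{lem:mu bounded L2}--\ref{lem:nablamu_bounded_L2}, and the pointwise mean-value bound $|F(x+m^1_t)-F(x+m^2_t)| \leq C|m^1_t-m^2_t|\,w_\varepsilon(x+m^2_t)$ obtained from \eqref{hyp:bound_F_exp}; the relevant integrals are finite because $\beta + 2\varepsilon < \alpha$ (see \eqref{eq:def_alpha_beta}). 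This yields a control of the type $C(|\dot m^1_t - \dot m^2_t| + \delta|m^1_t - m^2_t|)\,\|r_t\|_{L^2(w_\beta)}$.

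The crucial auxiliary step is an estimate for $|\dot m^1_t - \dot m^2_t|$ itself. Inserting the decomposition
\begin{equation*}
\dot m^1_t - \dot m^2_t \,=\, \delta\!\int F(\cdot+m^1_t)\,r_t \,+\, \delta\!\int(F(\cdot+m^1_t)-F(\cdot+m^2_t))\,p^2_t \,+\, \delta\bigl(h(m^1_t)-h(m^2_t)\bigr),
\end{equation*}
bounding the first integral by $\delta \|F(\cdot+m^1_t)\|_{L^2(w_{-\beta})}\|r_t\|_{L^2(w_\beta)}$ (finite since $2\varepsilon < \beta$) and the second by $C\delta|m^1_t-m^2_t|$ via the mean-value bound and the exponential moment \eqref{eq:apriori_bound_exp}, one obtains $|\dot m^1_t - \dot m^2_t| \leq C\delta(|m^1_t-m^2_t| + \|r_t\|_{L^2(w_\beta)})$. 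Feeding this back into the energy estimate and splitting mixed terms by Young's inequality reduces it to
\begin{equation*}
\tfrac{d}{dt}\|r_t\|^2_{L^2(w_\beta)} \,\leq\, -\chi(\delta)\,\|r_t\|^2_{L^2(w_\beta)} \,+\, c_2\delta\,|m^1_t-m^2_t|^2,
\end{equation*}
with $\chi(\delta) = k_\beta - c_1\delta > 0$ for $\delta$ small. Gr\"onwall then yields \eqref{eq:lip_cont_pi 2} with $\kappa_4 := c_2/\chi(\delta)$, and \eqref{eq:lip_cont_pi 1} follows from the barrier observation that, at any time where $\|r_t\|^2_{L^2(w_\beta)}$ exceeds $\kappa_4\delta\sup_{s\leq t}|m^1_s-m^2_s|^2$, the right-hand side above is non-positive, so this quantity cannot surpass the maximum of its initial value and the forcing supremum.

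The main obstacle beyond the routine bookkeeping is precisely this auxiliary bound on $|\dot m^1_t - \dot m^2_t|$: one must isolate the $\|r_t\|_{L^2(w_\beta)}$-contribution and absorb it into the decay rate (rather than treating it as forcing), since a naive estimate would produce a forcing quadratic in $\delta$ and would also lose the sharp exponential rate $\chi(\delta) = k_\beta - c_1\delta$.
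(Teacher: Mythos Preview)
Your approach is essentially the paper's, and the overall strategy is correct. Two technical points deserve attention, however.

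First, you invoke the bound on $\|\nabla p_t^2\|_{L^2(w_\beta)}$ from Lemma~\ref{lem:nablamu_bounded_L2}, but that lemma requires $p_0^2 \in \mathcal{G}(m_0^2)$, whereas the hypothesis here is only $p_0^2 \in \mathcal{F}(m_0^2)$; this control is therefore not available. The paper sidesteps this by integrating the forcing $\langle \nabla\!\cdot\!(p_t^2 C_t), r_t\rangle_{L^2(w_\beta)}$ by parts onto $\nabla r_t$ (not $\nabla p_t^2$), producing terms of the type $\|\nabla r_t\|_{L^2(w_\beta)}\cdot\|p_t^2\,(\text{stuff})\|_{L^2(w_\beta)}$ and $\|r_t\|_{L^2(w_\beta)}\cdot\|x\,p_t^2\,(\text{stuff})\|_{L^2(w_\beta)}$, which only require $\|p_t^2\|_{L^2(w_\alpha)}$ via $\beta+2\varepsilon<\alpha$.

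Second, and relatedly, to absorb the resulting $\|\nabla r_t\|$ contributions you must retain the gradient form $\langle \mathcal{L} r_t, r_t\rangle_{L^2(w_\beta)} \leq -\tfrac{k_\beta}{\beta(\Tr(K)+\underline{k})}\|\sigma\nabla r_t\|^2_{L^2(w_\beta)}$ from Lemma~\ref{lem:bound L OU weighted L2} rather than passing immediately to $-k_\beta\|r_t\|^2$; only after collecting all terms as $-(\text{const}-C\delta)\|\nabla r_t\|^2 + C\delta\sup_s|m_s^1-m_s^2|^2$ does one convert back via the Poincar\'e inequality. With these two adjustments your argument coincides with the paper's.
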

\begin{proof}[Proof of Lemma~\ref{lem:lip_cont_pi}]
We suppose in this proof that $ \delta\leq \delta_{2}$ (recall Lemma \ref{lem:mu bounded L2}). Define $ \pi_{ t}:= p_{ t}^{ 1}- p_{ t}^{ 2}$. Recall that $p_{ t}^{ i}$, $i=1, 2$, solves
\begin{multline}
\partial_t p_t^{ i}\, =\, \gs^ 2 \mathcal{ L}p_t^{ i}+\delta \nabla\cdot \big(p_t^{ i} \left(\dot m^i_t - h(m^i_t)-F_t^{ i}\right)\big)\\
=\, \gs^ 2 \mathcal{ L}p_t^{ i}+\delta \nabla\cdot \big(p_t^{ i} \left(\langle p^i_t\, ,\, F^i_t\rangle -F_t^{ i}\right)\big)\, ,
\end{multline}
for $F_{ t}^{ i}(x):= F(x+ m_{ t}^{ i})$. Hence, noting that $\langle p^i_t\, ,\, F^i_t\rangle=\langle \mu^i_t\, ,\, F\rangle $ where $\mu^i_t$ is the solution to \eqref{eq:PDE_mod_h} with initial condition $p^i_0(\cdot-m^i_0)$, and using the notation $F^{12}_t:= F^1_t-F^2_t$,
we have
\begin{multline}
\partial_t \pi_{ t}\, =\,  \mathcal{ L}\pi_{ t}- \delta \nabla\cdot \big( \pi_{ t} \left( F_t^{ 1}- \left\langle \mu_{ t}^{ 1}\, ,\, F\right\rangle \right) \big)- \delta \nabla\cdot \big(p_t^{ 2} \left(F_t^{12}- \left\langle \mu_{ t}^{ 1} - \mu_{ t}^{ 2}\, ,\, F\right\rangle\right)\big)\, ,
\end{multline}
which leads to
\begin{multline}\label{eq:ddt Vert pi_t 2}
\frac{ 1}{ 2}\frac{ {\rm d}}{ {\rm d} t} \left\Vert \pi_{ t} \right\Vert_{ L^{ 2}(w_{  \beta})}^{ 2} =\,  \left\langle \mathcal{ L} \pi_{ t}\, ,\, \pi_{ t}\right\rangle_{ L^{ 2}(w_{  \beta})}
 - \delta \left\langle \nabla\cdot \left( \pi_{ t} (F_{ t}^{ 1} - \left\langle \mu_{ t}^{ 1}\, ,\, F\right\rangle)\right)\, ,\, \pi_{ t}\right\rangle_{ L^{ 2}(w_{  \beta})}\\
- \delta \left\langle \nabla \cdot \left(p_{ t}^{ 2} \left( F_{ t}^{12}  - \left\langle \mu_{ t}^{ 1} - \mu_{ t}^{ 2}\, ,\, F\right\rangle\right)\right)\, ,\, \pi_{ t}\right\rangle_{ L^{ 2}(w_{  \beta})}.
\end{multline}
Let us treat the different terms appearing in \eqref{eq:ddt Vert pi_t 2}.
Using Lemma~\ref{lem:bound L OU weighted L2}, we have
\begin{equation}
\left\langle \mathcal{ L} \pi_{ t}\, ,\, \pi_{ t}\right\rangle_{ L^{ 2}(w_{ \beta})}\, \leq\, -  \frac{ k_{ \beta}}{ \beta(\Tr(K)+ \underline{ k})}\Vert \gs \nabla\pi_{ t}\Vert^2_{L^2(w_{\beta})}\, .
\end{equation}
Moreover, by a integration by parts (see for example \eqref{eq:simplify F term}), hypotheses \eqref{hyp:F_dot_x_bound} and \eqref{hyp:F_dot_x_larger}, Lemma \ref{lem:apriori_bound_mt} and Lemma \ref{lem:bound L OU weighted L2}, we obtain for some $C>0$
\begin{multline}
-\gd\left\langle \nabla\cdot \left( \pi_{ t} (F_{ t}^{ 1} - \left\langle \mu_{ t}^{ 1}\, ,\, F\right\rangle)\right)\, ,\, \pi_{ t}\right\rangle_{ L^{ 2}(w_{\beta})}
\\ =\, \frac{ \delta}{ 2} \int_{ \mathbb{ R}^{ d}} \left\vert \pi_{ t} \right\vert^{ 2}\left( \beta F_{ t}^{ 1}\cdot K\gs^{-2}x - \nabla \cdot F_{ t}^{ 1} - \beta\left\langle \mu_{ t}^{ 1}\, ,\, F\right\rangle\cdot  K\gs^{-2} x\right)w_{\beta} \\ \leq\, C\gd \Vert \nabla\pi_t\Vert_{L^2(w_{\beta})}^2\, .
\end{multline}

Lastly, we have the decomposition
\begin{multline}
- \delta \left\langle \nabla \cdot \left(p_{ t}^{ 2} \left(F_{ t}^{ 12} - \left\langle \mu_{ t}^{ 1} - \mu_{ t}^{ 2}\, ,\, F\right\rangle\right)\right)\, ,\, \pi_{ t}\right\rangle_{ L^{ 2}(w_{\beta})}\\
=\,
\delta\int_{ \mathbb{ R}^{ d}} p_{ t}^{ 2}  \left\lbrace F_{ t}^{ 12}  - \left\langle \mu_{ t}^{ 1} - \mu_{ t}^{ 2}\, ,\, F\right\rangle\right\rbrace \cdot \nabla \pi_{ t} w_{  \beta}\\
+ \delta \beta \int_{ \mathbb{ R}^{ d}} p_{ t}^{ 2} \pi_{ t} \left\lbrace F_{ t}^{ 12} - \left\langle \mu_{ t}^{ 1} - \mu_{ t}^{ 2}\, ,\, F\right\rangle\right\rbrace \cdot K\gs^{-2} x w_{  \beta}\, ,
\end{multline}
and the following estimates :
\begin{multline}
\int_{ \mathbb{ R}^{ d}} p_{ t}^{ 2}  \left(F_{ t}^{ 12} - \left\langle \mu_{ t}^{ 1} - \mu_{ t}^{ 2}\, ,\, F\right\rangle\right) \cdot \nabla \pi_{ t} w_{\beta}\\ \leq\,  \left\Vert \nabla \pi_{ t} \right\Vert_{ L^{ 2}(w_{  \beta})} \left( \left\Vert p_{ t}^{ 2}  F_{ t}^{ 12} \right\Vert_{ L^{ 2}(w_{\beta})} + \left\vert \left\langle \mu_{ t}^{ 1} - \mu_{ t}^{ 2}\, ,\, F\right\rangle \right\vert \left\Vert p_{ t}^{ 2} \right\Vert_{ L^{ 2}(w_{ \beta})}\right)\, ,
\end{multline}
and
\begin{multline}
\int_{ \mathbb{ R}^{ d}} p_{ t}^{ 2} \pi_{ t} \left\lbrace F_{ t}^{ 1} - F_{ t}^{ 2} - \left\langle \mu_{ t}^{ 1} - \mu_{ t}^{ 2}\, ,\, F\right\rangle\right\rbrace \cdot x w_{ \beta} \\
\leq\, \left\Vert \pi_{ t} \right\Vert_{ L^{ 2}(w_{  \beta})}  \left(  \left\Vert  p_{ t}^{ 2}  F_{ t}^{ 12}\cdot K\gs^{-2}x \right\Vert_{ L^{ 2}(w_{  \beta})} + \left\vert \left\langle \mu_{ t}^{ 1} - \mu_{ t}^{ 2}\, ,\, F\right\rangle \right\vert  \left\Vert K\gs^{-2}x  p_{ t}^{ 2} \right\Vert_{ L^{ 2}(w_{  \beta})}\right)\, .
\end{multline}
Let us now treat the different terms appearing in the right hand side of these last two estimates. We have, by Lemma \ref{lem:apriori_bound_mt}, Lemma \ref{lem:mu bounded L2} and the fact that $\beta+2\gep<\ga$ (as it has been done in \eqref{eq:bound p-q nabla F}):
\begin{multline}
\left\Vert  p_{ t}^{ 2}  F_{ t}^{ 12}\cdot x \right\Vert_{ L^{ 2}(w_{  \beta})}^2
\, \leq\,  \int_{ \mathbb{ R}^{ d}} \left\vert p_{ t}^{ 2}(x) \right\vert^{ 2} \left\vert F(x+m_{ t}^{ 1}) - F(x+m_{ t}^{ 2}) \right\vert^{ 2} \left\vert x \right\vert^{ 2} w_\beta(x) {\rm d} x,\\
\leq\,  C_{ F}^{ 2} \left\vert m_{ t}^{ 1}- m_{ t}^{ 2} \right\vert^{ 2}\int_{ \mathbb{ R}^{ d}} \left\vert p_{ t}^{ 2}(x) \right\vert^{ 2} \left\vert w_\gep(x-m^1_t)+w_\gep(x-m^2_t)\right\vert^{ 2} \left\vert x \right\vert^{ 2} w_\beta(x) {\rm d} x,\\
\leq\,  C_{F,r}  \left\vert m_{ t}^{ 1}- m_{ t}^{ 2} \right\vert^{ 2} \left\Vert p_{ t}^{ 2} \right\Vert_{ L^{ 2}(w_{\alpha})}^{ 2}\, \leq\, C'_{F,r} \left\vert m_{ t}^{ 1}- m_{ t}^{ 2} \right\vert^{ 2}\, .
\end{multline}
The same type of estimate can be obtained for $\left\Vert  p_{ t}^{ 2}  F^{12}_t \right\Vert_{ L^{ 2}(w_{\beta})}$. Moreover, we have
\begin{equation}\label{eq:first boun mui F}
\left\vert \left\langle \mu_{ t}^{ 1} - \mu_{ t}^{ 2}\, ,\, F\right\rangle \right\vert
\, \leq\, \int_{ \mathbb{ R}^{ d}} \left\vert F(x+m_{ t}^{ 1}) \right\vert \left\vert \pi_{ t}(x) \right\vert {\rm d}x  + \int_{ \mathbb{ R}^{ d}} \left\vert F^{12}_t(x) \right\vert p_{ t}^{ 2}(x) {\rm d}x\, .
\end{equation}
On one hand, by \eqref{eq:def_alpha_beta}, we have $\beta>2\gep$ so that
\begin{multline}
\int_{ \mathbb{ R}^{ d}} \left\vert F(x+m_{ t}^{ 1}) \right\vert \left\vert \pi_{ t}(x) \right\vert {\rm d}x\, \leq\, \left( \int_{ \mathbb{ R}^{ d}} \left\vert F(x+m_{ t}^{ 1}) \right\vert^{ 2} w_{-\beta}(x) {\rm d}x\right)^{ \frac{ 1}{ 2}} \left\Vert \pi_{ t} \right\Vert_{ L^{ 2}(w_{  \beta})}\\ \leq\, C\left\Vert \pi_{ t} \right\Vert_{ L^{ 2}(w_{  \beta})}\, ,
\end{multline}
and on the other hand, by Hypothesis \eqref{hyp:bound_F_exp}, Lemma \ref{lem:apriori_bound_mt}, Lemma \ref{lem:mu bounded L2} and the fact that $2\gep <\ga$,
\begin{multline}
\int_{ \mathbb{ R}^{ d}} \left\vert F^{12}_t(x) \right\vert p_{ t}^{ 2}(x) {\rm d}x\, \leq\, \left\vert m_{ t}^{ 1} - m_{ t}^{ 2} \right\vert\int_{ \mathbb{ R}^{ d}} \sup_{ z\in [x+m_{ t}^{ 1}, x+m_{ t}^{ 2}]}\left\vert DF(z)\right\vert p_{ t}^{ 2}(x) {\rm d}x\\
\leq\,  C_{ F}\left\vert m_{ t}^{ 1} - m_{ t}^{ 2} \right\vert\int_{ \mathbb{ R}^{ d}} \left( w_\gep(x+m^1_t)+w_\gep(x+m^2_t)\right) p_{ t}^{ 2}(x) {\rm d}x\\
\leq\, C_{F,r} \left\vert m_{ t}^{ 1} - m_{ t}^{ 2} \right\vert \left\Vert p^2_t\right\Vert_{L^2(w_{\ga})}\, \leq\, C'_{F,r} \left\vert m_{ t}^{ 1} - m_{ t}^{ 2} \right\vert\, ,
\end{multline}
so that \eqref{eq:first boun mui F} becomes
\begin{equation}\label{eq:second bound mui F}
\left\vert \left\langle \mu_{ t}^{ 1} - \mu_{ t}^{ 2}\, ,\, F\right\rangle \right\vert
\, \leq\, C''_{F} \left(\left\Vert \pi_{ t} \right\Vert_{ L^{ 2}(w_{\beta})}+\left\vert m_{ t}^{ 1} - m_{ t}^{ 2} \right\vert \right)\, .
\end{equation}
Gathering all the preceding estimates, we deduce that there exists $C>0$ such that
\begin{equation}
\frac{ 1}{ 2}\frac{ {\rm d}}{ {\rm d} t} \left\Vert \pi_{ t} \right\Vert_{ L^{ 2}(w_{  \beta})}^{ 2}\, \leq\, - \left( \frac{ k_{ \beta}}{ \beta(\Tr(K)+ \underline{ k})}-C\gd\right)\Vert  \nabla\pi_{ t}\Vert^2_{L^2(w_{\beta})}+C\gd \sup_{s\leq t}\left\vert m^1_t-m^2_t\right\vert^2\, ,
\end{equation}
which leads to, by Lemma \ref{lem:bound L OU weighted L2}, for some $C'>0$,
\begin{equation}
\frac{ 1}{ 2}\frac{ {\rm d}}{ {\rm d} t} \left\Vert \pi_{ t} \right\Vert_{ L^{ 2}(w_{  \beta})}^{ 2}\, \leq\, - \left( k_{ \beta}-C'\gd\right)\Vert  \pi_{ t}\Vert^2_{L^2(w_{ \beta})}+C\gd \sup_{s\leq t}\left\vert m^1_t-m^2_t\right\vert^2\, .
\end{equation}
We deduce \eqref{eq:lip_cont_pi 1} and \eqref{eq:lip_cont_pi 2} provided that $ \delta\leq \delta_{4}:= \min \left(\delta_{2}, \frac{ k_{ \beta}}{2 C'}\right)$, for the choice of $\kappa_{4}:= \frac{2C}{k_{\beta}}$.
\end{proof}
\begin{lemma}\label{lem:encadre m}
Fix $ \alpha, \beta$ as in \eqref{hyp:alpha_beta_gamma}. For any $T>0$, there exists $\delta_5= \delta_{ 5}(T)>0$ such that for all $m^i_0\in \bbR^d$ ($i=1,2$) with $\vert m^i_0\vert \leq 3L$ and all $p^i_0 \in \mathcal{ F}(m^i_0,\gd,\kappa_1,\kappa_2)$ satisfying
\begin{equation}
\label{eq:control_deltap_deltam}
\left\Vert p_{ 0}^{ 1} - p_{ 0}^{ 2} \right\Vert_{ L^{ 2}(w_{ \beta})} \leq  \left\vert m_{ 0}^{ 1} - m_{ 0}^{ 2} \right\vert,
\end{equation}
we have for all $0\leq\delta\leq \delta_5$ and $t\in [0,2T]$, denoting $m^i_t=m^{p^i_0,m^i_0}_t$:
\begin{equation}
\label{eq:encadre m}
\frac12 \vert m^1_0-m^2_0\vert \, \leq\, \vert m^1_t-m^2_t\vert\, \leq\, 2\vert m^1_0-m^2_0\vert\, . 
\end{equation}
\end{lemma}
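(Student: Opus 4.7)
The plan is to control $|m^1_t - m^2_t|$ through its ODE and then close the estimate by a Grönwall-type argument, using Lemma~\ref{lem:lip_cont_pi} to reduce $\|\pi_t\|_{L^2(w_\beta)}$ back to $|m^1_0-m^2_0|$ and $\sup_{s\leq t}|m^1_s-m^2_s|$. Throughout we assume $\delta\leq\delta_4$ so that Lemmas~\ref{lem:apriori_bound_mt}, \ref{lem:mu bounded L2} and~\ref{lem:lip_cont_pi} apply, and denote $\pi_t=p^1_t-p^2_t$ and $M_t:=\sup_{s\leq t}|m^1_s-m^2_s|$.

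From the second line of \eqref{eq:slow fast PDE modified} and the decomposition
\begin{equation}
\dot m^1_t-\dot m^2_t=\delta\!\int_{\bbR^d}\!F(x+m^1_t)\pi_t(\dd x)+\delta\!\int_{\bbR^d}\!\big(F(x+m^1_t)-F(x+m^2_t)\big)p^2_t(\dd x)+\delta(h(m^1_t)-h(m^2_t)),
\end{equation}
the same estimates as in the derivation of \eqref{eq:second bound mui F} (exponential control \eqref{hyp:bound_F_exp}, the fact that $\beta>2\gep$, Lemmas~\ref{lem:apriori_bound_mt} and~\ref{lem:mu bounded L2}) together with the Lipschitz continuity of $h$ yield a constant $C>0$ (depending on $F,h,L,T$) such that
\begin{equation}
\label{eq:plan_m_diff}
|\dot m^1_t-\dot m^2_t|\,\leq\,C\delta\big(\|\pi_t\|_{L^2(w_\beta)}+|m^1_t-m^2_t|\big),\qquad t\in[0,2T].
\end{equation}

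Now the assumption \eqref{eq:control_deltap_deltam} and \eqref{eq:lip_cont_pi 1} of Lemma~\ref{lem:lip_cont_pi} give
\begin{equation}
\|\pi_t\|_{L^2(w_\beta)}\,\leq\,\max\big(|m^1_0-m^2_0|,\sqrt{\kappa_4\delta}\,M_t\big)\,\leq\,|m^1_0-m^2_0|+\sqrt{\kappa_4\delta}\,M_t,
\end{equation}
so that integrating \eqref{eq:plan_m_diff} on $[0,t]$ and taking the supremum in $s\leq t$,
\begin{equation}
M_t\,\leq\,|m^1_0-m^2_0|\big(1+2CT\delta\big)+2CT\delta\big(1+\sqrt{\kappa_4\delta}\big)M_t,\qquad t\in[0,2T].
\end{equation}
Choosing $\delta_5\in(0,\delta_4]$ small enough that $2CT\delta_5(1+\sqrt{\kappa_4\delta_5})\leq 1/4$ and $2CT\delta_5\leq 1/2$, one obtains the upper bound $M_t\leq 2|m^1_0-m^2_0|$, which gives the right-hand inequality in \eqref{eq:encadre m}.

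For the lower bound, going back to \eqref{eq:plan_m_diff} and using the just-proved upper bound together with $\|\pi_s\|_{L^2(w_\beta)}\leq|m^1_0-m^2_0|+\sqrt{\kappa_4\delta}\,M_s\leq(1+2\sqrt{\kappa_4\delta})|m^1_0-m^2_0|$, one has
\begin{equation}
|m^1_t-m^2_t|\,\geq\,|m^1_0-m^2_0|-\int_0^t|\dot m^1_s-\dot m^2_s|\,\dd s\,\geq\,\big(1-C''\delta T\big)|m^1_0-m^2_0|,
\end{equation}
for some $C''>0$, and shrinking $\delta_5$ further if necessary so that $C''\delta_5 T\leq 1/2$ yields the left-hand inequality in \eqref{eq:encadre m}. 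The main (and only) non-routine input is the use of \eqref{eq:lip_cont_pi 1} to close the feedback loop between the PDE part $\pi_t$ and the finite-dimensional displacement $m^1_t-m^2_t$; everything else is a Grönwall-type manipulation.
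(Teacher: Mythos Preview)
Your proof is correct and follows essentially the same route as the paper's: bound $|\dot m^1_t-\dot m^2_t|$ via the ODE, control $\|\pi_t\|_{L^2(w_\beta)}$ through \eqref{eq:lip_cont_pi 1} and the hypothesis \eqref{eq:control_deltap_deltam}, and close by a Grönwall-type argument. The only cosmetic difference is that the paper phrases the bootstrap via a stopping time $t_5=\inf\{t:|m^1_t-m^2_t|>2|m^1_0-m^2_0|\}$, whereas you work directly with $M_t=\sup_{s\leq t}|m^1_s-m^2_s|$; these are equivalent formulations of the same continuity argument.
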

\begin{proof}[Proof of Lemma~\ref{lem:encadre m}]
We suppose here that $ \delta\leq \delta_{4}$ (recall Lemma~\ref{lem:lip_cont_pi}). We have
\begin{multline}
\left\vert \left\vert m_{ t}^{ 1} - m_{ t}^{ 2} \right\vert - \left\vert m_{ 0}^{ 1}- m_{ 0}^{ 2} \right\vert \right\vert\, \leq\, \left\vert m_{ t}^{ 1}- m_{ t}^{ 2}- (m_{ 0}^{ 1}- m_{ 0}^{ 2}) \right\vert\\
 \leq\,  \delta \int_{0}^{t} \left( \left\vert h(m_{ s}^{ 1}) - h(m_{ s}^{ 2}) \right\vert + \left\vert \left\langle \mu_{ s}^{ 1} - \mu_{ s}^{ 2}\, ,\, F\right\rangle \right\vert \right){\rm d}s\, ,
\end{multline}
and by \eqref{eq:second bound mui F}, we obtain for some $C>0$
\begin{equation}
\left\langle \mu_{ s}^{ 1} - \mu_{ s}^{ 2}\, ,\, F\right\rangle \, \leq\, C\left( \left\Vert p^1_s-p^2_s \right\Vert_{ L^{ 2}(w_{ \beta})} + \left\vert m_{ s}^{ 1} - m_{ s}^{ 2} \right\vert \right),\ 0\leq s\leq t\, ,
\end{equation}
so that, by the Lipschitz-continuity of $h$ and Lemma \ref{lem:lip_cont_pi}, for some $C'>0$,
\begin{equation}\label{eq:bound diff m_1}
\left\vert \left\vert m_{ t}^{ 1} - m_{ t}^{ 2} \right\vert  - \left\vert m_{ 0}^{ 1}- m_{ 0}^{ 2} \right\vert \right\vert\, \leq\, C'\gd \int_0^t \left( \left\Vert p^1_0-p^2_0 \right\Vert_{ L^{ 2}(w_{ \beta})} + \sup_{u\leq s}\left\vert m_{ u}^{ 1} - m_{ u}^{ 2} \right\vert \right)\dd s\, .
\end{equation}
Recalling \eqref{eq:control_deltap_deltam}, we get
\begin{equation}
\left\vert \left\vert m_{ t}^{ 1} - m_{ t}^{ 2} \right\vert - \left\vert m_{ 0}^{ 1}- m_{ 0}^{ 2} \right\vert \right\vert\, \leq\, 4 C' T\gd \sup_{s\leq t} \left\vert m_{ s}^{ 1} - m_{ s}^{ 2} \right\vert \, .
\end{equation}
Introducing the time $t_5:= \inf \left\lbrace t\in[0, 2T],\ \left\vert m_{ t}^{ 1}- m_{ t}^{ 2} \right\vert > 2\left\vert m_{ 0}^{ 1} - m_{ 0}^{ 2}\right\vert\right\rbrace$ we deduce that for all $t\leq t_5$,
\begin{equation}
\left\vert \left\vert m_{ t}^{ 1} - m_{ t}^{ 2} \right\vert - \left\vert m_{ 0}^{ 1}- m_{ 0}^{ 2} \right\vert \right\vert
\, \leq \, 8 C' T\gd \left\vert m_{ 0}^{ 1}- m_{ 0}^{ 2} \right\vert\, .
\end{equation}
Choosing $ \delta_5=\min(\gd_4,\frac{C' T}{16})$, we obtain that $t_5=2T$ and the result \eqref{eq:encadre m}.
\end{proof}

\subsection{ The fixed-point problem}
\label{sec:fixed_point_f}
Let $ \mathcal{ C}(B_{2L}, L^{ 2}(w_{\beta}))$ be the set of continuous functions $m \mapsto f(m)$ with values in $L^{ 2}(w_{\beta})$, endowed with the norm
\begin{equation}
\label{eq:norm_inf_minus_beta}
\left\Vert f \right\Vert_{ \infty, \beta}:= \sup_{ m\in B_{2L}} \left\Vert f(m) \right\Vert_{ L^{ 2}(w_{\beta})}.
\end{equation}

\begin{definition}
\label{def:space_F}
Fix $ \delta>0$ and let $\alpha, \beta$ defined as in \eqref{hyp:alpha_beta_gamma}. We denote $\cH=\cH(\gd,\kappa_1,\kappa_2,\kappa_3)$ the subset of $ \mathcal{ C}(B_{2L}, L^{ 2}(w_\beta))$ composed of the elements $f$ satisfying the following conditions:
\begin{enumerate}
\item for all $m\in B_{2L}$, 
\begin{equation}\label{eq:def H 1}
f(m)(\cdot)\, \geq\,  0 \text{ a.e.}\, ,   \qquad  \int_{ \mathbb{ R}^{ d}} f(m)(x) {\rm d}x \, =\, 1 \, , \qquad \text{and} \quad \int_{ \mathbb{ R}^{ d}} x f(m)(x) {\rm d}x\, =\, 0 \, ,
\end{equation}
\item for the constants $\kappa_1, \kappa_2$ and $\kappa_3$ defined respectively in Lemma \ref{lem:apriori_bound_mt}, Lemma \ref{lem:mu bounded L2} and Lemma \ref{lem:nablamu_bounded_L2},
\begin{multline}\label{eq:def H 2}
\sup_{ m\in B_{ 2L}}\int_{ \mathbb{ R}^{ d}} w_{\ga}(x+m) f(m)(x) {\rm d}x \, \leq\,  \kappa_{ 1}\, , \qquad \sup_{m\in B_{2L}} \left\Vert f(m)-q_0\right\Vert_{L^2(w_{\ga})} \,  \leq\,  \kappa_{2} \gd\, ,\\
\text{and} \quad 
  \sup_{m\in B_{2L}} 
 \Vert \nabla(f(m)-q_0)\Vert^2_{L^2(w_{\gb})}\, \leq\, \kappa_3\gd\, ,
\end{multline}
\item for all $m_{ 1}, m_{ 2}\in B_{ 2L}$, 
\begin{equation}
\label{eq:tilde_f_Lip}
 \left\Vert f(m_{ 1})- f(m_{ 2}) \right\Vert_{ L^{ 2}(w_{ \beta})}\leq  \left\vert m_{ 1}- m_{ 2} \right\vert.
\end{equation}
\end{enumerate}
\end{definition}

\begin{rem}
$\cH$ is a complete subset of $\cC(B_{ 2L},L^2(w_{\beta}))$.
\end{rem}

\bigskip

For $t\geq0$, consider the mapping
\begin{equation}
\label{eq:g_ft}
m'\, \mapsto \, g_{t,f}(m')\, :=m_t^{f(m'),m'} \, .
\end{equation}

\begin{lemma}
\label{lem:bij}
For any $T>0$, there exists $\delta_6= \delta_{ 6}(T)>0$ such that if $0\leq\delta\leq\delta_6$ and $f\in\mathcal{ H}$, then for all $m'\in B_{2L}$ and $t\in[0,2T]$ there exists a unique $m\in B_{2L}$ such that $m'=g_{t,f}(m)$. 
\end{lemma}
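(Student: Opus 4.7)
The plan is to establish injectivity of $g_{t,f}$ on $B_{2L}$ using Lemma~\ref{lem:encadre m}, and surjectivity onto $B_{2L}$ via a Brouwer-degree argument whose key input is the outward-pushing effect of the modified drift $h$ at $\partial B_{2L}$ that was engineered through \eqref{eq:hyp h}.

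For injectivity, fix $m_{1},m_{2}\in B_{2L}$ with $g_{t,f}(m_{1})=g_{t,f}(m_{2})$ and set $p_{i}^{0}:=f(m_{i})$. Condition \eqref{eq:tilde_f_Lip} gives $\left\Vert p_{1}^{0}-p_{2}^{0}\right\Vert_{L^{2}(w_{\beta})}\leq\vert m_{1}-m_{2}\vert$, which is exactly \eqref{eq:control_deltap_deltam}, and the remaining properties \eqref{eq:def H 1}--\eqref{eq:def H 2} guarantee $p_{i}^{0}\in\cF(m_{i},\delta,\kappa_{1},\kappa_{2})$. Hence, for $\delta\leq\delta_{5}$, Lemma~\ref{lem:encadre m} produces $\tfrac{1}{2}\vert m_{1}-m_{2}\vert\leq\vert g_{t,f}(m_{1})-g_{t,f}(m_{2})\vert=0$, so $m_{1}=m_{2}$. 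Combining Lemma~\ref{lem:lip_cont_pi} (whose hypotheses are verified in the same way) with the Lipschitz property of $f$ also shows that $(m,t)\mapsto g_{t,f}(m)$ is continuous on $B_{2L}\times[0,2T]$, which will be needed for the degree argument.

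The main obstacle lies in the outward-drift step: for $\delta$ small enough, for every $m_{0}\in\partial B_{2L}$ and every $s\in(0,2T]$, we want $\vert g_{s,f}(m_{0})\vert>2L$. Denoting by $(p_{s},m_{s})$ the solution of \eqref{eq:slow fast PDE modified} starting from $(f(m_{0}),m_{0})$ and setting $\phi(s):=\vert m_{s}\vert^{2}$, we have
\begin{equation*}
\phi'(s)\, =\, 2\delta\, m_{s}\cdot\bigg[h(m_{s})+\int_{\bbR^d} F(x+m_{s})\, p_{s}({\rm d}x)\bigg]\, .
\end{equation*}
At a time $s$ with $\vert m_{s}\vert=2L$, a Cauchy--Schwarz estimate using \eqref{hyp:bound_F_exp} (together with $2\varepsilon<\alpha$, see \eqref{eq:def_alpha_beta}), the uniform bound from Lemma~\ref{lem:apriori_bound_mt} and the proximity estimate of Lemma~\ref{lem:mu bounded L2} yields
\begin{equation*}
\left\vert\int F(\cdot+m_{s})\, p_{s}-\int F(y)\, q_{m_{s},\sigma^{2}K^{-1}}(y)\, {\rm d}y\right\vert\,\leq\, C\left\Vert p_{s}-q_{0}\right\Vert_{L^{2}(w_{\alpha})}\,\leq\, C\kappa_{2}\delta\, ,
\end{equation*}
with $C$ depending only on $L,F,\sigma,K,\alpha$. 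The design condition \eqref{eq:hyp h} provides a strictly positive lower bound $c\xi$ for $m_{s}\cdot[h(m_{s})+\int F\, q_{m_{s},\sigma^{2}K^{-1}}]$ along $\vert m_{s}\vert=2L$, so $\phi'(s)\geq 2\delta(c\xi-2LC\kappa_{2}\delta)\geq\delta c\xi$ provided $\delta$ is small enough. Combined with $\phi(0)=(2L)^{2}$ and $\phi'(0)>0$, a first-return argument rules out any $s>0$ with $\phi(s)=(2L)^{2}$ and gives $\phi(s)>(2L)^{2}$ throughout $(0,2T]$. Turning the qualitative condition \eqref{eq:hyp h} into this quantitative estimate, uniform in $s\in[0,2T]$ and robust against the $O(\delta)$ error from $\Vert p_{s}-q_{0}\Vert_{L^{2}(w_{\alpha})}$, is the delicate point of the proof.

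Surjectivity then follows from a standard Brouwer-degree argument. For $m'\in\mathrm{int}(B_{2L})$, the continuous homotopy $(m,s)\mapsto g_{st,f}(m)$ interpolates between the identity (at $s=0$) and $g_{t,f}$ (at $s=1$), and by the outward-drift step one has $m'\notin g_{st,f}(\partial B_{2L})$ for every $s\in[0,1]$. Homotopy invariance of the degree then gives $\deg(g_{t,f},B_{2L},m')=\deg(\mathrm{id},B_{2L},m')=1$ and hence a preimage in $B_{2L}$. Points $m'\in\partial B_{2L}$ are handled by approximation from the interior and compactness, the outward-drift property forbidding the limiting preimage from lying on $\partial B_{2L}$. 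Setting $\delta_{6}$ as the minimum of $\delta_{5}$ and the threshold coming from the outward-drift step completes the proof.
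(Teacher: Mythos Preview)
Your proof is correct, and the core ingredients (the Lipschitz control coming from Lemma~\ref{lem:encadre m} for injectivity, and the outward-drift estimate at $\vert m_s\vert=2L$ combining \eqref{eq:hyp h} with the proximity bound of Lemma~\ref{lem:mu bounded L2}) are exactly the ones the paper uses. The difference lies in how surjectivity is obtained. The paper first extends $f$ from $B_{2L}$ to $B_{3L}$ by radial projection, then shows that $w_t(m):=g_{t,f}(m)-m$ has small sup-norm and small Lipschitz constant (of order $\delta$), and invokes the classical perturbation-of-the-identity result from Dieudonn\'e to conclude that $g_{t,f}$ is a homeomorphism onto a ball containing $B_{2L}$; the outward-drift is then used only at the end, to check that preimages of points of $B_{2L}$ cannot come from the extended region $B_{3L}\setminus B_{2L}$. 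Your approach stays on $B_{2L}$ throughout and replaces the Dieudonn\'e step by a Brouwer-degree homotopy argument, using the outward-drift to ensure that $m'\in\mathrm{int}(B_{2L})$ is never hit by the image of $\partial B_{2L}$ along the homotopy $s\mapsto g_{st,f}$. This buys you a cleaner argument (no artificial extension of $f$, no external reference) at the price of handling $m'\in\partial B_{2L}$ separately by approximation; the paper's approach, closer to the Wiggins presentation, gives slightly more, namely that $g_{t,f}$ is a genuine homeomorphism on a full neighborhood of $B_{2L}$.
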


\begin{proof}[Proof of Lemma~\ref{lem:bij}]
We use ideas from \cite{wiggins2013normally}, relying in particular on approximation results given in \cite{MR0120319}, page 261. Suppose here that $\gd\leq \gd_5$ and consider some $f\in \cH$. Let us extend $f$ artificially to $B_{3L}$ by stating, for any $2L \leq \vert m\vert \leq 3L$, $f(m)=f \left(\frac{2L }{\vert m\vert}m\right)$. With this definition, \eqref{eq:tilde_f_Lip} is still satisfied for $m_1,m_2\in B_{3L}$.

Consider now for $t\in [0,2T]$, the mapping $w_t$ defined for all $m_0\in B_{3L}$ by $w_t(m_0)=g_{t,f}(m_0)-m_0$. Now recalling Lemma \ref{lem:lip_cont_pi}, \eqref{eq:second bound mui F} and Lemma \ref{lem:encadre m}, we have for a positive constant $C$:
\begin{equation}
\vert w_t(m^1_0)-w_t(m^2_0)\vert \leq 
  \delta \int_{0}^{t} \left( \left\vert h(m_{ s}^{ 1}) - h(m_{ s}^{ 2}) \right\vert + \left\vert \left\langle \mu_{ s}^{ 1} - \mu_{ s}^{ 2}\, ,\, F\right\rangle \right\vert \right){\rm d}s\\
\leq\, C \delta \vert m^1_0-m^2_0\vert\, ,
\end{equation}
and by Lemma \ref{lem:apriori_bound_mt},
\begin{equation}
\vert w_t(m_0)\vert \, \leq\, \delta \int_{0}^{t} \left( \left\vert h(m_{ s}) \right\vert + \left\vert \left\langle \mu_{ s}\, ,\, F\right\rangle \right\vert \right){\rm d}s\\
\leq\, C \delta\, .
\end{equation}
We can thus apply the approximation results given in \cite{MR0120319}: for all $t\in [0,2T]$ there exists an homeomorphism $b_t$ defined on the ball $B_{3L-c\delta}$ for some constant $c>0$ such that $b_t(m)=m-w_t(b_t(m))$ for all $m\in B_{3L-c\delta}$. This means that its inverse, i.e. the mapping $m\mapsto m+w_t(m)=g_{t,f}(m)$ is an homeomorphism from $f(B_{3L-c\delta})\supset B_{2L}$ to $B_{3L-c\delta}$, where the inclusion $f(B_{3L-c\delta})\supset B_{2L}$ holds for $\delta$ small enough, since $m\in f(B_{3L-c\delta})$ is equivalent to $g_{t,f}(m)\in B_{3L-c\delta}$, and if $\vert m\vert \leq 2L$ we have $\vert g_{t,f}(m)\vert \leq \vert m\vert +\vert w_t(m)\vert \leq 2L+C\delta\leq 3L-c\delta$ for $\delta\leq \frac{2L}{c+C}$.

It remains to prove that the elements $m'\in B_{2L}$ are images of elements $m\in B_{2L}$ by $g_{t,f}$. But for each trajectory $m_\cdot$ such that $\vert m_t\vert =2L$ we have, recalling \eqref{eq:hyp h},
\begin{multline}
n(m_t)\cdot \dot m_t =\, \gd n(m_t)\cdot \left(h(m_t)+\int_{\bbR^d}F(x)\dd \mu_t(x) \right)\\ \geq \, \gd\xi+\gd n(m_t)\cdot \left(\int_{\bbR^d}F(x)\dd \mu_{t}(x)-\int_{\bbR^d}F(x) q_{m_t}(x)\dd x \right)\, ,
\end{multline}
and, relying on Lemma \ref{lem:mu bounded L2} and the fact that $2\gep < \ga$,
\begin{multline}
\left|\int_{\bbR^d}F(x)\dd \mu_{t}(x)-\int_{\bbR^d}F(x) q_{m_t}(x)\dd x \right|\, \leq\,C_F \left( \int_{\bbR^d} w_\gep(x-m_t) (p_t(x)-q_0(x))^2\dd x\right)^{\frac12}\\ \leq\, C'\gd\, .
\end{multline}
We deduce that if $\gd$ is small enough, then for all trajectory $m_\cdot$ such that $\vert m_t\vert=2L$, we have $n(m_t)\cdot \dot m_t>0$, which means that the trajectories defined by \eqref{eq:PDE_mod_h} can not enter $B_{2L}$. So the elements $m'\in B_{2L}$ are indeed images of elements $m\in B_{2L}$ by $g_{t,f}$, and this concludes the proof.
\end{proof}

\bigskip

From Lemma~\ref{lem:bij} we deduce that for all $T>0$, $t\in[0, 2T]$, we can define a mapping $\eta_{ t}$ as follows:
\begin{equation}
\label{eq:eta_f}
\begin{array}{rcc}
\eta_{t,f}: B_{2L} & \rightarrow & L^2(w_{\beta})\\
           m  &  \mapsto &  p_t^{f( g_{t,f}^{-1}(m)),\, g_{t,f}^{-1}(m)} 
\end{array}\, .
\end{equation}
Form now on, we fix $T:=T_{ 0}$ such that:
\begin{equation}\label{eq:def T}
e^{-k_{\beta} T_{ 0}}\, \leq\, \frac{1}{16}\, .
\end{equation}

\begin{lemma}
\label{lem:etat_Lip}
There exists $\delta_7>0$ such that if $\delta\leq\delta_7$, then for any $f\in  \mathcal{ H}$ and any $t\in[T_{ 0},2T_{ 0}]$, we have $ \eta_{ t, f}\in \mathcal{ H}$.
\end{lemma}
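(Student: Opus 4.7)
The plan is to verify, for $\delta$ small and $t\in[T_0,2T_0]$, the three conditions defining $\cH$ in Definition~\ref{def:space_F}. Fix $m\in B_{2L}$, set $m':=g_{t,f}^{-1}(m)\in B_{2L}$ (well-defined by Lemma~\ref{lem:bij}), and denote by $(p_s,m_s)_{s\in[0,t]}$ the solution of \eqref{eq:slow fast PDE modified} with initial datum $(f(m'),m')$, so that $\eta_{t,f}(m)=p_t$ and $m_t=m$. Taking $\delta_7\leq\min(\delta_3,\delta_4,\delta_5,\delta_6)$ makes all the a priori estimates of Section~\ref{sec:persistence} available.

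Condition (1) is direct: $f(m')$ is a probability density with vanishing mean by $f\in\cH$, and these three properties are preserved by the $p$-equation of \eqref{eq:slow fast PDE modified} (positivity and total mass via the equivalence with the McKean process of Proposition~\ref{prop:PDE_well_posed p m}, vanishing first moment by the centering construction, as can be checked by a direct computation using the $m$-equation to cancel the drift). Condition (2) follows from the uniform a priori bounds: the three inequalities \eqref{eq:def H 2} for $f\in\cH$ evaluated at $m'$ mean precisely that $f(m')\in\cG(m',\delta,\kappa_1,\kappa_2,\kappa_3)$, so Lemmas~\ref{lem:apriori_bound_mt}, \ref{lem:mu bounded L2} and \ref{lem:nablamu_bounded_L2} (applied with $T=T_0$) propagate these bounds to all $s\in[0,2T_0]$, hence to $s=t$.

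The heart of the proof is condition (3). For $m_1,m_2\in B_{2L}$, let $m_i':=g_{t,f}^{-1}(m_i)$ and let $(p^i_s,m^i_s)$ be the associated trajectories. Because $f$ satisfies the Lipschitz bound \eqref{eq:tilde_f_Lip}, Lemma~\ref{lem:encadre m} applies and gives $\tfrac12|m_1'-m_2'|\leq|m_s^1-m_s^2|\leq 2|m_1'-m_2'|$ for all $s\in[0,t]$; evaluating at $s=t$ yields $|m_1'-m_2'|\leq 2|m_1-m_2|$ and $\sup_{s\leq t}|m_s^1-m_s^2|\leq 4|m_1-m_2|$. Plugging this and $\|f(m_1')-f(m_2')\|_{L^2(w_\beta)}\leq|m_1'-m_2'|\leq 2|m_1-m_2|$ into the contraction estimate \eqref{eq:lip_cont_pi 2} of Lemma~\ref{lem:lip_cont_pi} yields
\[
\left\Vert\eta_{t,f}(m_1)-\eta_{t,f}(m_2)\right\Vert_{L^2(w_\beta)}^2 \,\leq\, 4\,e^{-\chi(\delta)t}|m_1-m_2|^2 + 16\,\kappa_4\,\delta\,|m_1-m_2|^2.
\]
Since $\chi(\delta)=k_\beta-c_1\delta$ and $t\geq T_0$, the choice \eqref{eq:def T} of $T_0$ gives $4e^{-\chi(\delta)t}\leq 4\cdot 2\cdot\tfrac{1}{16}=\tfrac12$ as soon as $c_1\delta T_0\leq\ln 2$. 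Taking in addition $\delta\leq 1/(32\kappa_4)$, the right-hand side is bounded by $|m_1-m_2|^2$, which is \eqref{eq:tilde_f_Lip} for $\eta_{t,f}$ and, a fortiori, continuity in $m$.

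The main obstacle lies exactly in this last inequality. Two successive applications of Lemma~\ref{lem:encadre m} (one to pass from $|m_1-m_2|$ to $|m_1'-m_2'|$ through $g_{t,f}^{-1}$, one to bound $\sup_{s\leq t}|m_s^1-m_s^2|$) produce a multiplicative factor $4$ in front of the contracting exponential, and the specific normalization $e^{-k_\beta T_0}\leq 1/16$ imposed in \eqref{eq:def T} is precisely the budget needed to absorb this factor and retain a strict contraction, into which the $\mathcal{O}(\delta)$ Lipschitz term can then be accommodated for $\delta$ sufficiently small.
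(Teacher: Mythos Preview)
Your proof is correct and follows essentially the same route as the paper's: conditions (1) and (2) are handled by construction and by the a priori Lemmas~\ref{lem:apriori_bound_mt}--\ref{lem:nablamu_bounded_L2}, and for condition (3) you combine the contraction estimate \eqref{eq:lip_cont_pi 2} with both sides of the two-sided bound of Lemma~\ref{lem:encadre m}, arriving at the same coefficient $4e^{-\chi(\delta)T_0}+16\kappa_4\delta$ that the paper obtains (written there as $4(e^{-\chi(\delta)T_0}+4\kappa_4\delta)$). One small expository quibble: the factor $4$ in front of the exponential comes from a single use of the lower bound $|m_1'-m_2'|\leq 2|m_1-m_2|$ (squared), not from two successive applications; the second use of Lemma~\ref{lem:encadre m} is what produces the $16$ in front of $\kappa_4\delta$.
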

\begin{proof}[Proof of Lemma~\ref{lem:etat_Lip}]
We suppose here that $ \delta\leq \delta_{ 6}$ (recall Lemma~\ref{lem:bij}), we fix $t\in [T_{ 0}, 2T_{ 0}]$ and consider a $f\in \cH$. Then $\eta_{t,f}$ satisfies \eqref{eq:def H 1} by construction.
Moreover it satisfies \eqref{eq:def H 2} as a consequence of Lemma \ref{lem:apriori_bound_mt}, Lemma \ref{lem:mu bounded L2} and Lemma \ref{lem:nablamu_bounded_L2}. By Lemma~\ref{lem:bij}, consider now $m_{ 0}^{ i}= g_{ t, f}^{ -1}(m_{ i})$ for $i=1, 2$, so that, by definition, $m^i_t:=m^{f(m^i_0),m^i_0}_t=m_i$ and for $p^{ i}_t=p^{f(m^i_0),m^i_0}_t$,
\begin{equation}\label{eq:first bound etatf}
\left\Vert{ \eta}_{ t, f}(m_{ 1})- { \eta}_{ t, f}(m_{ 2}) \right\Vert_{ L^{ 2}(w_{  \beta})} \,  =\,  \left\Vert p_{ t}^{ 1} - p_{ t}^{ 2} \right\Vert_{ L^{ 2}(w_{  \beta})}\, .
\end{equation}
Note that in particular $p_{ 0}^{ i}(\cdot) = { f}(m_{ 0}^{ i})$, so that, by hypothesis \eqref{eq:tilde_f_Lip} on $f$, assumption \eqref{eq:control_deltap_deltam} of Lemma \ref{lem:encadre m} is satisfied. Hence, applying Lemma \ref{lem:lip_cont_pi} and Lemma \ref{lem:encadre m}, we deduce
\begin{equation}
\left\Vert p_{ s}^{ 1} - p_{ s}^{ 2} \right\Vert_{ L^{ 2}(w_{  \beta})}^{ 2} \, \leq\, \left(\left\Vert p_{ 0}^{ 1} - p_{ 0}^{ 2} \right\Vert_{ L^{ 2}(w_{  \beta})}^{ 2}e^{ - \chi(\gd)s} + 4\kappa_4\delta  \left\vert m_{ 0}^{ 1} - m_{ 0}^{ 2} \right\vert^{ 2}\right)\, ,
\end{equation}
and applying once again \eqref{eq:tilde_f_Lip}, we obtain
\begin{align}
\left\Vert p_{ s}^{ 1} - p_{ s}^{ 2} \right\Vert_{ L^{ 2}(w_{  \beta})}^{ 2} \, \leq\,  \left\vert m_{ 0}^{ 1} - m_{ 0}^{ 2} \right\vert^{ 2}\left(e^{ - \chi(\gd) s} +  4\kappa_{ 4}\delta\right).
\end{align}
Hence, using the lower bound in \eqref{eq:encadre m}, and choosing the value of $\gd_7$ small enough so that $e^{-\chi(\gd_{7})T_{ 0}}\leq \frac18$ (recall \eqref{eq:def chi} and \eqref{eq:def T}) and $4\kappa_4\gd_7\leq \frac{1}{8}$, we get:
\begin{equation}
\left\Vert{ \eta}_{ t, f}(m_{ 1}) - { \eta}_{ t, f}(m_{ 2}) \right\Vert_{ L^{ 2}(w_{  \beta})}^2\, \leq\,  4 \left\vert m_{ t}^{ 1} - m_{ t}^{ 2} \right\vert^{ 2}\left(e^{ - \chi(\gd)T_{ 0}} + 4\kappa_{ 4}\delta\right)
\,\leq\,  \left\vert m_{ 1} - m_{ 2} \right\vert^{ 2}\, ,
\end{equation}
so that Lemma~\ref{lem:etat_Lip} is proven.
\end{proof}
\begin{lemma}
\label{lem:eta_contraction}
There exists $\delta_{ 8}>0$ and a constant $c_2>0$ such that if $0\leq \delta\leq\delta_{8}$, then for $f_1,f_2\in  \mathcal{ H}$ and all $t\in [T_{ 0},2T_{ 0}]$ we have
\begin{equation}\label{eq:lem contract}
 \left\Vert \eta_{t,f_1}-\eta_{t,f_2} \right\Vert_{ \infty, \beta}\,\leq\, e^{-\left( k_{\beta}-c_2 \gd\right) t}  \left\Vert f_1- f_2 \right\Vert_{ \infty, \beta}\, \leq\, \frac12 \left\Vert f_1- f_2 \right\Vert_{ \infty,\beta}\, .
\end{equation}
\end{lemma}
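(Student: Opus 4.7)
Fix $m\in B_{2L}$ and $t\in[T_0,2T_0]$. Following the definition \eqref{eq:eta_f}, set $m_0^i=g_{t,f_i}^{-1}(m)$, $p_0^i=f_i(m_0^i)$ and denote by $(p_s^i,m_s^i)_{s\in[0,t]}$ the solution of \eqref{eq:slow fast PDE modified} starting from $(p_0^i,m_0^i)$. The crucial observation is that by construction $m_t^1=m_t^2=m$, so $|\dot m^1-\dot m^2|$ is integrated twice against itself in a way that lets us close a Gronwall-style loop. Since $\dot m^i_s=\delta(h(m^i_s)+\langle\mu^i_s,F\rangle)$, the Lipschitz continuity of $h$ combined with \eqref{eq:second bound mui F} yields
\begin{equation}
\sup_{s\leq t}|m_s^1-m_s^2|\,=\,\sup_{s\leq t}\Bigl|\int_s^t(\dot m_u^1-\dot m_u^2)\,\dd u\Bigr|\,\leq\, C\delta\int_0^t\bigl(|m_u^1-m_u^2|+\|p_u^1-p_u^2\|_{L^2(w_\beta)}\bigr)\,\dd u.
\end{equation}
For $\delta$ small enough that $C\delta(2T_0)\leq 1/2$, the first term on the right is absorbed, giving
\begin{equation}
\sup_{s\leq t}|m_s^1-m_s^2|\,\leq\, C'\delta\sup_{s\leq t}\|p_s^1-p_s^2\|_{L^2(w_\beta)}.
\end{equation}

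Now I would apply \eqref{eq:lip_cont_pi 1} (whose hypothesis \eqref{eq:control_deltap_deltam} needed in Lemma~\ref{lem:encadre m} is obtained from \eqref{eq:tilde_f_Lip} and the triangle inequality below) to obtain $\sup_s\|p_s^1-p_s^2\|^2_{L^2(w_\beta)}\leq\|p_0^1-p_0^2\|^2_{L^2(w_\beta)}+\kappa_4\delta\sup_s|m_s^1-m_s^2|^2$. Plugging the previous bound in and absorbing the resulting $\delta^3$ term gives, for $\delta$ small,
\begin{equation}
\sup_{s\leq t}\|p_s^1-p_s^2\|^2_{L^2(w_\beta)}\,\leq\, 2\|p_0^1-p_0^2\|^2_{L^2(w_\beta)},\qquad \sup_{s\leq t}|m_s^1-m_s^2|\,\leq\, C''\delta\|p_0^1-p_0^2\|_{L^2(w_\beta)}.
\end{equation}
The initial gap is then controlled by $\|f_1-f_2\|_{\infty,\beta}$ via the triangle inequality and the Lipschitz property \eqref{eq:tilde_f_Lip} of $f_2\in\mathcal{H}$:
\begin{equation}
\|p_0^1-p_0^2\|_{L^2(w_\beta)}\,\leq\,\|f_1(m_0^1)-f_2(m_0^1)\|_{L^2(w_\beta)}+\|f_2(m_0^1)-f_2(m_0^2)\|_{L^2(w_\beta)}\,\leq\,\|f_1-f_2\|_{\infty,\beta}+|m_0^1-m_0^2|.
\end{equation}
Combining with the estimate on $|m_0^1-m_0^2|\leq C''\delta\|p_0^1-p_0^2\|_{L^2(w_\beta)}$ and absorbing for $\delta$ small enough yields $\|p_0^1-p_0^2\|_{L^2(w_\beta)}\leq 2\|f_1-f_2\|_{\infty,\beta}$.

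At this point the proof is essentially done: applying \eqref{eq:lip_cont_pi 2} of Lemma~\ref{lem:lip_cont_pi},
\begin{equation}
\|p_t^1-p_t^2\|^2_{L^2(w_\beta)}\,\leq\, e^{-\chi(\delta)t}\|p_0^1-p_0^2\|^2_{L^2(w_\beta)}+\kappa_4\delta\sup_{s\leq t}|m_s^1-m_s^2|^2\,\leq\,\bigl(4e^{-\chi(\delta)t}+C'''\delta^3\bigr)\|f_1-f_2\|^2_{\infty,\beta}.
\end{equation}
Since $t\geq T_0>0$, the prefactor $4$ and the $\delta^3$ term can be absorbed into the exponential by a modification of its rate: there exists $c_2>0$ such that $4e^{-\chi(\delta)t}+C'''\delta^3\leq e^{-2(k_\beta-c_2\delta)t}$ for $\delta$ small. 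Taking $\sup_{m\in B_{2L}}$ and square roots delivers the first inequality of \eqref{eq:lem contract}; the second follows because $t\geq T_0$ and \eqref{eq:def T} gives $e^{-k_\beta T_0}\leq 1/16$, so for $\delta_8$ small enough $e^{-(k_\beta-c_2\delta_8)T_0}\leq 1/2$. The main obstacle is the coupled character of the estimates (the initial distance $\|p_0^1-p_0^2\|$ depends on $|m_0^1-m_0^2|$ which in turn is controlled via the time-$t$ constraint $m_t^1=m_t^2$); resolving it hinges on the fact that the feedback constants come with factors of $\delta$, so a finite number of absorption steps closes the loop as soon as $\delta$ is small.
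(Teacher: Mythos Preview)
Your route differs from the paper's. The paper introduces a third trajectory $(p^3,m^3)$ starting from $(f_1(m_0^2),m_0^2)$, splits $\|p_t^1-p_t^2\|\leq\|p_t^1-p_t^3\|+\|p_t^3-p_t^2\|$, and uses (i) the fact that $\eta_{t,f_1}\in\mathcal H$ (Lemma~\ref{lem:etat_Lip}) to bound the first piece by $|m_t^1-m_t^3|=|m_t^2-m_t^3|$, and (ii) the identities $m_0^3=m_0^2$ and $\|p_0^3-p_0^2\|=\|f_1(m_0^2)-f_2(m_0^2)\|\leq\|f_1-f_2\|_{\infty,\beta}$ (coefficient exactly~$1$) for the second. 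You instead exploit the terminal constraint $m_t^1=m_t^2$ to control $\sup_s|m_s^1-m_s^2|$ backward and close the estimates directly on the pair $(p^1,p^2)$. Your argument is more self-contained (it does not invoke Lemma~\ref{lem:etat_Lip}); the paper's decomposition, on the other hand, makes the leading constant in front of $e^{-\chi(\delta)t}$ manifestly equal to~$1$, which matters for the sharp rate in the first inequality of \eqref{eq:lem contract}.

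There is a concrete slip in your final absorption. The claim ``$4e^{-\chi(\delta)t}+C'''\delta^3\leq e^{-2(k_\beta-c_2\delta)t}$ for some fixed $c_2$ and $\delta$ small'' is false: at $\delta=0$ it reads $4e^{-\chi(0)t}\leq e^{-2k_\beta t}$, and the prefactor $4$ does not go away. The source of the $4$ is the crude step $\|p_0^1-p_0^2\|\leq 2\|f_1-f_2\|_{\infty,\beta}$. Your own inequalities in fact give the sharper bound $\|p_0^1-p_0^2\|\leq (1-C''\delta)^{-1}\|f_1-f_2\|_{\infty,\beta}$; carrying this through replaces $4e^{-\chi(\delta)t}$ by $(1-C''\delta)^{-2}e^{-\chi(\delta)t}=\bigl(1+O(\delta)\bigr)e^{-\chi(\delta)t}$, and since $t\in[T_0,2T_0]$ is bounded away from $0$ and $\infty$, a factor $1+O(\delta)$ \emph{can} be absorbed into the exponential rate (choose $c_2$ so that $(c_2-c_1)T_0$ dominates the relevant constant). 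With that correction both inequalities of \eqref{eq:lem contract} follow from your argument. Incidentally, the second inequality (contraction by $\tfrac12$) already goes through with your crude constant, since $4e^{-\chi(\delta)T_0}+C'''\delta^3\leq\tfrac14+O(\delta^3)$ by \eqref{eq:def T}. Also, the parenthetical about \eqref{eq:control_deltap_deltam} is unnecessary: Lemma~\ref{lem:lip_cont_pi} does not require that hypothesis.
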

\begin{proof}[Proof of Lemma~\ref{lem:eta_contraction}]
We suppose that $0\leq \delta\leq \delta_{7}$. Fix a $ m^{ \prime}\in B_{ r}$ and a $t\in [T_{ 0},2T_{ 0}]$. According to Lemma~\ref{lem:bij}, for $i=1, 2$, there exists a unique $m_{ 0}^{ i}\in B_{ r}$ such that $ m^{ \prime} = g_{ t, f_{ i}}(m_{ 0}^{ i})$. For $i=1, 2$, denote by $p^i:= \left(p^{f_{ i}(m^i_0),m^i_0}_s\right)_{ s\geq0}$ and $m^i:= \left(m^{f_{ i}(m^i_0),m^i_0}_s\right)_{ s\geq0}$. Note that by construction $m^i_t=m'$ (see Figure~\ref{fig:traj_mi}). We consider also the solutions $p^3:= \left(p^{f_1(m^2_0),m^2_0}_s\right)_{ s\geq0}$ and $m^3:= \left(m^{f_1(m^2_0),m^2_0}_s\right)_{ s\geq0}$.
\begin{figure}[h]
\centering
\includegraphics[width=0.6\textwidth]{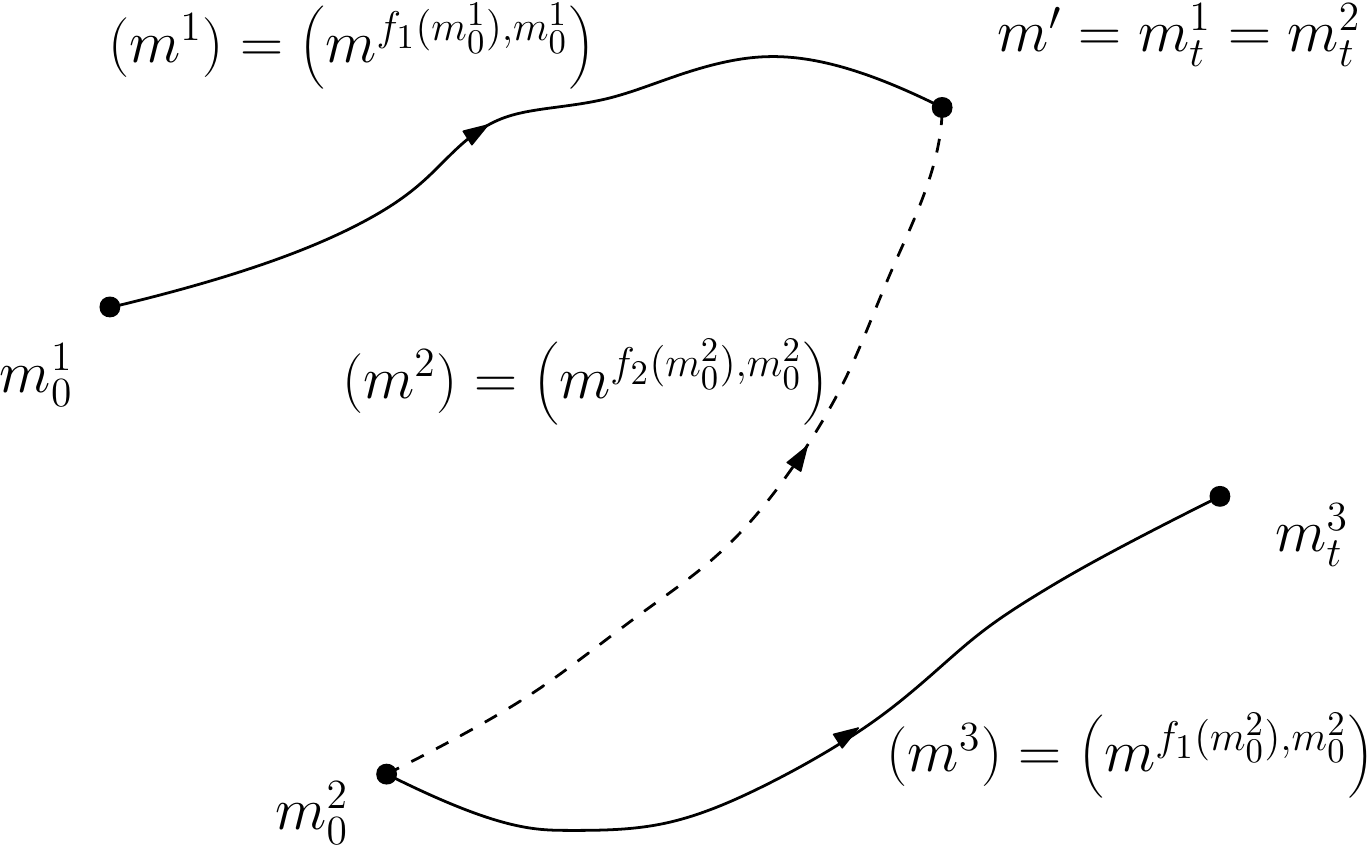}
\caption{Trajectories of the mean values $m^{ i}$ for $i=1, 2, 3$. The solid lines (resp. dashed line) correspond to trajectories of the mean value $(m_{ s}^{ f_{ 1}(\cdot), \cdot})_{ s\geq0}$ (resp. $(m_{ s}^{ f_{ 2}(\cdot), \cdot})_{ s\geq0}$) with initial conditions provided by $f_{ 1}$ (resp. $f_{ 2}$).}
\label{fig:traj_mi}
\end{figure}
With these notations at hand,
\begin{multline}
\left\Vert { \eta_{ t, f_{ 1}}}(m^{ \prime})-{ \eta_{ t, f_{ 2}}}(m^{ \prime})\right\Vert_{ L^{ 2}(w_{  \beta})} \, =\, \left\Vert p_{ t}^{ 1}- p_{ t}^{ 2} \right\Vert_{ L^{ 2}(w_{ \beta})}
\\ \leq \, \left\Vert p_{t}^{ 1} - p_{ t}^{ 3} \right\Vert_{ L^{ 2}(w_{  \beta})} + \left\Vert p_{t}^{ 3}-p_{t}^{ 2} \right\Vert_{ L^{ 2}(w_{ \beta})}\, .
\end{multline}
Firstly we have
\begin{equation}
\left\Vert p_{t}^{ 1}- p_{t}^{ 3} \right\Vert_{ L^{ 2}(w_{  \beta})}\, =\,  \left\Vert { \eta_{ t, f_{ 1}}}(m_{ t}^{ 1}) -{ \eta_{ t, f_{ 1}}}(m_{ t}^{ 3})\right\Vert_{ L^{ 2}(w_{ \beta})}\, ,
\end{equation}
and so, by Lemma~\ref{lem:etat_Lip}, we obtain
\begin{equation}
\left\Vert p_{ t}^{ 1}- p_{ t}^{ 3} \right\Vert_{ L^{ 2}(w_{ \beta})} \, \leq\ \left\vert m_{t}^{ 1}-m_{t}^{ 3} \right\vert\, = \, \left\vert m_{t}^{ 2} - m_{t}^{ 3} \right\vert\, ,
\end{equation}
where we have also used the identity $m_{t}^{ 1}= m^{ \prime} = m_{t}^{ 2}$. Note that, by construction, $m_{ 0}^{ 3}= m_{ 0}^{ 2}$. In particular, the same calculations leading to \eqref{eq:bound diff m_1} give the estimate, for $s\leq t$ and some constant $C>0$,
\begin{align*}
\left\vert m_{ s}^{ 3} - m_{ s}^{ 2} \right\vert &\leq \delta C\int_{0}^{s} \left( \left\Vert p_{ 0}^{ 3}- p_{ 0}^{ 2} \right\Vert_{ L^{ 2}(w_{  \beta})} + \sup_{ v\leq u} \left\vert m_{ v}^{ 3} - m_{ v}^{ 2} \right\vert\right){\rm d}u,
\end{align*}
so that Gr\"onwall's lemma implies, 
\begin{equation}
\label{eq:gronwall_m23}
\sup_{ u\leq s} \left\vert m_{ u}^{ 3} - m_{ u}^{ 2} \right\vert \leq \delta C \left\Vert p_{ 0}^{ 3} - p_{ 0}^{ 2} \right\Vert_{ L^{ 2}(w_{ \beta})} e^{ \delta C s}\, .
\end{equation}
Note also that 
\begin{equation}\label{eq:link p0 f}
\left\Vert p_{ 0}^{ 3}- p_{ 0}^{ 2} \right\Vert_{ L^{ 2}(w_{ \beta})}\, =\,  \left\Vert { f}_{ 1}(m_{ 0}^{ 2}) - { f}_{ 2}(m_{ 0}^{ 2}) \right\Vert_{ L^{ 2}(w_{ \beta})}\, \leq\, \left\Vert f_{ 1} - f_{ 2}\right\Vert_{ \infty,\beta}\, .
\end{equation}
This means that
\begin{equation}
\label{eq:control_p13_final}
\left\Vert p_{t}^{ 1}-p_{t}^{ 3} \right\Vert_{ L^{ 2}(w_{  \beta})} \, \leq\, \gd C  e^{ \gd C 2T} \left\Vert f_{ 1}- f_{ 2} \right\Vert_{ \infty, \beta}\, .
\end{equation}
Secondly, applying Lemma \ref{lem:lip_cont_pi}, we obtain
\begin{equation}
\left\Vert p_{t}^{ 3} - p_{t}^{ 2} \right\Vert_{ L^{ 2}(w_{  \beta})}^{ 2} \, \leq\,   \left\Vert p_{ 0}^{ 3}-p_{ 0}^{ 2} \right\Vert_{ L^{ 2}(w_{\beta})}^{ 2} e^{ - \chi(\gd)t} +  { \kappa}_{ 4}\delta  \sup_{ s\leq t} \left\vert m_{ s}^{ 3} - m_{ s}^{ 2} \right\vert^{ 2}\, , \label{eq:control_p23}
\end{equation}
so using again \eqref{eq:gronwall_m23} and \eqref{eq:link p0 f}, we obtain,
\begin{equation}
\left\Vert p_{t}^{ 3} - p_{t}^{ 2} \right\Vert_{ L^{ 2}(w_{  \beta})}^{ 2} \, \leq\,  \left( e^{ - \chi(\gd) t} +  { \kappa}_{ 4}\delta^{ 3} C^{ 2} e^{ 4\delta C T_{ 0}}\right) \left\Vert f_{ 1} - f_{ 2}\right\Vert_{ \infty, \beta}^{ 2}\,  .
\end{equation}
So, recalling the definition \eqref{eq:def chi} of $\chi(\gd)$, for $\gd$ small enough we indeed have for a positive constant $c_2$:
\begin{equation}
 \left\Vert \eta_{t,f_1}-\eta_{t,f_2} \right\Vert_{ \infty, \beta}\,\leq\, e^{-\left( k_{\beta}-c_2 \gd\right) t}  \left\Vert f_1- f_2 \right\Vert_{ \infty, \beta}\, ,
\end{equation}
and we obtain the result, choosing $\gd_8\leq \gd_7$ small enough (recall \eqref{eq:def T}).
\end{proof}

\subsection{Proof of Theorem \ref{th:main1}}

Let us denote for $t\geq 0$ the mapping $\Pi_t:f\mapsto \eta_{t,f}$. Lemma \ref{lem:etat_Lip} shows that $\Pi_t(\cH)\subset \cH$ for $t\in [T_{ 0},2T_{ 0}]$, and Lemma~\ref{lem:eta_contraction} implies that $\Pi_{ T_{ 0}}$ admits a unique fixed-point $f_0$ in $\cH$. Our first aim is to show that $\Pi_t(f_0)=f_0$ for all $t\geq 0$. The semi-group property implies directly that $\Pi_{kT_{ 0}}(f_0)=f_0$ for $k\in \bbN$. It remains to show that $\Pi_t(f_0)=f_0$ for $t\in (0,T_{ 0})$. But for such a $t$, we have $\Pi_t f_0=\Pi_t\Pi_{ T_{ 0}} f_0=\Pi_{T_{ 0}+t} f_0 \in \cH$, by Lemma \ref{lem:etat_Lip}, and $\Pi_{T_{ 0}}\Pi_t f_0=\Pi_t\Pi_{ T_{ 0}} f_0=f_0$, so $\Pi_t f_0=f_0$ by uniqueness of the fixed-point of $\Pi_{ T_{ 0}}$ on $\cH$.

\medskip
Since we have modified the dynamics of \eqref{eq:slow fast PDE} only for $m_t$ strictly outside of $B_L$, to prove that the manifold $ \mathcal{ M}^\gd$ is positively invariant for \eqref{eq:slow fast PDE} it remains only to show that, for a trajectory of \eqref{eq:slow fast PDE} starting from $(p_0,m_0)$ with $m_0\in \cV$, $m_t$ can not leave $\cV$. But if $m_0 \in \partial \cV$ we get
\begin{multline}
\frac{1}{\gd} n_{\partial\cV}(m_0)\cdot \dot m_0\, =\,   n_{\partial\cV}(m_0) \cdot \int_{\bbR^d} F(x) q^\gd_{m_0}(x) \dd x\\
=\, n_{\partial\cV}(m_0)\cdot \int_{\bbR^d} F(m_0+x) q_0(x)\dd x
+  n_{\partial\cV}(m_0) \cdot \int_{\bbR^d} F(x+m_0)\big(f_{ 0}(m_0)(x)-q_0(x)\big)\dd x\, .\label{eq:dot_m0}
\end{multline}
It remains to remark that \eqref{hyp:bound_F_exp} and the fact that $f_0\in \cH$ imply, by Cauchy-Schwartz inequality, that for some $C>0$
\begin{equation}
\left\vert  \int_{\bbR^d} F(x+m_0)\big(f_{ 0}(m_0)(x)-q_0(x)\big)\dd x\right\vert\, \leq\, C\gd\, .
\end{equation}
Combining the point (5) of Hypothesis \ref{hyp F} and the previous estimate shows that $n_{\partial\cV}(m_0)\cdot \dot m_0<0$ if we take $\gd$ small enough. This means precisely that $ \mathcal{ M}^\gd$ is positively invariant. This concludes the proof of Theorem \ref{th:main1}.

\medskip

Remark \ref{rem:stab} follows from \eqref{eq:lem contract}: any $p_0\in \cG(m_0)$ with $\vert m_0\vert \in \cV$ can be seen as a $f(m_0)$ for a $f\in \cH$, and with calculations similar as the ones we have just made we can show that the associated trajectory satisfies $\vert m_t\vert \in \cV$, and \eqref{eq:contract rem} follows from \eqref{eq:lem contract} (the constant $C'$ is necessary due to the fact that \eqref{eq:lem contract} provides informations only for $t\geq T$, for $t$ smaller we rely on \eqref{eq:bound ddt p-q}). Note that the third inequality of \eqref{eq:def H 2} is not needed to prove \eqref{eq:lem contract}.

\section{$ \mathcal{ C}^1$-regularity and approximated phase dynamics}
\label{sec:regularity_manifold}
The purpose of this section is to prove that the invariant manifold that we have found in the previous section is in fact $ \mathcal{ C}^{ 1}$. Following the approach of \cite{wiggins2013normally}, Section~3.3.1, the point is to first establish a formal equation that the derivative should satisfy, then to prove that this equation has a fixed-point and third, to show that this solution is indeed the derivative that we sought. After some preliminary estimates, we carry out this program in Section~\ref{sec:some_preliminary_def}.
\subsection{Linearized equation}
Consider the trajectories $p_t=p^{p_0,m_0}_t$ and $m_t=m^{p_0,m_0}_t$ for a $m_0\in \bbR^d$ and a $p_0\in L^2(w_{\ga})$ (recall \eqref{eq:slow fast PDE modified}).
Then for any initial condition $(y_0,n_0)$ that satisfies
\begin{equation}\label{eq:init cond y}
\int_{\bbR^d} y_0(x)\dd x=0\, ,\qquad \text{and}\qquad \int_{\bbR^d}x y_0(x)\dd x=0\, ,
\end{equation}
we consider the process $(y,n)$ defined by the following couple of equations:
\begin{equation}\label{eq:y_t n_t}
\left\{
\begin{array}{ccl}
\partial_t y_t & = &  \cL y_t +\nabla\cdot((\dot m_t-\gd F_{ t} -  \gd h(m_t))y_t)\\
&& \qquad \qquad \qquad \qquad \qquad  +  \nabla \cdot \left( (\dot n_t-\gd DF_t[n_t] -\gd Dh(m_t)[n_t])p_t\right)\\
\dot n_t & = & \gd \int F_t y_t +\gd Dh(m_t)[n_t]+ \gd \int DF_t[n_t]p_t
\end{array}
\right. \, ,
\end{equation}
where we have used the notation $DF_t(x)[u]=DF(x+m_t)[u]$.

\medskip

We first state the following existence and uniqueness result (recall the definition of $\gamma$ and $\gamma'$ in \eqref{hyp:alpha_beta_gamma}).

\begin{lemma}
For any $T>0$, $(p_0,m_0)\in L^2(w_{\ga})\times \bbR^d$ and any initial condition $(y_0,n_0)\in L^2(w_{\gamma})\times \bbR^d$ satisfying the hypotheses \eqref{eq:init cond y} there exists a unique couple $(y,n)$ element of $L^2((0,T),H^1(w_{\gamma}))\times \mathcal{ C}^1([0,T],\bbR^d)$ with $\frac{\dd}{\dd t} y\in L^2((0,T),H^{-1}(w_{\gamma'}))$ solution of \eqref{eq:y_t n_t}.
\end{lemma}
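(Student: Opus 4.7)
The plan is to decouple the system by a fixed-point argument on $n$: for $n(\cdot)\in \mathcal{C}([0,T],\bbR^d)$ held fixed the first line of \eqref{eq:y_t n_t} becomes a linear parabolic equation for $y$, and once $y$ is known the second line determines $\tilde n$ by simple integration. We show that the map $\Psi: n\mapsto \tilde n$ is a contraction on an appropriate space, provided $T$ is small enough; iteration then covers any finite horizon.

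More precisely, fix $n\in \mathcal{C}([0,T],\bbR^d)$ together with $\dot n\in L^\infty([0,T],\bbR^d)$ in a ball. The first equation of \eqref{eq:y_t n_t} reads
\begin{equation*}
\partial_t y_t\, =\, \cL y_t + \nabla\cdot\!\big((\dot m_t-\gd F_t-\gd h(m_t))\, y_t\big) + S_t(n),
\end{equation*}
with source term $S_t(n) := \nabla\cdot\big((\dot n_t-\gd DF_t[n_t]-\gd Dh(m_t)[n_t])\, p_t\big)$. By Proposition~\ref{lem:regularity_p} (using $p_0\in L^2(w_\alpha)$ which follows by density from $p_0\in L^2(w_\gamma)$ after a small reduction; if needed one first proves the statement for smooth initial data and then passes to the limit), one has $p\in L^2((0,T),H^1(w_\beta))$. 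Since $\gamma+2\varepsilon<\beta$ by \eqref{eq:def_alpha_beta}, the exponential bound \eqref{hyp:bound_F_exp} on $F$ and $DF$ together with Lemma~\ref{lem:apriori_bound_mt} (giving $|m_t|\le 4L$ and $|\dot m_t|$ bounded) imply that $S_t(n)\in L^2((0,T), H^{-1}(w_{\gamma'}))$, with a norm controlled by $\|n\|_\infty + \|\dot n\|_\infty$. The left-hand operator has the same structure as the operator $Q_{t,R}$ analyzed in \eqref{eq:decomp Quv}--\eqref{eq:Q coerc}, so the bilinear form
\begin{equation*}
a_t(u,v)\, :=\, -\langle \cL u + \nabla\cdot((\dot m_t-\gd F_t-\gd h(m_t))u),v\rangle_{L^2(w_\gamma)}
\end{equation*}
is continuous on $H^1(w_\gamma)\times H^1(w_\gamma)$ and satisfies a Gårding-type estimate $a_t(u,u)\ge C_1\|\nabla u\|^2_{L^2(w_\gamma)}-C_2\|u\|^2_{L^2(w_\gamma)}$ uniformly in $t\in[0,T]$, exactly as in \eqref{eq:Q coerc}. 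The Lions theorem (Theorem~X.9 in the reference already cited in the paper) therefore yields a unique
\begin{equation*}
y\in L^2((0,T),H^1(w_\gamma))\cap \mathcal{C}([0,T],L^2(w_\gamma)),\qquad \tfrac{\dd}{\dd t}y\in L^2((0,T),H^{-1}(w_{\gamma'})),
\end{equation*}
solving the linear equation with initial datum $y_0$. Moreover $\int y_t\,\dd x$ and $\int x y_t\,\dd x$ both satisfy homogeneous linear ODEs obtained by testing with $\varphi=1$ and $\varphi=x_j$ (the divergence structure and the identity $\cL x_j=-k_j x_j$ kill the boundary contributions after an approximation argument using a cut-off at infinity), so the constraints \eqref{eq:init cond y} are preserved in time.

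Given this $y$, define $\tilde n$ by
\begin{equation*}
\tilde n_t\, :=\, n_0+\int_0^t\!\Big(\gd\!\int F_s y_s\,\dd x +\gd\, Dh(m_s)[n_s]+\gd\!\int DF_s[n_s]\,p_s\,\dd x\Big)\dd s.
\end{equation*}
Using \eqref{hyp:bound_F_exp} and $\gamma>2\varepsilon$ one has $\|F_s\|_{L^2(w_{-\gamma})}+\|DF_s\,p_s\|_{L^1}\le C$ uniformly in $s\in[0,T]$, hence $|\tilde n_t|+|\dot{\tilde n}_t|\le C(1+\|y\|_{L^\infty_tL^2(w_\gamma)}+\|n\|_\infty)$. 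Given two data $n^1,n^2$ producing $y^1,y^2$, the difference $y^1-y^2$ solves the same equation with zero initial condition and source $S_t(n^1)-S_t(n^2)$; the Lions a priori estimate gives
\begin{equation*}
\|y^1-y^2\|_{L^\infty([0,T],L^2(w_\gamma))}\, \le\, C\sqrt{T}\,\big(\|n^1-n^2\|_\infty+\|\dot n^1-\dot n^2\|_\infty\big),
\end{equation*}
and plugging this into the definition of $\Psi$ yields $\|\Psi(n^1)-\Psi(n^2)\|_{\mathcal{C}^1}\le C'\sqrt{T}(\|n^1-n^2\|_\infty+\|\dot n^1-\dot n^2\|_\infty)$. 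For $T$ small enough this is a strict contraction in $W^{1,\infty}([0,T],\bbR^d)$; its unique fixed point provides the solution $(y,n)$ on a short time interval, and iteration extends the solution to any horizon. Uniqueness on the whole interval follows from the same Lions estimate applied to the difference of two global solutions and Grönwall's inequality.

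The main obstacle is the control of the source term $S_t(n)$: because $DF$ grows like $w_\varepsilon$, one cannot multiply it directly against $p_t$ in $L^2(w_\gamma)$; the chain of strict inequalities $\gamma'+2\varepsilon<\gamma<\gamma+2\varepsilon<\beta$ in \eqref{eq:def_alpha_beta} is exactly what is needed so that (i) $DF_t[n_t]\,p_t$ lies in $L^2(w_\gamma)$ thanks to the $H^1(w_\beta)$-bound on $p_t$, and (ii) the resulting divergence lies in $H^{-1}(w_{\gamma'})$, matching the Lions framework. Everything else is a routine extension of the arguments already carried out for \eqref{eq:slow fast PDE modified} in Section~\ref{sec:well_posed_apriori}.
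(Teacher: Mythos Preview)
Your overall strategy---freeze $n$, solve the linear parabolic problem for $y$ via Lions' theorem, then close a fixed-point on $n$---is the same as the paper's. The paper differs in two places. First, rather than a small-$T$ contraction followed by concatenation, the paper obtains the integral Lipschitz bound
\[
\|\Psi(\tilde n^1)-\Psi(\tilde n^2)\|_{\mathcal{C}^1([0,T])}\ \le\ C''(T)\int_0^T \|\tilde n^1-\tilde n^2\|_{\mathcal{C}^1([0,t])}\,\dd t,
\]
iterates it to get $\|\Psi^{k+1}(\tilde n)-\Psi^k(\tilde n)\|\le (C''(T))^k T^k/k!$, and concludes directly on any horizon $T$. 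Your small-time contraction is a legitimate alternative.

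There is, however, a genuine gap in your Lions step. You assert that the bilinear form
\[
a_t(u,v)\ =\ -\big\langle \cL u + \nabla\!\cdot\!\big((\dot m_t-\gd F_t-\gd h(m_t))u\big),\,v\big\rangle_{L^2(w_\gamma)}
\]
is continuous on $H^1(w_\gamma)\times H^1(w_\gamma)$. It is not: the term $\int \nabla u\cdot F_t\, v\, w_\gamma$ can only be bounded by $\|\nabla u\|_{L^2(w_\gamma)}\|v\|_{L^2(w_{\gamma+2\varepsilon})}$ because $|F_t|\lesssim w_\varepsilon$, and there is no control of $\|v\|_{L^2(w_{\gamma+2\varepsilon})}$ by $\|v\|_{H^1(w_\gamma)}$. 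The estimate \eqref{eq:bound Q L2 H1} you cite is for the \emph{truncated} operator $Q_{t,R}$ and carries an $R$-dependent constant; only the G\aa rding estimate \eqref{eq:Q coerc} is uniform in $R$. The paper therefore proceeds exactly as in Proposition~\ref{lem:regularity_p}: it applies Lions to the truncated problem to produce $y^R$, derives $R$-uniform energy bounds from the coercivity, and passes to the limit $R\to\infty$ via Banach--Alaoglu. Without this truncation-and-limit step, Lions' theorem does not apply as stated. (A smaller point: your remark that $p_0\in L^2(w_\alpha)$ ``follows by density from $p_0\in L^2(w_\gamma)$'' is backwards---the inclusion is $L^2(w_\alpha)\subset L^2(w_\gamma)$---but since $p_0\in L^2(w_\alpha)$ is part of the hypothesis this is harmless.)
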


\begin{proof}
We rely here on results already obtained in the proof of Proposition~\ref{lem:regularity_p}. Let us first fix a trajectory $\tilde n \in \mathcal{ C}^1([0,T],\bbR^d)$ with initial condition $\tilde n_0=n_0$. 
Since $p\in \mathcal{ C}([0,T], L^2(w_{\ga'})$ (see the proof of Proposition~\ref{lem:regularity_p}) and by \eqref{hyp:bound_F_exp}, we deduce that $\tilde d_t:= \nabla \cdot \left( (\dot{ \tilde n_t}-\gd DF_t[\tilde n_t] -\gd Dh(m_t)[\tilde n_t])p_t\right)$ belongs to $ \mathcal{ C}((0,T),H^{-1}(w_{\gamma}))$. Thus, remarking that the estimates \eqref{eq:bound Q L2 H1} and \eqref{eq:Q coerc} are valid with $\ga$ replaced by $\gamma$, up to a redefinition of the involved constants, Lions Theorem ensures the existence of a unique $y^R\in H^1((0,T),L^2(w_{\gamma})$ with $\frac{\dd}{\dd t} y^R\in L^2((0,T),H^1(w_{\gamma})$ solution of
\begin{equation}
\left\langle \frac{\dd}{\dd t} y^R_t,v\right\rangle_{L^2(w_{\gamma})}+\left\langle Q_{t,R}\, y^R_t,v\right\rangle_{L^2(w_{\gamma})}\, =\, \left\langle \tilde d_t,v\right\rangle_{L^2(w_{\gamma})}\, ,
\end{equation} 
for all $\nu\in H^1(w_{\gamma})$ and almost all $t\in (0,T)$. Moreover a straightforward calculation leads to, for a constant $C>0$ and almost all $t\in [0,T]$,
\begin{equation}
\frac{1}{2} \frac{\dd}{\dd t} \Vert y^R_t\Vert_{L^2(w_{\gamma})}^2\, \leq\, C\left( \Vert y^R_0\Vert_{L^2(w_{\gamma})}^2+\sup_{s\in [0,T]} \Vert \tilde d_s\Vert_{H^{-1}(w_{\gamma})}^2\right)\, ,
\end{equation}
from which we obtain, by Gr\"onwall's inequality,
\begin{equation}\label{eq:gron yR}
\Vert y^R_t\Vert_{L^2(w_{\gamma})}^2 \, \leq\,  \left( \Vert y^R_0\Vert_{L^2(w_{\gamma})}^2+\sup_{s\in [0,T]} \Vert \tilde d_s\Vert_{H^{-1}(w_{\gamma})}^2\right)e^{Ct}\, ,
\end{equation}
and using again \eqref{eq:Q coerc} (with $\ga$ replaced by $\gamma$),
\begin{equation}
\Vert y^R\Vert_{L^2((0,T),L^2(w_{\gamma}))}^2\, \leq\, C(T)\left(\Vert y_0\Vert_{L^2(w_{\gamma})}^2+ \sup_{s\in [0,T]} \Vert \tilde d_s\Vert_{H^{-1}(w_{\gamma})}^2\right)\, .
\end{equation}
Following the same steps as in the proof of Lemma \ref{lem:regularity_p}, we also get
\begin{equation}
\left\Vert \frac{\dd}{\dd t}y^R\right\Vert_{L^2((0,T),L^2(w_{\gamma'}))}^2\, \leq\, C'(T)\left(\Vert y_0\Vert_{L^2(w_{\gamma})}^2+ \sup_{s\in [0,T]} \Vert \tilde d_s\Vert_{H^{-1}(w_{\gamma})}^2\right)\, ,
\end{equation}
and thus, with $R\rightarrow \infty$, the Banach Alaoglu Theorem gives the existence of a $y\in L^2((0,T),H^1(w_{\gamma}))$ with $\frac{\dd}{\dd t} y\in L^2((0,T),H^{-1}(w_{\gamma'}))$ that satisfies (using in particular \eqref{eq:bound Q H1} to pass to the limit, with $\ga$ and $\ga'$ replaced by $\gamma$ and $\gamma'$ respectively)
\begin{equation}\label{eq:sol y tilde f}
\left\langle \frac{\dd}{\dd t} y_t,v\right\rangle_{L^2(w_{\gamma'})}+\left\langle Q_{t,\infty}\, y_t,v\right\rangle_{L^2(w_{\gamma'})}\, =\, \left\langle \tilde d_t,v\right\rangle_{L^2(w_{\gamma'})}\, ,
\end{equation}
for all $v\in H^1(w_{\gamma'})$ and almost all $t\in (0,T)$. The uniqueness for each such $y$ (for each given $\tilde n$) follows by Gr\"onwall's inequality as above, since \eqref{eq:Q coerc} (with $\ga$ replaced by $\gamma'$) does not depend on $R$.

\medskip

Denote now, for each $\tilde n\in \mathcal{ C}^1([0,T],\bbR^d)$, by $\Psi(\tilde n)$ the solution $n$ of the equation
\begin{equation}
\dot n_t \, = \, \gd \int F_t y_t +\gd Dh(m_t)[n_t]+ \gd \int DF_t[n_t]p_t\, ,
\end{equation}
where $y$ is the solution of \eqref{eq:sol y tilde f} with initial condition $y_0$. For two trajectories $\tilde n^1$ and $\tilde n^2$ and the two associated solution $y^1$ and $y^2$, with similar arguments as the ones leading to \eqref{eq:gron yR} we get
\begin{equation}
\Vert y^1_t-y^2_t\Vert_{L^2(w_{\gamma})}\, \leq\, C(T) \int_0^t\Vert \tilde d^1_s-\tilde d^2_s\Vert_{H^{-1}(w_{\gamma})}\dd s
\, \leq\, C'(T)\int_0^t\Vert \tilde n^1-\tilde n^2\Vert_{ \mathcal{ C}^1([0,t],\bbR^d)}\dd s\, .
\end{equation}
So, using the fact that $\sup_{t\in[0,T]} \int_{\bbR^d}\vert F_t\vert^2w_{-\gamma}<\infty$ and
$\sup_{t\in[0,T]}\int_{\bbR^d} |F_t|p_t <\infty$, we obtain, relying again on Gr\"onwall's inequality,
\begin{equation}
\Vert \Psi(\tilde n^1)-\Psi(\tilde n^2)\Vert_{ \mathcal{ C}^1([0,T],\bbR^d)}\, \leq\, C''(T)\int_0^T \Vert \tilde n^1-\tilde n^2\Vert_{ \mathcal{ C}^1([0,t],\bbR^d)}\dd t\, .
\end{equation}
By induction
\begin{equation}
\Vert \Psi^{k+1 }(\tilde n)-\Psi^k(\tilde n)\Vert_{ \mathcal{ C}^1([0,T],\bbR^d)}\, \leq\, \big(C''(T)\big)^k\frac{T^k}{k!}\Vert \Psi(\tilde n)-\tilde n\Vert_{ \mathcal{ C}^1([0,T],\bbR^d)}\, ,
\end{equation}
which means that $\Psi^k(\tilde n)$ is a Cauchy sequence, with limit $n$ which is the unique fixed-point of $\Psi$. This completes the proof.
\end{proof}

We will use in the following the notations $y^{p_0,m_0}_t[y_0,n_0]$ and  $n^{p_0,m_0}_t[y_0,n_0]$ to emphasize the dependency on the initial conditions of the solutions to \eqref{eq:y_t n_t} and we denote by $f_0$ the fixed-point of the mapping $f\mapsto \eta_{t,f}$ obtained in the proof of Theorem~\ref{th:main1}. Recall the definition of $T_{ 0}$ in \eqref{eq:def T}.

We give now a first regularity result for the trajectories of \eqref{eq:PDE_mod_h} with respect to the initial mean $m_0$.
\begin{lemma}\label{lem:diff p}
There exist positive constants $\gd_9>0$ and $\kappa_9>0$ such that for all $0\leq \gd\leq \gd_9$, $m^{ i}_0 \in B_{2L}$ ($i=1,2$) and $p^i_0\in \cG(m^i_0)$ such that $\Vert p^2_0-p^1_0\Vert_{L^2(w_{\beta})}\leq \vert m^2_0-m^1_0\vert$, we have for all $t\in [0,2T_{ 0}]$, denoting $p^i_t=p^{p^i_0,m^i_0}_t$ and  $m^i_t=m^{p^i_0,m^i_0}_t$,
\begin{equation}
\left\Vert p^2_t-p^1_t-y^{p^1_0,m^1_0}_t[p^2_0-p^1_0,m^2_0-m^1_0]\right\Vert_{L^2(w_{\gamma})}\, \leq\, \kappa_{9} \vert m^2_0-m^1_0\vert^2\, ,
\end{equation}
and
\begin{equation}
\left\Vert m^2_t-m^1_t-n^{p^1_0,m^1_0}_t[p^2_0-p^1_0,m^2_0-m^1_0]\right\Vert_{L^2(w_{\gamma})}\, \leq\, \kappa_{9} \vert m'_0-m_0\vert^2\, .
\end{equation}
\end{lemma}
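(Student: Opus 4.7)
The plan is to set $r_t := p^2_t - p^1_t - y_t$ and $s_t := m^2_t - m^1_t - n_t$, with $(y_t,n_t) := (y^{p^1_0,m^1_0}_t[p^2_0-p^1_0, m^2_0-m^1_0],\, n^{p^1_0,m^1_0}_t[p^2_0-p^1_0, m^2_0-m^1_0])$, and to derive an energy inequality of the form
\begin{equation*}
\tfrac{\dd}{\dd t}\Vert r_t\Vert_{L^2(w_\gamma)}^2 + |\dot s_t|^2 \,\leq\, C\bigl(\Vert r_t\Vert_{L^2(w_\gamma)}^2 + |s_t|^2\bigr) + C\,|m^2_0-m^1_0|^4\, ,
\end{equation*}
from which the conclusion follows by Grönwall's lemma, since $r_0=0$ and $s_0=0$. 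The choice of the weight $\gamma < \beta$ (with $\beta-\gamma>2\varepsilon$, cf.\ \eqref{hyp:alpha_beta_gamma}) is imposed so that the quadratic source terms arising below from the Taylor expansion of $F$, which carry polynomial prefactors like $|DF|$ or $|D^2F|$ with at most $w_\varepsilon$ growth, can be absorbed by $L^2(w_\beta)$-norms of the factors they multiply.

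The first step is to write the equation for $\pi_t := p^2_t-p^1_t$ in the form
\begin{equation*}
\partial_t \pi_t \,=\, \mathcal{L}\pi_t + \nabla\!\cdot\!\bigl((\dot m^1_t-\gd F^1_t-\gd h(m^1_t))\pi_t\bigr) + \nabla\!\cdot\!\bigl((\dot\mu_t - \gd(F^2_t-F^1_t) - \gd(h(m^2_t)-h(m^1_t)))p^2_t\bigr)\, ,
\end{equation*}
and compare it with the equation \eqref{eq:y_t n_t} for $y_t$. Writing $F^2_t-F^1_t = DF^1_t[\mu_t] + Q^F_t$ with $Q^F_t(x) = O(|\mu_t|^2)$ pointwise by Taylor's formula, and similarly for $h$, one obtains a PDE for $r_t$ whose source is a sum of three types of terms: (i) a linear term in $s_t$ of the form $\nabla\!\cdot\!((\dot s_t - \gd DF^1_t[s_t] - \gd Dh(m^1_t)[s_t])p^1_t)$, (ii) a genuinely quadratic remainder $\nabla\!\cdot\!(\gd Q^F_t\, p^2_t + \gd Q^h_t\, p^2_t)$, and (iii) cross terms of the form $\nabla\!\cdot\!((\dot\mu_t - \gd(F^2_t-F^1_t) - \gd(h(m^2_t)-h(m^1_t)))(p^2_t-p^1_t))$. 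Correspondingly, $\dot s_t$ is the sum of a linear term in $(r_t,s_t)$ and a quadratic remainder coming from $\gd\int(F^2_t y_t - F^1_t y_t) + \gd\int(DF^2_t[\mu_t] - DF^1_t[\mu_t])p^1_t$ plus Taylor remainders.

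The second step is to run the standard energy estimate on $\frac12 \frac{\dd}{\dd t}\Vert r_t\Vert_{L^2(w_\gamma)}^2$, treating the diffusion and drift in $\mathcal{L}$ exactly as in the proof of Lemma~\ref{lem:lip_cont_pi}, using Lemma~\ref{lem:bound L OU weighted L2} for the dissipative contribution and the $F$-bound \eqref{hyp:bound_F_exp} for the perturbative drift. For each quadratic source, the key estimate is of the form
\begin{equation*}
\Vert Q^F_t\, p^2_t\Vert_{L^2(w_\gamma)} \,\leq\, C_F\, |\mu_t|^2\, \Vert p^2_t\Vert_{L^2(w_{\gamma+2\varepsilon})} \,\leq\, C\, |\mu_t|^2\, ,
\end{equation*}
which uses $\gamma+2\varepsilon < \beta < \alpha$ together with the apriori bound \eqref{eq:apriori_bound_exp}, and analogously for $Q^h$. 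The cross terms (iii) are controlled by combining Lemma~\ref{lem:lip_cont_pi} (which gives $\Vert p^2_t-p^1_t\Vert_{L^2(w_\beta)} \leq C|m^2_0-m^1_0|$) with the uniform bound on $|\dot\mu_t|$ inherited from $|\dot m^i_t|\leq C\gd$ and Lipschitz-continuity of $h$ and of $m\mapsto\int F(\cdot+m)p(\dd\cdot)$. The apriori bound $|\mu_t|\leq 2|m^2_0-m^1_0|$ furnished by Lemma~\ref{lem:encadre m} (whose hypothesis $\Vert p^2_0-p^1_0\Vert_{L^2(w_\beta)}\leq |m^2_0-m^1_0|$ is exactly what is assumed here) then makes all the source terms bounded by $C|m^2_0-m^1_0|^2$.

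The main obstacle, and the place where the proof is not simply a translation of Lemma~\ref{lem:lip_cont_pi}, is the handling of the linear-in-$s_t$ term $\gd \nabla\!\cdot\!(DF^1_t[s_t] p^1_t)$ inside the equation for $r_t$, and symmetrically of $\gd \int DF^1_t[s_t]\, p^1_t$ inside $\dot s_t$: one must close a coupled Grönwall estimate for the pair $(\Vert r_t\Vert_{L^2(w_\gamma)}, |s_t|)$ rather than for $\Vert r_t\Vert_{L^2(w_\gamma)}$ alone. This is where the smallness of $\gd$ (through the constant $\gd_9$) enters: picking $\gd_9$ small enough that $\gd\, C_F \sup_{t\leq 2T_0}\Vert p^1_t\Vert_{L^2(w_\alpha)}$ is dominated by the dissipation constant $k_\gamma$ of Lemma~\ref{lem:bound L OU weighted L2}, the coupled system becomes a standard linear Grönwall with forcing of size $|m^2_0-m^1_0|^2$, and integration on $[0,2T_0]$ yields the stated bounds with $\kappa_9 = \kappa_9(T_0)$.
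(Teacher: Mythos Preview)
Your proposal is correct and follows essentially the same route as the paper: define the defects $r_t=p^2_t-p^1_t-y_t$ and $s_t=m^2_t-m^1_t-n_t$, write the PDE for $r_t$ with source terms split into (i) a part linear in $(s_t,\dot s_t)$ acting on $p^1_t$, (ii) Taylor remainders of $F$ and $h$, and (iii) cross terms carrying the product of two first-order differences, then close a coupled Gr\"onwall inequality for $\Vert r_t\Vert_{L^2(w_\gamma)}^2+|s_t|^2$ on $[0,2T_0]$. Two small remarks: the $L^2(w_{\gamma+2\varepsilon})$ bound on $p^2_t$ you invoke comes from Lemma~\ref{lem:mu bounded L2} (not from \eqref{eq:apriori_bound_exp}, which is only an $L^1$-type moment bound); and the smallness of $\gd$ you emphasize at the end is not actually needed to close the Gr\"onwall here, since on the fixed finite horizon $[0,2T_0]$ an exponential-in-time constant is harmless---$\gd_9$ only needs to be below the thresholds of the earlier lemmas you call upon.
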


\begin{proof}[Proof of Lemma \ref{lem:diff p}]
For $y_t:=y^{p^1_0,m^1_0}_t[p^2_0-p^1_0,m^2_0-m^1_0]$, $n_t:=n^{p^1_0,m^1_0}_t[p^2_0-p^1_0,m^2_0-m^1_0]$, $u_t := p^2_t-p^1_t-y_t$ and $s_t := m^2_t-m^1_t-n_t$, we have
\begin{align}
\frac{1}{2} \frac{\dd}{\dd t}\Vert u_t\Vert^2_{L^2(w_{\gamma})}
\,  =&  \, \langle \cL u_t,u_t\rangle_{L^2(w_{\gamma})}
+\langle \nabla\cdot ((\dot m^1_t -\gd F^1_t-\gd h(m^1_t) )u_t),u_t\rangle_{L^2(w_{\gamma})} \nonumber\\
& +(\dot s_t - \gd (h(m^2_t)-h(m^1_t)-Dh(m^1_t)[n_t]))\cdot \langle \nabla p^1_t,u_t\rangle_{L^2(w_{\gamma})} \nonumber\\
& \qquad \qquad-\gd \langle \nabla\cdot ((F^2_t-F^1_t-DF^1_t[n_t])p^1_t),u_t\rangle_{L^2(w_{\gamma})}\nonumber\\
& +(\dot m^2_t-\dot m^1_t -\gd(h(m^2_t)-h(m^1_t))) \cdot \langle \nabla (p^2_t-p^1_t),u_t\rangle_{L^2(w_{\gamma})} \nonumber\\
&\qquad \qquad  -\gd \langle \nabla\cdot ((F^2_t-F^1_t)(p^2_t-p^1_t)),u_t\rangle_{L^2(w_{\gamma})}\, ,
\end{align}
where we have used the notations $F^i_t(x)=F(x+m^i_t)$ and $DF^1_t[n](x)=DF(x+m^1_t)[n]$.
Using similar arguments as in the previous sections (see in particular \eqref{eq:by part Ft}), we have for some $C>0$:
\begin{equation}
\langle \nabla\cdot ((\dot m^1_t -\gd F^1_t-\gd h(m^1_t) )u_t),u_t\rangle_{L^2(w_{\gamma})}\, \leq\, C\gd  \Vert u_t\Vert^2_{H^1(w_{\gamma})}\, .
\end{equation} 
The regularity of $h$ implies that
\begin{equation}
\vert h(m^2_t)-h(m^1_t)-Dh(m^1_t)[n_t]|\, \leq\, C_h(\vert s_t\vert+\vert m^2_t-m^1_t\vert^2)\, .
\end{equation}
Moreover, we have
\begin{align}
\big\Vert |F^2_t-&F^1_t-DF^1_t[n_t]|p^1_t\big\Vert^2_{L^2(w_{\gamma})}\\
& =\, \int_{\bbR^d}\vert F(x+m^2_t)-F(x+m^1_t)-DF(x+m^1_t)[n_t]\vert^2 (p^1_t(x))^2w_{\gamma}(x) \dd x\nonumber\\
& \leq\, 2\int_{\bbR^d}\vert F(x+m^2_t)-F(x+m^1_t)-DF(x+m^1_t)[m^2_t-m^1_t]\vert^2( p^1_t(x))^2 w_{\gamma}(x)\dd x \nonumber\\
& \qquad \qquad + \int_{\bbR^d}\vert DF(x+m^1_t)[s_t]\vert^2 (p^1_t(x))^2 w_{\gamma}(x)\dd x\, .
\end{align}
On one hand, by \eqref{hyp:bound_F_exp} and Lemma \ref{lem:mu bounded L2}, we get for some constant $C_{F,L}>0$,
\begin{align}
\int_{\bbR^d}\vert F(x+m^2_t)-&F(x+m^1_t)-DF(x+m^1_t)[m^2_t-m^1_t]\vert^2 (p^1_t(x))^2 w_{\gamma}(x) \dd x \nonumber\\
& \leq\, \vert m^2_t-m^1_t\vert^4 \int_{\bbR^d} \sup_{z\in [x+m_t, x+m'_t]} \vert D^2 F(z) \vert^2 (p^1_t(x))^2 w_{\gamma}(x)\dd x \nonumber\\
& \leq\, C_F\vert m^2_t-m^1_t\vert^4 \int_{\bbR^d} \left(w_{2\gep}(x+m^2_t)+w_{2\gep}(x+m^1_t)\right) (p^1_t(x))^2w_{\gamma}(x) \dd x \nonumber\\
&\leq\, C_{F,L} \vert m^2_t-m^1_t\vert^4 \Vert p^1_t\Vert_{L^2(w_{\ga})}^2\, \leq\, C_{F,L}\kappa_2^2 \vert m^2_0-m^1_0\vert^4\, ,
\end{align}
and on the other hand, with similar arguments, for some constant $C_{F,L}'>0$, we obtain
\begin{equation}
 \int_{\bbR^d}\vert DF(x+m^1_t)[s_t]\vert^2 p_t(x)^2 \dd x\, \leq\, C_{F,L}' \vert s_t\vert^2 \Vert p_t\Vert_{L^2(w_{\ga})}^2\, .
\end{equation}
Recalling now Lemma \ref{lem:lip_cont_pi}, Lemma \ref{lem:encadre m}, \eqref{eq:bound diff m_1} and using the fact that $\Vert p^2_0-p^1_0\Vert_{L^2(w_{\beta})}\leq \vert m^2_0-m^1_0\vert$, we get for some constant $C_{F,L}''>0$
\begin{equation}
\vert \dot m^2_t-\dot m^1_t -\gd(h(m^2_t)-h(m^1_t))\vert \leq\, C_{F,L}''\vert m^2_0-m^1_0\vert\, .
\end{equation}
Finally, using similar arguments (remark here that the choice of the weight $w_{\gamma}$ allows us to use hypothesis \eqref{hyp:bound_F_exp} and Lemma \ref{lem:lip_cont_pi} successively), with some constant $C_{F,L}'''>0$,
\begin{multline}
\int_{\bbR^d} \vert F^2_t-F^1_t\vert^2 (p^2_t-p^1_t)^2w_{\gamma}\, \leq\, \vert m^2_t-m^1_t\vert^2 \int_{\bbR^d} \sup_{z\in [x+m^1_t,x+m^2_t]}\vert DF(x)\vert^2 (p^2_t-p^1_t)^2 w_{\gamma}\\
\leq\,C_{F,L}''' \vert m^2_t-m^1_t\vert^2 \Vert p^2_t-p^1_t\Vert_{L^2(w_{\beta})}^2\, \leq\, C_{F,L}'''\kappa_4^2 \vert m^2_0-m^1_0\vert^4\, .
\end{multline}
We deduce, recalling Lemma \ref{lem:bound L OU weighted L2}, that for some constant $C'>0$,
\begin{multline}
\frac{1}{2} \frac{\dd}{\dd t}\Vert u_t\Vert^2_{L^2(w_{\gamma})}\, \leq\, -\frac{k_{\gamma}}{\gamma(\Tr(K)+ \underline{ k})}\Vert \nabla u\Vert_{L^2(w_{\gamma})}^2\\
+C'\Big(\vert \dot s_t\vert +(\vert m'_0-m_0\vert^2+\vert s_t\vert) \Big)\Vert \nabla u\Vert_{L^2(w_{\gamma})}\, ,
\end{multline}
and thus, using Young's inequality, there exists a constant $c_1$ such that
\begin{equation}
 \frac{\dd}{\dd t}\Vert u_t\Vert^2_{L^2(w_{\gamma})}\, \leq\,c_1(\vert \dot s_t\vert^2+\vert m'_0-m_0\vert^4+\vert s_t\vert^2) \left\Vert u_{ t} \right\Vert^{ 2}_{ L^{ 2}(w_{ \gamma})}\, .
\end{equation}
Now $s_t$ satisfies
\begin{multline}
\dot s_t\, =\, \gd(h(m^2_t)-h(m^1_t)-Dh(m^1_t)[s_t])\\+\gd\left(\int_{\bbR^d}F^2_t p^2_t-\int_{\bbR^d}F^1_tp^1_t-\int_{\bbR^d} DF^1_t[n_t]p^1_t-\int_{\bbR^d} F^1_t y_t\right)\, ,
\end{multline}
and, using the same arguments as before, remarking in particular that
\begin{multline}
\int_{\bbR^d}F^2_t p^2_t-\int_{\bbR^d}F^1_tp^1_t-\int_{\bbR^d} DF^1_t[n_t]p^1_t-\int_{\bbR^d} F^1_t y_t\,
=\, \int_{\bbR } F^1_t u_t +\int_{\bbR^d} (F^2_t)-F^1_t)(p^2_t-p^1_t) \\+\int_{\bbR^d}(F^2_t-F^1_t-DF^1_t[n_t])p^1_t\, ,
\end{multline}
we obtain, for some constant $c_2$,
\begin{equation}
\vert \dot s_t \vert\, \leq\, c_2(\vert s_t\vert+ \Vert u_t\Vert_{L^2(w_{\gamma})} +\vert m^2_0-m^1_0\vert^2)\, .
\end{equation}
The result follows, via the application of Gr\"onwall's lemma to $\Vert u_t\Vert_{L^2(w_{ \gamma})}^2 + \vert s_t\vert^2$.
\end{proof}

\begin{lemma}
\label{lem:contraction_y}
There exists positive constants $\gd_{ 10}>0$ and $\kappa_{ 10}>0$ such that for all $0\leq \gd\leq \gd_{ 10}$, $m_0 \in B_{2L}$, $p_0\in \cG(m_0)$, $y_0\in L^2(w_{\gamma})$ and $n_0\in \bbR^d$ we have for all $t\in [T_{ 0},2T_{ 0}]$,
\begin{equation}
\left\Vert y^{p_0,m_0}_{ t}[y_0,n_0] \right\Vert_{ L^{ 2}(w_{ \gamma})}\, \leq\, \left\Vert y_{ 0} \right\Vert_{ L^{ 2}(w_{ \gamma})} e^{ - \chi'(\gd)t} + \kappa_{10}\gd \left( \left\vert n_0 \right\vert^{ 2} + \left\Vert y_{ 0} \right\Vert_{ L^{ 2}(w_{ \gamma})}^{ 2}\right)^{ \frac{ 1}{ 2}}\, ,
\end{equation}
where, for some constant $c_2$ depending on $F$, $L$, $h$ and $T_{ 0}$, $\chi'(\gd)= k_{ \gamma} -c_2 \delta $ and
\begin{equation}
\left\vert n^{p_0,m}_t[y_0,n_0]-n_0\right\vert \, \leq\, \kappa_{10}\gd \left(\vert n_0\vert +\Vert y_0\Vert_{L^2(w_{\gamma})}\right)\, .
\end{equation}
\end{lemma}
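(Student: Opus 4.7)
The strategy mirrors the one used in Lemma~\ref{lem:lip_cont_pi} (contraction of $p_t^1-p_t^2$ in $L^2(w_\beta)$), but now applied to the linearized system~\eqref{eq:y_t n_t} in the slightly heavier weight $w_\gamma$ (the extra room between $\gamma$ and $\beta$ given by \eqref{eq:def_alpha_beta} is needed to absorb the $DF_t$ terms via \eqref{hyp:bound_F_exp}, exactly as in Lemma~\ref{lem:diff p}). The plan is to compute $\frac{1}{2}\frac{d}{dt}\Vert y_t\Vert^2_{L^2(w_\gamma)}$ by testing the evolution equation of $y_t$ against $y_t w_\gamma$; this produces three contributions: the Ornstein--Uhlenbeck term, a transport term in $y_t$ itself, and a source term involving $\dot n_t$, $DF_t[n_t]$, $Dh(m_t)[n_t]$ and $p_t$.

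First, Lemma~\ref{lem:bound L OU weighted L2} handles the Ornstein--Uhlenbeck term and gives the dissipation $-\frac{k_\gamma}{\gamma(\Tr(K)+\underline{k})}\Vert\sigma\nabla y_t\Vert^2_{L^2(w_\gamma)}$. Integration by parts on the $y_t$-transport contribution (as in \eqref{eq:simplify F term}), combined with $|\dot m_t|\leq C\delta$ (Lemma~\ref{lem:apriori_bound_mt}), the bounds on $h$, \eqref{hyp:F_dot_x_bound}--\eqref{hyp:F_dot_x_larger} and \eqref{hyp:bound_F_exp}, yields a term of the form $C\delta\Vert\nabla y_t\Vert^2_{L^2(w_\gamma)}$, absorbed by the dissipation for $\delta$ small. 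The source term, after an integration by parts, reads
\begin{equation*}
-\int\big(\dot n_t-\delta DF_t[n_t]-\delta Dh(m_t)[n_t]\big)\cdot\nabla y_t\, p_t\, w_\gamma
-\gamma\int\big(\dots\big)\cdot K\sigma^{-2}x\, y_t\, p_t\, w_\gamma,
\end{equation*}
and by Cauchy--Schwarz together with the controls $\Vert p_t\Vert_{L^2(w_\alpha)}\leq \kappa_2\delta+\Vert q_0\Vert_{L^2(w_\alpha)}$ (Lemma~\ref{lem:mu bounded L2}), $\Vert\nabla p_t\Vert_{L^2(w_\beta)}$ bounded (Lemma~\ref{lem:nablamu_bounded_L2}) and \eqref{hyp:bound_F_exp} (which gives $\Vert DF_t\Vert_{L^2(w_{-\gamma})}<\infty$), it is bounded by $C(|\dot n_t|+\delta|n_t|)\Vert y_t\Vert_{H^1(w_\gamma)}$.

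The equation for $\dot n_t$ reads $\dot n_t=\delta\int F_t y_t+\delta Dh(m_t)[n_t]+\delta\int DF_t[n_t]p_t$, so Cauchy--Schwarz gives $|\dot n_t|\leq C\delta(|n_t|+\Vert y_t\Vert_{L^2(w_\gamma)})$. Injecting this and using Young's inequality to redistribute the $\Vert\nabla y_t\Vert$ factor into the dissipation, one obtains for $\delta$ small enough
\begin{equation*}
\frac{d}{dt}\Vert y_t\Vert^2_{L^2(w_\gamma)}\leq -2(k_\gamma-c_2\delta)\Vert y_t\Vert^2_{L^2(w_\gamma)}+C\delta^2|n_t|^2,
\qquad
\frac{d}{dt}|n_t|^2\leq C\delta\big(|n_t|^2+\Vert y_t\Vert^2_{L^2(w_\gamma)}\big).
\end{equation*}
Gr\"onwall's inequality applied first to $E_t:=|n_t|^2+\Vert y_t\Vert^2_{L^2(w_\gamma)}$ yields $E_t\leq E_0 e^{C\delta t}$, so on $[0,2T_0]$ we have $|n_t|^2+\Vert y_t\Vert^2\leq C'E_0$. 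Feeding this back into the $y_t$-inequality gives
\begin{equation*}
\Vert y_t\Vert^2_{L^2(w_\gamma)}\leq \Vert y_0\Vert^2_{L^2(w_\gamma)}e^{-2\chi'(\delta)t}+C\delta^2\int_0^t e^{-2\chi'(\delta)(t-s)}|n_s|^2\, ds,
\end{equation*}
and estimating the integral by $C'\delta^2 T_0\, E_0$ produces the first claim with $\chi'(\delta)=k_\gamma-c_2\delta$ for a suitable $c_2$. For the second claim, one simply integrates $|\dot n_s|\leq C\delta(|n_s|+\Vert y_s\Vert_{L^2(w_\gamma)})$ over $[0,t]$ and uses the bound $|n_s|+\Vert y_s\Vert\leq C''(|n_0|+\Vert y_0\Vert)$ just derived.

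The main technical obstacle I anticipate is the careful integration-by-parts bookkeeping in the source term: the factor $\nabla p_t$ is only controlled in $L^2(w_\beta)$ (Lemma~\ref{lem:nablamu_bounded_L2}), while $y_t$ lives in $L^2(w_\gamma)$ with $\gamma<\beta$; the margin $\gamma+2\varepsilon<\beta$ from \eqref{eq:def_alpha_beta} is precisely what allows the pointwise bound \eqref{hyp:bound_F_exp} on $DF_t$ to compensate for the mismatch, exactly as in the proof of Lemma~\ref{lem:diff p}.
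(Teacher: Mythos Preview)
Your overall strategy is exactly the paper's: energy estimate on $\Vert y_t\Vert^2_{L^2(w_\gamma)}$, dissipation from Lemma~\ref{lem:bound L OU weighted L2}, the $O(\delta)$ transport term handled via \eqref{eq:simplify F term}, the bound $|\dot n_t|\le C\delta(|n_t|+\Vert y_t\Vert)$, a rough Gr\"onwall on $E_t=|n_t|^2+\Vert y_t\Vert^2$ to get a priori control, then feed back. The second claim on $|n_t-n_0|$ follows exactly as you say.

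There is, however, a genuine gap in your Young step. From $C\delta(|n_t|+\Vert y_t\Vert)\Vert y_t\Vert_{H^1(w_\gamma)}$ you cannot obtain \emph{both} the rate $k_\gamma-c_2\delta$ \emph{and} a source $C\delta^2|n_t|^2$. The only term that is not already $O(\delta)\Vert\nabla y_t\Vert^2$ is $C\delta|n_t|\,\Vert\nabla y_t\Vert$; Young with parameter $\eta$ gives $\eta\Vert\nabla y_t\Vert^2+\tfrac{C^2\delta^2}{4\eta}|n_t|^2$. If $\eta$ is fixed, the dissipation rate becomes $k_\gamma-c'\eta-c\delta$, not $k_\gamma-c_2\delta$; if $\eta=O(\delta)$, the source is only $O(\delta)|n_t|^2$, and after Duhamel and a square root you end up with a $\sqrt{\delta}$ prefactor, not $\kappa_{10}\delta$. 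So the displayed inequality $\tfrac{d}{dt}\Vert y_t\Vert^2\le -2(k_\gamma-c_2\delta)\Vert y_t\Vert^2+C\delta^2|n_t|^2$ is not achievable as written.

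The paper sidesteps this in two small ways. First, it does \emph{not} integrate the source term by parts onto $\nabla y_t$: it writes $(\dot n_t-\delta Dh[n_t])\cdot\langle\nabla p_t,y_t\rangle_{L^2(w_\gamma)}-\delta\langle\nabla\!\cdot(DF_t[n_t]p_t),y_t\rangle_{L^2(w_\gamma)}$ and estimates each factor directly in $L^2(w_\gamma)$, using that $\Vert\nabla p_t\Vert_{L^2(w_\beta)}$ is bounded (Lemma~\ref{lem:nablamu_bounded_L2}) and the weight margin $\gamma+2\varepsilon<\beta$. This yields a source $C(|\dot n_t|+\delta|n_t|)\Vert y_t\Vert_{L^2(w_\gamma)}$ with \emph{no} $\nabla y_t$ factor. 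Second, after inserting the a priori bound $|n_t|\le C\sqrt{E_0}$ one has $\tfrac12\tfrac{d}{dt}v\le -(k_\gamma-c\delta)v+C\delta\sqrt{E_0}\sqrt{v}$, and Lemma~\ref{lem:gronwall_sqrt} (the $\sqrt{v}$--Gr\"onwall) gives directly $\sqrt{v(t)}\le\sqrt{v(0)}e^{-\chi'(\delta)t}+\kappa_{10}\delta\sqrt{E_0}$. Either modification --- dropping your IBP and estimating the source in $L^2$, or keeping your route but dividing through by $\Vert y_t\Vert$ and running Duhamel on $\Vert y_t\Vert$ rather than $\Vert y_t\Vert^2$ --- repairs the gap and recovers the stated rate $\chi'(\delta)=k_\gamma-c_2\delta$ together with the $\kappa_{10}\delta$ prefactor.
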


\begin{proof}[Proof of Lemma~\ref{lem:contraction_y}]
Let us denote $y_t=y^{p_0,m_0}_t[y_0,n_0]$. A simple calculation leads to
\begin{multline}
\frac{ 1}{ 2}\frac{ {\rm d}}{ {\rm d}t} \left\Vert y_{ t} \right\Vert_{ L^{ 2}(w_{ \gamma})}^{ 2} \, =\,\left\langle \cL y_{ t}\, ,\, y_{ t}\right\rangle_{ L^{ 2}(w_{ \gamma})}+ (\dot m_t-\gd h(m_t))\cdot \langle \nabla y_t,y_t\rangle_{L^2(w_{\gamma})}
\\
-\gd  \langle F_t\cdot \nabla y_t,y_t\rangle_{L^2(w_{\gamma})}
+(\dot n_t -\gd Dh(m_t)[n_t]) \cdot \langle \nabla p_t,y_t\rangle_{L^2(w_{\gamma})}\\
-\gd\langle \nabla\cdot (DF_t[n_t] p_t),y_t\rangle_{L^2(w_{\gamma})}\, .\label{eq:diff_xit_beta}
\end{multline}
Using similar arguments as in the previous proofs (relying in particular on Lemma \ref{lem:bound L OU weighted L2}, Lemma \ref{lem:mu bounded L2} and Lemma \ref{lem:nablamu_bounded_L2}), we get for some positive constants $c$ and $C$, 
\begin{equation}
\frac{ 1}{ 2}\frac{ {\rm d}}{ {\rm d}t} \left\Vert y_{ t} \right\Vert_{ L^{ 2}(w_{  \gamma})}^{ 2} \leq - \left(k_{ \gamma} - c\gd \right) \left\Vert y_{ t} \right\Vert_{ L^{ 2}(w_{  \gamma})}^{ 2} + C  \left( \vert \dot n_t \vert +\gd \vert n_t\vert \right)\left\Vert y_{ t} \right\Vert_{ L^{ 2}(w_{  \gamma})} \, . \label{eq:yt_beta_1}
\end{equation}
But using \eqref{eq:y_t n_t} and the fact that $\left| \int_{\bbR^d} F_t w_{-\gamma/2}\right|$ and $\left| \int_{\bbR^d} |DF_t| w_{-\gamma/2}\right|$ are uniformly bounded for $m_0\in B_{2L}$ (since $2 \varepsilon< \gamma$) and $p_0\in \cG(m_0)$, we get
\begin{equation}\label{eq:bound dot n}
\vert \dot n_t\vert \, \leq\, C \gd ( \vert n_t\vert +\Vert y_t\Vert_{L^2(w_{\gamma})})\, .
\end{equation}
Hence, Gr\"onwall's lemma leads to
\begin{equation}
\label{eq:apriori_bound_n_t}
\vert n_t\vert \, \leq\, C' (\vert n_0\vert +\sup_{s\leq t}\Vert y_s\Vert_{L^2(w_{\gamma})})\, .
\end{equation}
So we deduce from \eqref{eq:yt_beta_1} and \eqref{eq:apriori_bound_n_t} the following rough bound: for some constant $C''>0$
\begin{equation}
\left\Vert y_{ t} \right\Vert_{ L^{ 2}(w_{  \gamma})}^{ 2}\,  \leq\,  \left\Vert y_{ 0} \right\Vert_{ L^{ 2}(w_{ \gamma})}^{ 2} +C''  \left(\left\vert n_0 \right\vert^{ 2} +\int_{ 0}^{t}\sup_{ v\leq s}\left\Vert  y_{ v} \right\Vert_{ L^{ 2}(w_{ \gamma})}^{ 2} {\rm d}s\right)\, ,
\end{equation}
so that, by Gr\"onwall's lemma, for some constant $C'''>0$
\begin{equation}
\sup_{ s\leq 2T_{ 0}} \left\Vert y_{ s} \right\Vert_{ L^{ 2}(w_{  \gamma})}^{ 2} \, \leq\,  C''' \left(\left\Vert y_{ 0} \right\Vert_{ L^{ 2}(w_{  \gamma})}^{ 2} + \left\vert n_{ 0} \right\vert^{ 2}\right)\, .
\end{equation}
Putting finally this estimate into \eqref{eq:apriori_bound_n_t} gives the following a priori bound
\begin{equation}
\label{eq:barut_y0}
\sup_{ t\leq 2T_{ 0}}\left\vert n_{ t} \right\vert^{ 2} \leq C \left( \left\vert n_{ 0} \right\vert^{ 2} +  \left\Vert y_{ 0} \right\Vert_{ L^{ 2}(w_{  \gamma})}^{ 2}\right)
\end{equation}
With this estimate at hand, one can conclude on the estimation of $ \left\Vert y_{ t} \right\Vert_{ L^{ 2}(w_{  \gamma})}$: applying \eqref{eq:barut_y0} to \eqref{eq:yt_beta_1}, we obtain, for some constants $c,C>0$
\begin{equation}
\frac12 \frac{ {\rm d}}{ {\rm d}t} \left\Vert y_{ t} \right\Vert_{ L^{ 2}(w_{  \gamma})}^{ 2} \, \leq \, - \left(k_{ \beta} -c \delta \right) \left\Vert y_{ t} \right\Vert_{ L^{ 2}(w_{  \gamma})}^{ 2} + C\gd \left\Vert y_{ t} \right\Vert_{ L^{ 2}(w_{ \gamma})} \left( \left\vert n_{ 0} \right\vert^{ 2} + \left\Vert y_{ 0} \right\Vert_{ L^{ 2}(w_{  \gamma})}^{ 2}\right)^{ \frac{ 1}{ 2}} \, ,\label{eq:yt_beta_2}
\end{equation}
and we are now in position to apply Lemma~\ref{lem:gronwall_sqrt} and conclude the proof, with $\gd_{ 10}=\min\left(\gd_{ 9},\frac{ k_{ \beta}}{2c} \right)$, and $\kappa_{ 10}^2=\frac{2C}{ k_{ \beta}}$.
\end{proof}

\subsection{Some preliminary definitions}
\label{sec:some_preliminary_def}
Let $\{ \tau_{ i}\}_{ i=1, \ldots, d}$ be the canonical basis of $ \mathbb{ R}^{ d}$. Recall the definition of $g_{ t, f}$ in \eqref{eq:g_ft}.
The fixed-point problem writes (recall \eqref{eq:eta_f})
\begin{equation}
\label{eq:fixed_point_f_1}
f_{ 0}(g_{ t, f_{ 0}}(m)) = p_{ t}^{ f_{ 0}(m),m},
\end{equation}
for all $m \in B_{ 2L}$, $t\in [0, T_{ 0}]$ such that $g_{t,f_{ 0}}(m)\in B_{2L}$. We use here for simplicity $f$ instead of $f_{ 0}$ (i.e. the fixed-point solution to \eqref{eq:eta_f}) in the following of the section. Our aim is to show that $ m \mapsto f(m)$ is continuously differentiable in the following sense: there exists a $\nabla_m f= \left(\partial_{ 1} f, \ldots, \partial_{ d}f\right)\in \mathcal{ C}(B_{2L},L^2(w_{\gamma}))$ such that (by definition)
\begin{equation}
\label{eq:diff_f_H}
\left\Vert  \frac{ f(m+ \varepsilon \tau_{ i}) - f(m)}{ \varepsilon} - \partial_{ i}f(m) \right\Vert_{L^2(w_{\gamma})} \underset{\gep \rightarrow 0}{\longrightarrow} 0,\ i=1,\ldots, d\, .
\end{equation}

Define the following set of functions :
\begin{multline}
\label{eq:G_alpha}
\mathcal{ I}_{ \gamma}:= \Big\lbrace \varphi=( \varphi_{ 1}, \ldots, \varphi_{ d})\in \left( \mathcal{ C}(B_{2L},L^2(w_{\gamma}))\right)^{ d}:\\
 \left\langle \varphi_{ k}(m)\, ,\, x\right\rangle= 0,\ k=1,\ldots, d,\ m\in B_{2L}\Big\rbrace.
\end{multline}
\begin{rem}
Note that by \eqref{eq:diff_f_H}, we have necessarily that $ \nabla f\in \mathcal{ I}_{ \gamma}$.
\end{rem}

\subsection{ Formal derivation of the fixed-point relation for the derivative}

Supposing \eqref{eq:diff_f_H}, and relying on Lemma \ref{lem:diff p}, choosing $m^1_0=m$ and $m^{2}_0= m+ \varepsilon \tau_{ i}$, we obtain for the right-hand side of \eqref{eq:fixed_point_f_1}, as $ \varepsilon\to 0$,
\begin{equation}
\label{aux:diff_mu}
p_{ t}^{ f(m+ \varepsilon \tau_{ i}),m+\varepsilon \tau_i}- p_t^{ f(m),m} \, =\, \varepsilon y_{ t}^{ f(m),m}\left[\partial_{ i}f(m), \tau_i\right]+o(\gep)\, ,
\end{equation}
and concerning the left-hand side,
\begin{multline}
f(g_{ t,f}(m+ \varepsilon \tau_{ i}))\,=\,   f\left(g_{ t, f}(m) + \varepsilon n_{ t}^{ f(m),m} \left[ \partial_{ i}f(m), \tau_i\right]+o( \varepsilon)\right),\\
= \,  f(g_{ t,f}(m)) + \varepsilon \nabla_m f(g_{ t,f}(m))\cdot n_{ t}^{ f(m),m} \left[ \partial_{ i}f(m), \tau_i\right]  + o(\varepsilon)\, .
\end{multline}
Identifying in both terms the contribution of order $ \varepsilon$ gives: for all $i=1, \ldots, d$
\begin{equation}
\label{eq:fixed_point_f_diff_1}
\nabla f(g_{t,f}(m))\cdot n_{ t}^{ f(m),m}[\partial_{ i}f(m), \tau_i]\, =\, y_{ t}^{ f(m),m}[\partial_{ i}f(m), \tau_i]\, .
\end{equation}
Hence, $ \nabla f$ is necessarily a solution $ \varphi$ in $ \mathcal{ I}_{ \gamma}$ to the following fixed-point equation 
\begin{equation}
\label{eq:fixed_point_phi_1}
\varphi(g_{t,f}(m))\cdot n_{ t}^{ f(m),m}[ \varphi_{ i}(m), \tau_i]\, =\, y_{ t}^{ f(m),m}[ \varphi_{ i}(m), \tau_i]\, \qquad  i=1, \ldots, d.
\end{equation}
Expanding the scalar product, we obtain the relation
\begin{equation}
\label{eq:fixed_point_partialf}
\varphi_{ i}(\xi)= \Psi_{ t}( \varphi)_{ i}(\xi)\, ,\qquad \text{for all} \quad \xi\in B_{ 2L},\ i=1,\ldots, d,\ t\in[0, 2T_{ 0}],
\end{equation}
where, for $m=g_{t,f}^{-1}(\xi)$,
\begin{equation}
\label{eq:def_Psit}
\Psi_{ t}(\varphi)_{ i}(\xi)\, :=\, 
\frac{1}{n_{ t,i}^{ f(m),m}[ \varphi_{ i}(m), \tau_i]}\bigg(y^{f(m),m}_t[\varphi_i(m), \tau_i]  - \sum_{ k\neq i}\varphi_{ k}(\xi) n_{ t,k}^{ f(m),m}[ \varphi_{ i}(m), \tau_i]\bigg)\, .
\end{equation}

\subsection{Well-posedness of the fixed-point problem}
\label{sec:fixed_point_phi_wellposed}

\begin{lemma}
\label{lem:proximity_Psi_t}
There exist constants $ \gd_{ 11}>0$ and $\kappa_{ 11}>0$ such that if $0\leq\gd\leq \gd_{ 11}$ and $\varphi$ satisfies $ \left\Vert \varphi_{ i}\right\Vert_{ \infty, \gamma} \leq \kappa_{11} \delta$, for all  all $i=1, \ldots, d$, then for all $t\in[T_{ 0}, 2T_{ 0}]$ and $i=1, \ldots, d$ we have $\left\Vert \Psi_{ t}(\varphi)_{ i}\right\Vert_{ \infty, \gamma} \leq \kappa_{11} \delta$.
\end{lemma}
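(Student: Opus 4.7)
The plan is to directly estimate the $L^2(w_\gamma)$-norm of $\Psi_t(\varphi)_i(\xi)$ using the output of Lemma~\ref{lem:contraction_y}, and to absorb the linear-in-$\kappa_{11}$ contribution coming from the decay factor $e^{-\chi'(\delta) T_0}$ by choosing $\kappa_{11}$ large enough. Fix $\xi\in B_{2L}$, $t\in[T_0,2T_0]$, and set $m:=g_{t,f}^{-1}(\xi)$, which exists uniquely by Lemma~\ref{lem:bij} provided $\delta\leq\delta_6$. Denote
\begin{equation*}
y_t\,:=\,y^{f(m),m}_t[\varphi_i(m),\tau_i]\, ,\qquad n_t\,:=\,n^{f(m),m}_t[\varphi_i(m),\tau_i]\,.
\end{equation*}
Under the assumption $\|\varphi_i(m)\|_{L^2(w_\gamma)}\leq\kappa_{11}\delta$, Lemma~\ref{lem:contraction_y} applied with $y_0=\varphi_i(m)$ and $n_0=\tau_i$ (so $|n_0|=1$) yields
\begin{equation*}
\|y_t\|_{L^2(w_\gamma)}\,\leq\,\kappa_{11}\delta\, e^{-\chi'(\delta) t}+\kappa_{10}\delta\bigl(1+\kappa_{11}^2\delta^2\bigr)^{1/2}\, ,
\end{equation*}
and
\begin{equation*}
\bigl|n_t-\tau_i\bigr|\,\leq\,\kappa_{10}\delta\bigl(1+\kappa_{11}\delta\bigr)\,.
\end{equation*}

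The second bound shows that $n_{t,i}=1+O(\delta)$ and $n_{t,k}=O(\delta)$ for $k\neq i$, uniformly in $\xi\in B_{2L}$ and in $\varphi$ in the ball of radius $\kappa_{11}\delta$. In particular, for $\delta$ small enough (say $\kappa_{10}\delta(1+\kappa_{11}\delta)\leq 1/2$), the scalar $n_{t,i}$ is bounded away from zero, so the denominator in \eqref{eq:def_Psit} is under control and the mapping $\Psi_t(\varphi)_i$ is well defined. Plugging these estimates into \eqref{eq:def_Psit} and using the triangle inequality with $\|\varphi_k(\xi)\|_{L^2(w_\gamma)}\leq\kappa_{11}\delta$ for each $k\neq i$, we obtain
\begin{equation*}
\bigl\|\Psi_t(\varphi)_i(\xi)\bigr\|_{L^2(w_\gamma)}\,\leq\, \frac{1}{1-\kappa_{10}\delta(1+\kappa_{11}\delta)}\left(\kappa_{11}\delta\, e^{-\chi'(\delta) t}+\kappa_{10}\delta\sqrt{1+\kappa_{11}^2\delta^2}+(d-1)\kappa_{11}\delta\cdot\kappa_{10}\delta(1+\kappa_{11}\delta)\right)\,.
\end{equation*}
The bound is uniform in $\xi\in B_{2L}$; continuity in $\xi$ and the vanishing-first-moment constraint in the definition of $\mathcal{I}_\gamma$ are inherited from the analogous properties of $y_t$, $n_t$ and $\varphi_k(\xi)$, so $\Psi_t(\varphi)_i$ genuinely belongs to the target space.

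Dividing by $\delta$, it suffices to verify
\begin{equation*}
\kappa_{11}\,\geq\,\frac{\kappa_{11}\,e^{-\chi'(\delta)T_0}+\kappa_{10}+R(\delta,\kappa_{11})}{1-\kappa_{10}\delta(1+\kappa_{11}\delta)}\,,
\end{equation*}
where $R(\delta,\kappa_{11})\to 0$ as $\delta\to 0$ with $\kappa_{11}$ fixed. Since $\chi'(\delta)\to k_\gamma>0$ as $\delta\to 0$ and $t\geq T_0$, the contraction factor satisfies $\limsup_{\delta\to 0} e^{-\chi'(\delta)t}\leq e^{-k_\gamma T_0}=:q<1$. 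The main mechanism is then the same as in Lemma~\ref{lem:etat_Lip}: choosing $\kappa_{11}$ any fixed constant strictly larger than $\kappa_{10}/(1-q)$ (for instance $\kappa_{11}:=2\kappa_{10}/(1-q)$) and then $\delta_{11}\leq\delta_{10}$ small enough so that the $O(\delta)$ perturbations to the denominator and numerator are absorbed, the required inequality $\|\Psi_t(\varphi)_i\|_{\infty,\gamma}\leq\kappa_{11}\delta$ holds. The only delicate point is that $k_\gamma$ rather than $k_\beta$ governs the decay here (since we work in $L^2(w_\gamma)$), but the definition of $T_0$ in \eqref{eq:def T} is a lower bound on $T_0$, and one may enlarge $T_0$ if necessary so that $e^{-k_\gamma T_0}$ is bounded away from $1$; the previous choices in Lemmas~\ref{lem:etat_Lip} and~\ref{lem:eta_contraction} remain valid for any larger $T_0$.
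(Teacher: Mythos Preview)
Your proof is correct and follows essentially the same approach as the paper: apply Lemma~\ref{lem:contraction_y} with $y_0=\varphi_i(m)$, $n_0=\tau_i$ to bound $\|y_t\|$ and $|n_t-\tau_i|$, plug these into the definition \eqref{eq:def_Psit} of $\Psi_t$, and close the inequality by choosing $\kappa_{11}$ large enough against the contraction factor $e^{-\chi'(\delta)T_0}<1$. Your treatment is in fact slightly more careful than the paper's on one point: you explicitly note that the relevant decay rate here is $k_\gamma$ rather than $k_\beta$, so \eqref{eq:def T} as literally written does not directly give $e^{-k_\gamma T_0}$ small, and you remark that $T_0$ can be enlarged from the outset without affecting the earlier lemmas; the paper simply writes ``recall \eqref{eq:def T}'' and leaves this implicit.
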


\begin{proof}[Proof of Lemma~\ref{lem:proximity_Psi_t}]
Applying Lemma \ref{lem:contraction_y}, we get for all $m\in B_{2L}$ and $t\in [T_{ 0},2T_{ 0}]$:
\begin{equation}
\left\Vert y^{f(m),m}_t[\varphi_i(m), \tau_i] \right\Vert_{L^2(w_{\gamma})} \, \leq\, \left(e^{-\chi'(\gd)t}+\kappa_{ 10} \gd\right)\Vert \phi_i\Vert_{\infty,\gamma}+ \kappa_{ 10} \gd\, ,
\end{equation}
and
\begin{equation}
\left\vert n^{f(m),m}_t[\varphi_i(m), \tau_i]- \tau_i\right\vert\, \leq\, \kappa_{ 10} \gd(1+\Vert \phi_i\Vert_{\infty,\gamma})\, .
\end{equation}
We deduce, recalling \eqref{eq:def_Psit}, that for a constant $C>0$,
\begin{equation}
\left \Vert \Psi_{ t}(\varphi)_{ i}\right\Vert_{ \infty, \gamma} \, \leq
\, \left(e^{-\chi'(\gd)t}+C\gd\right) \max_k \Vert \phi_k\Vert_{\infty,\gamma}+ C\gd\, ,
\end{equation}
and this concludes the proof with the choice $\kappa_{11}=\frac34 C$, since $C\gd\leq \frac18$ and $e^{-\chi'(\gd_{ 11})T_{ 0}}\leq\frac18$ for $\gd_{ 11}$ small enough (recall \eqref{eq:def T}).
\end{proof}

\begin{definition}
We say that $ \varphi\in \mathcal{J}_{ \gamma}=\mathcal{J}_\gamma(\gd,\kappa_{11})$ if
$ \varphi\in \mathcal{ I}_{ \gamma}$ and
\begin{equation}
\sup_{i=1,\dots,d}\left\Vert \varphi_{ i} \right\Vert_{ \infty, \gamma} \,\leq\,  \kappa_{11} \delta\, .
\end{equation}
\end{definition}

\begin{lemma}
\label{lem:contraction_Psi}
There exists a constant $ \delta_{ 12}>0$ such that if $ 0\leq\delta\leq \delta_{12}$, for all $ \varphi, \psi\in \mathcal{J}_{ \gamma}$ and $t\in[T_{ 0}, 2T_{ 0}]$, we have
\begin{equation}
\sup_{i=1,\ldots,d}\left\Vert \Psi_{ t}(\varphi)_i - \Psi_{ t}(\psi)_i\right\Vert_{ \infty,\gamma} \leq \frac{ 1}{ 2} \sup_{i=1,\ldots,d}\left\Vert \varphi_i- \psi_i \right\Vert_{ \infty, \gamma}\, .
\end{equation}
\end{lemma}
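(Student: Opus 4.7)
The key structural observation is that the system \eqref{eq:y_t n_t} is linear in its initial condition $(y_0,n_0)$, so for fixed $m$ we may decompose
\begin{equation}
y_t^{f(m),m}[\varphi_i(m),\tau_i] - y_t^{f(m),m}[\psi_i(m),\tau_i] = y_t^{f(m),m}[\varphi_i(m)-\psi_i(m),0]\, ,
\end{equation}
and similarly for $n_t$. The plan is to write the difference $\Psi_t(\varphi)_i(\xi) - \Psi_t(\psi)_i(\xi)$ via the elementary identity
\begin{equation}
\frac{A}{B} - \frac{A'}{B'} = \frac{A-A'}{B} + \frac{A'(B'-B)}{BB'}\, ,
\end{equation}
with $A$ the numerator and $B = n_{t,i}^{f(m),m}[\varphi_i(m),\tau_i]$ of \eqref{eq:def_Psit}, and then to control each piece using Lemma~\ref{lem:contraction_y}.

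First I would verify uniform bounds on the denominators: applying Lemma~\ref{lem:contraction_y} with $y_0=\varphi_i(m)$ and $n_0=\tau_i$ (and exploiting $\|\varphi_i\|_{\infty,\gamma}\leq \kappa_{11}\delta$), one obtains $|n_{t,i}^{f(m),m}[\varphi_i(m),\tau_i]-1|\leq \kappa_{10}\delta(1+\kappa_{11}\delta)$ and $|n_{t,k}^{f(m),m}[\varphi_i(m),\tau_i]|\leq \kappa_{10}\delta(1+\kappa_{11}\delta)$ for $k\neq i$, and the same with $\psi$. In particular $B,B'\geq 1/2$ for $\delta$ small enough, and $|B'-B|= |n_{t,i}^{f(m),m}[\varphi_i(m)-\psi_i(m),0]|\leq \kappa_{10}\delta\,\|\varphi_i-\psi_i\|_{\infty,\gamma}$ by linearity.

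Next I would estimate $A-A'$. Its $y$-contribution is
\begin{equation}
y_t^{f(m),m}[\varphi_i(m)-\psi_i(m),0],
\end{equation}
whose $L^2(w_\gamma)$-norm is bounded, by Lemma~\ref{lem:contraction_y}, by $(e^{-\chi'(\delta)t}+\kappa_{10}\delta)\|\varphi_i-\psi_i\|_{\infty,\gamma}$. Its sum-contribution splits as
\begin{equation}
\sum_{k\neq i}\left[(\varphi_k-\psi_k)(\xi)\,n_{t,k}^{f(m),m}[\varphi_i(m),\tau_i] + \psi_k(\xi)\, n_{t,k}^{f(m),m}[\varphi_i(m)-\psi_i(m),0]\right];
\end{equation}
the first term is $O(\delta)\max_k\|\varphi_k-\psi_k\|_{\infty,\gamma}$ by the bound on off-diagonal $n_{t,k}$, and the second is $O(\delta^2)\|\varphi_i-\psi_i\|_{\infty,\gamma}$ since $\|\psi_k\|_{\infty,\gamma}\leq\kappa_{11}\delta$ and $|n_t[\cdot,0]|=O(\delta)$. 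Finally, the remainder $A'(B'-B)/(BB')$ is bounded using that $|A'|$ is $O(1)$ (combine the $y$-estimate of Lemma~\ref{lem:contraction_y} with $|n_{t,k}|=O(\delta)$ and $\|\varphi_k\|_{\infty,\gamma}=O(\delta)$) and that $|B'-B|=O(\delta)\|\varphi_i-\psi_i\|_{\infty,\gamma}$.

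Summing these contributions at time $t\in[T_0,2T_0]$ yields, for some constant $C>0$ independent of $\delta$,
\begin{equation}
\|\Psi_t(\varphi)_i-\Psi_t(\psi)_i\|_{\infty,\gamma}\,\leq\,\bigl(2e^{-\chi'(\delta)T_0}+C\delta\bigr)\sup_{k=1,\dots,d}\|\varphi_k-\psi_k\|_{\infty,\gamma}\, .
\end{equation}
Because $\chi'(\delta)\to k_\gamma>0$ as $\delta\to 0$ (cf. Remark~\ref{rem:meaning_constants}) and $T_0$ is fixed by \eqref{eq:def T}, we may choose $\delta_{12}\leq\delta_{11}$ small enough so that the prefactor is at most $\tfrac12$, provided we first enlarge $T_0$ slightly (or equivalently replace $k_\beta$ by $\min(k_\beta,k_\gamma)$ in the definition \eqref{eq:def T}, which does not affect any previous argument). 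The bookkeeping of the various $O(\delta)$-terms is the only delicate point; everything else follows from the a priori linear estimates of Lemma~\ref{lem:contraction_y} and the smallness assumption $\|\varphi_i\|_{\infty,\gamma}\vee\|\psi_i\|_{\infty,\gamma}\leq\kappa_{11}\delta$.
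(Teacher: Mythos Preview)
Your proposal is correct and follows essentially the same route as the paper: both exploit the linearity of $(y_t,n_t)$ in the initial data, decompose the difference of fractions $\Psi_t(\varphi)_i-\Psi_t(\psi)_i$ via an elementary algebraic identity, bound the denominators away from zero using Lemma~\ref{lem:contraction_y}, and arrive at the same final estimate $(2e^{-\chi'(\delta)t}+C\delta)\sup_k\|\varphi_k-\psi_k\|_{\infty,\gamma}$. Your closing remark about possibly adjusting $T_0$ (since $\chi'$ involves $k_\gamma$ rather than $k_\beta$, and $k_\gamma<k_\beta$) is in fact a point the paper glosses over with a bare ``recall \eqref{eq:def T}''; your proposed fix is the right one and costs nothing.
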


\begin{proof}[Proof of Lemma~\ref{lem:contraction_Psi}]
Let $i=1,\ldots, d$, $m\in B_{2L}$. We have
\begin{align}
\Vert \Psi_{ t}(\varphi)_{ i}(\xi)& - \Psi_{ t}(\psi)_{ i}(\xi)\Vert_{ L^{ 2}(w_{ - \gamma})} \nonumber\\
& = \left\Vert \frac{ y^{f(m),m}_t[\varphi_i(m), \tau_i] - a_{ t, i}(\varphi)}{ b_{ t, i}(\varphi)} - \frac{ y^{f(m),m}_t[\psi_i(m), \tau_i]  - a_{ t, i}(\psi)}{b_{ t, i}(\psi)} \right\Vert_{ L^{ 2}(w_{\gamma})} \nonumber\\
& \ \leq \frac{ \left\Vert y^{f(m),m}_t[\varphi_i(m)-\psi_i(m),0] \right\Vert_{ L^{ 2}(w_{\gamma})}}{ \left\vert b_{ t, i}(\varphi) \right\vert}+ \frac{ \left\Vert a_{ t, i}(\psi) - a_{ t, i}(\varphi)\right\Vert_{ L^{ 2}(w_{\gamma})}}{ \left\vert b_{ t, i}(\psi) \right\vert}  \nonumber\\
& \qquad + \frac{ \left\vert b_{ t, i}(\psi) - b_{ t, i}(\varphi)\right\vert }{ \left\vert b_{ t, i}(\varphi) b_{ t, i}(\psi) \right\vert} \left(\left\Vert y^{f(m),m}_t[\psi_i(m), \tau_i] \right\Vert_{ L^{ 2}(w_{\gamma})} + \left\Vert a_{ t, i}(\varphi) \right\Vert_{ L^{ 2}(w_{\gamma})} \right)\, ,\label{eq:Bt}
\end{align}
where we have used the notations
\begin{align*}
a_{ t, i}(\varphi)&:= \sum_{ k\neq i}\varphi_{ k}(\xi) n_{ t,k}^{ f(m),m}[ \varphi_{ i}(m), \tau_i],\\
b_{ t, i}(\varphi)&:= n_{ t,i}^{ f(m),m}[ \varphi_{ i}(m), \tau_i].
\end{align*}
But Lemma \ref{lem:contraction_y} implies
\begin{equation}
\left\Vert y^{f(m),m}_t[\varphi_i(m)-\psi_i(m),0] \right\Vert_{ L^{ 2}(w_{  \gamma})}\, \leq\, 
\left(e^{-\chi'(\gd)t}+\kappa_{10}\gd\right)\Vert \varphi_i-\psi_i\Vert_{\infty,\gamma}\, ,
\end{equation}
and
\begin{equation}
\left\vert b_{ t, i}(\psi) - b_{ t, i}(\varphi)\right\vert\, =\, \left\vert n_{ t,i}^{ f(m),m}[ \varphi_{ i}(m)-\psi_i(m),0]\right\vert\, \leq\, \kappa_{10}\gd \Vert \varphi_i-\psi_i\Vert_{\infty,\gamma}\, ,
\end{equation}
and by Lemma \ref{lem:contraction_y} and Lemma \ref{lem:proximity_Psi_t}, for $\gd$ small enough,
\begin{equation}
\vert b_{t,i}(\varphi)\vert \, \geq \, \frac12\, , \qquad \vert b_{t,i}(\psi)\vert \, \geq \, \frac12\, .
\end{equation}
Secondly, using similar arguments, we obtain for a constant $C>0$,
\begin{align}
\Vert a_{ t, i}(\psi) - a_{ t, i}(\varphi)\Vert_{ L^{ 2}(w_{ \gamma})}\, & \leq  \sup_{j=1,\ldots,d}\Vert \varphi_j-\psi_j\Vert_{\infty,\gamma}\sum_{ k\neq i} \left\vert n_{ t,k}^{ f(m),m}[ \varphi_{ i}(m),0]\right\vert \nonumber\\
&\qquad + \sup_{j=1,\ldots,d}\Vert \psi_j\Vert_{\infty,\gamma}\sum_{ k\neq i} \left\vert n_{ t,k}^{ f(m),m}[ \varphi_{ i}(m)-\psi_i(m),0]\right\vert\nonumber\\
&\leq 
\, C \gd \,\sup_{j=1,\ldots,d}\Vert \varphi_j-\psi_j\Vert_{\infty,\gamma}\, .
\end{align}
Gathering all these estimates we get, for a constant $C'>0$
\begin{multline}
\Vert \Psi_{ t}(\varphi)_{ i}(m)(\cdot +m) - \Psi_{ t}(\psi)_{ i}(m)(\cdot +m)\Vert_{ L^{ 2}(w_{  \gamma})}\\ \leq\, \left(2 e^{-\chi'(\gd)t}+C'\gd\right) \sup_{j=1,\ldots,d}\Vert \varphi_j-\psi_j\Vert_{\infty,\gamma}\, ,
\end{multline}
which implies the result for $\gd_{ 12}$ small enough (recall \eqref{eq:def T}).
\end{proof}

\subsection{ Identification with the derivative of $f$}
\label{sec:identification_phi_diff_f}

Lemma \ref{lem:contraction_Psi} implies the existence of a unique fixed-point (that is denoted $\varphi$) to the mapping $\Psi_{ T_{ 0}}$ in $\mathcal{J}_\gamma$ and we are now ready to prove that $f$ is $ \mathcal{ C}^1$ with derivative given by $\varphi$. As in \cite{wiggins2013normally}, the idea we follow is to apply the following classical result (which corresponds to Lemma 3.3.8 in \cite{wiggins2013normally}).

\begin{lemma}\label{lem:rho}
Suppose that $ \rho:\bbR_+\mapsto \bbR_+$ is nondecreasing and satisfies the inequality
\begin{equation}\label{eq:ineq rho}
\rho(a)\, \leq\, \zeta_1 \rho(\zeta_2 a)+ \iota(a)\, ,
\end{equation}
where $a$ is small, $0\leq \zeta_1\leq 1$, $ \zeta_{ 2}\in \mathbb{ R}$ and $\lim_{a\rightarrow 0}\iota(a)=0$. Then $\lim_{a\rightarrow 0}\rho(a)=0$.
\end{lemma}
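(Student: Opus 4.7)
The plan is to iterate the functional inequality $\rho(a)\leq \zeta_1 \rho(\zeta_2 a)+\iota(a)$ a finite number of times, exploiting only that $\rho\geq 0$, to obtain, for every $n\geq 1$,
\[
\rho(a)\,\leq\, \zeta_1^n \rho(\zeta_2^n a)+\sum_{k=0}^{n-1}\zeta_1^k \iota(\zeta_2^k a).
\]
The setting of interest is $0\leq \zeta_1<1$ and $0<\zeta_2<1$, which is the one arising in Section~\ref{sec:identification_phi_diff_f} (the extreme case $\zeta_1=1$ is degenerate and never needed in practice). Under these conditions the argument $\zeta_2^n a$ contracts to zero geometrically, and the prefactor $\zeta_1^n$ decays geometrically as well.

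To pass to the limit $n\to\infty$, I would first bound the leading term: since $\rho$ is nondecreasing and $\mathbb{R}_+$-valued, it is bounded above on $[0,a]$ by the finite number $\rho(a)$, so that $\zeta_1^n\rho(\zeta_2^n a)\leq \zeta_1^n\rho(a)\longrightarrow 0$. For the tail sum, I would introduce the nondecreasing envelope $M(a):=\sup_{0\leq b\leq a}\iota(b)$, which satisfies $M(a)\to 0^+$ as $a\to 0^+$ thanks to $\lim_{a\to 0}\iota(a)=0$. Since $\iota(\zeta_2^k a)\leq M(a)$ for every $k\geq 0$,
\[
\sum_{k=0}^{n-1}\zeta_1^k \iota(\zeta_2^k a)\,\leq\, M(a)\sum_{k=0}^{\infty}\zeta_1^k\,=\,\frac{M(a)}{1-\zeta_1}.
\]

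Sending $n\to\infty$ in the iterated inequality yields $\rho(a)\leq M(a)/(1-\zeta_1)$ for all sufficiently small $a$, and letting $a\to 0^+$ concludes the proof. There is no real obstacle: the one subtle point is that both $\zeta_1<1$ (to make the geometric sum converge and the prefactor $\zeta_1^n$ vanish) and $0<\zeta_2<1$ (to contract the argument of $\rho$ to zero) must hold, but both are verified in the intended application, where $\zeta_1$ comes from the spectral-gap contraction in $L^2(w_\gamma)$ (Lemma~\ref{lem:contraction_y}) and $\zeta_2$ from the geometric reduction of the increment $a$ under one step of the semi-flow.
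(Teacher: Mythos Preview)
Your argument has a genuine gap: you assume $0<\zeta_2<1$, but in the application of Section~\ref{sec:identification_phi_diff_f} the paper explicitly takes $\zeta_2=2$. This value comes from the bound $|m'-m|\leq 2|\xi'-\xi|$ supplied by Lemma~\ref{lem:encadre m}; the slow variable $m$ moves only by $O(\delta)$ over one step of the semiflow, so $g_{T_0,f}^{-1}$ is merely $2$-Lipschitz, not a contraction. With $\zeta_2>1$ your iteration breaks down on two counts. First, the inequality~\eqref{eq:ineq rho} is only assumed for small $a$, so you cannot apply it at the arguments $\zeta_2^k a$, which grow without bound as $k$ increases. Second, even on the range where the iteration is legitimate, your control of the leading term uses $\rho(\zeta_2^n a)\leq\rho(a)$ via monotonicity, and this inequality goes the wrong way when $\zeta_2>1$.

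The correct argument (the paper only cites Wiggins for it) is in fact simpler and avoids iteration altogether. Since $\rho$ is nondecreasing and nonnegative, the limit $L:=\lim_{a\to 0^+}\rho(a)=\inf_{a>0}\rho(a)$ exists in $[0,\infty)$. For any fixed $\zeta_2>0$ one has $\zeta_2 a\to 0^+$ as $a\to 0^+$, so $\rho(\zeta_2 a)\to L$ as well; letting $a\to 0^+$ in~\eqref{eq:ineq rho} then yields $L\leq \zeta_1 L$, whence $L=0$ provided $\zeta_1<1$. No restriction on $\zeta_2$ beyond positivity is needed.
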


We refer to \cite{wiggins2013normally} for a proof of this result. We consider the function $\rho$ defined as follows:
\begin{equation}
\gamma(a) := \sup_{ \xi, \xi^{ \prime}\in B_{2L},\, \left\vert \xi^{ \prime}- \xi \right\vert< a} \quad \frac{1}{\vert \xi'-\xi\vert }\left\Vert f(\xi^{ \prime}) - f(\xi) -  \varphi(\xi)\cdot ( \xi^{ \prime}- \xi) \right\Vert_{L^2(w_{\gamma})}\, ,a>0\,,
\end{equation}
and we aim at proving \eqref{eq:ineq rho}, so that Lemma \ref{lem:rho} ensures that $f$ is $ \mathcal{ C}^1$ in the sense of \eqref{eq:diff_f_H}.

\medskip

For any $ \xi, \xi^{ \prime}\in B_{2L}$, consider 
\begin{equation}
\Delta_{ \xi, \xi^{ \prime}} \left[f, \varphi\right]\, :=\,  f(\xi^{ \prime}) - f(\xi) -  \varphi(\xi) \cdot(\xi^{ \prime} - \xi)\, .
\end{equation}
By Lemma~\ref{lem:bij}, one has $ \xi = g_{T_{ 0}, f}(m)$ and $ \xi^{ \prime}= g_{ T_{ 0}, f}(m^{ \prime})$ for $m,m^{ \prime}\in B_{2L}$. Hence, by Lemma~\ref{lem:diff p}, we have
\begin{multline}
f(\xi')-f(\xi)\, =\, p^{f(m'),m'}_{ T_{ 0}}-p^{f(m),m}_{ T_{ 0}}\\ =\, y^{f(m),m}_{ T_{ 0}}[f(m')-f(m),m'-m]+O_{L^2(w_{\gamma})}(\vert m'-m\vert^2)\, .
\end{multline}
On the other hand, applying again Lemma \ref{lem:diff p} we obtain
\begin{equation}
\varphi(\xi) (\xi^{ \prime} - \xi)\, =\, 
 \varphi(\xi)\cdot n^{f(m),m}_{T_{ 0}}[f(m')-f(m),m'-m] +O_{L^2(w_{\gamma})}(\vert m'-m\vert^2)\, .
\end{equation}
But we have the decomposition
\begin{multline}
\varphi(\xi)\cdot n^{f(m),m}_{T_{ 0}}[f(m')-f(m),m'-m]\, =\,  \varphi(\xi)\cdot n^{f(m),m}_{T_{ 0}}\left[\Delta_{m,m'}[f,\varphi],0\right]\\
+ \varphi(\xi)\cdot n^{f(m),m}_{T_{ 0}}\left[\varphi(m)\cdot (m'-m),m'-m\right]\, , 
\end{multline}
and, recalling the fixed-point relation \eqref{eq:fixed_point_phi_1},
\begin{multline}
 \varphi(\xi)\cdot n^{f(m),m}_{T_{ 0}}\left[\varphi(m)\cdot (m'-m),m'-m\right]\,
 =\, \sum_{k=1}^d (m'_k-m_k)\varphi(\xi) \cdot n^{f(m),m}_{T_{ 0}}[\phi_k(m), \tau_k]
 \\ =\, \sum_{k=1}^d (m'_k-m_k) y_{T_{ 0}}^{f(m),m}[\phi_k(m), \tau_k]\,
 =\, y_{T_{ 0}}^{f(m),m}\left[\phi(m)\cdot (m'-m),m'-m\right]\, .
\end{multline}
Finally, all these estimates lead to
\begin{multline}
\Delta_{ \xi, \xi^{ \prime}} \left[f, \varphi\right]\, =\, y_{ T_{ 0}}^{f(m),m}\left[\Delta_{m,m'}[f,\varphi],0 \right] - \varphi(\xi)\cdot n^{f(m),m}_{T_{ 0}}\left[\Delta_{m,m'}[f,\varphi],0\right]
 \\+O_{L^2(w_{\gamma})}(\vert m'-m\vert^2)\, .
\end{multline}
Applying Lemma \ref{lem:contraction_y} and Lemma \ref{lem:encadre m} (which ensures that $|\xi'-\xi|\geq \frac12 |m'-m|$), we obtain (here $n_0=0$):
\begin{equation}
\frac{1}{\vert \xi'-\xi\vert}\left\Vert \Delta_{ \xi, \xi^{ \prime}} \left[f, \varphi\right]\right\Vert_{L^2(w_{\gamma})}\, \leq\,
\frac{2 \left( e^{-\chi'(\gd) T_{ 0}}+2\kappa_{10} \gd\right)}{\vert m'-m\vert} \left\Vert \Delta_{ m,m'} \left[f, \varphi\right]\right\Vert_{L^2(w_{\gamma})}+O(\vert m'-m\vert)\, ,
\end{equation}
and thus, taking $\gd$ small enough so that $2 \left( e^{-\chi'(\gd) T_{ 0}}+2\kappa_{10} \gd\right)\leq 1$ (recall \eqref{eq:def T}) and $\zeta_2=2$ (relying again on Lemma~\ref{lem:encadre m}), $\rho$ indeed satisfies \eqref{eq:ineq rho}, which ensures that $f$ is $ \mathcal{ C}^1$, with $\nabla_m f=\varphi$. Recalling moreover Lemma \ref{lem:proximity_Psi_t}, we have just proved the following proposition:

\begin{proposition}\label{prop:reg f}
There exists a constant $\gd^{ \prime}>0$ such that for all $\gd\leq \gd^{ \prime}$ the mapping $f$ has $ \mathcal{ C}^1$-regularity in the sense of \eqref{eq:diff_f_H} and satisfies
\begin{equation}
\sup_{|m|< 2L} \quad \max_{i=1,\ldots,d}\quad \Vert \partial_i f(m)\Vert_{L^2(w_{\gamma})}\, \leq\,  \kappa_{ 11} \gd\, .
\end{equation}
\end{proposition}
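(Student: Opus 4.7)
The plan has two stages: first produce a candidate derivative $\varphi$ by a Banach fixed-point argument, then verify that $\varphi$ is genuinely the differential of $f$ via the convergence criterion of Lemma~\ref{lem:rho}.

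\textbf{Stage 1: existence of $\varphi$.} For $\delta$ small enough, Lemma~\ref{lem:proximity_Psi_t} shows that $\Psi_{T_0}$ defined in \eqref{eq:def_Psit} sends $\mathcal{J}_\gamma$ into itself and Lemma~\ref{lem:contraction_Psi} shows it is a $\tfrac12$-contraction on $\mathcal{J}_\gamma$ (which is a complete metric space). Banach's theorem therefore yields a unique fixed-point $\varphi=(\varphi_1,\ldots,\varphi_d)\in\mathcal{J}_\gamma$, which in particular satisfies $\sup_i\|\varphi_i\|_{\infty,\gamma}\leq \kappa_{11}\delta$. The bound in the proposition then reduces to the identification $\nabla_m f=\varphi$, carried out in the next stage.

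\textbf{Stage 2: identification $\nabla_m f = \varphi$.} Following \cite{wiggins2013normally}, I define
\begin{equation*}
\rho(a)\, :=\, \sup_{\xi,\xi'\in B_{2L},\,0<|\xi'-\xi|<a}\frac{\|\Delta_{\xi,\xi'}[f,\varphi]\|_{L^2(w_\gamma)}}{|\xi'-\xi|},\quad \Delta_{\xi,\xi'}[f,\varphi]:= f(\xi')-f(\xi)-\varphi(\xi)\cdot(\xi'-\xi),
\end{equation*}
and aim to establish $\rho(a)\leq \tfrac12\rho(2a)+\iota(a)$ with $\iota(a)\to 0$, so that Lemma~\ref{lem:rho} forces $\rho(a)\to 0$, i.e. \eqref{eq:diff_f_H} holds with $\partial_i f=\varphi_i$. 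To derive this, I lift through the bijection $g_{T_0,f}$ from Lemma~\ref{lem:bij}: write $\xi=g_{T_0,f}(m)$, $\xi'=g_{T_0,f}(m')$, so $f(\xi)=p^{f(m),m}_{T_0}$, $f(\xi')=p^{f(m'),m'}_{T_0}$, and apply Lemma~\ref{lem:diff p} twice to get
\begin{equation*}
f(\xi')-f(\xi)= y^{f(m),m}_{T_0}[f(m')-f(m),\,m'-m]+O_{L^2(w_\gamma)}(|m'-m|^2),
\end{equation*}
\begin{equation*}
\xi'-\xi= n^{f(m),m}_{T_0}[f(m')-f(m),\,m'-m]+O(|m'-m|^2).
\end{equation*}

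\textbf{Stage 3: the cancellation.} The decisive step is to exploit the linearity of $(y_0,n_0)\mapsto (y_\cdot,n_\cdot)$ in its initial condition, combined with the fixed-point identity \eqref{eq:fixed_point_phi_1} for $\varphi$. Writing $f(m')-f(m)=\varphi(m)\cdot(m'-m)+\Delta_{m,m'}[f,\varphi]$ and splitting the first arguments of $y^{f(m),m}_{T_0}$ and $n^{f(m),m}_{T_0}$ accordingly, the contribution coming from $\varphi(m)\cdot(m'-m)$ in $f(\xi')-f(\xi)$ matches exactly $\varphi(\xi)\cdot$ the analogous contribution in $\xi'-\xi$, by \eqref{eq:fixed_point_phi_1}. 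What remains is
\begin{equation*}
\Delta_{\xi,\xi'}[f,\varphi]\, =\, y^{f(m),m}_{T_0}[\Delta_{m,m'}[f,\varphi],0]-\varphi(\xi)\cdot n^{f(m),m}_{T_0}[\Delta_{m,m'}[f,\varphi],0]+O_{L^2(w_\gamma)}(|m'-m|^2).
\end{equation*}
Lemma~\ref{lem:contraction_y} then bounds the first term by $(e^{-\chi'(\delta)T_0}+\kappa_{10}\delta)\|\Delta_{m,m'}[f,\varphi]\|_{L^2(w_\gamma)}$ and, since $\varphi(\xi)\in\mathcal{J}_\gamma$ is of order $\delta$ and $|n^{f(m),m}_{T_0}[u,0]-0|\leq \kappa_{10}\delta\|u\|$, the second term is absorbed into a similar bound. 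By Lemma~\ref{lem:encadre m} we have $|\xi'-\xi|\geq\tfrac12|m'-m|$, so dividing by $|\xi'-\xi|$ yields, for $\delta$ small enough that $2(e^{-\chi'(\delta)T_0}+C\delta)\leq \tfrac12$ (possible thanks to \eqref{eq:def T}), the inequality $\rho(a)\leq \tfrac12\rho(2a)+O(a)$.

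\textbf{Main obstacle.} The delicate point is not the contraction estimate itself but the algebraic cancellation in Stage~3: one must verify that the leading-order terms in the expansions of $f(\xi')-f(\xi)$ and $\varphi(\xi)\cdot(\xi'-\xi)$ match precisely as dictated by \eqref{eq:fixed_point_phi_1}, which in turn requires the linearity of $y$ and $n$ in their initial data and that the error terms from Lemma~\ref{lem:diff p} genuinely sit in $L^2(w_\gamma)$ (rather than in a worse weight) with $O(|m'-m|^2)$ control uniform in $m,m'\in B_{2L}$; this is exactly what the choice of exponents $\gamma,\gamma'$ in \eqref{hyp:alpha_beta_gamma} and the a priori regularity built in Section~\ref{sec:persistence} are engineered to provide.
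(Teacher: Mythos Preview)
Your proposal is correct and follows essentially the same route as the paper: produce the candidate $\varphi$ as the Banach fixed point of $\Psi_{T_0}$ on $\mathcal{J}_\gamma$ via Lemmas~\ref{lem:proximity_Psi_t}--\ref{lem:contraction_Psi}, then identify $\varphi=\nabla_m f$ by lifting $\Delta_{\xi,\xi'}[f,\varphi]$ through $g_{T_0,f}$, using Lemma~\ref{lem:diff p}, the linearity of $(y,n)$ in its initial data together with the fixed-point relation \eqref{eq:fixed_point_phi_1} for the cancellation, and Lemmas~\ref{lem:contraction_y}, \ref{lem:encadre m} to feed into Lemma~\ref{lem:rho}. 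The only cosmetic difference is that you target the contraction factor $\tfrac12$ whereas the paper is content with $\zeta_1\leq 1$; both are achieved for $\delta$ small enough by \eqref{eq:def T}.
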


\subsection{Proof of Theorem \ref{th:main2}}

With the notations introduced in the proof of Theorem \ref{th:main1}, the phase dynamics on $ \mathcal{ M}^\gd$ is given by
\begin{equation}
\frac{1}{\gd}\dot m^\gd_t\, =\, \int_{\bbR^d} Fq^\gd_{m^\gd_t}\, =\,\int_{\bbR^d}F(x+m^\gd_t) q_0(x)\dd x + \int_{\bbR^d} F(x+m^\gd_t)\big(f(m^\gd_t)(x)-q_0(x)\big)\dd x\, ,
\end{equation}
and thus \eqref{eq:dot m th} holds with
\begin{equation}
g^\gd(m)\, :=\, \frac{1}{\gd}\int_{\bbR^d} F(x+m)\big(f(m)(x)-q_0(x)\big)\dd x\, .
\end{equation}
It remains to control $g^\gd$: since $f\in \cH$ and
$\sup_{m\in B_L} \left|\int F(x+m)w_{-\ga}(x) {\rm d}x\right|< \infty$, we have $\Vert g^\gd\Vert_{ \mathcal{ C}(B_L,\bbR^d)}\leq C$ for some constant $C>0$ and the bound on the derivative follows from the same arguments, relying on Proposition \ref{prop:reg f} and remarking that
\begin{equation}
\nabla g(m)\, =\, \frac{1}{\gd}\int_{\bbR^d}\nabla F(x+m)\big( f(m)(x)-q_0(x)\big)\dd x+\frac{1}{\gd} \int_{\bbR^d} F(x+m)\nabla_m f(m)(x)\dd x\, .
\end{equation}

\appendix
\section{ Some technical lemmas}
\begin{lemma}
\label{lem:rough_bound_q}
For any $ \alpha, a, b, c>0$, we have
\begin{equation}
\label{eq:rough_bound_q}
q(u):=-au^{ 2} + bu+c \leq \left( \frac{ b^{ 2}}{ 4a} + c\right) \exp\left(\frac{ \alpha}{ 2a} \left( \frac{ b^{ 2}}{ a}+2c\right)\right)e^{ - \frac{ \alpha u^{ 2}}{ 2}},\ u\geq 0.
\end{equation}
\end{lemma}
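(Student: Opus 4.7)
The strategy is to establish the equivalent inequality
$q(u) e^{\alpha u^2/2} \leq M$ on $[0,\infty)$, where $M$ denotes the right-hand side constant $\bigl(\tfrac{b^2}{4a}+c\bigr)\exp\bigl(\tfrac{\alpha}{2a}(\tfrac{b^2}{a}+2c)\bigr)$, by splitting $[0,\infty)$ according to the sign of $q$. Since $q$ is a downward-opening parabola with $q(0)=c>0$, it has a unique positive root
\begin{equation*}
u_0 \, = \, \frac{b+\sqrt{b^2+4ac}}{2a}\,,
\end{equation*}
and $q(u) \leq 0$ for all $u\geq u_0$. On that range the bound is immediate since $Me^{-\alpha u^2/2}\geq 0$.

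On the remaining interval $0\leq u\leq u_0$, the global maximum $q(u)\leq q(b/(2a))=c+b^2/(4a)$ combined with the trivial monotone bound $e^{\alpha u^2/2}\leq e^{\alpha u_0^2/2}$ yields
\begin{equation*}
q(u)\, e^{\alpha u^2/2} \, \leq \, \left(c+\frac{b^2}{4a}\right) e^{\alpha u_0^2/2}\, .
\end{equation*}
The key remaining step is to control $u_0^2$ by a clean expression in $a,b,c$. Using $(x+y)^2\leq 2x^2+2y^2$,
\begin{equation*}
u_0^2 \, \leq \, \frac{2b^2+2(b^2+4ac)}{4a^2} \, = \, \frac{1}{a}\left(\frac{b^2}{a}+2c\right)\,,
\end{equation*}
so that $\tfrac{\alpha}{2}u_0^2 \leq \tfrac{\alpha}{2a}\bigl(\tfrac{b^2}{a}+2c\bigr)$, which gives precisely the claimed constant in the exponential. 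Combining the two cases yields the inequality \eqref{eq:rough_bound_q}.

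There is no real obstacle here: the statement is elementary and the only minor choice is the trivial constant used in $(x+y)^2\leq 2(x^2+y^2)$, which matches the prefactor $\tfrac{1}{a}$ in the exponential in the statement. A slightly sharper constant could be obtained by maximising $u\mapsto q(u)e^{\alpha u^2/2}$ directly, but the bound above is already in the form needed in the application (namely \eqref{eq:bound_exp_Xt_C12}) and is purposely loose to keep the expression of $\kappa_0$ in \eqref{eq:kappa1} explicit.
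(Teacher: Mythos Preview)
Your proof is correct and is essentially identical to the paper's argument: the paper also splits at the positive root of $q$ (written there as $\tfrac{b}{2a}+\bigl(\tfrac{b^2}{4a^2}+\tfrac{c}{a}\bigr)^{1/2}$, which equals your $u_0$), uses the vertex value $\tfrac{b^2}{4a}+c$ as the uniform bound on $q$, and bounds $u_0^2$ via the same inequality $(x+y)^2\le 2x^2+2y^2$ to obtain $u_0^2\le \tfrac{b^2}{a^2}+\tfrac{2c}{a}$.
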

\begin{proof}[Proof of Lemma~\ref{lem:rough_bound_q}]
This is obvious since
\begin{equation}
\label{eq:q_u}
q(u)= -a \left(u- \frac{ b}{ 2a}\right)^{ 2}+ \frac{ b^{ 2}}{ 4a}+c\leq\begin{cases}
0& \text{ if }u\geq \frac{ b}{ 2a}+ \left( \frac{ b^{ 2}}{ 4a^{ 2}}+ \frac{ c}{ a}\right)^{ \frac{ 1}{ 2}},\\
\frac{ b^{ 2}}{ 4a} +c & \text{ if }u< \frac{ b}{ 2a}+ \left( \frac{ b^{ 2}}{ 4a^{ 2}}+ \frac{ c}{ a}\right)^{ \frac{ 1}{ 2}}\,.
\end{cases}
\end{equation}
Noting that in the second case, we have in particular $u^{ 2}\leq \frac{ b^{ 2}}{ a^{ 2}}+ \frac{ 2c}{ a} $ and \eqref{eq:rough_bound_q} follows.
\end{proof}
\begin{lemma}
\label{lem:gronwall_sqrt}
Let $v$ be a continuously differentiable function on $[0, +\infty)$ such that $v(t)\geq0$ for all $t\geq0$. Suppose that there exists $ \alpha, \beta>0$ such that
\begin{equation}
v^{ \prime}(t)\leq - \alpha v(t) + \beta \sqrt{ v(t)}.
\end{equation}
Then, for all $t\geq 0$,
\begin{align}
v(t) &\leq \max \left(v(0),\ \frac{ \beta^{ 2}}{ \alpha^{ 2}}\right), \label{eq:gronwall_sqrt_1}\\
v(t) &\leq \left(\sqrt{ v(0)}e^{ - \alpha t/2} + \frac{ \beta}{ \alpha} \right)^{ 2}. \label{eq:gronwall_sqrt_2}
\end{align}
\end{lemma}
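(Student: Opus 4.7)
The plan is to reduce the nonlinear differential inequality satisfied by $v$ to a standard linear one via the substitution $u=\sqrt{v}$, and then deduce both bounds from the explicit solution of the associated linear ODE $u'=-\frac{\alpha}{2}u+\frac{\beta}{2}$.

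Formally, differentiating $u(t)=\sqrt{v(t)}$ on any open interval where $v>0$ gives
\begin{equation*}
u'(t)\,=\,\frac{v'(t)}{2\sqrt{v(t)}}\,\leq\,\frac{-\alpha v(t)+\beta\sqrt{v(t)}}{2\sqrt{v(t)}}\,=\,-\frac{\alpha}{2}u(t)+\frac{\beta}{2}.
\end{equation*}
The only technical issue is that $u=\sqrt{v}$ is not differentiable at points where $v$ vanishes, and this is the step that requires some care. To bypass this, I would fix $\varepsilon>0$, set $u_\varepsilon(t):=\sqrt{v(t)+\varepsilon}$ (which is smooth whenever $v$ is), and check by a direct computation using $\sqrt{v}\leq u_\varepsilon$ and $v/u_\varepsilon=u_\varepsilon-\varepsilon/u_\varepsilon\geq u_\varepsilon-\sqrt{\varepsilon}$ that
\begin{equation*}
u_\varepsilon'(t)\,\leq\,-\frac{\alpha}{2}u_\varepsilon(t)+\frac{\beta}{2}+\frac{\alpha}{2}\sqrt{\varepsilon}.
\end{equation*}
This is a standard linear differential inequality and a routine Gr\"onwall argument yields the closed-form bound
\begin{equation*}
u_\varepsilon(t)\,\leq\,\frac{\beta+\alpha\sqrt{\varepsilon}}{\alpha}+\left(u_\varepsilon(0)-\frac{\beta+\alpha\sqrt{\varepsilon}}{\alpha}\right)e^{-\alpha t/2}.
\end{equation*}
Passing to the limit $\varepsilon\downarrow 0$ (with $u_\varepsilon(0)\to\sqrt{v(0)}$ and $u_\varepsilon(t)\to\sqrt{v(t)}$ pointwise by continuity) gives
\begin{equation*}
\sqrt{v(t)}\,\leq\,\frac{\beta}{\alpha}+\left(\sqrt{v(0)}-\frac{\beta}{\alpha}\right)e^{-\alpha t/2}\,\leq\,\sqrt{v(0)}\,e^{-\alpha t/2}+\frac{\beta}{\alpha},
\end{equation*}
which after squaring is exactly \eqref{eq:gronwall_sqrt_2}.

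To obtain \eqref{eq:gronwall_sqrt_1} from the same explicit bound, I would distinguish two cases. If $\sqrt{v(0)}\geq\beta/\alpha$, then the coefficient of $e^{-\alpha t/2}$ is non-negative, so $\sqrt{v(t)}\leq\beta/\alpha+(\sqrt{v(0)}-\beta/\alpha)=\sqrt{v(0)}$, giving $v(t)\leq v(0)$. If on the contrary $\sqrt{v(0)}<\beta/\alpha$, then the same coefficient is non-positive, so $\sqrt{v(t)}\leq\beta/\alpha$, giving $v(t)\leq\beta^2/\alpha^2$. In both cases $v(t)\leq\max(v(0),\beta^2/\alpha^2)$.

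The main (and only) obstacle is the regularity issue at zeros of $v$, which is handled by the $\varepsilon$-perturbation above; everything else is just integration of a linear first-order ODE and trivial case analysis. An alternative, slightly more elegant route would be to compare $v$ directly to the solution $w$ of the ODE $w'=-\alpha w+\beta\sqrt{w}$ with $w(0)=v(0)$, using that the right-hand side is locally Lipschitz on $(0,\infty)$ and that $v$ cannot cross $w$ from below without violating the differential inequality; however, the $\varepsilon$-regularization is simpler and entirely self-contained.
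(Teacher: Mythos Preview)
Your proof is correct and takes a different route from the paper's. The paper argues \eqref{eq:gronwall_sqrt_1} directly from the sign of $v'$, and then proves \eqref{eq:gronwall_sqrt_2} by a comparison argument: on each maximal open interval where $v>0$ it introduces the integrating factor $f(u)=(\alpha\sqrt{v(u)}-\beta)e^{\alpha u/2}$, shows $f$ is nonincreasing, and compares with the explicit solution $w$ of the equality $w'=-\alpha w+\beta\sqrt{w}$; the case distinction on whether the left endpoint of the interval is $0$ or strictly positive then yields the bound. You instead regularize $u_\varepsilon=\sqrt{v+\varepsilon}$, obtain a genuine linear differential inequality valid on all of $[0,\infty)$ with no singularities, integrate once, and let $\varepsilon\downarrow 0$. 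Your intermediate bound $\sqrt{v(t)}\leq \beta/\alpha+(\sqrt{v(0)}-\beta/\alpha)e^{-\alpha t/2}$ is in fact sharp and yields both \eqref{eq:gronwall_sqrt_1} and \eqref{eq:gronwall_sqrt_2} by the simple case split you give. The $\varepsilon$-trick is cleaner in that it avoids localizing to positivity intervals and the associated endpoint analysis; the paper's approach has the (minor) advantage of making the comparison with the exact ODE solution explicit. Amusingly, you sketch the paper's comparison idea yourself in your closing remark.
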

\begin{proof}[Proof of Lemma~\ref{lem:gronwall_sqrt}]
The first result follows from the fact that $v(t)$ is always non-increasing unless $ \sqrt{ v(t)}\leq \frac{ \beta}{ \alpha}$. We now prove the second inequality: let $t\geq0$ such that $v(t)>0$. Consider the maximal interval $I:=(t^{ -}, t^{ +})$ (that is non empty by continuity of $v$) containing $t$ such that $v(u)>0$ on $I$. Let $f(u):= \left( \alpha \sqrt{ v(u)} - \beta\right)e^{ \alpha u/2}$. On $I$, $f^{ \prime}(u) = \frac{ \alpha e^{ \alpha u/2}}{ 2 \sqrt{ v(u)}} \left( \alpha v(u) - \beta \sqrt{ v(u)} + v^{ \prime}(u)\right)\leq 0$, so that $f$ is nonincreasing on $I$. Consider now the solution $w$ of the equation $w^{ \prime}(t)= - \alpha w(t) + \beta \sqrt{ w(t)}$ such that $w(t^{ -})=v(t^{ -})$. Then, by the same calculation, $g:= u \mapsto \left( \alpha \sqrt{ w(u)} - \beta\right)e^{ \alpha u/2}$ is constant on $I$, equal to $g(t^{ -})$. This means that $f(u) \leq f(t^{ -})= g(t^{ -})= g(u)$. By definition of $f$ and $g$, this implies that $v(u)\leq w(u)$ for all $u\in I$. This means that 
\begin{equation}
v(u) \leq  \frac{ e^{ - \alpha u}}{ \alpha^{ 2}} \left(g(t^{ -}) + \beta e^{ \alpha u/2}\right)^{ 2}=\frac{ e^{ - \alpha u}}{ \alpha^{ 2}} \left(\left( \alpha \sqrt{ v(t^{ -})} - \beta\right)e^{ \alpha t^{ -}/2} + \beta e^{ \alpha u/2}\right)^{ 2}.
\end{equation}
We have now two possibilities: if $t^{ -}=0$, then,
\begin{align*}
v(t) &\leq  \frac{ 1}{ \alpha^{ 2}} \left(\alpha \sqrt{ v(0)}e^{ - \alpha t/2} + \beta \left(1-e^{ -\alpha t/2}\right)\right)^{ 2},\\
&\leq  \left(\sqrt{ v(0)}e^{ - \alpha t/2} + \frac{ \beta}{ \alpha} \right)^{ 2}.
\end{align*}
In the case $t^{ -}>0$, by continuity of $v$, $v(t^{ -})=0$. In this case,
\begin{align*}
v(t) &\leq  \frac{ 1}{ \alpha^{ 2}} \left(\beta \left(1-e^{ -\alpha t/2}\right)\right)^{ 2}\leq \frac{ \beta^{ 2}}{ \alpha^{ 2}}.
\end{align*}
This proves Lemma~\ref{lem:gronwall_sqrt}.
\end{proof}

\section{Proof of Lemma~\ref{lem:F_vs_q0}}
\label{sec:proof_lemma_F_VS_q0}
For $m_0\in \bbR^d$ and $L_0>0$ that $ \left\vert m_{ 0} \right\vert_{ K \sigma^{ -2}}=L_0$, it is easy to see that $\gep \vert m_0+x\vert_{ K \sigma^{ -2}}^2 -\vert x\vert_{ K \sigma^{ -2}}^2\leq -\frac{1}{2}\vert x\vert_{ K \sigma^{ -2}}^2$ as soon as $\vert x\vert_{ K \sigma^{ -2}}\geq r_{ \varepsilon}L_0$, for $r_{ \varepsilon}:= \frac{ \sqrt{ 2\varepsilon}}{ 1- 2 \varepsilon} \left(\sqrt{ 2\varepsilon} + 1\right)\geq \sqrt{ 2 \varepsilon}$. In the following, we consider $ \varepsilon$ small enough such that $ r_{ \varepsilon}< \frac{ \min(1, c_{ F})}{ 4}$. A small calculation shows that this is true when $\gep< \rho_{ 0}$ with 
\[\rho_{ 0}:= \frac{ \min(c_{ F}, 1)^{ 2}}{ 2 \left(4+ \min(c_{ F}, 1)\right)^{ 2}}\leq \frac{ 1}{ 32}.\]
With this notations at hand, 
\begin{align}\label{eq: ugly 1}
\left\vert \int_{\bbR^d} x\cdot K \sigma^{ -2}F(m_0+x) q_0(x)\dd x\right\vert & \leq \, r_{ \varepsilon}L\int_{\vert x\vert_{ K \sigma^{ -2}} <r_{ \varepsilon}L_0}\vert F(x+m_0)\vert_{ K \sigma^{ -2}} q_0(x)\dd x \nonumber\\ & \qquad + \int_{\vert x\vert_{ K \sigma^{ -2}}\geq r_{ \varepsilon}L_0} \left\vert x \right\vert_{ K \sigma^{ -2}} \left\vert F(m_0+x) \right\vert_{ K \sigma^{ -2}} q_0(x)\dd x
\nonumber\\ 
&:= I_{ 1}+I_{ 2}.
\end{align}
Concerning the second term $I_{ 2}$ in the above sum, using the introductory remark on $r_{  \varepsilon}$,
\begin{align}
I_{ 2} &\leq  \frac{ C_{ F}}{ \left((2 \pi)^{ d} \det \left( \sigma^{ 2}K^{ -1}\right)\right)^{ \frac{ 1}{ 2}}}\int_{\vert x\vert_{ K \sigma^{ -2}}\geq r_{ \varepsilon}L_0} \left\vert x \right\vert_{ K \sigma^{ -2}} \exp \left(- \frac{ 1}{ 4} \left\vert x \right\vert_{ K \sigma^{ -2}}^{ 2} \right)\dd x, \nonumber\\
&\leq \frac{ (\bar k)^{ \frac{ 1}{ 2}}}{ \underline{ \sigma}}Q_{ 1}(L_0) e^{ - \frac{ 1}{ 2} \varepsilon L_0^{ 2}}, \label{eq:I2}
\end{align}
for some polynomial function $Q_{ 1}(L_0)$ in $L_0$, independent of $K$, $ \sigma$ and that can be made independent of $ \varepsilon< \rho_{ 0}$.
Secondly, applying \eqref{hyp:F_dot_x_bound 2},
\begin{multline}
\int_{\bbR^d}(x+m_0)\cdot K \sigma^{ -2}F(x+m_0)q_0(x)\dd x\\
\leq\, C_F\int_{\vert x+m_0\vert_{ K \sigma^{ -2}} \leq r}q_0(x)\dd x- c_{ F}\int_{ \mathbb{ R}^{ d}}\vert x+m_0\vert_{ K \sigma^{ -2}}\, \vert F(x+m_0)\vert_{ K \sigma^{ -2}} q_0(x)\dd x\\
\leq\, C_F\int_{\vert x+m_0\vert_{ K \sigma^{ -2}} \leq r}q_0(x)\dd x- c_{ F}\int_{\vert x\vert_{ K \sigma^{ -2}} <r_{ \varepsilon}L_0}\vert x+m_0\vert_{ K \sigma^{ -2}}\, \vert F(x+m_0)\vert_{ K \sigma^{ -2}} q_0(x)\dd x\,.
\end{multline}
Recall that $r_{ \varepsilon}< \frac{ 1}{ 4}$ and assume that $L_0 >\frac{ 4r}{ 3}$ so that under these two assumptions, 
\begin{equation}
\label{eq:reps_m0}
 \left\lbrace \left\vert x+ m_{ 0} \right\vert_{ K \sigma^{ -2}}\leq r\right\rbrace\subset \left\lbrace \left\vert x+ m_{ 0} \right\vert_{ K \sigma^{ -2}}\leq \frac{ 3L_0}{ 4}\right\rbrace \subset \left\{\vert x\vert_{ K \sigma^{ -2}} \geq \frac{ L_0}{ 4} \right\} \subset \left\{\vert x\vert_{ K \sigma^{ -2}} \geq r_{ \varepsilon}L_0 \right\}.
\end{equation} 
Hence,
\begin{multline}
\int_{\bbR^d}(x+m_0)\cdot K \sigma^{ -2}F(x+m_0)q_0(x)\dd x\\
\leq\, C_F\int_{\vert x\vert_{ K \sigma^{ -2}} \geq r_{ \varepsilon}L_0}q_0(x)\dd x- \frac{ 3c_{ F}}{ 4}L_0\int_{\vert x\vert_{ K \sigma^{ -2}} < r_{ \varepsilon}L_0} \vert F(x+m_0)\vert_{ K \sigma^{ -2}} q_0(x)\dd x\\
\leq\, Q_{ 2}(L_0)e^{ - \varepsilon L_0^{ 2}}- \frac{ 3c_{ F}}{ 4}L_0\int_{\vert x\vert_{ K \sigma^{ -2}} < r_{ \varepsilon}L_0} \vert F(x+m_0)\vert_{ K \sigma^{ -2}} q_0(x)\dd x\,.\label{eq: ugly 2}
\end{multline}
for another polynomial $Q_{ 2}$ in $L_0$.
Gathering \eqref{eq: ugly 1} \eqref{eq:I2} and \eqref{eq: ugly 2}, using $r_{ \varepsilon}< \frac{ c_{ F}}{ 4}$, the second part of  \eqref{hyp:F_dot_x_bound 2} and \eqref{eq:reps_m0} again, we deduce that, for $L_0> \frac{ r}{ r_{ \varepsilon}}$,
\begin{align}
m_0\cdot \int_{\bbR^d} F(m_0+x) q_0(x)\dd x&\leq \frac{ (\bar k)^{ \frac{ 1}{ 2}}}{ \underline{ \sigma}}Q_{ 1}(L_0) e^{ - \frac{ 1}{ 2} \varepsilon L_0^{ 2}} + Q_{ 2}(L_0) e^{ - \varepsilon L_0^{ 2}} \\
& \qquad \qquad \qquad \qquad - \frac{ 3c_{ F}^{ 2}}{ 8}L_0^{ 2} \int_{ \left\vert x \right\vert_{ K \sigma^{ -2}}< r} q_{ 0}(x) {\rm d}x,\nonumber\\
&\leq \frac{ (\bar k)^{ \frac{ 1}{ 2}}}{ \underline{ \sigma}}Q_{ 1}(L_0) e^{ - \frac{ 1}{ 2} \varepsilon L_0^{ 2}} + Q_{ 2}(L_0) e^{ - \varepsilon L_0^{ 2}} - cL_0^{ 2}\,. \label{eq:bound m_0 cdot int}
\end{align}
for some $c>0$. Choosing now $L_0> \frac{ 1}{ \varepsilon}$ sufficiently large, we obtain the result.
\section{Proof of Proposition \ref{prop:PDE_well_posed}}\label{app: proof uniqueness}

 One weak solution to \eqref{eq:PDE_mod_h} is provided by $ \mu_{ t}:= \mathcal{ L}(X_{ t})$ where $X_{ t}$ is the nonlinear process defined in \eqref{eq:modif mc kean}. It remains to show uniqueness. For the rest of the proof, $ \mu$ is the law of the process $X_{ \cdot}$ and $ \nu$ stands for any other weak solution to \eqref{eq:PDE_mod_h} in $ \mathcal{ C}([0, \infty), \mathcal{ P}_{ 2})$ such that $ \nu_{ 0}= \mu_{ 0}$. The point is to prove that $ \nu_{ t}= \mu_{ t}$, $t\in[0, T]$, for any arbitrary $T>0$.
 
 For such a $ \nu$, define the following diffusion
\begin{equation}
\label{eq:diff_nu}
\dd Y_t\, =\, \left(\delta F(Y_t) + \delta h\left( \int_{ \mathbb{ R}^{ d}} z \nu_{ t}( {\rm d} z)\right)-K \left(Y_t- \int_{ \mathbb{ R}^{ d}} z \nu_{ t}({\rm d} z)\right)\right)\dd t+\sqrt{2}\gs \dd B_t\, ,
\end{equation}
with $Y_{ 0}\sim \nu_{ 0}$. For any $0\leq s\leq t$, denote by $ \left\lbrace \varphi_{ s}^{ t}(y)\right\rbrace_{ s\leq t \leq T}$ the unique solution of \eqref{eq:diff_nu} with initial condition at $t=s$ such that $ \varphi_{ s}^{ s}=y$. Finally, for any test function $f$, define 
\begin{equation}
\label{eq:propagator}
P_{ s, t}f(y):= \mathbf{ E}_{ B} f \left( \varphi_{ s}^{ t}(y)\right).
\end{equation}
Let us suppose that $F$ is uniformly Lispchitz on $ \mathbb{ R}^{ d}$ (one can remove this assumption by replacing $F$ by its Yosida approximation $ \left(F_{\lambda}\right)_{ \lambda>0}$, as in \cite{LucSta2014}, Section~7, (7.3); see also \cite{MR1840644}, Appendix~A). Having proven uniqueness for $ \mu_{ \lambda}$ (that is when $F$ has been replaced by $F_{ \lambda}$), it suffices to see that the distance \eqref{eq:W1} between $ \mu$ (resp. $ \nu$) and $ \mu_{ \lambda}$ (resp. $ \nu_{ \lambda}$) converges to $0$ as $ \lambda\to\infty$, see \cite{LucSta2014}, Proposition~7.1). Under the assumptions made on the model, the propagator $P$ satisfies the following Backward Kolmogorov equation (see \cite{doi:10.1080/07362999808809576}, Remark 2.3): for any regular test function $ \varphi$,
\begin{multline}
\label{eq:kolmogorov}
\partial_{ s} P_{ s, t} \varphi(y) + \nabla\cdot (\sigma^{ 2} \nabla )P_{ s, t} \varphi(y)\\
+ \left(  \left(\delta F(y) + \delta h\left( \int_{ \mathbb{ R}^{ d}} z \nu_{ t}( {\rm d} z)\right)-K \left(y- \int_{ \mathbb{ R}^{ d}} z \nu_{ t}({\rm d} z)\right)\right)\cdot \nabla\right)P_{ s, t} \varphi(y)=0.
\end{multline}
Let us now apply Ito formula to $ t \mapsto P_{ t, T} \varphi(X_{ t})$, where we recall that $X$ solves \eqref{eq:modif mc kean}:
\begin{align}
P_{ t, T} \varphi(X_{ t}) &= P_{ 0, T} \varphi(X_{ 0}) + \int_{ 0}^{t} \partial_{ s} P_{ s, T} \varphi(X_{ s}) {\rm d} s \nonumber\\
&+ \int_{ 0}^{t} \nabla P_{ s, T} \varphi(X_{ s}) \cdot \left( \delta F(X_{ s}) + \delta h \left( \int_{ \mathbb{ R}^{ d}} z \mu_{ s}({\rm d}z)\right) - K \left(X_{ s} - \int z \mu_{ s}({\rm d} z)\right)\right){\rm d} s \nonumber\\
&+  \int_{ 0}^{t} \nabla\cdot \gs^2\nabla P_{ s, T} \varphi(X_{ s}) {\rm d} s + \sqrt{ 2} \sigma\int_{ 0}^{t} \nabla P_{ s, T} \varphi(X_{ s}) {\rm d}B_{ s}.
\end{align}
Using \eqref{eq:kolmogorov}, this simplifies into
\begin{align}
P_{ t, T} \varphi(X_{ t}) &= P_{ 0, T} \varphi(X_{ 0}) + \sqrt{ 2} \sigma\int_{ 0}^{t} \nabla P_{ s, T} \varphi(X_{ s}) {\rm d}B_{ s}\nonumber\\
&+ \int_{ 0}^{t} \nabla P_{ s, T} \varphi(X_{ s}) \cdot \Bigg(\delta \left\lbrace h \left( \int_{ \mathbb{ R}^{ d}} z \mu_{ s}({\rm d}z)\right) - h\left( \int_{ \mathbb{ R}^{ d}} z \nu_{ s}( {\rm d} z)\right) \right\rbrace \nonumber\\ &\quad \quad- K \left( \int z \mu_{ s}({\rm d} z) - \int z \nu_{ s}({\rm d} z)\right)\Bigg){\rm d} s.
\end{align}
Taking the expectation w.r.t. the Brownian motion and for $t=T$ (recall that $P_{ T, T} \varphi= \varphi$), we obtain (using the notation $ \left\langle \varphi\, ,\, \mu\right\rangle := \int \varphi(x) \mu({\rm d}x)$),
\begin{multline}
\left\langle \varphi\, ,\, \mu_{ T}\right\rangle = \left\langle P_{ 0, T}\varphi\, ,\, \mu_{ 0}\right\rangle\\
+ \int_{ 0}^{T}  \left\langle \nabla P_{ s, T} \varphi\, ,\, \mu_{ s}\right\rangle \cdot \left( \delta \left\lbrace h \left( \int_{ \mathbb{ R}^{ d}} z \mu_{ s}({\rm d}z)\right) - h\left( \int_{ \mathbb{ R}^{ d}} z \nu_{ s}( {\rm d} z)\right) \right\rbrace - K \left\langle z\, ,\, \mu_{ s}- \nu_{ s}\right\rangle\right){\rm d} s.
\end{multline}
Furthermore, for any regular fonction $ \varphi$, by definition of $P_{ t, T}$ we have $\partial_{ t} \left\langle P_{ t, T} \varphi\, ,\, \nu_{ t}\right\rangle=0$.
We obtain finally
\begin{multline}
\label{eq:mu_minus_nu}
\left\langle \varphi\, ,\, \mu_{ T} - \nu_{ T}\right\rangle = \left\langle P_{ 0, T} \varphi\, ,\, \mu_{ 0} - \nu_{ 0}\right\rangle\\
+ \int_{ 0}^{T}  \left\langle \nabla P_{ s, T} \varphi\, ,\, \mu_{ s}\right\rangle \cdot \left( \delta \left\lbrace h \left( \int_{ \mathbb{ R}^{ d}} z \mu_{ s}({\rm d}z)\right) - h\left( \int_{ \mathbb{ R}^{ d}} z \nu_{ s}( {\rm d} z)\right) \right\rbrace - K \left\langle z\, ,\, \mu_{ s}- \nu_{ s}\right\rangle\right){\rm d} s.
\end{multline}
Let us introduce the usual Wasserstein distance $W_{ 1}$ between two measures. By the Kantorovich-Rubinstein duality, an expression of this distance is
\begin{equation}
\label{eq:W1}
W_{ 1}( \mu, \nu)= \sup_{ \left\Vert f \right\Vert_{ Lip}\leq 1} \left\vert \int \varphi {\rm d} \mu - \int \varphi {\rm d} \nu \right\vert.
\end{equation}
An important point is to note that there exists a constant $C>0$ such that for every regular $ \varphi$ such that $ \left\Vert \varphi \right\Vert_{ Lip}\leq 1$, $ \left\vert \nabla P_{ s, T} \varphi \right\vert \leq C$ (where the constant is independent of $ \varphi$: see \cite{LucSta2014}, Lemma~4.4 for a similar proof in a more complicated context of singular interactions). By the Lipschitz continuity of $h$ and since $ z \mapsto z$ is obviously $1$-Lipschitz, one can bound the righthand part of \eqref{eq:mu_minus_nu} by $\tilde C \int_{ 0}^{t} \sup_{ u\leq s} W_{ 1}( \mu_{ u}, \nu_{ u}) {\rm d} s$. Taking now the supremum in $ \varphi$ and using the fact that $ \mu_{ 0}= \nu_{ 0}$, we obtain from Gr\"onwall's lemma that $ \sup_{ s\leq T} W_{ 1}( \mu_{ s}, \nu_{ s})=0$, which gives uniqueness.

\section*{Acknowledgements}

We would like to thank Giambattista Giacomin, Charles-Edouard Br\'ehier, Ivan Gentil and Arnaud Guillin for very fruitful discussions. C. Poquet benefited from the support of the ANR-17-CE40-0030.

\end{document}